\newcommand{\brI}{\mathbf{I}}
\newcommand{\bbC}{\mathbb{C}}
\newcommand{\bbF}{\mathbb{F}}
\newcommand{\bbR}{\mathbb{R}}
\newcommand{\bbZ}{\mathbb{Z}}
\newcommand{\bfA}{{\boldsymbol{A}}}
\newcommand{\bfc}{{\boldsymbol{c}}}
\newcommand{\bfG}{{\boldsymbol{G}}}
\newcommand{\bfL}{{\boldsymbol{L}}}
\newcommand{\bfP}{{\boldsymbol{P}}}
\newcommand{\bfu}{{\boldsymbol{u}}}
\newcommand{\bfU}{{\boldsymbol{U}}}
\newcommand{\bfv}{{\boldsymbol{v}}}
\newcommand{\bfV}{{\boldsymbol{V}}}
\newcommand{\bfx}{{\boldsymbol{x}}}
\newcommand{\bfX}{{\boldsymbol{X}}}
\newcommand{\bfy}{{\boldsymbol{y}}}
\newcommand{\bfone}{{\boldsymbol{1}}}
\newcommand{\bfzero}{{\boldsymbol{0}}}
\newcommand{\bfdelta}{\boldsymbol{\delta}}
\newcommand{\bfDelta}{\boldsymbol{\varDelta}}
\newcommand{\bfGamma}{\boldsymbol{\varGamma}}
\newcommand{\bfchi}{\boldsymbol{\chi}}
\newcommand{\bfphi}{\boldsymbol{\varphi}}
\newcommand{\bfPhi}{\boldsymbol{\varPhi}}
\newcommand{\bfpsi}{\boldsymbol{\psi}}
\newcommand{\bfPsi}{\boldsymbol{\varPsi}}
\newcommand{\bfzeta}{\boldsymbol{\zeta}}
\newcommand{\calA}{\mathcal{A}}
\newcommand{\calB}{\mathcal{B}}
\newcommand{\calD}{\mathcal{D}}
\newcommand{\calE}{\mathcal{E}}
\newcommand{\calG}{\mathcal{G}}
\newcommand{\calK}{\mathcal{K}}
\newcommand{\calL}{\mathcal{L}}
\newcommand{\calM}{\mathcal{M}}
\newcommand{\calN}{\mathcal{N}}
\newcommand{\calQ}{\mathcal{Q}}
\newcommand{\calS}{\mathcal{S}}
\newcommand{\calU}{\mathcal{U}}
\newcommand{\calV}{\mathcal{V}}
\newcommand{\calX}{\mathcal{X}}
\newcommand{\rmB}{\mathrm{B}}
\newcommand{\rmc}{\mathrm{c}}
\newcommand{\rmi}{\mathrm{i}}
\newcommand{\rmQ}{\mathrm{Q}}
\newcommand{\rmT}{\mathrm{T}}
\newcommand{\Aut}{\operatorname{Aut}}
\newcommand{\bin}{{\operatorname{bin}}}
\newcommand{\coh}{{\operatorname{coh}}}
\newcommand{\Fro}{\mathrm{Fro}}
\newcommand{\GL}{\operatorname{GL}}
\newcommand{\im}{\operatorname{im}}
\newcommand{\Orb}{{\operatorname{Orb}}}
\newcommand{\Orth}{\operatorname{O}}
\newcommand{\rank}{\operatorname{rank}}
\newcommand{\sign}{\operatorname{sign}}
\newcommand{\Sp}{\operatorname{Sp}}
\newcommand{\Span}{\operatorname{span}}
\newcommand{\spark}{{\operatorname{spark}}}
\newcommand{\Stab}{{\operatorname{Stab}}}
\newcommand{\Sym}{\operatorname{Sym}}
\newcommand{\TP}{\operatorname{TP}}
\newcommand{\tr}{\operatorname{tr}}
\newcommand{\Tr}{\operatorname{Tr}}
\newcommand{\gen}[1]{\langle{#1}\rangle}
\newcommand{\abs}[1]{|{#1}|}
\newcommand{\biggparen}[1]{\biggl({#1}\biggr)}
\newcommand{\bigbracket}[1]{\bigl[{#1}\bigr]}
\newcommand{\biggbracket}[1]{\biggl[{#1}\biggr]}
\newcommand{\set}[1]{\{{#1}\}}
\newcommand{\bigset}[1]{\bigl\{{#1}\bigr\}}
\newcommand{\norm}[1]{\|{#1}\|}
\newcommand{\biggnorm}[1]{\biggl\|{#1}\biggr\|}
\newcommand{\ip}[2]{\langle{#1},{#2}\rangle}
\theoremstyle{definition}
\newtheorem{theorem}{Theorem}[section]
\newtheorem{lemma}[theorem]{Lemma}
\newtheorem{definition}[theorem]{Definition}
\newtheorem{example}[theorem]{Example}
\newtheorem{remark}[theorem]{Remark}
\begin{document}
\begin{frontmatter}
\title{Doubly transitive equiangular tight frames that contain regular simplices}

\author{Matthew Fickus}
\ead{Matthew.Fickus@afit.edu, Matthew.Fickus@gmail.com}
\author{Evan C.\ Lake}

\address{Department of Mathematics and Statistics, Air Force Institute of Technology, Wright--Patterson AFB, OH 45433}

\begin{abstract}
An equiangular tight frame (ETF) is a finite sequence of equal norm vectors in a Hilbert space that achieves equality in the Welch bound, and so has minimal coherence. The binder of an ETF is the set of all subsets of its indices whose corresponding vectors form a regular simplex. An ETF achieves equality in Donoho and Elad's spark bound if and only if its binder is nonempty. When this occurs, its binder is the set of all linearly dependent subsets of it of minimal size. Moreover, if members of the binder form a balanced incomplete block design (BIBD) then its incidence matrix can be phased to produce a sparse representation of its dual (Naimark complement). A few infinite families of ETFs are known to have this remarkable property. In this paper, we relate this property to the recently introduced concept of a doubly transitive equiangular tight frame (DTETF), namely an ETF for which the natural action of its symmetry group is doubly transitive. In particular, we show that the binder of any DTETF is either empty or forms a BIBD, and moreover that when the latter occurs, any member of the binder of its dual is an oval of this BIBD. We then apply this general theory to certain known infinite families of DTETFs. Specifically, any symplectic form on a finite vector space yields a DTETF, and we compute the binder of it and its dual, showing that the former is empty except in a single notable case, and that the latter consists of affine Lagrangian subspaces. This unifies and generalizes several results from the existing literature. We then consider the binders of four infinite families of DTETFs that arise from quadratic forms over the field of two elements, showing that two of these are empty except in a finite number of cases, whereas the other two form BIBDs that relate to each other, and to Lagrangian subspaces, in nonobvious ways.
\end{abstract}

\begin{keyword}
equiangular tight frame \sep regular simplex \sep doubly transitive
\MSC[2020] 42C15
\end{keyword}
\end{frontmatter}

%%%%%%%%%%%%%%%%%%%%%%%%%%%%%%%%%%%%%%%%%%%%%%%%%%%%%%%%%%%%%%%%
\section{Introduction}
%%%%%%%%%%%%%%%%%%%%%%%%%%%%%%%%%%%%%%%%%%%%%%%%%%%%%%%%%%%%%%%%

Let \smash{$\bfPhi:=(\bfphi_n)_{n\in\calN}$} be a finite sequence of $N$ nonzero vectors in a Hilbert space $\calE$ over $\bbF\in\set{\bbR,\bbC}$ of dimension $D<N$.
Broadly speaking, the goal of compressed sensing is to uniquely recover, in a practical way, a vector $\bfx\in\bbF^\calN$ from $\bfy=\bfPhi\bfx:=\sum_{n\in\calN}\bfx(n)\bfphi_n$ subject to the assumption that $\bfx$ is sufficiently \textit{sparse},
namely that $\norm{\bfx}_0:=\#\set{n\in\calN: \bfx(n)\neq 0}$ is sufficiently small.
Since two sparse solutions of $\bfPhi\bfx=\bfy$ differ by a (possibly less) sparse member of $\ker(\bfPhi):=\set{\bfx\in\bbF^\calN: \bfPhi\bfx=\bfzero}$,
this can only happen if $\spark(\bfPhi)
:=\min\bigset{\norm{\bfx}_0: \bfzero\neq\bfx\in\ker(\bfPhi)}$ is sufficiently large,
that is, if every $K$-vector subsequence $\bfPhi_\calK:=(\bfphi_n)_{n\in\calK}$ of $\bfPhi$ is linearly independent for some sufficiently large $K$.
Remarkably,
a sufficiently sparse $\bfx$ can be recovered from $\bfy=\bfPhi\bfx$ using convex optimization when $\bfPhi$ has the \textit{restricted isometry property} (RIP),
namely when every such subsequence $\bfPhi_\calK$ is approximately orthogonal in a particular way~\cite{Candes08}.
Though randomly chosen $\bfPhi$ are likely to have the RIP to a remarkable degree,
it is computationally hard to certify this fact for a general $\bfPhi$~\cite{BandeiraDMS13}.
Computing the spark of a general $\bfPhi$ is also hard~\cite{AlexeevCM12}.
The problem of designing a deterministic $\bfPhi$ that is guaranteed to have the RIP to a degree similar to that of its random cousins remains open~\cite{BourgainDFKK11,BandeiraFMW13}.
Much of the recent work on this problem is motivated by two bounds involving the \textit{coherence} of $\bfPhi$,
the \textit{Welch bound}~\cite{Welch74} and Donoho and Elad's \textit{spark bound}~\cite{DonohoE03}, namely
\begin{equation}
\label{eq.Welch and spark bounds}
\coh(\bfPhi)
:=\max_{n_1\neq n_2}\frac{\abs{\ip{\bfphi_{n_1}}{\bfphi_{n_2}}}}{\norm{\bfphi_{n_1}}\norm{\bfphi_{n_2}}}
\geq\biggbracket{\frac{N-D}{D(N-1)}}^{\frac12},
\qquad
\spark(\bfPhi)
\geq\frac1{\coh(\bfPhi)}+1,
\end{equation}
respectively.
The latter suggests that $\bfPhi$ is well-suited for compressed sensing if $\coh(\bfPhi)$ is small.
This idea is reinforced in~\cite{Tropp04}:
any $\bfx\in\bbF^\calN$ with $\norm{\bfx}_0\leq K$ can be uniquely recovered from $\bfPhi\bfx$,
using orthogonal matching pursuit for example,
if $K<\tfrac12\bigset{[\coh(\bfPhi)]^{-1}+1}$.
Moreover, assuming without loss of generality that $\bfPhi$ is equal norm,
it is well known~\cite{StrohmerH03} that it achieves equality in the Welch bound---and thus has minimal coherence---if and only if it is an \textit{equiangular tight frame} (ETF) for $\calE$,
namely if and only if there exists both $A>0$ such that $\sum_{n\in\calN}\ip{\bfphi_n}{\bfy}\bfphi_n=A\bfy$ for all $\bfy\in\calE$ and $C>0$ such that $\abs{\ip{\bfphi_{n_1}}{\bfphi_{n_2}}}=C$ when $n_1\neq n_2$.
Any such $\bfPhi$ has a \textit{dual (Naimark complement)} $\bfPsi=(\bfpsi_n)_{n\in\calN}$ that is an ETF for a space of dimension $N-D$ with $\ip{\bfpsi_{n_1}}{\bfpsi_{n_2}}=-\ip{\bfphi_{n_1}}{\bfphi_{n_2}}$ whenever $n_1\neq n_2$, and that is unique up to unitary transformations.
Infinite families of ETFs are known, arising for example from algebro-combinatorial designs.
See~\cite{FickusM16} for a survey of ETFs.

The \textit{binder} of an ETF $\bfPhi$ is the set $\bin(\bfPhi)$ of all subsets $\calK$ of $\calN$ for which $\bfPhi_\calK$ is a \textit{regular simplex},
that is, a $K$-vector ETF for some $(K-1)$-dimensional subspace of $\calE$.
This set $\bin(\bfPhi)$ gives insight into the efficacy of $\bfPhi$ for compressed sensing:
$\bin(\bfPhi)$ is nonempty if and only if $\bfPhi$ achieves equality in the spark bound,
and when this occurs, $\bin(\bfPhi)$ consists of the smallest subsets $\calK$ of $\calN$ for which $\bfPhi_\calK$ is linearly dependent~\cite{FickusJKM18}.
Moreover, if members of $\bin(\bfPhi)$ form a \textit{balanced incomplete block design} (BIBD), the incidence matrix of this BIBD can be phased to produce a sparse representation of its dual $\bfPsi$~\cite{FickusJMPW19,FickusJKM18}.
A few infinite families of ETFs are known to have this remarkable property~\cite{FickusJMPW19,FickusJKM18,BodmannK20},
and numerical experimentation gives various other small examples that do so as well~\cite{FickusJKM18}.

In this paper, we better explain why so many ETFs have binders that form BIBDs.
The key idea is the recent realization~\cite{IversonM22,IversonM24,DempwolffK23} that many ETFs are \textit{doubly transitive}, that is, have the property that any distinct pair of its indices can be mapped to any other such pair via the natural action of its symmetry group.
In the next section, we establish notation and review known facts that we will need later on.
In Section~3,
we show that the binder of any \textit{doubly transitive equiangular tight frame} (DTETF) is either empty or forms a BIBD,
and moreover that when the latter occurs,
any member of the binder of its dual is an \textit{oval}~\cite{Andriamanalimanana79} of this BIBD (Theorem~\ref{theorem.binders of DTETFs}).
The remainder of the paper concerns the application of this general theory to known infinite families of DTETFs.
In particular, in Section~4, we show that the binder of the dual of any \textit{symplectic ETF} consists of all \textit{affine Lagrangian subspaces} of the underlying finite symplectic space, and compute the parameters of the corresponding BIBD (Theorem~\ref{theorem.binder of dual symplectic}).
This unifies some results of~\cite{FickusJMPW19,FickusJKM18,BodmannK20} and resolves Conjecture~5.12 of~\cite{BodmannK20} as well as another open problem posed there.
We moreover show that the binder of any symplectic ETF is empty except in a single nontrivial case (Theorem~\ref{theorem.binder of symplectic}).
In Section~5, we perform a similar analysis to certain sub-ETFs of symplectic ETFs and their duals that arise from quadratic forms over the field of two elements~\cite{FickusIJK21}.
We use techniques from~\cite{Kantor75,Wertheimer86} to verify that these ETFs are doubly transitive (Theorem~\ref{theorem.sub-ETFs are DT}).
This leads to the new realization that two known infinite families of ETFs have binders that form BIBDs.
These BIBDs are related to the binder of the dual symplectic ETF in nonobvious ways (Theorem~\ref{theorem.binders of sub-ETFs}).
The binders of their duals are empty except in a finite number of exceptional cases (Theorems~\ref{theorem.binder of sub-ETF of symplectic} and~\ref{theorem.binder Tremain}).
A preliminary version of much of the material of Sections~3 and~4 is given in~\cite{Lake23}.
See~\cite{King25} for recent related work.

%%%%%%%%%%%%%%%%%%%%%%%%%%%%%%%%%%%%%%%%%%%%%%%%%%%%%%%%%%%%%%%%
\section{Preliminaries}
%%%%%%%%%%%%%%%%%%%%%%%%%%%%%%%%%%%%%%%%%%%%%%%%%%%%%%%%%%%%%%%%

We use the phrase ``$(x_n)_{n\in\calN}$ is a finite sequence in $\calX$'' as an abbreviation of ``$n\mapsto x_n$ is a function from some finite nonempty set $\calN$ into some set $\calX$.''
For any such $\calN$ and field $\bbF$ we denote the standard basis of the vector space $\bbF^{\calN}:=\set{\bfx:\calN\rightarrow\bbF}$ as $(\bfdelta_n)_{n\in\calN}$, $\bfdelta_n(n'):=1$ when $n'=n$ and $\bfdelta_n(n'):=0$ when $n'\neq n$.
Given two such sets $\calM$ and $\calN$,
we identify a linear mapping $\bfA$ from $\bbF^\calN$ to $\bbF^\calM$ with a member of $\bbF^{\calM\times\calN}$ (an ``$\calM\times\calN$ matrix'') in the standard way,
namely so that $\bfA(\bfdelta_n)=\sum_{m\in\calM}\bfA(m,n)\bfdelta_m$ for all $n\in\calN$.
The \textit{synthesis map} of a finite sequence $(\bfphi_n)_{n\in\calN}$ of vectors in a vector space $\calV$ over some field $\bbF$ is $\bfPhi:\bbF^\calN\rightarrow\calV$, $\bfPhi\bfx:=\sum_{n\in\calN}\bfx(n)\bfphi_n$.
Its image is the span of $(\bfphi_n)_{n\in\calN}$,
that is, $\im(\bfPhi)=\Span(\bfphi_n)_{n\in\calN}$.
Since $\ker(\bfPhi)=\set{\bfzero}$ if and only if $(\bfphi_n)_{n\in\calN}$ is linearly independent,
$\bfPhi$ is thus invertible if and only if $(\bfphi_n)_{n\in\calN}$ is a basis for $\calV$.
Every linear mapping $\bfPhi:\bbF^\calN\rightarrow\calV$ is the synthesis map of the unique finite sequence $(\bfphi_n)_{n\in\calN}$ defined by $\bfphi_n:=\bfPhi\bfdelta_n$ for each $n\in\calN$.
As such, we abuse notation by sometimes denoting $(\bfphi_n)_{n\in\calN}$ itself as $\bfPhi$.

When $\bbF$ is either $\bbR$ or $\bbC$,
we equip $\bbF^\calN$ with the inner product
$\ip{\bfx_1}{\bfx_2}:=\sum_{n\in\calN}\overline{\bfx_1(n)}\bfx_2(n)$.
Each complex inner product in this paper is conjugate-linear in its first argument.
The adjoint of the synthesis map $\bfPhi$ of a finite sequence of vectors $(\bfphi_n)_{n\in\calN}$ in a Hilbert space $\calE$ over $\bbF$ is the \textit{analysis map} $\bfPhi^*:\calE\rightarrow\bbF^\calN$, $(\bfPhi^*\bfy)(n)=\ip{\bfphi_n}{\bfy}$.
As a special case of this,
we sometimes identify a single vector $\bfphi\in\calE$ with the mapping $\bfphi:\bbF\rightarrow\calE$, $\bfphi(z):=z\bfphi$ whose adjoint is the linear functional $\bfphi^*:\calE\rightarrow\bbF$, $\bfphi^*\bfy=\ip{\bfphi}{\bfy}$.
In general,
the corresponding \textit{frame operator} and \textit{Gram matrix} are
$\bfPhi\bfPhi^*:\calE\rightarrow\calE$,
$\bfPhi\bfPhi^*=\sum_{n\in\calN}\bfphi_n^{}\bfphi_n^*$ and $\bfPhi^*\bfPhi\in\bbF^{\calN\times\calN}$, $(\bfPhi^*\bfPhi)(n_1,n_2)=\ip{\bfphi_{n_1}}{\bfphi_{n_2}}$,
respectively.
These operators are positive semidefinite and have $\rank(\bfPhi)=\dim(\Span(\bfphi_n)_{n\in\calN})$ as their rank.
Conversely, by the spectral theorem,
any positive semidefinite $\bfG\in\bbF^{\calN\times\calN}$ factors as $\bfG=\bfPhi^*\bfPhi$ where $\bfPhi$ is the synthesis map of some finite sequence of vectors $(\bfphi_n)_{n\in\calN}$ that lies in a Hilbert space $\calE$ over $\bbF$ of dimension $\rank(\bfG)$.
Such $(\bfphi_n)_{n\in\calN}$ and $\calE$ are only unique up to unitary transformations.
When $\calE=\bbF^\calM$, $\bfPhi$ is the $\calM\times\calN$ matrix whose $n$th column is $\bfphi_n$,
$\bfPhi^*$ is its $\calN\times\calM$ conjugate-transpose,
and $\bfPhi\bfPhi^*$ and $\bfPhi^*\bfPhi$ are their $\calM\times\calM$ and $\calN\times\calN$ products, respectively.

We say that $(\bfphi_n)_{n\in\calN}$ is a \textit{tight frame for $\calE$} if there exists $A>0$ such that $\bfPhi\bfPhi^*=A\brI$,
namely such that $\bfy=\tfrac1A\sum_{n\in\calN}\ip{\bfphi_n}{\bfy}\bfphi_n$ for all $\bfy\in\calE$.
Since this requires $(\bfphi_n)_{n\in\calN}$ to span $\calE$,
we more generally say that $(\bfphi_n)_{n\in\calN}$ is a \textit{tight frame}---without specifying for what space---if it is a tight frame for the subspace $\im(\bfPhi)$ of $\calE$ that it spans,
namely if there exists $A>0$ such that $\bfPhi\bfPhi^*\bfy=A\bfy$ for all $\bfy\in\im(\bfPhi)$, or equivalently, such that $\bfPhi\bfPhi^*\bfPhi=A\bfPhi$.
As we now explain,
this equates to $\bfPhi^*\bfPhi$ being a positive multiple of a projection (orthogonal projection operator).
Indeed, if $\bfPhi\bfPhi^*\bfPhi=A\bfPhi$ for some $A>0$ then $\bfP:=\tfrac1A\bfPhi^*\bfPhi$ satisfies $\bfP^*\bfP=\bfP$.
Conversely, if $\bfPhi^*\bfPhi=A\bfP$ where $A>0$ and $\bfP^*\bfP=\bfP$ then since $\ker(\bfL^*\bfL)=\ker(\bfL)$ in general,
$\ker[\bfPhi(\brI-\bfP)]
=\ker[(\brI-\bfP)A\bfP(\brI-\bfP)]
=\ker(\bfzero)
=\bbF^\calN$
implying
$\bfzero
=\bfPhi(\brI-\bfP)
=\bfPhi-\tfrac1{A}\bfPhi\bfPhi^*\bfPhi$
and so $\bfPhi\bfPhi^*\bfPhi=A\bfPhi$.
Thus, $(\bfphi_n)_{n\in\calN}$ is a tight frame if and only if there exists $A>0$ such that $A\bfPhi^*\bfPhi=(\bfPhi^*\bfPhi)^2$,
that is, such that
\begin{equation}
\label{eq.tight frame condition}
A\ip{\bfphi_{n_1}}{\bfphi_{n_3}}
=\sum_{n_2\in\calN}\ip{\bfphi_{n_1}}{\bfphi_{n_2}}\ip{\bfphi_{n_2}}{\bfphi_{n_3}}
\end{equation}
for all $n_1,n_3\in\calN$.
Since any positive semidefinite matrix is a Gram matrix,
this moreover implies that a matrix $\bfG\in\bbF^{\calN\times\calN}$ is the Gram matrix of a tight frame $(\bfphi_n)_{n\in\calN}$ if and only if $\bfG$ is a positive multiple of a projection,
that is, $\bfG^*=\bfG$ and $\bfG^2=A\bfG$ for some $A>0$.
When this occurs,
\begin{equation}
\label{eq.tightness constant relation}
\bfG=\bfPhi^*\bfPhi,
\qquad
\dim(\Span(\bfphi_n)_{n\in\calN})
=\rank(\bfPhi)
=\rank(\bfG)
=\tfrac1A\Tr(\bfG).
\end{equation}
Since $\brI-\bfP$ is a projection whenever $\bfP$ is as well,
any tight frame $(\bfphi_n)_{n\in\calN}$ has a \textit{dual (Naimark complement)} $(\bfpsi_n)_{n\in\calN}$ that has $\bfPsi^*\bfPsi=A\brI-\bfPhi^*\bfPhi$ as its Gram matrix, and so is a tight frame for a space of dimension $\#(\calN)-\dim(\Span(\bfphi_n)_{n\in\calN})$ that is unique up to unitary transformations.

\subsection{Equiangular tight frames}

The members of any finite sequence $(\bfphi_n)_{n\in\calN}$ of $N:=\#(\calN)$ nonzero vectors in $\calE$ can be normalized,
and this preserves its spark and coherence~\eqref{eq.Welch and spark bounds}.
A finite sequence $(\bfphi_n)_{n\in\calN}$ of unit norm vectors in $\calE$ has the $(K,\delta)$-RIP for some integer $2\leq K\leq N$ and real number $0\leq\delta<1$ if every $K$-vector subsequence $(\bfphi_n)_{n\in\calK}$ of $(\bfphi_n)_{n\in\calN}$ is approximately orthonormal in the sense that its synthesis map $\bfPhi_\calK$ satisfies
$\norm{\bfPhi_\calK^*\bfPhi_\calK^{}-\brI}_2\leq\delta$,
that is, if every eigenvalue of $\bfPhi_\calK^*\bfPhi_{\calK}^{}$ lies in the interval $[1-\delta,1+\delta]$.
Since $\delta<1$ and $\ker(\bfPhi_\calK)=\ker(\bfPhi_\calK^*\bfPhi_\calK^{})$,
this is only possible if every $K$-vector subsequence of $(\bfphi_n)_{n\in\calN}$ is linearly independent, namely only if $\spark(\bfPhi)\geq K+1$.
In the special case when $K=2$,
the optimal (minimal) such $\delta$ is the coherence of $(\bfphi_n)_{n\in\calN}$:
\begin{equation*}
\max_{\#(\calK)=2}\norm{\bfPhi_\calK^*\bfPhi_\calK^{}-\brI}_2
=\max_{n_1\neq n_2}\biggnorm{\left[\begin{array}{cc}
0&\ip{\bfphi_{n_1}}{\bfphi_{n_2}}\\
\ip{\bfphi_{n_2}}{\bfphi_{n_1}}&0\end{array}\right]}_2
=\max_{n_1\neq n_2}\abs{\ip{\bfphi_{n_1}}{\bfphi_{n_2}}}
=\coh(\bfPhi).
\end{equation*}
For larger values of $K$,
the Gershgorin circle theorem gives the following (often poor) estimate:
\begin{equation*}
\max_{\#(\calK)=K}\norm{\bfPhi_\calK^*\bfPhi_\calK^{}-\brI}_2
\leq\max_{\#(\calK)=K}\max_{n_1\in\calK}\sum_{n_2\in\calK\backslash\set{n_1}}
\abs{\ip{\bfphi_{n_1}}{\bfphi_{n_2}}}
\leq(K-1)\,\coh(\bfPhi).
\end{equation*}
In particular, we have $\spark(\bfPhi)\geq K+1$ for any integer $K\geq 2$ such that $(K-1)\,\coh(\bfPhi)<1$.
Taking $K=\bigl\lceil[\,\coh(\bfPhi)]^{-1}\bigr\rceil$ here gives the spark bound~\eqref{eq.Welch and spark bounds}.
For these and other reasons~\cite{StrohmerH03,Tropp04},
we search for finite sequences $(\bfphi_n)_{n\in\calN}$ of unit norm vectors for which $\coh(\bfPhi)$ is minimal.

In that search, it is convenient to more generally let $(\bfphi_n)_{n\in\calN}$ be \textit{equal norm},
that is, to assume there exists $S>0$ such that $\norm{\bfphi_n}^2=S$ for all $n$.
If $(\bfphi_n)_{n\in\calN}$ is both equal norm and a tight frame for $\calE$ then~\eqref{eq.tightness constant relation} gives that \smash{$D=\tfrac{NS}{A}$} where $D:=\dim(\calE)$,
i.e., $A=\tfrac{NS}{D}$.
When this occurs, $(\bfphi_n)_{n\in\calN}$ has a dual $(\bfpsi_n)_{n\in\calN}$ that satisfies $\bfPsi^*\bfPsi=A\brI-\bfPhi^*\bfPhi$, namely
\begin{equation}
\label{eq.dual of ENTF}
\norm{\bfpsi_n}^2=\tfrac{(N-D)S}{D},\ \forall\,n\in\calN,
\qquad
\ip{\bfpsi_{n_1}}{\bfpsi_{n_2}}=-\ip{\bfphi_{n_1}}{\bfphi_{n_2}},\ \forall\,n_1,n_2\in\calN,\,n_1\neq n_2,
\end{equation}
and so is itself an equal norm tight frame for a Hilbert space of dimension $N-D$ when $D<N$.

If $(\bfphi_n)_{n\in\calN}$ is equal norm then it is thus a tight frame for $\calE$ if and only if it achieves equality in the following inequality,
which itself can be proven by cycling traces in a straightforward way:
\begin{equation*}
0\leq\norm{\bfPhi\bfPhi^*-\tfrac{NS}{D}\brI}_\Fro^2
=\sum_{n_1\in\calN}\sum_{n_2\in\calN\backslash\set{n_1}}
\abs{\ip{\bfphi_{n_1}}{\bfphi_{n_2}}}^2-\tfrac{N(N-D)S^2}{D}.
\end{equation*}
Rearranging and then continuing this inequality gives
\begin{equation}
\label{eq.Welch bound derivation}
\tfrac{N-D}{D(N-1)}
\leq\tfrac1{N(N-1)}\sum_{n_1\in\calN}\sum_{n_2\in\calN\backslash\set{n_1}}
\tfrac1{S^2}\abs{\ip{\bfphi_{n_1}}{\bfphi_{n_2}}}^2
\leq\max_{n_1\neq n_2}\tfrac1{S^2}\abs{\ip{\bfphi_{n_1}}{\bfphi_{n_2}}}^2
=[\coh(\bfPhi)]^2.
\end{equation}
When $N>D$, taking a square root of~\eqref{eq.Welch bound derivation} yields the Welch bound~\eqref{eq.Welch and spark bounds}.
Equality holds in it precisely when it holds in both inequalities of~\eqref{eq.Welch bound derivation}.
This equates to $(\bfphi_n)_{n\in\calN}$ being an ETF for $\calE$,
namely a tight frame for $\calE$ that is \textit{equiangular} in the sense that it is equal norm and there exists $C>0$ such that $\abs{\ip{\bfphi_{n_1}}{\bfphi_{n_2}}}^2=C$ whenever $n_1\neq n_2$.

We more generally say that $(\bfphi_n)_{n\in\calN}$ is an ETF---without specifying for what space---if it is both equiangular and a tight frame.
A matrix $\bfG\in\bbF^{\calN\times\calN}$ is thus the Gram matrix of an ETF if and only if it is a positive multiple of a projection (tightness) and its diagonal entries have constant positive value while its off-diagonal entries have constant positive modulus (equiangularity).
Accordingly,
an equiangular finite sequence $(\bfphi_n)_{n\in\calN}$ is an ETF if and only if~\eqref{eq.tight frame condition} holds for all $n_1,n_3\in\calN$.
Here, without loss of generality scaling $(\bfphi_n)_{n\in\calN}$ so that $\abs{\ip{\bfphi_{n_1}}{\bfphi_{n_2}}}=1$ whenever $n_1\neq n_2$ and $\norm{\bfphi_n}^2=S$ for all $n\in\calN$,
note that~\eqref{eq.tight frame condition} holds when $n_1=n_3$ if and only if \smash{$A=\tfrac{N-1}{S}+S$},
and also that~\eqref{eq.tight frame condition} holds when $n_1\neq n_3$ if and only if
$A=\sum_{n_2\in\calN}\ip{\bfphi_{n_1}}{\bfphi_{n_2}}\ip{\bfphi_{n_2}}{\bfphi_{n_3}}\ip{\bfphi_{n_3}}{\bfphi_{n_1}}$.
As such, when scaled in this way,
an equiangular finite sequence $(\bfphi_n)_{n\in\calN}$ is an ETF if and only if
\begin{equation}
\label{eq.triple product tightness}
\tfrac{N-1}{S}-S
%=\sum_{n_2\in\calN}\ip{\bfphi_{n_1}}{\bfphi_{n_2}}\ip{\bfphi_{n_2}}{\bfphi_{n_3}}\ip{\bfphi_{n_3}}{\bfphi_{n_1}}
=\sum_{n_2\in\calN\backslash\set{n_1,n_3}}\ip{\bfphi_{n_1}}{\bfphi_{n_2}}\ip{\bfphi_{n_2}}{\bfphi_{n_3}}\ip{\bfphi_{n_3}}{\bfphi_{n_1}},
\quad
\forall\,n_1,n_3\in\calN,\, n_1\neq n_3.
\end{equation}
The summands of~\eqref{eq.triple product tightness} are known as \textit{triple products}.
When an ETF $(\bfphi_n)_{n\in\calN}$ is scaled in this way,
\eqref{eq.dual of ENTF} implies that its dual $(\bfpsi_n)_{n\in\calN}$ is an ETF that enjoys this same scaling, that is,
$\abs{\ip{\bfpsi_{n_1}}{\bfpsi_{n_2}}}=1$ whenever $n_1\neq n_2$.

\subsection{Equiangular tight frames that contain regular simplices}

Let $\bfPhi=(\bfphi_n)_{n\in\calN}$ be an $N$-vector ETF,
scaled so that $\abs{\ip{\bfphi_{n_1}}{\bfphi_{n_2}}}=1$ whenever $n_1\neq n_2$ and so we also have
\smash{$\norm{\bfphi_n}^2=S=[\tfrac{D(N-1)}{N-D}]^{\frac12}$} for all $n$,
where $D=\dim(\Span(\bfphi_n)_{n\in\calN})$.
Since $\coh(\bfPhi)=\frac1S$, the spark bound~\eqref{eq.Welch and spark bounds} gives $\spark(\bfPhi)\geq S+1$,
namely that any fewer than $S+1$ of these vectors are linearly independent.
Any $K$-vector subsequence $\bfPhi_\calK=(\bfphi_n)_{n\in\calK}$ of $\bfPhi$ is equiangular and so is itself an ETF if it happens to be a tight frame.
Though seemingly rare, this remarkable phenomenon nevertheless happens infinitely often in various nontrivial ways~\cite{FickusMT12,FickusMJ16,FickusJMP18,FickusJKM18,FickusS20,FickusIJK21}.
As a special case of this, we say that $\bfPhi_\calK$ is a \textit{regular simplex} if it is an ETF with $\rank(\bfPhi_\calK)=K-1$.
The \textit{binder} of an ETF $\bfPhi$ is the set $\bin(\bfPhi)$ of all subsets $\calK$ of $\calN$ for which $\bfPhi_\calK$ is a regular simplex.

Though seemingly hard to compute in general,
this binder helps us gauge the ETF's suitability for compressed sensing:
by Theorem~3.1 of~\cite{FickusJKM18},
$\bin(\bfPhi)$ is nonempty if and only if $\spark(\bfPhi)=S+1$,
and moreover when this occurs, $\calK\in\bin(\bfPhi)$ if and only if $\bfPhi_\calK$ is a linearly dependent subsequence of $\bfPhi$ of minimal cardinality $K=S+1$.
To explain, if $\bin(\bfPhi)$ is nonempty then $\bfPhi_\calK$ is a regular simplex for some $K$-element subset $\calK$ of $\calN$.
Since both $\bfPhi$ and $\bfPhi_\calK$ achieve equality in the Welch bound,
$\frac1S
=\coh(\bfPhi)
=\coh(\bfPhi_\calK)
=\frac1{K-1}$ and so $K=S+1$.
Thus, $\bfPhi_\calK$ is an $(S+1)$-vector ETF for its $S$-dimensional span.
When combined with the fact that $\spark(\bfPhi)\geq S+1$,
this implies that $\spark(\bfPhi)=S+1$ and that $\bfPhi_\calK$ is a linearly dependent subsequence of $\bfPhi$ of minimal cardinality $K=S+1$.
Conversely, if $\spark(\bfPhi)=S+1$ then there exists an $(S+1)$-vector linearly dependent subsequence $\bfPhi_\calK$ of $\bfPhi$ that spans a subspace of dimension $S$.
Since $\coh(\bfPhi_\calK)=\coh(\bfPhi)=\tfrac1S$,
any such $\bfPhi_\calK$ moreover achieves equality in the Welch bound for $S+1$ vectors in this subspace,
and so is an ETF for this subspace, and thus also a regular simplex.
Any such $\calK$ thus lies in $\bin(\bfPhi)$.

In particular, $\bfPhi$ achieves equality in both the Welch and spark bounds~\eqref{eq.Welch and spark bounds} if and only if it is an ETF that contains a regular simplex.
Such $\bfPhi$ are the ``worst of the best'' with respect to spark,
having the least (worst) spark subject to the greatest (best) spark bound.
From a compressed sensing perspective, it is nevertheless useful to know the binder $\bin(\bfPhi)$ of such an ETF $\bfPhi$ since, for example,
the ratio of the cardinality of $\bin(\bfPhi)$ to $\binom{N}{S+1}$ quantifies how frequently the mapping $\bfx\mapsto\bfPhi\bfx$ confuses minimally sparse nonzero signals $\bfx$ with $\bfzero$.

By the arguments above,
a given subset $\calK$ of $\calN$ belongs to $\bin(\bfPhi)$ if and only if $(\bfphi_n)_{n\in\calK}$ is an $(S+1)$-vector tight frame.
Considering~\eqref{eq.triple product tightness} when ``$\calN$'' is $\calK$ and ``$N$'' is $S+1$,
this occurs if and only if $\#(\calK)=S+1$ and
$-(S-1)=\sum_{n_2\in\calK\backslash\set{n_1,n_3}}\ip{\bfphi_{n_1}}{\bfphi_{n_2}}\ip{\bfphi_{n_2}}{\bfphi_{n_3}}\ip{\bfphi_{n_3}}{\bfphi_{n_1}}$ for all $n_1,n_3\in\calK$ with $n_1\neq n_3$.
Since $S-1$ unimodular numbers sum to $-(S-1)$ if and only if each of these numbers is $-1$, this recovers the triple-product-based characterization of the binder given in Theorem~4.2 of~\cite{FickusJKM18},
namely that $\calK\in\bin(\bfPhi)$ if and only if
\begin{equation}
\label{eq.triple product simplex}
\#(\calK)=S+1,
\quad \ip{\bfphi_{n_1}}{\bfphi_{n_2}}\ip{\bfphi_{n_2}}{\bfphi_{n_3}}\ip{\bfphi_{n_3}}{\bfphi_{n_1}}=-1,
\ \forall\,n_1,n_2,n_3\in\calK,\,n_1\neq n_2\neq n_3\neq n_1.
\end{equation}
In particular,
$\bin(\bfPhi)$ is a set of certain $(S+1)$-element subsets $\calK$ of $\calN$.
As we now explain, objects of this type arise in combinatorial design,
and this has ramifications for the study of ETFs.

In general, a finite \textit{incidence structure} is a pair $(\calV,(\calK_b)_{b\in\calB})$ of a finite nonempty (vertex) set $\calV$ and a finite sequence $(\calK_b)_{b\in\calB}$ of subsets of $\calV$, called \textit{blocks}.
In this paper, we use the term ``incidence structure'' to exclusively refer to instances of such objects for which, for some integer $K\geq 2$, we have $\#(\calK_b)=K$ for all $b\in\calB$.
Such an incidence structure is said to be a BIBD if, for some integer $\Lambda$, any two distinct vertices are contained in exactly $\Lambda$ blocks.
For any $v\in\calV$,
counting $\set{(v',b)\in\calV\times\calB: v,v'\in\calK_b,\, v'\neq v}$ in two ways gives that the number $R$ of blocks that contain $v$ satisfies $(V-1)\Lambda=R(K-1)$.
This implies that $\Lambda\geq 1$ and also that $R$ is independent of $v$.
Similarly, $B=\#(\calB)$ satisfies $BK=\#\set{(v,b)\in\calV\times\calB: v\in\calK_b}=VR$.
The \textit{incidence matrix} of $(\calV,(\calK_b)_{b\in\calB})$ is $\bfX\in\bbR^{\calB\times\calV}$ where $\bfX(b,v):=1$ if $v\in\calK_b$ and $\bfX(b,v):=0$ otherwise.
Letting $\bfone_\calV$, $\bfone_\calB$ and $\bfone_{\calV\times\calV}$ be the appropriate all-ones vectors and matrix, respectively, $\bfX$ satisfies
\begin{equation}
\label{eq.BIBD incidence matrix properties}
\bfX\bfone_\calV=K\bfone_\calB,
\quad
\bfX^\rmT\bfone_\calB=R\bfone_\calV,
\quad
\bfX^\rmT\bfX=(R-\Lambda)\brI+\Lambda\bfone_{\calV\times\calV}.
\end{equation}
Excluding degenerate cases where $K=V$ we moreover have \smash{$\Lambda<\tfrac{\Lambda(V-1)}{K-1}=R$},
implying that $\bfX^\rmT\bfX$ is positive definite and so $V=\rank(\bfX^\rmT\bfX)=\rank(\bfX)\leq B$.
Altogether, a BIBD can only exist if
\begin{equation}
\label{eq.BIBD parameter relations}
R=\tfrac{\Lambda(V-1)}{K-1}\in\bbZ,
\qquad
B=\tfrac{VR}{K}=\tfrac{\Lambda V(V-1)}{K(K-1)}\in\bbZ,
\qquad
K=V\ \text{or}\ V\leq B.
\end{equation}

We thus have that $(\calN,(\calK_b)_{b\in\calB})$ is a BIBD for some finite sequence $(\calK_b)_{b\in\calB}$ of members of $\bin(\bfPhi)$ if and only if the number of corresponding regular simplices that contain any pair of distinct ETF vectors is the same as that of any other such pair.
When this occurs, for any $b\in\calB$,
the dual of the regular simplex $\bfPhi_{\calK_b}$ is an ETF for a one-dimensional space.
Without loss of generality taking it to be $\bbF$,
\eqref{eq.dual of ENTF} gives that such a dual equates to a finite sequence of unimodular scalars $(z_{b,n})_{n\in\calK_b}$ such that
$\overline{z_{b,n_1}}z_{b,n_2}=-\ip{\bfphi_{n_1}}{\bfphi_{n_2}}$ for all $n_1,n_2\in\calK_b$ with $n_1\neq n_2$.
We can let $z_{b,n_0}=1$ for some $n_0\in\calK_b$ and $z_{b,n}=-\ip{\bfphi_{n_0}}{\bfphi_n}$ for all $n\in\calK_b\backslash\set{n_0}$, for example.
Repopulating $\bfX$ with these scalars yields
\begin{equation}
\label{eq.phased incidence}
\bfPsi\in\bbF^{\calB\times\calV},
\quad
\bfPsi(b,n):=\frac1{\sqrt{\Lambda}}\left\{\begin{array}{cl}
z_{b,n},&\ n\in\calK_b,\\
0,&\ n\notin\calK_b.
\end{array}\right.
\end{equation}
For any $n\in\calN$,
\smash{$(\bfPsi^*\bfPsi)(n,n)
=\frac1{\Lambda}\sum_{\set{b\in\calB\, :\, n\in\calK_b}}\abs{z_{b,n}}^2
=\tfrac{R}{\Lambda}
=\tfrac{V-1}{K-1}
=\tfrac{N-1}{S}
=\tfrac{(N-D)S}{D}$}.
When instead $n_1\neq n_2$,
$(\bfPsi^*\bfPsi)(n_1,n_2)
=\tfrac1{\Lambda}\sum_{\set{b\in\calB\, :\, n_1,n_2\in\calK_b}}
\overline{z_{b,n_1}}z_{b,n_2}
=-\ip{\bfphi_{n_1}}{\bfphi_{n_2}}$.
Comparing this against~\eqref{eq.dual of ENTF} reveals that $\bfPsi$ is (the synthesis map of) a dual of $\bfPhi$.
Theorems~5.1 and 5.2 of~\cite{FickusJKM18} give this result and its converse:
if $(\calN,(\calK_b)_{b\in\calB})$ is a BIBD and~\eqref{eq.phased incidence} defines an ETF $\bfPsi$ for some choice of unimodular scalars $((z_{b,n})_{n\in\calK_b})_{b\in\calB}$ such that $\overline{z_{b_1,n_1}}z_{b_1,n_2}=\overline{z_{b_2,n_1}}z_{b_2,n_2}$ whenever $n_1,n_2\in\calK_{b_1}\cap\calK_{b_2}$,
then every block $\calK_b$ belongs to the binder of its dual $\bfPhi$.

In general, we say that a finite sequence $(\bfpsi_n)_{n\in\calN}$ of vectors in a Hilbert space is \textit{equivalent} to another such sequence $(\bfphi_n)_{n\in\calN}$ if there exists a finite sequence $(z_n)_{n\in\calN}$ of unimodular scalars such that $\ip{\bfpsi_{n_1}}{\bfpsi_{n_2}}=\overline{z_{n_1}}z_{n_2}\ip{\bfphi_{n_1}}{\bfphi_{n_2}}$ for all $n_1,n_2\in\calN$,
namely such that $\bfpsi_n=z_n\bfU\bfphi_n$ for all $n$, where $\bfU$ is some unitary transformation from the span of $(\bfphi_n)_{n\in\calN}$ onto that of $(\bfpsi_n)_{n\in\calN}$.
Any \textit{cycle} of inner products is preserved by this notion of equivalence,
including $\norm{\bfphi_n}^2=\ip{\bfphi_n}{\bfphi_n}$ for any $n$,
$\abs{\ip{\bfphi_{n_1}}{\bfphi_{n_2}}}^2
=\ip{\bfphi_{n_1}}{\bfphi_{n_2}}\ip{\bfphi_{n_2}}{\bfphi_{n_1}}$ for any $n_1$ and $n_2$,
and triple products:
\begin{equation}
\label{eq.triple product invariance}
\ip{z_{n_1}\bfphi_{n_1}}{z_{n_2}\bfphi_{n_2}}\ip{z_{n_2}\bfphi_{n_2}}{z_{n_3}\bfphi_{n_3}}\ip{z_{n_3}\bfphi_{n_3}}{z_{n_1}\bfphi_{n_1}}\\
=\ip{\bfphi_{n_1}}{\bfphi_{n_2}}\ip{\bfphi_{n_2}}{\bfphi_{n_3}}\ip{\bfphi_{n_3}}{\bfphi_{n_1}}.
\end{equation}
In particular, by~\eqref{eq.triple product tightness} and~\eqref{eq.triple product simplex}, any finite sequence of vectors that is equivalent to an ETF $(\bfphi_n)_{n\in\calN}$ is itself an ETF and the two ETFs have the same binder.
This notion of equivalence moreover preserves tightness~\eqref{eq.tight frame condition}, duals, the RIP, coherence, and spark.
We caution however that it does not preserve certain other properties of note such as
\textit{centroidal symmetry}~\cite{FickusJMPW18} and \textit{flatness}~\cite{FickusJMP21}.

%%%%%%%%%%%%%%%%%%%%%%%%%%%%%%%%%%%%%%%%%%%%%%%%%%%%%%%%%%%%%%%%
\section{The binders of doubly transitive equiangular tight frames}
%%%%%%%%%%%%%%%%%%%%%%%%%%%%%%%%%%%%%%%%%%%%%%%%%%%%%%%%%%%%%%%%

As detailed above,
the binder of an ETF $\bfPhi=(\bfphi_n)_{n\in\calN}$ is the set $\bin(\bfPhi)$ of all subsets $\calK$ of $\calN$ for which $\bfPhi_\calK=(\bfphi_n)_{n\in\calK}$ is a regular simplex.
Many ETFs have empty binders~\cite{FickusJKM18}.
Indeed by~\eqref{eq.triple product simplex},
this occurs if \smash{$S:=[\frac{D(N-1)}{N-D}]^{\frac12}$} is not an integer,
or if $\ip{\bfphi_{n_1}}{\bfphi_{n_2}}$ is an odd root of unity for any distinct $n_1,n_2\in\calN$.
Other ETFs, including some \textit{harmonic} ETFs, have binders that seemingly consist of a single partition of $\calN$~\cite{FickusJKM18,FickusS20}.
Experimentation reveals that yet other ETFs have nontrivial binders with little discernible combinatorial structure.
That said, a remarkable number of ETFs $\bfPhi$ have binders that are rich enough for there to exist a BIBD of the form $(\calN,(\calK_b)_{b\in\calB})$ where $\calK_b\in\bin(\bfPhi)$ for all $b\in\calB$.
As reviewed above,
this in turn permits a (sparse but high-dimensional) \textit{phased incidence} representation~\eqref{eq.BIBD parameter relations} of its dual $\bfPsi$~\cite{FickusJKM18}.
Infinite families of such ETFs are known for example,
including---via Theorem~5.2 of~\cite{FickusJKM18}---the \textit{phased BIBD} ETFs of Theorems~5.1 and~5.2 of~\cite{FickusJMPW19},
and the \textit{Gabor--Steiner} ETFs of Theorems~5.6 and~5.8 of~\cite{BodmannK20}.
In fact, these results from~\cite{BodmannK20} show that certain ETFs have the following stronger property:

\begin{definition}
We say the binder $\bin(\bfPhi)$ of an ETF $\bfPhi$ \textit{forms a BIBD} if $(\calN,(\calK_b)_{b\in\calB})$ is a BIBD for some enumeration $(\calK_b)_{b\in\calB}$ of $\bin(\bfPhi)$, i.e., for some bijection $b\mapsto\calK_b$ from $\calB$ onto $\bin(\bfPhi)$.
\end{definition}

In particular, when the binder of ETF $\bfPhi$ forms a BIBD we can represent its dual $\bfPsi$ as a phased incidence matrix~\eqref{eq.phased incidence} that has no repeated blocks, nor any ``missing'' blocks.
Numerical experimentation provides a number of small-parameter ETFs with this property~\cite{FickusJKM18} that are not explained by the results of~\cite{BodmannK20}.
In this section, we exploit the recently introduced concept of a \textit{doubly transitive equiangular tight frame} (DTETF) to better explain this phenomenon.
Many ETFs are DTETFs~\cite{IversonM22,IversonM24,DempwolffK23},
and we show that the binder of any DTETF is either empty or forms a BIBD.
In the next section, we combine this result with a key proof technique of~\cite{BodmannK20} to resolve some problems posed there,
showing that the binder of the dual of any \textit{symplectic ETF} forms a BIBD that consists of all affine Lagrangian subspaces.
In Section~5, we use a similar approach to calculate the binder of any member of two other infinite families of DTETFs and their duals.

In general,
a group $\calG$ is a \textit{permutation group} of a finite set $\calN$ if it is a subgroup of the symmetric group $\Sym(\calN)$ of all permutations on $\calN$.
For any such $\calG$, the mapping $(\sigma,n)\mapsto\sigma(n)$ is its \textit{natural action} on $\calN$.
Such an action is \textit{doubly transitive} if, for any $n_1,n_2,n_3,n_4\in\calN$ with $n_1\neq n_2$ and $n_3\neq n_4$, there exists $\sigma\in\calG$ such that $\sigma(n_1)=n_3$ and $\sigma(n_2)=n_4$.
Any such action is necessarily transitive, that is, for any $n_1,n_2\in\calN$ there exists $\sigma\in\calG$ such that $\sigma(n_1)=n_2$.
We consider how such actions arise in the classical theory of BIBDs as well as their connection to ETFs.

Let $V\geq K\geq 2$, and let $(\calV,(\calK_b)_{b\in\calB})$ be an incidence structure with $\#(\calV)=V$ and $\#(\calK_b)=K$ for all $b\in\calB$.
A subset $\calL$ of $\calV$ is an \textit{arc} of $(\calV,(\calK_b)_{b\in\calB})$ if $\#(\calK_b\cap\calL)\leq 2$ for all $b\in\calB$.
A permutation $\sigma$ of $\calV$ is an \textit{automorphism} of $(\calV,(\calK_b)_{b\in\calB})$ if there exists a permutation $\tau$ of $\calB$ such that, for any $v\in\calV$ and $b\in\calB$,
$v\in\calK_b$ if and only if $\sigma(v)\in\calK_{\tau(b)}$.
The corresponding \textit{automorphism group} is the set $\Aut(\calV,(\calK_b)_{b\in\calB})$ of all such $\sigma$,
which is a subgroup of $\Sym(\calV)$.
If $\sigma\in\Aut(\calV,(\calK_b)_{b\in\calB})$ then
$\sigma(\calK_b)
:=\set{\sigma(v): v\in\calK_b}
=\set{\sigma(v): \sigma(v)\in\calK_{\tau(b)}}
=\calK_{\tau(b)}$ for all $b\in\calB$.
In particular, any such $\sigma$ maps any block to another.
The converse is true if we further assume that $(\calK_b)_{b\in\calB}$ contains no repeated blocks, that is, if $\calK_{b_1}\neq\calK_{b_2}$ when $b_1\neq b_2$:
in that case, if $\sigma$ is any permutation of $\calV$ that maps any block to another,
then for each $b\in\calB$, there exists a unique $\tau(b)\in\calB$ such that $\sigma(\calK_b)=\calK_{\tau(b)}$,
and the mapping $b\mapsto\tau(b)$ is moreover a permutation of $\calB$ since if $\tau(b_1)=\tau(b_2)$ then
$\calK_{b_1}
=\sigma^{-1}(\calK_{\tau(b_1)})
=\sigma^{-1}(\calK_{\tau(b_2)})
=\calK_{b_2}$ and so $b_1=b_2$.
In general, we say that the incidence structure $(\calV,(\calK_b)_{b\in\calB})$ is ``doubly transitive'' when the natural action of $\Aut(\calV,(\calK_b)_{b\in\calB})$ on $\calV$ is doubly transitive.
The next result combines combinatorial design folklore with some results about arcs from~\cite{Andriamanalimanana79}.
For the sake of completeness, a proof of it is given in the appendix%
%\ of the arXiv version of this paper~\cite{FickusL23}% COMMENT
.

\begin{lemma}[\cite{Andriamanalimanana79}]
\label{lemma.DT BIBD}
Let $V\geq K\geq 2$ be integers and let $(\calK_b)_{b\in\calB}$ be a finite sequence of $K$-element subsets of the $V$-element set $\calV$.
Then:
\begin{enumerate}
\renewcommand{\labelenumi}{(\alph{enumi})}
\item
If $(\calV,(\calK_b)_{b\in\calB})$ is doubly transitive then it is a BIBD.\smallskip

\item
If $(\calV,(\calK_b)_{b\in\calB})$ is a BIBD,
then any $L$-element arc $\calL$ of it satisfies
$L\leq\frac{V-1}{K-1}+1$.\smallskip

Moreover, $L=\frac{V-1}{K-1}+1$ if and only if $\calL$ is nonempty and $\#(\calK_b\cap\calL)\in\set{0,2}$ for all $b\in\calB$.\smallskip

When this occurs, we say that $\calL$ is an \textit{oval} of $(\calV,(\calK_b)_{b\in\calB})$.\smallskip

\item
If $(\calV,(\calK_b)_{b\in\calB})$ is doubly transitive and has an oval then letting $(\calL_a)_{a\in\calA}$ be an enumeration of its ovals,
$(\calV,(\calL_a)_{a\in\calA})$ is itself a doubly transitive BIBD and has each $\calK_b$ as an oval.
\end{enumerate}
\end{lemma}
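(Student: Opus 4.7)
The plan is to handle the three parts in order, leveraging elementary double-counting together with the observation from the preceding text that each automorphism $\sigma$ of $(\calV,(\calK_b)_{b\in\calB})$ carries blocks to blocks via some companion permutation $\tau\in\Sym(\calB)$ satisfying $\sigma(\calK_b)=\calK_{\tau(b)}$.

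For part~(a), the key point is that for any two ordered pairs $(v_1,v_2)$ and $(v_3,v_4)$ of distinct vertices, an automorphism $\sigma$ with $\sigma(v_1)=v_3$ and $\sigma(v_2)=v_4$ induces via $\tau$ a bijection between the blocks containing $\set{v_1,v_2}$ and those containing $\set{v_3,v_4}$. Hence the count $\Lambda(v_1,v_2):=\#\set{b\in\calB:v_1,v_2\in\calK_b}$ is constant on $\Aut$-orbits of such pairs, and so constant across all pairs of distinct vertices under double transitivity. Since $K\geq 2$ ensures that some block contains at least one pair (assuming $\calB\neq\emptyset$), this common value is positive and $(\calV,(\calK_b)_{b\in\calB})$ is a BIBD.

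For part~(b), fix $v\in\calL$ (the bound being vacuous when $L\leq 1$) and count in two ways the pairs $(u,b)$ with $u\in\calL\setminus\set{v}$ and $u,v\in\calK_b$. Each such $u$ contributes exactly $\Lambda$ such pairs, giving $(L-1)\Lambda$. On the other side, the arc condition $\#(\calK_b\cap\calL)\leq 2$ forces each of the $R$ blocks through $v$ to contribute at most one pair, bounding the total by $R$. Combining these counts and using $(V-1)\Lambda=R(K-1)$ yields $L\leq\frac{V-1}{K-1}+1$. Equality forces every block through $v$ to meet $\calL$ in exactly $2$ points; applying this in turn to each $v\in\calL$ shows that every block meeting $\calL$ at all meets it in exactly $2$ points. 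Conversely, the $\set{0,2}$-condition makes both counts tight, giving equality.

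For part~(c), observe that any $\sigma\in\Aut(\calV,(\calK_b)_{b\in\calB})$ carries each oval $\calL$ of the original structure to an oval $\sigma(\calL)$: cardinalities are preserved, and for any $b\in\calB$ one has $\#(\calK_b\cap\sigma(\calL))=\#(\calK_{\tau^{-1}(b)}\cap\calL)\in\set{0,2}$, so the characterization from~(b) applies. Hence $\sigma$ permutes the ovals and is therefore an automorphism of $(\calV,(\calL_a)_{a\in\calA})$, which thus inherits double transitivity from the original and is a BIBD by part~(a). Its block size is $L:=\frac{V-1}{K-1}+1$, so the arc-size bound of~(b), applied to this new BIBD, reads $\#(\calK_b)\leq\frac{V-1}{L-1}+1=K$. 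Since $\#(\calK_b)=K$ attains this bound and $\#(\calL_a\cap\calK_b)\in\set{0,2}$ for every $a$ (because $\calL_a$ is an oval of the original), $\calK_b$ is itself an oval of $(\calV,(\calL_a)_{a\in\calA})$. The main obstacle throughout is essentially bookkeeping: tracking how the companion permutation $\tau$ on $\calB$ intertwines with both the original block-incidence relation and the derived oval-incidence relation, and handling the degenerate cases where $\calL$ or $\calB$ is empty; once this correspondence is made explicit, the ``self-duality'' in~(c) reduces to reading the arc bound of~(b) back with the swapped parameters $K\leftrightarrow L$.
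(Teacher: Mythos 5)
Your proposal is correct, and parts (a) and (c) proceed exactly as the paper does: (a) via the induced bijection $\tau$ between the blocks through $\set{v_1,v_2}$ and those through $\set{v_3,v_4}$, and (c) by showing automorphisms of the original structure permute ovals, so that $\Aut(\calV,(\calK_b)_{b\in\calB})\leq\Aut(\calV,(\calL_a)_{a\in\calA})$ and (a) applies, after which the $\set{0,2}$ intersection pattern makes each $\calK_b$ an oval of the new BIBD (your explicit check that $\tfrac{V-1}{L-1}+1=K$ is a small refinement the paper leaves implicit, since its ``if and only if'' in (b) already delivers the size for free). The one genuine divergence is in (b): you fix a point $v\in\calL$ and double-count the flags $(u,b)$ with $u\in\calL\backslash\set{v}$ and $u,v\in\calK_b$, getting $(L-1)\Lambda\leq R$ with equality exactly when every block through $v$ is a secant; the paper instead works globally with the incidence matrix, using $\bfX^\rmT\bfone_\calB=R\bfone_\calV$ and $\bfX^\rmT\bfX=(R-\Lambda)\bfI+\Lambda\bfone_{\calV\times\calV}$ to compute the number $N_1$ of tangent blocks exactly as $N_1=\Lambda L[(\tfrac{V-1}{K-1}+1)-L]$. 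Your local count is more elementary and suffices for the stated equivalence (tightness at every $v\in\calL$ is the same as $N_1=0$); the paper's quadratic count buys an explicit formula for the number of tangent blocks of an arbitrary arc, which is slightly more information than the lemma needs. Both arguments are complete, and your handling of the degenerate cases ($L\leq 1$, and nonemptiness under equality since $\tfrac{V-1}{K-1}+1\geq 2$) is sound.
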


The next result is a new application of these classical ideas to the study of ETFs:

\begin{theorem}
\label{theorem.ovals of B-BIBD ETF}
If $\bfPhi$ is an ETF and $(\calN,(\calK_b)_{b\in\calB})$ is a BIBD where $\calK_b\in\bin(\bfPhi)$ for all $b\in\calB$,
then every member of the binder of its dual $\bfPsi$ is an oval of this BIBD.
\end{theorem}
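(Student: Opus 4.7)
The plan is to take an arbitrary $\calL\in\bin(\bfPsi)$ and verify directly the two conditions of Lemma~\ref{lemma.DT BIBD}(b) that characterize ovals: that $\calL$ meets every block in at most two points (the arc condition) and that $\#(\calL)=\tfrac{V-1}{K-1}+1$ (the cardinality condition).

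The cardinality check is purely arithmetic. Normalize $\bfPhi$ so that $\norm{\bfphi_n}^2=S:=[\tfrac{D(N-1)}{N-D}]^{\frac12}$ and $\abs{\ip{\bfphi_{n_1}}{\bfphi_{n_2}}}=1$ for $n_1\neq n_2$, where $D:=\dim\Span(\bfphi_n)_{n\in\calN}$ and $N:=\#(\calN)$.  By~\eqref{eq.dual of ENTF}, the dual $\bfPsi$ then has $\norm{\bfpsi_n}^2=\tfrac{(N-D)S}{D}$---which a quick squaring shows equals $S':=[\tfrac{(N-D)(N-1)}{D}]^{\frac12}$---and inherits the modulus-$1$ off-diagonal normalization.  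Since $V=N$, $K=\#(\calK_b)=S+1$, and $\#(\calL)=S'+1$, the identity $\tfrac{V-1}{K-1}=\tfrac{N-1}{S}=S'$ (immediate from squaring) shows that $\#(\calL)$ matches the putative oval cardinality.

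The arc condition I would prove by contradiction using the triple-product characterization~\eqref{eq.triple product simplex} of regular simplices. Suppose three distinct indices $n_1,n_2,n_3$ lie in $\calK_b\cap\calL$ for some block $b\in\calB$.  Applying~\eqref{eq.triple product simplex} to the regular simplices $\bfPhi_{\calK_b}$ and $\bfPsi_\calL$ (both available after the scaling above) yields
\begin{equation*}
\ip{\bfphi_{n_1}}{\bfphi_{n_2}}\ip{\bfphi_{n_2}}{\bfphi_{n_3}}\ip{\bfphi_{n_3}}{\bfphi_{n_1}}=-1
\quad\text{and}\quad
\ip{\bfpsi_{n_1}}{\bfpsi_{n_2}}\ip{\bfpsi_{n_2}}{\bfpsi_{n_3}}\ip{\bfpsi_{n_3}}{\bfpsi_{n_1}}=-1.
\end{equation*}
But~\eqref{eq.dual of ENTF} gives $\ip{\bfpsi_{n_i}}{\bfpsi_{n_j}}=-\ip{\bfphi_{n_i}}{\bfphi_{n_j}}$ for distinct indices, so the second triple product is $(-1)^3$ times the first, forcing $-1=+1$.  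This contradiction establishes the arc property; Lemma~\ref{lemma.DT BIBD}(b) then concludes that $\calL$ is an oval.

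There is no serious obstacle to overcome: the whole argument pivots on the single observation that duality flips each off-diagonal sign and that a three-term cycle of such flips therefore flips the sign of its product. The only care required is in the scaling bookkeeping, which ensures that the canonical triple-product value $-1$ characterizing regular simplices is simultaneously available for $\bfPhi$ and $\bfPsi$.
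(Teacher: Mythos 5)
Your proposal is correct and follows essentially the same route as the paper's proof: verify that $\#(\calL)=\tfrac{V-1}{K-1}+1$ via the identity $ST=N-1$, and establish the arc condition by noting that the sign flip $\ip{\bfpsi_{n_i}}{\bfpsi_{n_j}}=-\ip{\bfphi_{n_i}}{\bfphi_{n_j}}$ sends a triple product of $-1$ to $+1$, which is incompatible with~\eqref{eq.triple product simplex} holding for both $\bfPhi_{\calK_b}$ and $\bfPsi_\calL$. The paper phrases this last step contrapositively rather than as an explicit contradiction, but the content is identical.
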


\begin{proof}
Recall that if $\bfPhi=(\bfphi_n)_{n\in\calN}$ is an $N$-vector ETF with $D:=\dim(\Span(\bfphi_n)_{n\in\calN})$ that is without loss of generality scaled so that $\abs{\ip{\bfphi_{n_1}}{\bfphi_{n_2}}}=1$ when $n_1\neq n_2$,
then defining \smash{$S:=[\tfrac{D(N-1)}{N-D}]^{\frac12}$},
we have $\norm{\bfphi_n}^2=S$ for all $n$, and moreover that every $\calK\in\bin(\bfPhi)$ has $\#(\calK)=K:=S+1$.
By~\eqref{eq.dual of ENTF},
its dual $\bfPsi=(\bfpsi_n)_{n\in\calN}$ is an $N$-vector ETF with \smash{$\dim(\Span(\bfpsi_n)_{n\in\calN})=N-D$} and has $\abs{\ip{\bfpsi_{n_1}}{\bfpsi_{n_2}}}=1$ when $n_1\neq n_2$.
Letting \smash{$T:=[\tfrac{(N-D)(N-1)}{D}]^{\frac12}$},
we thus have that $\norm{\bfpsi_n}^2=T$ for all $n$,
and moreover that every $\calL\in\bin(\bfPsi)$ has $\#(\calL)=L:=T+1$.
(We caution that such a set $\calL$ does not necessarily exist.
If one does not, this result is vacuously true.)
In particular,
\begin{equation*}
(K-1)(L-1)
=ST
=[\tfrac{D(N-1)}{N-D}]^{\frac12}[\tfrac{(N-D)(N-1)}{D}]^{\frac12}
=N-1.
\end{equation*}
Now assume that $(\calN,(\calK_b)_{b\in\calB})$ is a BIBD where $\calK_b\in\bin(\bfPhi)$ for all $b\in\calB$.
Applying Lemma~\ref{lemma.DT BIBD} where ``$\calV$'' is $\calN$ gives that a subset $\calL$ of $\calN$ is an oval of this BIBD if and only if it is an arc of it of cardinality $\frac{N-1}{K-1}+1=L$.
In particular, to show that any $\calL\in\bin(\bfPsi)$ is an oval of $(\calN,(\calK_b)_{b\in\calB})$, it suffices to show that $\#(\calK\cap\calL)\leq 2$ for all $\calK\in\bin(\bfPhi)$,
or equivalently, that if $\set{n_1,n_2,n_3}\subseteq\calK\in\bin(\bfPhi)$ and $n_1\neq n_2\neq n_3\neq n_1$ then $\set{n_1,n_2,n_3}\nsubseteq\calL$.
Here, note that by~\eqref{eq.triple product simplex}, having the former gives $\ip{\bfphi_{n_1}}{\bfphi_{n_2}}\ip{\bfphi_{n_2}}{\bfphi_{n_3}}\ip{\bfphi_{n_3}}{\bfphi_{n_1}}=-1$.
But since $\ip{\bfpsi_{n_1}}{\bfpsi_{n_2}}=-\ip{\bfphi_{n_1}}{\bfphi_{n_2}}$ whenever $n_1\neq n_2$, this implies that
$\ip{\bfpsi_{n_1}}{\bfpsi_{n_2}}\ip{\bfpsi_{n_2}}{\bfpsi_{n_3}}\ip{\bfpsi_{n_3}}{\bfpsi_{n_1}}=-(-1)^3=1\neq -1$,
implying via~\eqref{eq.triple product simplex} that indeed $\set{n_1,n_2,n_3}\nsubseteq\calL$.
\end{proof}

One nonobvious consequence of Lemma~\ref{lemma.DT BIBD} and Theorem~\ref{theorem.ovals of B-BIBD ETF} is that, under the hypotheses of Theorem~\ref{theorem.ovals of B-BIBD ETF},
no member of $\bin(\bfPsi)$ can intersect any member of $\bin(\bfPhi)$ in exactly one index $n$.
In the special case where $(\calN,(\calL_a)_{a\in\calA})$ is itself a BIBD where $\calL_a\in\bin(\bfPsi)$ for all $a\in\calA$, there is an alternative way to observe this:
since $\bfPsi^*\bfPsi=A\brI-\bfPhi^*\bfPhi$ where $(\bfPhi^*\bfPhi)^2=A\bfPhi^*\bfPhi$ we have $\norm{\bfPsi\bfPhi^*}_\Fro^2
=\Tr(\bfPhi\bfPsi^*\bfPsi\bfPhi^*)
=\Tr(\bfPhi^*\bfPhi(A\brI-\bfPhi^*\bfPhi))
=\Tr(\bfzero)
=0$,
and so $\bfPsi\bfPhi^*=\bfzero$.
As such, the rows of the particular representation of $\bfPsi$ given in~\eqref{eq.phased incidence} are necessarily orthogonal to the rows of the analogous representation of $\bfPhi$.
Such rows are supported on $\calK\in\bin(\bfPhi)$ and $\calL\in\bin(\bfPsi)$, respectively, and so cannot be orthogonal if $\#(\calK\cap\calL)=1$.

In order to relate these concepts to DTETFs we now review some ideas from~\cite{IversonM22,IversonM24}.
Here let $\bfPhi=(\bfphi_n)_{n\in\calN}$ be an equal-norm finite sequence of vectors in a Hilbert space $\calE$.
A \textit{symmetry} of $\bfPhi$ is a permutation $\sigma$ of $\calN$ for which $(\bfphi_{\sigma(n)})_{n\in\calN}$ is equivalent to $(\bfphi_n)_{n\in\calN}$,
that is, for which there exist unimodular scalars $(z_n)_{n\in\calN}$ such that
$\ip{\bfphi_{\sigma(n_1)}}{\bfphi_{\sigma(n_2)}}
=\overline{z_{n_1}}z_{n_2}\ip{\bfphi_{n_1}}{\bfphi_{n_2}}$ whenever $n_1\neq n_2$.
It is straightforward to verify that the set $\Sym(\bfPhi)$ of all such symmetries $\sigma$ is a subgroup of $\Sym(\calN)$.
If $\bfPhi$ is an equal norm tight frame with $\dim(\Span(\bfphi_n)_{n\in\calN})<\#(\calN)$ then its dual $\bfPsi$ is also an equal norm tight frame, and~\eqref{eq.dual of ENTF} immediately implies that $\Sym(\bfPhi)=\Sym(\bfPsi)$.
Though we refer to $\Sym(\bfPhi)$ as the \textit{symmetry group} of $\bfPhi$,
it is more accurate to call it the symmetry group of the corresponding finite sequence of \textit{lines} $(\gen{\bfphi_n})_{n\in\calN}$,
where $\gen{\bfphi_n}:=\set{z\bfphi_n: z\in\bbF}$ is the one-dimensional subspace of $\calE$ that contains any particular vector $\bfphi_n$.
Indeed, though not necessary for our work here,
it is straightforward to verify~\cite{IversonM22} that $\sigma\in\Sym(\bfPhi)$ if and only if there exists a unitary operator $\bfU$ on $\calE$ such that $\gen{\bfphi_{\sigma(n)}}=\bfU\gen{\bfphi_n}$ for each $n$.
We now consider a streamlined version of a result from~\cite{IversonM22} that relates ETFs to doubly transitive actions.
Our proof of it relies on triple products, and is given in the appendix%
%\ of~\cite{FickusL23}% COMMENT
.

\begin{lemma}[Lemma~1.1 of~\cite{IversonM22}]
\label{lemma.DT implies ETF}
Let $\bfPhi=(\bfphi_n)_{n\in\calN}$ be an equal norm finite sequence of $N$ vectors in a Hilbert space.
Then if $\dim(\Span(\bfphi_n)_{n\in\calN})<N$ and the natural action of its symmetry group $\Sym(\bfPhi)$ on $\calN$ is doubly transitive then $\bfPhi$ is an ETF.
We call any such $\bfPhi$ a DTETF.
\end{lemma}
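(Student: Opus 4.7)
The plan is to verify equiangularity and tightness separately, using double transitivity to force each via triple products. Throughout let $S := \norm{\bfphi_n}^2$ (well-defined by equal norm) and, after a harmless common scaling of all $\bfphi_n$, assume $\abs{\ip{\bfphi_{n_1}}{\bfphi_{n_2}}} = 1$ whenever $n_1\neq n_2$; the target is then the triple-product characterization~\eqref{eq.triple product tightness}.

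For equiangularity: given distinct pairs $(n_1,n_2)$ and $(n_3,n_4)$, double transitivity produces $\sigma \in \Sym(\bfPhi)$ with $\sigma(n_1) = n_3$ and $\sigma(n_2) = n_4$, and the definition of a symmetry yields unimodular scalars $(z_n)_{n\in\calN}$ such that $\ip{\bfphi_{n_3}}{\bfphi_{n_4}} = \overline{z_{n_1}} z_{n_2} \ip{\bfphi_{n_1}}{\bfphi_{n_2}}$. Taking moduli shows $\abs{\ip{\bfphi_{n_1}}{\bfphi_{n_2}}}$ is constant over $n_1 \neq n_2$; this constant is strictly positive, since otherwise the nonzero equal-norm vectors would be mutually orthogonal, contradicting $\dim(\Span(\bfphi_n)_{n\in\calN}) < N$. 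The assumed scaling then makes it equal $1$.

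For tightness, I would first show that for each $n_1 \neq n_3$, the triple-product sum $T(n_1,n_3) := \sum_{n_2 \in \calN\setminus\set{n_1,n_3}} \ip{\bfphi_{n_1}}{\bfphi_{n_2}} \ip{\bfphi_{n_2}}{\bfphi_{n_3}} \ip{\bfphi_{n_3}}{\bfphi_{n_1}}$ does not depend on $(n_1,n_3)$. The key observation is that each individual triple product is invariant under $\Sym(\bfPhi)$: if $\sigma$ is a symmetry with accompanying scalars $(z_n)$, the three phase factors of the form $\overline{z_{n_i}} z_{n_{i+1}}$ cancel around the cycle, in exact analogy with~\eqref{eq.triple product invariance}. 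Re-indexing the sum via the substitution $n_2 = \sigma(m)$ therefore gives $T(\sigma(n_1),\sigma(n_3)) = T(n_1,n_3)$, and double transitivity collapses all ordered pairs of distinct indices into one orbit. Hence $T(n_1,n_3) \equiv T$ for a single constant $T$.

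It remains to identify $T = \tfrac{N-1}{S} - S$, which by~\eqref{eq.triple product tightness} would complete the proof. Let $\bfG := \bfPhi^*\bfPhi$. Expanding $(\bfG^2)(n_1,n_3) = \sum_{n_2} \bfG(n_1,n_2) \bfG(n_2,n_3)$ and separating out the $n_2 \in \set{n_1,n_3}$ contributions yields $(\bfG^2)(n_1,n_1) = S^2 + N - 1$ on the diagonal and $(\bfG^2)(n_1,n_3) = (2S+T)\bfG(n_1,n_3)$ off the diagonal. Hence $\bfG^2 - (2S+T)\bfG = c\bfI$ where $c := (S^2+N-1) - (2S+T)S$. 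This last step is the crux of the argument and the anticipated obstacle: the symmetry reasoning alone produces a constant $T$ but says nothing about its value. The dimension hypothesis $\dim(\Span(\bfphi_n)_{n\in\calN}) = \rank(\bfG) < N$ supplies some $\bfzero \neq \bfx \in \ker(\bfG) \subseteq \ker(\bfG^2)$, and applying the matrix identity to $\bfx$ forces $c\bfx = \bfzero$, hence $c = 0$, hence $T = \tfrac{N-1}{S} - S$ as required.
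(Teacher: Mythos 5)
Your proposal is correct and follows essentially the same route as the paper's own proof: both use double transitivity to force a constant off-diagonal modulus (equiangularity, normalized to $1$), invoke the invariance of triple products under symmetries to make the triple-product sums constant, and then use a nonzero vector in $\ker(\bfPhi^*\bfPhi)$ (available since $\rank(\bfPhi^*\bfPhi)<N$) to kill the multiple of $\bfI$ and conclude $(\bfPhi^*\bfPhi)^2=A\bfPhi^*\bfPhi$. The only cosmetic difference is that the paper sums the triple products over all $n_2\in\calN$ (so its constant $A$ equals your $2S+T$), whereas you separate out the $n_2\in\set{n_1,n_3}$ terms.
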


The dual $\bfPsi$ of any DTETF $\bfPhi$ has $\Sym(\bfPsi)=\Sym(\bfPhi)$ and so is itself a DTETF.
With these ideas in hand, we now state and prove our most fundamental result:

\begin{theorem}
\label{theorem.binders of DTETFs}
If $\bfPhi$ is a DTETF then its binder is either empty or forms a BIBD,
and in the latter case, any member of the binder of its dual $\bfPsi$ is an oval of this BIBD.
\end{theorem}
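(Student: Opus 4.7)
The plan is to reduce the theorem to two results already in hand: Lemma~\ref{lemma.DT BIBD}(a), which says that a doubly transitive incidence structure is a BIBD, and Theorem~\ref{theorem.ovals of B-BIBD ETF}, which handles the statement about ovals. The bridge between the hypothesis ``$\bfPhi$ is a DTETF'' and these combinatorial results is the observation that the action of $\Sym(\bfPhi)$ on $\calN$ should lift to an action on $\bin(\bfPhi)$ by permuting blocks, and thus the symmetry group of the frame embeds into the automorphism group of the incidence structure obtained by enumerating its binder.

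First I would verify that any $\sigma\in\Sym(\bfPhi)$ satisfies $\sigma(\calK)\in\bin(\bfPhi)$ for every $\calK\in\bin(\bfPhi)$. By definition there exist unimodular scalars $(z_n)_{n\in\calN}$ with $\ip{\bfphi_{\sigma(n_1)}}{\bfphi_{\sigma(n_2)}}=\overline{z_{n_1}}z_{n_2}\ip{\bfphi_{n_1}}{\bfphi_{n_2}}$ for $n_1\neq n_2$, so the finite sequence $(\bfphi_{\sigma(n)})_{n\in\calK}$ is equivalent to $(z_n\bfphi_n)_{n\in\calK}$. Combining the triple product invariance identity~\eqref{eq.triple product invariance} with the characterization~\eqref{eq.triple product simplex} of binder members then immediately gives $\sigma(\calK)\in\bin(\bfPhi)$ (and $\#(\sigma(\calK))=S+1$ is automatic since $\sigma$ is a bijection).

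Next, assume $\bin(\bfPhi)$ is nonempty and fix any enumeration $(\calK_b)_{b\in\calB}$ of it, so in particular $(\calK_b)_{b\in\calB}$ contains no repeated blocks. By the previous step, every $\sigma\in\Sym(\bfPhi)$ sends each block to another block, so by the discussion preceding Lemma~\ref{lemma.DT BIBD} (no-repeated-blocks criterion for automorphism), $\sigma\in\Aut(\calN,(\calK_b)_{b\in\calB})$. Hence $\Sym(\bfPhi)$ is a subgroup of $\Aut(\calN,(\calK_b)_{b\in\calB})$, so double transitivity of $\Sym(\bfPhi)$ on $\calN$ implies double transitivity of the automorphism group on $\calN$, meaning the incidence structure $(\calN,(\calK_b)_{b\in\calB})$ is doubly transitive. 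Applying Lemma~\ref{lemma.DT BIBD}(a) then shows it is a BIBD, so by definition $\bin(\bfPhi)$ forms a BIBD. The oval statement is then an immediate application of Theorem~\ref{theorem.ovals of B-BIBD ETF}.

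The main obstacle I anticipate is mostly bookkeeping: one must carefully invoke the no-repeated-blocks hypothesis to upgrade ``$\sigma$ permutes blocks'' to ``$\sigma$ is an automorphism in the formal sense of the paper,'' and to be explicit that one is using the definition of ``forms a BIBD'' (a bijection $b\mapsto\calK_b$ from $\calB$ onto $\bin(\bfPhi)$), not merely the existence of some enumeration of BIBD-blocks drawn from the binder as in Theorem~\ref{theorem.ovals of B-BIBD ETF}. Once that is pinned down, the proof is essentially a one-line invocation of Lemma~\ref{lemma.DT BIBD}(a) followed by Theorem~\ref{theorem.ovals of B-BIBD ETF}; no new computation is required beyond the triple-product equivalence argument of the first step.
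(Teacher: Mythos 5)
Your proposal is correct and follows essentially the same route as the paper's own proof: show that symmetries preserve triple products and hence permute the blocks of $\bin(\bfPhi)$, conclude via the no-repeated-blocks criterion that $\Sym(\bfPhi)\leq\Aut(\calN,(\calK_b)_{b\in\calB})$, and then invoke Lemma~\ref{lemma.DT BIBD}(a) followed by Theorem~\ref{theorem.ovals of B-BIBD ETF}. The bookkeeping concerns you flag are handled in the paper exactly as you anticipate, so no changes are needed.
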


\begin{proof}
Let $\bfPhi=(\bfphi_n)_{n\in\calN}$ be an ETF, scaled without loss of generality so that \smash{$\abs{\ip{\bfphi_{n_1}}{\bfphi_{n_2}}}=1$} when $n_1\neq n_2$.
Let $\TP(n_1,n_2,n_3):=
\ip{\bfphi_{n_1}}{\bfphi_{n_2}}\ip{\bfphi_{n_2}}{\bfphi_{n_3}}\ip{\bfphi_{n_3}}{\bfphi_{n_1}}$
be the triple product that corresponds to a given $n_1,n_2,n_3\in\calN$.
For any $\sigma\in\Sym(\bfPhi)$, there exists a finite sequence of unimodular scalars $(z_n)_{n\in\calN}$ such that $\ip{\bfphi_{\sigma(n_1)}}{\bfphi_{\sigma(n_2)}}
=\overline{z_{n_1}}z_{n_2}\ip{\bfphi_{n_1}}{\bfphi_{n_2}}$ when $n_1\neq n_2$.
For any $n_1,n_2,n_3\in\calN$,
this implies via~\eqref{eq.triple product invariance} that $\TP(\sigma(n_1),\sigma(n_2),\sigma(n_3))=\TP(n_1,n_2,n_3)$.
In particular, if $\calK$ is a subset of $\calN$ of cardinality $S+1$ and $\TP(n_1,n_2,n_3)=-1$ for all pairwise distinct $n_1,n_2,n_3\in\calK$ then $\sigma(\calK):=\set{\sigma(n): n\in\calK}$ has these same properties.
By~\eqref{eq.triple product simplex}, this implies that any symmetry $\sigma$ of $\bfPhi$ maps any member $\calK$ of the binder of $\bfPhi$ to another such member.
That is, if $\bin(\bfPhi)$ is nonempty then $\Sym(\bfPhi)\leq\Aut(\calN,(\calK_b)_{b\in\calB})$ where $(\calK_b)_{b\in\calB}$ is any enumeration of $\bin(\bfPhi)$.

As such, if $\bfPhi$ is a DTETF with a nonempty binder then
$\Sym(\bfPhi)\leq \Aut(\calN,(\calK_b)_{b\in\calB})$ where the natural action of $\Sym(\bfPhi)$ on $\calN$ is doubly transitive,
implying that the same is true for $\Aut(\calN,(\calK_b)_{b\in\calB})$.
By Lemma~\ref{lemma.DT BIBD},
$(\calN,(\calK_b)_{b\in\calB})$ is a BIBD with $V=N$ and $K=S+1$.
By Theorem~\ref{theorem.ovals of B-BIBD ETF}, any member of the binder of the dual $\bfPsi$ of $\bfPhi$ is an oval of this BIBD.
Here in fact, $\bfPsi$ is itself a DTETF since $\Sym(\bfPsi)=\Sym(\bfPhi)$,
and so when its binder is also nonempty,
any member of either of the two resulting BIBDs is an oval of the other.
\end{proof}

In Theorem II.6 of~\cite{King19}, a similar triple-product-based argument is used to show that every triply transitive ETF is either a regular simplex or an ETF for a space of dimension one.
In fact, such ETFs are totally symmetric.
See also Lemma~6.2 of~\cite{IversonM24}.
Theorem~\ref{theorem.binders of DTETFs} still applies in such cases,
but the resulting BIBDs are degenerate:

\begin{example}
\label{example.binder of simplex}
For any positive integer $S$,
let $\bfPhi=(\bfphi_n)_{n\in\calN}$ be an $(S+1)$-vector regular simplex,
namely an ETF whose span has dimension $S$.
Its dual $\bfPsi$ is thus an $(S+1)$-vector ETF for a space of dimension one.
Without loss of generality scaling $\bfPhi$ and $\bfPsi$ so that $\abs{\ip{\bfphi_{n_1}}{\bfphi_{n_2}}}=\abs{\ip{\bfpsi_{n_1}}{\bfpsi_{n_2}}}=1$ when $n_1\neq n_2$,
any such ETF $\bfPsi$ consists, up to equivalence, of $S+1$ copies of the scalar $1$.
Choosing $\bfPsi$ in this way and modifying its dual $\bfPhi$ accordingly gives $\bfPhi^*\bfPhi=(S+1)\brI-\bfPsi^*\bfPsi=(S+1)\brI-\bfone_{\calN\times\calN}$,
that is, $\norm{\bfphi_n}^2=S$ for all $n$ and
$\ip{\bfphi_{n_1}}{\bfphi_{n_2}}=-1$ when $n_1\neq n_2$.
The synthesis map of such a $\bfPhi$ can be constructed, for example, by taking a possibly-complex Hadamard matrix of size $S+1$, scaling its columns as necessary so that it has an all-ones row,
and then removing that row.

Clearly, every permutation $\sigma$ of $\calN$ is a symmetry of $\bfPsi$, and so $\Sym(\bfPhi)=\Sym(\bfPsi)=\Sym(\calN)$.
Since the natural action of $\Sym(\calN)$ on $\calN$ is doubly transitive---in fact it is totally transitive---this implies that $\bfPhi$ and $\bfPsi$ are DTETFs.
As such, Theorem~\ref{theorem.binders of DTETFs} applies to them.
The binder of the regular simplex $\bfPhi$ consists of the single set $\calN$.
Indeed since $\ip{\bfphi_{n_1}}{\bfphi_{n_2}}\ip{\bfphi_{n_2}}{\bfphi_{n_3}}\ip{\bfphi_{n_3}}{\bfphi_{n_1}}=(-1)^3=-1$
for any $n_1,n_2,n_3\in\calN$ with $n_1\neq n_2\neq n_3\neq n_1$,
\eqref{eq.triple product simplex} gives that a subset $\calK$ of $\calN$ belongs to $\bin(\bfPhi)$ if and only if $\#(\calK)=S+1$, namely if and only if $\calK=\calN$.
Since $\bin(\bfPhi)=\set{\calN}$ is nonempty,
Theorem~\ref{theorem.binders of DTETFs} guarantees that it forms a BIBD.
In fact, it is a degenerate BIBD with $V=K=S+1$ and $\Lambda=1$ whose $\calB\times\calN$ incidence matrix $\bfX$ consists of a single all-ones row.
Since $\ip{\bfphi_{n_1}}{\bfphi_{n_2}}=-1$ when $n_1\neq n_2$,
we can further choose $z_{b,n}=1$ for all $b$ and $n$ when representing its dual $\bfPsi$ as in~\eqref{eq.phased incidence}, yielding an ``all-ones row vector.''

In contrast, the binder of its dual $\bfPsi$ consists of all two-element subsets of $\calN$: when ``$(\bfphi_n)_{n\in\calN}$'' is $(\bfpsi_n)_{n\in\calN}$,
``$N$'', ``$D$'' and ``$S$'' become $S+1$, $1$ and $1$, respectively,
and so~\eqref{eq.triple product simplex} gives that a subset $\calK$ of $\calN$ belongs to $\bin(\bfPsi)$ if and only if $\#(\calK)=2$ and $-1=\ip{\bfpsi_{n_1}}{\bfpsi_{n_2}}\ip{\bfpsi_{n_2}}{\bfpsi_{n_3}}\ip{\bfpsi_{n_3}}{\bfpsi_{n_1}}=1$ for all $n_1,n_2,n_3\in\calK$ with $n_1\neq n_2\neq n_3\neq n_1$,
and though this second property is impossible when $\#(\calK)\geq 3$,
it is vacuously true when $\#(\calK)=2$.
Since $\bin(\bfPsi)=\bigset{\set{n_1,n_2}: n_1,n_2\in\calN, n_1\neq n_2}$ is nonempty, Theorem~\ref{theorem.binders of DTETFs} guarantees that it forms a BIBD.
In fact, it is clearly a BIBD with $V=N$, $K=2$ and $\Lambda=1$,
implying via~\eqref{eq.BIBD parameter relations} that $R=S$ and $B=\binom{S+1}{2}=\tfrac12 S(S+1)$.
As such, applying~\eqref{eq.phased incidence} when ``$\bfPhi$'' is $\bfPsi$ yields a phased incidence representation of its dual $\bfPhi$.
Here since $\ip{\bfpsi_{n_1}}{\bfpsi_{n_2}}=1$ when $n_1\neq n_2$,
we can simply choose ``$z_{b,n_1}$'' and ``$z_{b,n_2}$'' to be $1$ and $-1$, respectively, when ``$\calK_b$'' is $\set{n_1,n_2}$.
This yields a nonobvious result: up to equivalence, any $(S+1)$-vector regular simplex arises as the columns of a $\tfrac12 S(S+1)\times(S+1)$ matrix,
each of whose rows is supported on two opposite nonzero entries,
and whose pairs of columns have exactly one nonzero entry in common.
As guaranteed by Theorem~\ref{theorem.binders of DTETFs},
every member of $\bin(\bfPsi)$ is an oval of $\bin(\bfPhi)$, and vice versa.
Here in fact, the sole member of $\bin(\bfPhi)$ intersects every member of $\bin(\bfPsi)$ in exactly two vertices.

For example, when $S=3$, we can form a real such simplex $\bfPhi$ by removing an all-ones row---the synthesis map of its dual $\bfPsi$---from a standard Hadamard matrix of size $4$:
\begin{equation*}
\setlength{\arraycolsep}{0pt}
\begin{array}{ccccc}
\bfPsi
&=&
\begin{tiny}\left[\begin{array}{rrrr}+&+&+&+\end{array}\right]\end{tiny}
&\cong&
\begin{tiny}\left[\begin{array}{rrrr}+&+&+&+\end{array}\right]\end{tiny},\\
\bfPhi
&=&
\begin{tiny}\left[\begin{array}{rrrr}
+&+&-&-\\
+&-&+&-\\
+&-&-&+
\end{array}\right]\end{tiny}
&\cong&
\begin{tiny}\left[\begin{array}{rrrr}
+&-&0&0\\
0&0&+&-\\
+&0&-&0\\
0&+&0&-\\
+&0&0&-\\
0&+&-&0\\
\end{array}\right]\end{tiny}.
\end{array}
\end{equation*}
Here, ``$+$'' and ``$-$'' denote $1$ and $-1$, respectively, and we in general write ``$\bfPhi_1\cong\bfPhi_2$'' when $\bfPhi_1:\calV\rightarrow\calU_1$ and $\bfPhi_2:\calV\rightarrow\calU_2$ are linear where $\calV$, $\calU_1$ and $\calU_2$ are finite-dimensional Hilbert spaces and there exists a unitary transformation $\bfU$ from the image of $\bfPhi_1$ to that of $\bfPhi_2$ such that $\bfPhi_2=\bfU\bfPhi_1$.
\end{example}

%%%%%%%%%%%%%%%%%%%%%%%%%%%%%%%%%%%%%%%%%%%%%%%%%%%%%%%%%%%%%%%%
\section{The binders of symplectic equiangular tight frames and their duals}
%%%%%%%%%%%%%%%%%%%%%%%%%%%%%%%%%%%%%%%%%%%%%%%%%%%%%%%%%%%%%%%%

In the previous section, we discussed when a finite sequence $\bfPhi=(\bfphi_n)_{n\in\calN}$ of vectors in a Hilbert space is a DTETF (Lemma~\ref{lemma.DT implies ETF}), and showed that the binder of any such $\bfPhi$ is either empty or forms a BIBD (Theorem~\ref{theorem.binders of DTETFs}).
To date, we have not found a general method for determining when the binder of a DTETF is nonempty, nor if so, for computing the ``$\Lambda$'' parameter of the resulting BIBD.
That said, \cite{IversonM24,DempwolffK23} provides a complete classification of~DTETFs,
and analyzing each class of them individually seems like a surmountable task for the community.
In support of such a program,
we devote this section and the next to performing such analyses for specific classes of DTETFs that arise from symplectic and quadratic forms on finite vector spaces, respectively.

As elegantly summarized in Example~5.10 of~\cite{IversonM22},
symplectic ETFs have been independently discovered several times, and---up to equivalence---in several guises.
Some facts about the binders of their duals are already known:
combining Theorem~5.1 of~\cite{FickusJMPW19} and Theorem~5.2 of~\cite{FickusJKM18} gives that any such binder contains a (Desarguesian) affine plane.
Theorem~5.6 of~\cite{BodmannK20} gives that it is in fact an affine plane when the underlying finite vector space has dimension~$2$.
When this space instead has dimension~$4$,
Theorem~5.8 of~\cite{BodmannK20} gives that it forms a more exotic BIBD.
Conjecture~5.12 of~\cite{BodmannK20} moreover predicts that the binder of the dual of any symplectic ETF forms a BIBD with a particular $\Lambda$ parameter.
The approach of~\cite{BodmannK20} relied on their novel use of the triple-product-based characterization of regular simplices~\eqref{eq.triple product simplex} as a symbolic tool, rather than simply a numerical aid~\cite{FickusJKM18}.
In this section, we combine that trick with both Theorem~\ref{theorem.binders of DTETFs} and a clean expression for the Gram matrix of the symplectic ETF to prove that Conjecture~5.12 of~\cite{BodmannK20} is partially true---the binder of its dual does indeed form a BIBD---but is also partially false,
as its prediction for the value of $\Lambda$ is incorrect in general.
We in fact show that, in general, this binder consists of all affine Lagrangian subspaces of the underlying finite symplectic space.
We begin here by reviewing symplectic ETFs.

\subsection{Symplectic forms}

A function $\rmB:\calV\times\calV\rightarrow\bbF$ is a \textit{bilinear form} on a vector space $\calV$ if,
for any $\bfv_1,\bfv_2\in\calV$, the mappings $\bfv\mapsto\rmB(\bfv_1,\bfv)$ and $\bfv\mapsto\rmB(\bfv,\bfv_2)$ are linear.
Such a function $\rmB$ is \textit{symmetric} if $\rmB(\bfv_1,\bfv_2)=\rmB(\bfv_2,\bfv_1)$ for all $\bfv_1,\bfv_2\in\calV$, and is \textit{alternating} if $\rmB(\bfv,\bfv)=0$ for all $\bfv\in\calV$.
For any $\bfv_1,\bfv_2\in\calV$,
the latter implies $0=\rmB(\bfv_1+\bfv_2,\bfv_1+\bfv_2)=\rmB(\bfv_1,\bfv_2)+\rmB(\bfv_2,\bfv_1)$,
that is, $\rmB(\bfv_2,\bfv_1)=-\rmB(\bfv_1,\bfv_2)$.
A \textit{symplectic form} on $\calV$ is an alternating bilinear form $\rmB$ that is \textit{nondegenerate} in the sense that for any $\bfv_1\neq0$ there exists $\bfv_2\in\calV$ such that $\rmB(\bfv_1,\bfv_2)\neq0$.
When this occurs, we can scale such a vector $\bfv_2$ so as to assume without loss of generality that $\rmB(\bfv_1,\bfv_2)=1$.

For the remainder of this section, let $\rmB$ be a symplectic form on a finite-dimensional vector space $\calV$ over some field $\bbF$.
Vectors $\bfv_1,\bfv_2\in\calV$ are \textit{orthogonal} if $\rmB(\bfv_1,\bfv_2)=0$.
Since $\rmB$ is alternating, this occurs if and only if $\rmB(\bfv_2,\bfv_1)=0$, and so we need not distinguish left- and right-orthogonality.
The \textit{orthogonal complement} of any subset $\calS$ of $\calV$ is $\calS^\perp:=\set{\bfv_2\in\calV: \rmB(\bfv_1,\bfv_2)=0,\ \forall\,\bfv_1\in\calS}$.
The algebraic properties of symplectic forms differ from those of real inner products in some key respects: for example,
since every vector is orthogonal to itself,
orthogonality does not guarantee linear independence,
and orthonormal bases do not exist.
That said, some nontrivial properties of real inner products generalize to the symplectic setting.
We now briefly review those that we need here.

For example,
we claim that $\dim(\calV_0^{})+\dim(\calV_0^\perp)=\dim(\calV)$
for any subspace $\calV_0$ of $\calV$.
To see this,
extend a basis $(\bfv_m)_{m=1}^{M_0}$ for $\calV_0$ to a basis $(\bfv_m)_{m=1}^{M}$ for $\calV$.
The nondegeneracy of $\rmB$ implies that
$\bfL:\calV\rightarrow\bbF^M$, $(\bfL\bfv)(m):=\rmB(\bfv_m,\bfv)$
has $\ker(\bfL)=\set{\bfzero}$.
By the rank-nullity theorem, $\bfL$ is invertible.
Composing $\bfL$ with the truncated identity matrix $\bfA\in\bbF^{M_0\times M}$, $\bfA(m_1,m_2):=\bfdelta_{m_2}(m_1)$ yields
$\bfA\bfL:\calV\rightarrow\bbF^{M_0}$, $(\bfA\bfL)(m)=\rmB(\bfv_m,\bfv)$ for all $m\in[M_0]$.
Thus,
$\dim(\calV)-\dim(\calV_0^\perp)
=\dim(\calV)-\dim(\ker(\bfA\bfL))
=\rank(\bfA\bfL)
=\rank(\bfA)
=M_0
=\dim(\calV_0)$,
giving the claim.
This in turn implies that $(\calS^\perp)^\perp=\Span(\calS)$ for any subset $\calS$ of $\calV$:
letting $\calV_0=\Span(\calS)$, we have \smash{$\calV_0^\perp=\calS^\perp$} where clearly \smash{$\calV_0\subseteq(\calV_0^\perp)^\perp$} and
$\dim(\calV_0)
=\dim(\calV)-\dim(\calV_0^\perp)
=\dim((\calV_0^\perp)^\perp)$.
We moreover claim that any linear functional $\bfL:\calV\rightarrow\bbF$ has a Riesz representation,
that is, there exists $\bfv_\bfL\in\calV$ such that $\bfL(\bfv)=\rmB(\bfv_\bfL,\bfv)$ for all $\bfv\in\calV$.
Indeed, take $\bfv_\bfL=\bfzero$ when $\bfL=\bfzero$.
When instead $\bfL\neq\bfzero$,
$\ker(\bfL)^\perp$ has dimension one and so contains some $\bfv_1\neq\bfzero$.
Take $\bfv_2$ such that $\rmB(\bfv_1,\bfv_2)=1$ and so $\bfv_2\notin\calV_0$.
For any $\bfv\in\calV$,
there thus exists $c\in\bbF$ and $\bfv_0\in\calV_0$ such that $\bfv=c\bfv_2+\bfv_0$.
Here, $\rmB(\bfv_1,\bfv)=c1+0=c$ and so $\bfL\bfv=c\bfL\bfv_2+\bfzero=\rmB(\bfv_\bfL,\bfv)$ where $\bfv_\bfL:=(\bfL\bfv_2)\bfv_1$.
Having the claim,
we moreover note that since $\rmB$ is nondegenerate, this $\bfv_\bfL$ is unique.

A pair $(\bfv_1,\bfv_2)$ of vectors in $\calV$ is \textit{symplectic} if $\rmB(\bfv_1,\bfv_2)=1$.
Such a pair exists if and only if $\calV\neq\set{\bfzero}$.
In fact, as noted above, for any nonzero $\bfv_1\in\calV$ there exists $\bfv_2\in\calV$ such that $(\bfv_1,\bfv_2)$ is symplectic.
For any symplectic pair $(\bfv_1,\bfv_2)$,
$\Span\set{\bfv_1,\bfv_2}$ has dimension $2$ since otherwise $\bfv_1$ and $\bfv_2$ are collinear and $\rmB(\bfv_1,\bfv_2)=0$, a contradiction.
Thus, $\calV_0:=\set{\bfv_1,\bfv_2}^\perp$ has $\dim(\calV_0)=M-2$ where $M=\dim(\calV)$.
We moreover claim that any $\bfv\in\calV$ can be uniquely written as $\bfv=x_1\bfv_1+x_2\bfv_2+\bfv_0$ where $x_1,x_2\in\bbF$ and $\bfv_0\in\calV_0$.
Indeed, taking the $\rmB$-form of this equation with $\bfv_1$ and $\bfv_2$ implies that necessarily $x_1=\rmB(\bfv,\bfv_2)$ and $x_2=\rmB(\bfv_1,\bfv)$,
and so $\bfv_0$ is necessarily $\bfv-x_1\bfv_1-x_2\bfv_2$.
Conversely, choosing $x_1$, $x_2$ and $\bfv_0$ in this way gives $\bfv=x_1\bfv_1+x_2\bfv_2+\bfv_0$ where $\bfv_0\in\calV_0$, yielding the claim.
In particular, $\calV_0^{}\cap\calV_0^\perp=\set{\bfzero}$ and $\calV$ is the internal direct sum of $\calV_0^{}$ and $\calV_0^\perp=\Span\set{\bfv_1,\bfv_2}$.
The restriction $\rmB_0:\calV_0\times\calV_0\rightarrow\bbF$ of $\rmB$ to $\calV_0$ is itself symplectic:
it is immediately bilinear and alternating,
and is nondegenerate since if $\bfv_0\in\calV_0$ satisfies $\rmB(\bfv_0,\bfv)=0$ for all $\bfv\in\calV_0$ then $\bfv\in\calV_0^\perp$, implying
$\bfv_0\in\calV_0^{}\cap\calV_0^\perp=\set{\bfzero}$,
and so $\bfv_0=\bfzero$.
Provided $\calV_0\neq\set{\bfzero}$,
this allows one to repeat this procedure with ``$\calV$'' being $\calV_0$,
that is, to choose a symplectic pair $(\bfv_3,\bfv_4)$ from $\calV_0$,
which is a space of dimension $M_0=M-2$.
Since we have assumed here that the dimension $M$ of $\calV$ is finite,
proceeding in this manner by induction reveals that $M$ is necessarily even---since no alternating bilinear form on a one-dimensional space is nondegenerate---and produces a \textit{symplectic basis} for $\calV$, namely a finite sequence $(\bfv_m)_{m=1}^{2J}$ of $2J$ vectors in the $2J$-dimensional space $\calV$ such that
\begin{equation}
\label{eq.symplectic basis}
\rmB(\bfv_{m_1},\bfv_{m_2})
=\left\{\begin{array}{rl}
 1,&\ 2\mid m_2=m_1+1,\\
-1,&\ 2\mid m_1=m_2+1,\\
 0,&\ \text{else}.
\end{array}\right.
\end{equation}
Any such $(\bfv_m)_{m=1}^{2J}$ is a basis for $\calV$:
letting $\bfV:\bbF^{2J}\rightarrow\calV$,
$\bfV\bfx:=\sum_{m=1}^{2J}\bfx(m)\bfv_m$ be its synthesis map,
we have $\bfV\bfx=\bfv$ only if
$\bfx(2j-1)=\rmB(\bfv,\bfv_{2j})$ and $\bfx(2j)=\rmB(\bfv_{2j-1},\bfv)$ for all $j\in[J]$,
implying $\bfV$ is invertible.
Moreover, writing any $\bfv_1,\bfv_2\in\calV$ as $\bfv_1=\bfV\bfx_1$ and $\bfv_2=\bfV\bfx_2$ for some $\bfx_1,\bfx_2\in\bbF^{2J}$,
distributing $\rmB$ over linear combinations gives $\rmB(\bfv_1,\bfv_2)=\rmB_J(\bfx_1,\bfx_2)$ where
\begin{equation}
\label{eq.canonical symplectic form}
\rmB_J:\bbF^{2J}\times\bbF^{2J}\rightarrow\bbF,
\qquad
\rmB_J(\bfx_1,\bfx_2)=\sum_{\smash{j=1}}^{\smash{J}}[\bfx_1(2j-1)\bfx_2(2j)-\bfx_1(2j)\bfx_2(2j-1)],
\end{equation}
is the \textit{canonical} symplectic form on $\bbF^{2J}$.
Conversely,
letting $\bfV$ be the synthesis map of any basis $(\bfv_m)_{m=1}^{2J}$ for any vector space over $\bbF$ of dimension $2J$,
it is straightforward to verify that $\rmB(\bfv_1,\bfv_2):=\rmB_J(\bfV^{-1}\bfv_1,\bfV^{-1}\bfv_2)$ defines a symplectic form on $\calV$.
Up to isomorphism, $\rmB_J$ is thus the unique symplectic form on a vector space of dimension $2J$.

\subsection{Symplectic equiangular tight frames}

As we now explain, in the special case where $\bbF$ is the finite field $\bbF_P=\set{0,\dotsc,P-1}$ of prime order $P$, the symplectic form $\rmB$ on a vector space $\calV$ of dimension $2J$ over $\bbF_P$ gives rise to a DTETF whose vectors are indexed by $\calV$,
which has order $N=P^{2J}$.
When $\rmB$ is more generally a symplectic form on a finite-dimensional vector space $\calV$ over a finite field $\bbF_Q$ whose order $Q$ is a power of this prime $P$,
we claim that the requisite $\bbF_P$-valued form can be obtained by composing $\rmB$ with any nontrivial linear functional $\bfL:\bbF_Q\rightarrow\bbF_P$.
Indeed, it is clear that $\calV$ is a finite-dimensional space over $\bbF_P$ whose dimension is the product of that of $\calV$ over $\bbF_Q$ with that of $\bbF_Q$ over $\bbF_P$,
and moreover that $\bfL\circ\rmB:\calV\times\calV\rightarrow\bbF_P$ is an alternating bilinear form.
To see that $\bfL\circ\rmB$ is nondegenerate,
for any nonzero $\bfv_1\in\calV$, take $\bfv_2\in\calV$ such that $\rmB(\bfv_1,\bfv_2)=1$ and any $x\in\bbF_Q$ such that $\bfL(x)\neq0$,
and note that
$(\bfL\circ\rmB)(\bfv_1,x\bfv_2)
=\bfL(\rmB(\bfv_1,x\bfv_2))
=\bfL(x\rmB(\bfv_1,\bfv_2))
=\bfL(x)\neq0$,
yielding the claim.
For example, one can always choose $\bfL$ to be the field trace,
that is, $\bfL(x)=\tr(x):=\sum_{i=0}^{I-1}x^{P^i}$ where $Q=P^I$.

The \textit{character table} of an $\bbF_P$-valued symplectic form $\rmB$ is the matrix $\bfGamma\in\bbF^{\calV\times\calV}$, $\bfGamma(\bfv_1,\bfv_2):=\exp(\frac{2\pi\rmi}{P}\rmB(\bfv_1,\bfv_2))$.
Here, $\bbF=\bbC$ unless $P=2$ and we can choose $\bbF=\bbR$.
It can be shown that $\bfGamma$ is the classical character table of the (finite abelian) additive group of $\calV$ with respect to a particular enumeration of its Pontryagin dual.
We claim that $\bfGamma$ is a possibly-complex Hadamard matrix.
Indeed,
$(\bfGamma\bfGamma^*)(\bfv_1,\bfv_2)
=\sum_{\bfv\in\calV}\bfGamma(\bfv_1,\bfv)\overline{\bfGamma(\bfv_2,\bfv)}
=\sum_{\bfv\in\calV}\exp(\tfrac{2\pi\rmi}{P}\rmB(\bfv_1-\bfv_2,\bfv))$
for any $\bfv_1,\bfv_2\in\calV$.
When $\bfv_1\neq \bfv_2$,
letting $\bfv_3:=\bfv_2-\bfv_1\neq0$ and taking $\bfv_4\in\calV$ such that $\rmB(\bfv_3,\bfv_4)=1$,
\begin{equation*}
\bigbracket{1-\exp(\tfrac{2\pi\rmi}{P})}(\bfGamma\bfGamma^*)(\bfv_1,\bfv_2)
=\sum_{\bfv\in\calV}\exp(\tfrac{2\pi\rmi}{P}\rmB(\bfv_3,\bfv))
-\sum_{\bfv\in\calV}\exp(\tfrac{2\pi\rmi}{P}\rmB(\bfv_3,\bfv+\bfv_4))
=0,
\end{equation*}
implying $(\bfGamma\bfGamma^*)(\bfv_1,\bfv_2)=0$.
Thus, $\bfGamma\bfGamma^*=N\brI$, as claimed.
Since $\rmB$ is alternating,
$\bfGamma(\bfv,\bfv)=\exp(\frac{2\pi\rmi}{P}0)=1$ for all $\bfv\in\calV$.
Since $\rmB(\bfv_2,\bfv_1)=-\rmB(\bfv_1,\bfv_2)$ for all $\bfv_1,\bfv_2\in\calV$,
we moreover have $\bfGamma^*=\bfGamma$.
Thus, $\bfGamma^2=\bfGamma\bfGamma^*=P^{2J}\brI$, implying that every eigenvalue of $\bfGamma$ is either $P^J$ or $-P^J$.
In fact, the multiplicity $P^J$ is $D=\tfrac12P^J(P^J+1)$ since $P^{2J}=\Tr(\bfGamma)=DP^J+(N-D)(-P^J)=P^J(2D-P^{2J})$.
Altogether,
$\bfGamma$ is a self-adjoint possibly-complex Hadamard matrix of size $N=P^{2J}$ whose diagonal entries are $1$ and whose spectrum consists of $P^J$ and $-P^J$ with multiplicities
$D=\tfrac12 P^J(P^J+1)$ and $N-D=\tfrac12 P^J(P^J-1)$, respectively.
As such, $P^J\brI+\bfGamma$ and $P^J\brI-\bfGamma$ are the Gram matrices of an ETF $\bfPhi$ and its dual $\bfPsi$, respectively.
We refer to these ETFs as follows:

\begin{definition}
\label{def.symplectic ETF}
Let $P$ be prime, $\rmB$ be a symplectic form on a vector space $\calV$ over $\bbF_P$ of dimension $2J$,
and $\bfGamma\in\bbF^{\calV\times\calV}$, $\bfGamma(\bfv_1,\bfv_2):=\exp(\frac{2\pi\rmi}{P}\rmB(\bfv_1,\bfv_2))$.
Here, $\bbF$ is $\bbR$ if $P=2$ and $\bbF=\bbC$ if $P>2$.
A finite sequence $\bfPhi=(\bfphi_{\bfv})_{\bfv\in\calV}$ of vectors in a Hilbert space over $\bbF$ is a corresponding \textit{symplectic ETF} if
\begin{equation*}
\ip{\bfphi_{\bfv_1}}{\bfphi_{\bfv_2}}
=\left\{\begin{array}{cl}
P^J+1,&\ \bfv_1=\bfv_2,\\
\exp(\frac{2\pi\rmi}{P}\rmB(\bfv_1,\bfv_2)),&\ \bfv_1\neq \bfv_2,
 \end{array}\right.
\end{equation*}
for any $\bfv_1,\bfv_2\in\calV$.
Such a $\bfPhi$ exists for any such $\rmB$,
and any such $\bfPhi$ is an $N=P^{2J}$-vector ETF for a space of dimension
$D=\tfrac12P^J(P^J+1)$.
A finite sequence $\bfPsi=(\bfpsi_{\bfv})_{\bfv\in\calV}$ is thus its dual if
\begin{equation*}
\ip{\bfpsi_{\bfv_1}}{\bfpsi_{\bfv_2}}
=\left\{\begin{array}{cl}
P^J-1,&\ \bfv_1=\bfv_2,\\
-\exp(\frac{2\pi\rmi}{P}\rmB(\bfv_1,\bfv_2)),&\ \bfv_1\neq \bfv_2,
\end{array}\right.
\end{equation*}
for any $\bfv_1,\bfv_2\in\calV$.
Any such $\bfPsi$ is an $N$-vector ETF for a space of dimension $N-D=\tfrac12P^J(P^J-1)$.
\end{definition}

\begin{example}
\label{example.symplectic ETFs}
The canonical symplectic form~\eqref{eq.canonical symplectic form} on $\bbF_2^2$ is $\rmB_1((x_1,x_2),(y_1,y_2))=x_1y_2+x_2y_1$.
(Here since the underlying field has characteristic $2$,
we express the subtractions in~\eqref{eq.canonical symplectic form} as additions.)
The corresponding character table $\bfGamma_1$
is a symmetric Hadamard matrix with ones along its diagonal:
\begin{equation*}
\setlength{\arraycolsep}{0pt}
\bfGamma_1((x_1,x_2),(y_1,y_2))=(-1)^{x_1y_2+x_2y_1},
\quad\text{i.e.,}\quad
\bfGamma_1=\begin{tiny}\left[\begin{array}{cccc}
+&+&+&+\\
+&+&-&-\\
+&-&+&-\\
+&-&-&+
\end{array}\right]\ \begin{array}{c}(00)\\(01)\\(10)\\(11)\end{array}\end{tiny}\,.
\end{equation*}
(Here, the elements of the index set $\bbF_2^2$ of the rows and columns of $\bfGamma_1$ are ordered as
$00$, $01$, $10$, $11$.)
Letting $P=2$ and $J=1$ in Definition~\ref{def.symplectic ETF} gives that
the Gram matrices of the corresponding symplectic ETF and its dual are
\begin{equation*}
\setlength{\arraycolsep}{0pt}
2\brI+\bfGamma_1
=\begin{tiny}\left[\begin{array}{cccc}
3&+&+&+\\
+&3&-&-\\
+&-&3&-\\
+&-&-&3
\end{array}\right]\end{tiny},
\qquad
2\brI-\bfGamma_1
=\begin{tiny}\left[\begin{array}{cccc}
+&-&-&-\\
-&+&+&+\\
-&+&+&+\\
-&+&+&+
\end{array}\right]\end{tiny}.
\end{equation*}
These are $N=P^{2J}=4$-vector ETFs for spaces of dimension
$D=\frac12P^J(P^J+1)=3$ and $N-D=1$, respectively,
that are indexed by $\calV=\bbF_2^2$.
Negating the first vector in these ETFs---and so negating all nondiagonal entries in the first row and column of these two Gram matrices---reveals them to be equivalent to the regular simplex and its dual, respectively, that were considered in Example~\ref{example.binder of simplex}.
For more interesting ETFs, consider the analogous form on $\bbF_2^4$, namely
$\rmB_2((x_1,x_2,x_3,x_4),(y_1,y_2,y_3,y_4))=(x_1y_2+x_2y_1)+(x_3y_4+x_4y_3)$.
Its character table $\bfGamma_2$ has
\begin{align*}
\bfGamma_2((x_1,x_2,x_3,x_4),(y_1,y_2,y_3,y_4))
&=(-1)^{x_1y_2+x_2y_1}(-1)^{x_3y_4+x_4y_3}\\
&=\bfGamma_1((x_1,x_2),(y_1,y_2))\bfGamma_1((x_3,x_4),(y_3,y_4)),
\end{align*}
and so is naturally identified with the tensor product of $\bfGamma_1$ with itself:
\begin{equation}
\label{eq.Gamma 16x16}
\setlength{\arraycolsep}{0pt}
\bfGamma_2=\bfGamma_1\otimes\bfGamma_1
=\begin{tiny}\left[\begin{array}{cccc}
+&+&+&+\\
+&+&-&-\\
+&-&+&-\\
+&-&-&+
\end{array}\right]\end{tiny}
\otimes
\begin{tiny}\left[\begin{array}{cccc}
+&+&+&+\\
+&+&-&-\\
+&-&+&-\\
+&-&-&+
\end{array}\right]\end{tiny}
=\begin{tiny}\left[\begin{array}{cccccccccccccccc}
+&+&+&+&+&+&+&+&+&+&+&+&+&+&+&+\\
+&+&-&-&+&+&-&-&+&+&-&-&+&+&-&-\\
+&-&+&-&+&-&+&-&+&-&+&-&+&-&+&-\\
+&-&-&+&+&-&-&+&+&-&-&+&+&-&-&+\\
+&+&+&+&+&+&+&+&-&-&-&-&-&-&-&-\\
+&+&-&-&+&+&-&-&-&-&+&+&-&-&+&+\\
+&-&+&-&+&-&+&-&-&+&-&+&-&+&-&+\\
+&-&-&+&+&-&-&+&-&+&+&-&-&+&+&-\\
+&+&+&+&-&-&-&-&+&+&+&+&-&-&-&-\\
+&+&-&-&-&-&+&+&+&+&-&-&-&-&+&+\\
+&-&+&-&-&+&-&+&+&-&+&-&-&+&-&+\\
+&-&-&+&-&+&+&-&+&-&-&+&-&+&+&-\\
+&+&+&+&-&-&-&-&-&-&-&-&+&+&+&+\\
+&+&-&-&-&-&+&+&-&-&+&+&+&+&-&-\\
+&-&+&-&-&+&-&+&-&+&-&+&+&-&+&-\\
+&-&-&+&-&+&+&-&-&+&+&-&+&-&-&+
\end{array}\right]
\ \begin{array}{c}
(0000)\\
(0001)\\
(0010)\\
(0011)\\
(0100)\\
(0101)\\
(0110)\\
(0111)\\
(1000)\\
(1001)\\
(1010)\\
(1011)\\
(1100)\\
(1101)\\
(1110)\\
(1111)
\end{array}
\end{tiny}\ .
\end{equation}
Being a symmetric Hadamard matrix of size $N=P^{2J}=2^4=16$ and trace $N=16$,
$\bfGamma_2$ has eigenvalues $P^J=2^2=4$ and $-P^J=-4$ with multiplicities
$D=\frac12P^J(P^J+1)=\frac12 2^2(2^2+1)=10$ and
$N-D=\frac12P^J(P^J-1)=\frac12 2^2(2^2-1)=6$, respectively,
implying $4\brI+\bfGamma_2$ and $4\brI-\bfGamma_2$ are Gram matrices of $16$-vector ETFs for spaces of dimension $10$ and $6$, respectively.
Similarly,
$8\brI+\bfGamma_3$ and $8\brI-\bfGamma_3$ are Gram matrices of $64$-vector ETFs for spaces of dimension $36$ and $28$, respectively, where $\bfGamma_3=\bfGamma_1\otimes\bfGamma_1\otimes\bfGamma_1$.
Later on in this section, we calculate the binders of all such ETFs.
\end{example}

Any symplectic ETF $\bfPhi$ and its dual $\bfPsi$ are DTETFs~\cite{IversonM22}.
To explain, in general, if $\rmB$ is any symplectic form on a finite-dimensional vector space $\calV$ over some field $\bbF$,
the corresponding \textit{symplectic group} is the set $\Sp(\rmB)$ of all linear operators $\bfA:\calV\rightarrow\calV$ that preserve $\rmB$, that is,
for which $\rmB(\bfA\bfv_1,\bfA\bfv_2)=\rmB(\bfv_1,\bfv_2)$ for all $\bfv_1,\bfv_2\in\calV$.
In light of~\eqref{eq.symplectic basis},
any such $\bfA$ maps any symplectic basis of $\calV$ to another such basis.
Since any such $\calV$ has a symplectic basis,
and since any symplectic basis for $\calV$ is a basis for it,
any such $\bfA$ is necessarily invertible.
Thus, $\Sp(\rmB)$ is necessarily a subset of the general linear group $\GL(\calV)$ of all invertible linear operators on $\calV$.
It is moreover straightforward to show that it is a subgroup of $\GL(\calV)$.
Thus,
\begin{equation}
\label{eq.symplectic group}
\Sp(\rmB)=\set{\bfA\in\GL(\calV) : \rmB(\bfA\bfv_1,\bfA\bfv_2)=\rmB(\bfv_1,\bfv_2),\ \forall\,\bfv_1,\bfv_2\in\calV}
\leq\GL(\calV).
\end{equation}
Conversely, any linear operator $\bfA:\calV\rightarrow\calV$
that maps even a single symplectic basis $(\bfv_m)_{m=1}^{2J}$ for $\calV$ to another such basis is necessarily a member of $\Sp(\rmB)$ since any $\bfu_1,\bfu_2\in\calV$ can be written as linear combinations of $(\bfv_m)_{m=1}^{2J}$ where $\rmB(\bfA\bfv_{m_1},\bfA\bfv_{m_2})=\rmB(\bfv_{m_1},\bfv_{m_2})$ for all $m_1,m_2\in\calM$.
From this, it follows that the natural action of $\Sp(\rmB)$ on $\calV\backslash\set{\bfzero}$ is transitive,
that is, that for any nonzero $\bfv_1,\bfv_2\in\calV$, there exists $\bfA\in\Sp(\rmB)$ such that $\bfA\bfv_1=\bfv_2$:
since any nonzero vector can serve as the first member of a symplectic basis for $\calV$,
extending $\bfv_1$ to one such basis and $\bfv_2$ to another,
the unique linear operator $\bfA:\calV\rightarrow\calV$ that maps the former onto the latter thus belongs to $\Sp(\rmB)$.

We now more broadly consider affine symplectic mappings, that is,
bijections from $\calV$ onto itself of the form
$\bfv'\mapsto(\bfv,\bfA)\cdot\bfv':=\bfv+\bfA\bfv'$ where $\bfv\in\calV$ and $\bfA\in\Sp(\rmB)$.
Composing two such mappings yields another:
$(\bfv_1,\bfA_1)\cdot[(\bfv_2,\bfA_2)\cdot\bfv']
=\bfv_1+\bfA_1(\bfv_2+\bfA_2\bfv')
%=(\bfv_1+\bfA_1\bfv_2)+(\bfA_1\bfA_2)
=(\bfv_1+\bfA_1\bfv_2,\bfA_1\bfA_2)\cdot\bfv'$.
It quickly follows that $(\bfv,\bfA)\cdot\bfv':=\bfv+\bfA\bfv'$ defines an action on $\calV$ by the semidirect product $\calV\rtimes\Sp(\rmB)$ under the group operation
$(\bfv_1,\bfA_1)(\bfv_2,\bfA_2):=(\bfv_1+\bfA_1\bfv_2,\bfA_1\bfA_2)$.
This action is doubly transitive since, for any $\bfv_1,\bfv_2,\bfv_3,\bfv_4\in\calV$ with $\bfv_1\neq\bfv_2$ and $\bfv_3\neq\bfv_4$,
taking $\bfA\in\Sp(\rmB)$ such that $\bfA(\bfv_1-\bfv_2)=(\bfv_3-\bfv_4)$ and $\bfv=\bfv_3-\bfA\bfv_1$,
we have
$(\bfv,\bfA)\cdot\bfv_1
=\bfv+\bfA\bfv_1
=\bfv_3$
and
\begin{equation*}
(\bfv,\bfA)\cdot\bfv_2
=\bfv+\bfA\bfv_2
=\bfv_3-\bfA\bfv_1+\bfA\bfv_2
=\bfv_3-\bfA(\bfv_1-\bfv_2)
=\bfv_3-(\bfv_3-\bfv_4)
=\bfv_4.
\end{equation*}
Such mappings have a simple effect on symplectic forms:
for any $\bfv_1,\bfv_2\in\calV$,
\begin{align*}
\rmB(\bfv+\bfA\bfv_1,\bfv+\bfA\bfv_2)
&=\rmB(\bfA\bfv_1,\bfv)+\rmB(\bfv,\bfA\bfv_2)+\rmB(\bfA\bfv_1,\bfA\bfv_2)\\
&=-\rmB(\bfv,\bfA\bfv_1)+\rmB(\bfv,\bfA\bfv_2)+\rmB(\bfv_1,\bfv_2).
\end{align*}
When regarded as permutations of $\calV$, these mappings thus belong to the symmetry group of the symplectic ETF (Definition~\ref{def.symplectic ETF}):
for any $\bfv\in\calV$ and $\bfA\in\Sp(\rmB)$,
$(z_{\bfv'})_{\bfv'\in\calV}$, $z_{\bfv'}:=\exp(\frac{2\pi\rmi}{P}\rmB(\bfv,\bfA\bfv'))$
is a finite sequence of unimodular scalars, and the following holds for any $\bfv_1,\bfv_2\in\calV$, $\bfv_1\neq\bfv_2$:
\begin{align*}
\ip{\bfphi_{\bfv+\bfA\bfv_1}}{\bfphi_{\bfv+\bfA\bfv_2}}
&=\exp(\tfrac{2\pi\rmi}{P}\rmB(\bfv+\bfA\bfv_1,\bfv+\bfA\bfv_2))\\
&=\exp(\tfrac{2\pi\rmi}{P}
[-\rmB(\bfv,\bfA\bfv_1)+\rmB(\bfv,\bfA\bfv_2)+\rmB(\bfv_1,\bfv_2)])\\
&=\overline{z_{\bfv_1}}z_{\bfv_2}\ip{\bfphi_{\bfv_1}}{\bfphi_{\bfv_2}}.
\end{align*}
One may more strongly show that these affine symplectic mappings are the only members of the symmetry group of a symplectic ETF $\bfPhi$, that is, $\bfPhi\cong\calV\rtimes\Sp(\rmB)$~\cite{IversonM22}.
Regardless, by the argument above, any symplectic ETF is a DTETF,
and so its dual is as well.
By Theorem~\ref{theorem.binders of DTETFs},
the binders of these DTETFs are thus either empty or form BIBDs.
We now calculate these binders explicitly.

\subsection{Calculating the binders of symplectic equiangular tight frames and their duals}

The process of calculating the binder of a symplectic ETF has much in common with that for its dual:

\begin{lemma}
\label{lemma.binder of symplectic}
For any prime $P$ and positive integer $J$,
let $\bin(\bfPhi)$ be the binder of a $P^{2J}$-vector symplectic ETF $\bfPhi$,
and $\bin(\bfPsi)$ be the binder of its dual $\bfPsi$,
as given in Definition~\ref{def.symplectic ETF}.
Then:
\begin{enumerate}
\renewcommand{\labelenumi}{(\alph{enumi})}
\item
$\bin(\bfPhi)$ is empty if $P>2$.
When $P=2$, $\bin(\bfPhi)$ consists of sets of the form $\bfv+(\set{\bfzero}\cup\calS)$ where $\bfv\in\calV$ and $\calS\subseteq\calV$ satisfies $\#(\calS)=2^J+1$ and $\rmB(\bfv_1,\bfv_2)=1$ for all $\bfv_1,\bfv_2\in\calS$, $\bfv_1\neq\bfv_2$.

\item
$\bin(\bfPsi)$ consists of sets of the form $\bfv+\calL$ where $\bfv\in\calV$ and $\calL\subseteq\calV$ satisfies $\#(\calL)=P^J$ and $\rmB(\bfv_1,\bfv_2)=0$ for all $\bfv_1,\bfv_2\in\calL$.
\end{enumerate}
\end{lemma}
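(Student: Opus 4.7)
The plan is to apply the triple-product characterization~\eqref{eq.triple product simplex} of the binder directly to the explicit inner products given in Definition~\ref{def.symplectic ETF}. For the symplectic ETF $\bfPhi$, a direct calculation with $N=P^{2J}$ and $D=\tfrac12 P^J(P^J+1)$ gives $S=P^J+1$, so members of $\bin(\bfPhi)$ have cardinality $P^J+2$; the analogous calculation for $\bfPsi$ gives cardinality $P^J$. For pairwise distinct $\bfv_1,\bfv_2,\bfv_3\in\calV$, the relevant triple product for $\bfPhi$ reduces to $\omega^{T(\bfv_1,\bfv_2,\bfv_3)}$ where $\omega:=\exp(2\pi\rmi/P)$ and
\[ T(\bfv_1,\bfv_2,\bfv_3):=\rmB(\bfv_1,\bfv_2)+\rmB(\bfv_2,\bfv_3)+\rmB(\bfv_3,\bfv_1)\in\bbF_P, \]
while the triple product for $\bfPsi$ is $-\omega^{T(\bfv_1,\bfv_2,\bfv_3)}$ by the extra minus sign in its Gram matrix entries.

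The key preliminary observation is that $T$ is translation-invariant, that is, $T(\bfv+\bfu_1,\bfv+\bfu_2,\bfv+\bfu_3)=T(\bfu_1,\bfu_2,\bfu_3)$ for every $\bfv\in\calV$. This follows by bilinearity and the alternating property of $\rmB$: expanding each $\rmB(\bfv+\bfu_i,\bfv+\bfu_j)$ leaves the cross terms $\rmB(\bfv,\bfu_j)-\rmB(\bfv,\bfu_i)$, which cancel in the telescoping cyclic sum. Consequently, any candidate set may be translated to contain $\bfzero$, after which $T(\bfv_1,\bfv_2,\bfzero)$ collapses to $\rmB(\bfv_1,\bfv_2)$. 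This reduction is what makes both parts tractable.

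For part (a) the condition on $\bfPhi$ is $\omega^T=-1$ for every triple. When $P>2$, $-1$ is not a $P$-th root of unity, so no triple can satisfy it; since $P^J+2\geq 3$, this forces $\bin(\bfPhi)$ to be empty. When $P=2$ we need $T\equiv 1\pmod 2$. Translating so $\bfzero\in\calK$ and writing $\calS:=\calK\setminus\set{\bfzero}$, the triples $\set{\bfzero,\bfv_1,\bfv_2}$ force $\rmB(\bfv_1,\bfv_2)=1$ for all distinct $\bfv_1,\bfv_2\in\calS$; triples entirely within $\calS$ then automatically satisfy $T=1+1+1\equiv 1\pmod 2$, so this pair condition is also sufficient. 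For part (b) the condition is $-\omega^T=-1$, i.e., $T\equiv 0\pmod P$; translating so $\bfzero\in\calL$, the triples through $\bfzero$ force $\rmB(\bfv_1,\bfv_2)=0$ for all distinct $\bfv_1,\bfv_2\in\calL\setminus\set{\bfzero}$, which together with $\rmB(\bfv,\bfv)=\rmB(\bfv,\bfzero)=0$ makes $\calL$ totally isotropic, and conversely total isotropy makes every $T$ vanish.

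The main obstacle is really just the translation-invariance identity for $T$; once that reduction is in place, both parts collapse to a pair condition arising from triples through $\bfzero$, and sufficiency is a one-line check using either $1+1+1=1$ in $\bbF_2$ or $0+0+0=0$ in $\bbF_P$. Notably, no deeper structural claim (for instance, that the isotropic $\calL$ must be a subspace) is needed for this lemma; such strengthenings, culminating in the identification with affine Lagrangian subspaces, belong to the theorem that follows.
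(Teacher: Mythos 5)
Your proposal is correct and follows essentially the same route as the paper: apply the triple-product characterization~\eqref{eq.triple product simplex} to the explicit Gram entries of Definition~\ref{def.symplectic ETF}, observe that the cyclic sum $\rmB(\bfv_1,\bfv_2)+\rmB(\bfv_2,\bfv_3)+\rmB(\bfv_3,\bfv_1)$ is translation-invariant (the paper phrases this as $\rmB(\bfv_1-\bfv_3,\bfv_2-\bfv_3)$), translate so that $\bfzero$ lies in the candidate set, and reduce everything to the pairwise conditions $\rmB(\bfv_1,\bfv_2)=1$ or $0$, with sufficiency following from $1+1+1=1$ in $\bbF_2$ and $0+0+0=0$, respectively. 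The only cosmetic difference is that you verify translation invariance by a telescoping expansion rather than by rewriting the sum as a single difference form, which changes nothing of substance.
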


\begin{proof}
(a)
Since $\ip{\bfphi_{\bfv_1}}{\bfphi_{\bfv_2}}
=\exp(\frac{2\pi\rmi}{P}\rmB(\bfv_1,\bfv_2))$ is unimodular when $\bfv_1\neq\bfv_2$ and $\norm{\bfphi_\bfv}^2=P^J+1$ for all $\bfv$,
\eqref{eq.triple product simplex} gives that a subset $\calK$ of $\calV$ belongs to $\bin(\bfPhi)$ if and only if both $\#(\calK)=P^J+2$ and
\begin{align*}
-1=\ip{\bfphi_{\bfv_1}}{\bfphi_{\bfv_2}}\ip{\bfphi_{\bfv_2}}{\bfphi_{\bfv_3}}\ip{\bfphi_{\bfv_3}}{\bfphi_{\bfv_1}}
&=\exp(\tfrac{2\pi\rmi}{P}[\rmB(\bfv_1,\bfv_2)+\rmB(\bfv_2,\bfv_3)+\rmB(\bfv_3,\bfv_1)])\\
&=\exp(\tfrac{2\pi\rmi}{P}\rmB(\bfv_1-\bfv_3,\bfv_2-\bfv_3))
\end{align*}
for all pairwise distinct $\bfv_1,\bfv_2,\bfv_3\in\calK$.
Since $-1$ is only a $P$th root of unity when $P=2$,
$\bin(\bfPhi)$ is thus empty when $P>2$.
When instead $P=2$,
$\calK\in\bin(\bfPhi)$ if and only if $\#(\calK)=2^J+2$ and $\rmB(\bfv_1-\bfv_3,\bfv_2-\bfv_3)=1$ for all pairwise distinct $\bfv_1,\bfv_2,\bfv_3\in\calK$.
These conditions are translation invariant,
and so $\calK\in\bin(\bfPhi)$ if and only if $\calK_0:=-\bfv+\calK\in\bin(\bfPhi)$ for any $\bfv\in\calV$.
In particular,
for any $\calK\in\bin(\bfPhi)$, letting $\bfv\in\calK$ and $\bfv_3=\bfzero\in\calK_0$,
we have $1=\rmB(\bfv_1-\bfzero,\bfv_2-\bfzero)=\rmB(\bfv_1,\bfv_2)$ for any $\bfv_1,\bfv_2\in\calK_0$ with $\bfzero\neq\bfv_1\neq\bfv_2\neq\bfzero$.
Letting $\calS=\calK_0\backslash\set{\bfzero}$,
any $\calK\in\bin(\bfPhi)$ can thus be written as $\calK=\bfv+(\set{\bfzero}\cup\calS)$ where $\bfv\in\calV$, $\#(\calS)=2^J+1$ and $\rmB(\bfv_1,\bfv_2)=1$ for all $\bfv_1,\bfv_2\in\calS$ with $\bfv_1\neq\bfv_2$.
Conversely, for any such $\calS$, we claim that $\calK_0:=\set{\bfzero}\cup\calS$ belongs to $\bin(\bfPhi)$.
Indeed,
$\#(\calK_0)=2^J+2$ since $\#(\calS)=2^J+1$ and $\bfzero\notin\calS$.
Moreover, since $P=2$,
$\rmB(\bfv_1-\bfv_3,\bfv_2-\bfv_3)
=\rmB(\bfv_1,\bfv_2)+\rmB(\bfv_2,\bfv_3)+\rmB(\bfv_3,\bfv_1)=1$
for all pairwise distinct $\bfv_1,\bfv_2,\bfv_3\in\calK_0$,
being either $1+1+1=1$ when such $\bfv_1,\bfv_2,\bfv_3$ are all nonzero,
and being a sum of one $1$ and two zeros when any one of these three vectors is $\bfzero$.
Having the claim,
we further note that since $\bin(\bfPhi)$ is translation invariant,
it contains $\bfv+(\set{\bfzero}\cup\calS)$ for any $\bfv\in\calV$ and any such $\calS$.

(b) Since
$\ip{\bfpsi_{\bfv_1}}{\bfpsi_{\bfv_2}}=-\ip{\bfphi_{\bfv_1}}{\bfphi_{\bfv_2}}$
is unimodular when $\bfv_1\neq\bfv_2$ and has $\norm{\bfpsi_\bfv}^2=P^J-1$ for all $\bfv$,
\eqref{eq.triple product simplex} gives that a subset $\calK$ of $\calV$ belongs to $\bin(\bfPsi)$ if and only if both $\#(\calK)=P^J$ and
\begin{align*}
-1=\ip{\bfpsi_{\bfv_1}}{\bfpsi_{\bfv_2}}\ip{\bfpsi_{\bfv_2}}{\bfpsi_{\bfv_3}}\ip{\bfpsi_{\bfv_3}}{\bfpsi_{\bfv_1}}
&=-\ip{\bfphi_{\bfv_1}}{\bfphi_{\bfv_2}}\ip{\bfphi_{\bfv_2}}{\bfphi_{\bfv_3}}\ip{\bfphi_{\bfv_3}}{\bfphi_{\bfv_1}}\\
&=-\exp(\tfrac{2\pi\rmi}{P}\rmB(\bfv_1-\bfv_3,\bfv_2-\bfv_3)),
\end{align*}
for all pairwise distinct $\bfv_1,\bfv_2,\bfv_3\in\calK$.
As such, $\calK\in\bin(\bfPsi)$ if and only if $\#(\calK)=P^J$ and $0=\rmB(\bfv_1-\bfv_3,\bfv_2-\bfv_3)$ for any pairwise distinct $\bfv_1,\bfv_2,\bfv_3\in\calK$.
As such, $\calK\in\bin(\bfPsi)$ if and only if $\calL:=-\bfv+\calK\in\bin(\bfPsi)$ for any $\bfv\in\calV$.
Thus, if $\calK\in\bin(\bfPsi)$ then taking $\bfv\in\calK$ and $\bfv_3=\bfzero\in\calL$,
we have $0=\rmB(\bfv_1-\bfzero,\bfv_2-\bfzero)=\rmB(\bfv_1,\bfv_2)$ for any distinct nonzero $\bfv_1,\bfv_2\in\calL$.
Since any $\bfv\in\calV$ is orthogonal to $\bfzero$ and to itself,
$\rmB(\bfv_1,\bfv_2)=0$ for all $\bfv_1,\bfv_2\in\calL$.
Conversely, any $\calL\subseteq\calV$ with these properties belongs to $\bin(\bfPsi)$,
having $\#(\calL)=P^J$ and $\rmB(\bfv_1-\bfv_3,\bfv_2-\bfv_3)=0-0-0+0=0$ for any $\bfv_1,\bfv_2,\bfv_3\in\calV$.
As such, for any such $\calL$, we also have $\bfv+\calL\in\bin(\bfPsi)$ for any $\bfv\in\calV$.
\end{proof}

In light of the prior result and the fact that orthogonality is simpler than nonorthogonality in general,
we for the moment focus exclusively on the binder of the dual of a symplectic ETF.
In particular, a subset $\calS$ of $\calV$ is \textit{totally orthogonal} if $\rmB(\bfv_1,\bfv_2)=0$ for all $\bfv_1,\bfv_2\in\calS$, that is, if $\calS\subseteq\calS^\perp$.
When a subset $\calL$ of $\calV$ satisfies $\calL=\calL^\perp$ it is known as a \textit{Lagrangian subspace} of $\calV$.
Any such $\calL$ is necessarily a subspace of $\calV$ since $\calS^\perp$ is a subspace of $\calV$ for any subset $\calS$ of $\calV$.
As this name suggests, such sets are topics of classical interest.
The following result gives the relevant folklore.
For the sake of completeness,
these facts are proven in the appendix%
%\ of~\cite{FickusL23}% COMMENT
.

\begin{lemma}[Folklore]
\label{lemma.totally orthogonal}
Let $\rmB$ be a symplectic form on a vector space $\calV$ of dimension $2J$ over a field $\bbF$.
Let $\calS$ and $\calL$ be a subset and subspace of $\calV$, respectively,
Let $Q$ be a prime power.

\begin{enumerate}
\renewcommand{\labelenumi}{(\alph{enumi})}
\item
If $\calS$ is totally orthogonal then so is $\Span(\calS)$.\smallskip

\item
If $\calL\subseteq\calL^\perp$ then $\dim(\calL)\leq J$.
Here, $\dim(\calL)=J$ if and only if $\calL$ is Lagrangian, that is,
$\calL=\calL^\perp$.\smallskip

\item
A finite sequence $(\bfv_m)_{m=1}^M$ of vectors in $\calV$ is an ordered basis for a totally orthogonal subspace of $\calV$ if and only if \smash{$\bfv_m\in\calV_m^\perp\backslash\calV_m^{}$} for each $m\in[M]$ where \smash{$\calV_m:=\Span(\bfv_{m'})_{m'=1}^{m-1}$}.\smallskip

\item
When $\bbF=\bbF_Q$ has order $Q$,
the number of Lagrangian subspaces of $\calV$ is
$\prod_{j=1}^{J}(Q^j+1)$.
\end{enumerate}
\end{lemma}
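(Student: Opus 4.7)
Parts (a) and (b) will be short applications of bilinearity and the identity $\dim(\calL^\perp)=2J-\dim(\calL)$ already established in the body of the section. For (a), I would write any two vectors in $\Span(\calS)$ as $\bbF$-linear combinations of members of $\calS$ and expand $\rmB$ bilinearly; each pairwise term vanishes because $\calS$ is totally orthogonal. For (b), the inclusion $\calL\subseteq\calL^\perp$ forces $\dim(\calL)\leq\dim(\calL^\perp)=2J-\dim(\calL)$, hence $\dim(\calL)\leq J$; equality forces $\dim(\calL)=\dim(\calL^\perp)$, which together with $\calL\subseteq\calL^\perp$ gives $\calL=\calL^\perp$.

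For (c), the forward direction is immediate: if $(\bfv_m)_{m=1}^M$ is a basis of a totally orthogonal subspace $\calV_0$, then $\bfv_m\notin\calV_m$ by linear independence, and the chain $\calV_m\subseteq\calV_0\subseteq\calV_0^\perp\subseteq\calV_m^\perp$ places $\bfv_m$ in $\calV_m^\perp$. For the converse I would induct on $m$: assuming $(\bfv_{m'})_{m'=1}^{m-1}$ is an ordered basis for a totally orthogonal subspace $\calV_m$ of dimension $m-1$, the condition $\bfv_m\in\calV_m^\perp\setminus\calV_m$ simultaneously extends it to a linearly independent sequence and, combined with $\rmB(\bfv_m,\bfv_m)=0$ and the inductive hypothesis, lets me invoke (a) to conclude that the augmented span is again totally orthogonal.

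For (d), I will apply (c) with $M=J$: by (b), every Lagrangian subspace has dimension exactly $J$, and conversely any $J$-dimensional totally orthogonal subspace is automatically Lagrangian. Hence counting ordered bases of Lagrangian subspaces reduces, via (c), to choosing $\bfv_m\in\calV_m^\perp\setminus\calV_m$ for each $m\in[J]$. Since $\dim(\calV_m)=m-1$ forces $\dim(\calV_m^\perp)=2J-m+1$, the number of such choices is $Q^{2J-m+1}-Q^{m-1}$, and the total number of ordered bases is
\begin{equation*}
\prod_{m=1}^{J}\bigparen{Q^{2J-m+1}-Q^{m-1}}
=Q^{J(J-1)/2}\prod_{k=1}^{J}(Q^{2k}-1)
=Q^{J(J-1)/2}\prod_{k=1}^{J}(Q^k-1)(Q^k+1).
\end{equation*}
Dividing this by $\abs{\GL_J(\bbF_Q)}=Q^{J(J-1)/2}\prod_{k=1}^{J}(Q^k-1)$, which is the number of ordered bases of any fixed $J$-dimensional $\bbF_Q$-space, yields the claimed count $\prod_{j=1}^{J}(Q^j+1)$. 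Since the result is explicitly marked folklore, I do not anticipate any genuine obstacle; the only nontrivial step is the bookkeeping in (d), while the structural content sits entirely in (a)--(c) and requires nothing beyond the symplectic-complement facts already developed in this section.
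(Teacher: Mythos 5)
Your proposal is correct and follows essentially the same route as the paper's appendix proof: bilinear expansion for (a), the dimension identity $\dim(\calL)+\dim(\calL^\perp)=2J$ for (b), the same two-directional argument for (c) (the paper phrases your induction as a direct verification that all pairwise form values vanish, then invokes (a)), and the same ordered-basis count divided by $\#\GL_J(\bbF_Q)$ for (d), with matching telescoping arithmetic. No gaps.
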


Combining these results with our prior observations yields the following main result:

\begin{theorem}
\label{theorem.binder of dual symplectic}
For any prime $P$ and positive integer $J$,
the binder of the dual $\bfPsi$ of a $P^{2J}$-vector symplectic ETF $\bfPhi$
(Definition~\ref{def.symplectic ETF}) consists of all affine Lagrangian subspaces of $\calV$,
that is, all subsets of $\calV$ of the form $\bfv+\calL$ where $\bfv\in\calV$ and $\calL^\perp=\calL$.
This binder forms a BIBD with parameters
\begin{equation*}
V=P^{2J},
\quad
K=P^J,
\quad
\Lambda=\prod_{j=1}^{J-1}(P^j+1),
\quad
R=\prod_{j=1}^{J}(P^j+1),
\quad
B=P^J\prod_{j=1}^{J}(P^j+1).
\end{equation*}
Moreover, a subset of this binder forms an affine plane of order $Q=P^J$.
namely a BIBD with $(V,K,\Lambda,R,B)=(Q^2,Q,1,Q+1,Q(Q+1))$.
\end{theorem}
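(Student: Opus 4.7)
The plan is to establish the three claims in sequence, reducing each to material already available in the paper.

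First, I would identify $\bin(\bfPsi)$ with the set of all affine Lagrangian subspaces. Lemma~\ref{lemma.binder of symplectic}(b) already pins every block of $\bin(\bfPsi)$ down to the form $\bfv+\calL$ with $\#(\calL)=P^J$ and $\calL$ totally orthogonal. Lemma~\ref{lemma.totally orthogonal}(a) upgrades total orthogonality to $\Span(\calL)$, and part (b) of that lemma bounds $\dim(\Span(\calL))\leq J$, giving $\#(\Span(\calL))\leq P^J=\#(\calL)$. Hence $\calL=\Span(\calL)$ is a $J$-dimensional totally orthogonal subspace, which by Lemma~\ref{lemma.totally orthogonal}(b) is Lagrangian. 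Conversely, every affine Lagrangian subspace lies in $\bin(\bfPsi)$ by Lemma~\ref{lemma.binder of symplectic}(b).

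Second, I would check the BIBD claim and tally its parameters. Since any symplectic ETF (and hence its dual) is a DTETF and $\bin(\bfPsi)$ is nonempty, Theorem~\ref{theorem.binders of DTETFs} guarantees that $\bin(\bfPsi)$ forms a BIBD. The numerics then fall out: $V=\#(\calV)=P^{2J}$ and $K=P^J$ are immediate; each Lagrangian subspace has exactly $P^J$ affine cosets, so Lemma~\ref{lemma.totally orthogonal}(d) yields $B=P^J\prod_{j=1}^{J}(P^j+1)$; by translation invariance (translations belong to the affine symplectic symmetry group of $\bfPsi$), the replication number $R$ equals the number of blocks through $\bfzero$, which is exactly the number of Lagrangian subspaces $\prod_{j=1}^{J}(P^j+1)$; and $\Lambda=R(K-1)/(V-1)$ from~\eqref{eq.BIBD parameter relations} telescopes via $P^{2J}-1=(P^J-1)(P^J+1)$ to $\prod_{j=1}^{J-1}(P^j+1)$.

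Third, I would exhibit an affine plane of order $Q=P^J$ as a sub-incidence-structure of $\bin(\bfPsi)$. Because the canonical symplectic form~\eqref{eq.canonical symplectic form} is unique up to $\bbF_P$-linear isomorphism, I may identify $\calV$ with $\bbF_Q^2$ (viewed as a $2J$-dimensional $\bbF_P$-space via any $\bbF_P$-basis of $\bbF_Q$) and take $\rmB((x_1,y_1),(x_2,y_2)):=\tr(x_1y_2-x_2y_1)$, with $\tr:\bbF_Q\rightarrow\bbF_P$ the field trace. The $Q+1$ one-dimensional $\bbF_Q$-subspaces of $\bbF_Q^2$, namely $\set{(x,ax):x\in\bbF_Q}$ for each $a\in\bbF_Q$ together with $\set{(0,y):y\in\bbF_Q}$, are each $J$-dimensional over $\bbF_P$ and totally $\rmB$-isotropic by a one-line trace computation, hence Lagrangian; they partition $\calV\setminus\set{\bfzero}$ in the projective-line manner, and their $Q(Q+1)$ affine translates form the line set of an affine plane of order $Q$ with the stated parameters. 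The main obstacle lies in this last step: one must commit to an $\bbF_Q$-structure on $\calV$ compatible with $\rmB$ via the trace form above, and verify that $\bbF_Q$-lines through the origin really do produce $J$-dimensional $\bbF_P$-Lagrangian subspaces---once done, the rest is bookkeeping from Lemmas~\ref{lemma.binder of symplectic} and~\ref{lemma.totally orthogonal}.
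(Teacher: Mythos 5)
Your proposal is correct and follows essentially the same route as the paper's own proof: the identification of $\bin(\bfPsi)$ with affine Lagrangian subspaces via Lemmas~\ref{lemma.binder of symplectic}(b) and~\ref{lemma.totally orthogonal}, the appeal to Theorem~\ref{theorem.binders of DTETFs} for the BIBD structure, the same parameter count via Lemma~\ref{lemma.totally orthogonal}(d) and~\eqref{eq.BIBD parameter relations}, and the same realization of the affine plane by identifying $\calV$ with $\bbF_Q^2$ under $\tr\circ\rmB_1$ and checking that $\bbF_Q$-lines are totally isotropic. The step you flag as the "main obstacle" is handled in the paper exactly as you suggest, by invoking the uniqueness of the symplectic form up to isomorphism to justify the choice of $\bbF_Q$-structure.
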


\begin{proof}
By Lemma~\ref{lemma.binder of symplectic},
$\bin(\bfPsi)$ consists of sets of the form $\bfv+\calL$ where $\bfv\in\calV$ and $\calL$ is a $P^J$-vector totally orthogonal subset of $\calV$.
For any such $\calL$, Lemma~\ref{lemma.totally orthogonal} gives that $\Span(\calL)$ is a totally orthogonal subspace of $\calV$ and so moreover that $\dim(\Span(\calL))\leq J$.
Thus, $P^J=\#(\calL)\leq\#(\Span(\calL))\leq P^J$,
implying that $\calL=\Span(\calL)$ is a totally orthogonal subspace of $\calV$ with $\#(\calL)=P^J$, i.e., $\dim(\calL)=J$.
Lemma~\ref{lemma.totally orthogonal} gives that any such $\calL$ is Lagrangian.
Conversely, any Lagrangian subspace $\calL$ is totally orthogonal and,
by Lemma~\ref{lemma.totally orthogonal}, has $\dim(\calL)=J$ and so $\#(\calL)=P^J$.
Thus indeed,
\begin{equation*}
\bin(\bfPsi)=\set{\bfv+\calL: \bfv\in\calV,\,\calL\subseteq\calV,\,\calL=\calL^\perp}.
\end{equation*}

Lemma~\ref{lemma.binder of symplectic} moreover guarantees that Lagrangian subspaces $\calL$ of $\calV$ exist.
In fact, it gives that there are exactly $\prod_{j=1}^{J}(P^j+1)$ distinct such $\calL$.
Since $\bfPsi$ is a DTETF~\cite{IversonM22} and $\bin(\bfPsi)$ is nonempty,
Theorem~\ref{theorem.binders of DTETFs} guarantees that $\bin(\bfPsi)$ forms a BIBD,
that is, that $(\calV,(\calK_b)_{b\in\calB})$ is a BIBD where $(\calK_b)_{b\in\calB}$ is any enumeration of $\bin(\bfPsi)$.
The vertex set $\calV$ of this BIBD has cardinality $V=P^{2J}$.
Every block of it is of the form $\bfv+\calL$ and so has cardinality $K=\#(\calL)=P^J$.
Moreover, recall that in a BIBD, the number $R$ of blocks that contain a given vertex is independent of one's choice of vertex.
In particular, $R$ is the number of affine Lagrangian subspaces of $\calV$ that contain $\bfzero$, that is, the number of Lagrangian subspaces of $\calV$, that is, \smash{$R=\prod_{j=1}^{J}(P^j+1)$}.
Since $\frac{V}{K}=P^J$,
\eqref{eq.BIBD parameter relations} gives that \smash{$B=\frac{V}{K}R=P^J\prod_{j=1}^{J}(P^j+1)$}.
This is consistent with the fact that each of the $R$ Lagrangian subspaces of $\calV$ has codimension $J$ and so has $P^J$ distinct cosets.
Similarly, \smash{$\frac{V-1}{K-1}=\frac{P^{2J}-1}{P^J-1}=P^J+1$} and so~\eqref{eq.BIBD parameter relations} gives
\smash{$\Lambda
=\frac{K-1}{V-1}R
=(P^J+1)^{-1}\prod_{j=1}^{J}(P^j+1)
=\prod_{j=1}^{J-1}(P^j+1)$}.
From this analysis it is clear that when $J=1$,
this empty product should be regarded as $1$,
as is the standard convention.

All that remains to be shown is that a subset of $\bin(\bfPsi)$ forms an affine plane of order \smash{$Q:=P^J$}.
Here, recall that any symplectic form on a finite-dimensional vector space has a symplectic basis~\eqref{eq.symplectic basis},
and that this implies that up to isomorphism, there is exactly one symplectic form on spaces of a given even dimension $2J$ over a given field $\bbF$.
Without loss of generality we can thus choose $\calV$ to be $\bbF_Q^2$,
and let $\rmB$ be the composition of the field trace $\tr:\bbF_Q\rightarrow\bbF_P$ (or any other such nontrivial linear functional) with the canonical $\bbF_Q$-valued symplectic form $\rmB_1$ on this space~\eqref{eq.canonical symplectic form}:
\begin{equation*}
\rmB=\tr\circ\,\rmB_1:\bbF_Q\times\bbF_Q\rightarrow\bbF,
\quad
\rmB((x_1,x_2),(y_1,y_2))=\tr(x_1y_2-x_2y_1).
\end{equation*}
Note that the $\bbF_Q$-multiples of any $(x_1,x_2)\neq(0,0)$ form a Lagrangian subspace of $\bbF_Q^2$ with respect to $\rmB$:
clearly, $\set{c(x_1,x_2): c\in\bbF_Q}$ has cardinality $Q=P^J$,
and this set is totally orthogonal since
\begin{equation*}
\rmB(c_1(x_1,x_2),c_2(x_1,x_2))
=\rmB((c_1x_1,c_1x_2),(c_2x_1,c_2x_2))
=\tr(c_1x_1c_2x_2-c_1x_2c_2x_1)
=\tr(0)
=0
\end{equation*}
for any $c_1,c_2\in\bbF_Q$.
Since $\bin(\bfPsi)$ is closed under translation,
it thus contains all affine lines (one-dimensional subspaces) in $\bbF_Q^2$,
regarded as a plane (two-dimensional space) over $\bbF_Q$,
namely the classical (Desarguesian) affine plane of order $Q$.
\end{proof}

\begin{example}
\label{example.binder of 6x16}
From Example~\ref{example.symplectic ETFs},
recall that when $P=2$ and $J=2$,
the canonical symplectic form~\eqref{eq.canonical symplectic form} on $\bbF_2^4$ is
$\rmB_2((x_1,x_2,x_3,x_4),(y_1,y_2,y_3,y_4))=(x_1y_2+x_2y_1)+(x_3y_4+x_4y_3)$,
and that the dual $\bfPsi$ of the corresponding symplectic ETF $\bfPhi$ has Gram matrix $4\brI-\bfGamma_2$ where $\bfGamma_2$ is given by~\eqref{eq.Gamma 16x16}.
By Theorem~\ref{theorem.binder of dual symplectic},
$\bin(\bfPsi)$ consists of all affine Lagrangian subspaces of \smash{$\bbF_2^4$},
namely all sets of the form $\bfv+\calL$ where \smash{$\calL\subseteq\bbF_2^4$} satisfies $\calL^\perp=\calL$,
and moreover, that such sets form a BIBD with parameters
$(V,K,\Lambda,R,B)
=(2^{2(2)},2^{2},2+1,(2^2+1)(2+1),2^2(2^2+1)(2+1))
=(16,4,3,15,60)$.
In particular, there are exactly $15$ Lagrangian subspaces of \smash{$\bbF_2^4$}, namely:
\begin{equation}
\label{eq.Lagrangian subspaces in F_2^4}
\begin{tiny}\begin{array}{ccccc}
\set{0000,0010,1000,1010},&
\set{0000,0011,1100,1111},&
\set{0000,0111,1001,1110},&
\set{0000,0110,1011,1101},&
\set{0000,0001,0100,0101},\\
\set{0000,0001,1000,1001},&
\set{0000,0001,1100,1101},&
\set{0000,0010,0100,0110},&
\set{0000,0010,1100,1110},&
\set{0000,0011,0100,0111},\\
\set{0000,0011,1000,1011},&
\set{0000,0101,1010,1111},&
\set{0000,0101,1011,1110},&
\set{0000,0110,1001,1111},&
\set{0000,0111,1010,1101}.
\end{array}\end{tiny}
\end{equation}
These can be obtained, for example, via the iterative construction used in Lemma~\ref{lemma.totally orthogonal}.
To explain in detail,
there are $(16-1)(8-2)=90$ ordered bases $(\bfv_1,\bfv_2)$ for some Lagrangian subspace of $\bbF_2^4$,
each consisting of an arbitrary $\bfv_1\neq\bfzero$ followed by some $\bfv_2\in\set{\bfv_1}^\perp\backslash\set{\bfzero,\bfv_1}$.
Any two such ordered bases span the same Lagrangian subspace if and only if they are equivalent via one of $(4-1)(4-2)=6$ invertible matrices in \smash{$\bbF_2^2$}.
Since each of these $\frac{90}{6}=15$ Lagrangian subspaces is a subgroup of \smash{$\bbF_2^4$} of index $4$,
it has $4$ distinct cosets, yielding $4(15)=60$ affine Lagrangian subspaces altogether.
Since \smash{$\binom{16}{4}=1820$},
the probability of randomly selecting a $4$-vector subset of this $16$-vector ETF for $\bbR^6$ whose vectors are linearly dependent is thus $\frac{60}{1820}=\frac{3}{91}\approx0.033$.

Since $\bin(\bfPsi)$ forms a BIBD,
we can phase (sign) the entries of the corresponding $60\times16$ incidence matrix \`{a} la~\eqref{eq.phased incidence} to embed its dual $\bfPhi$ into a $10$-dimensional subspace of $\bbR^{60}$.
In fact, Theorem~\ref{theorem.binder of dual symplectic} guarantees that a subset of $\bin(\bfPsi)$ forms an affine plane,
namely a BIBD with $(V,K,\Lambda,R,B)=(16,4,1,5,20)$,
and phasing its $20\times16$ incidence matrix yields an equally relatively sparse---yet more concise---embedding of $\bfPhi$ into a $10$-dimensional subspace of $\bbR^{20}$.
To see how such an affine plane arises,
let \smash{$\bbF_4=\set{0,1,\alpha,\beta}$} where $\beta=\alpha^2=1+\alpha$,
and regard \smash{$\bbF_4^2$} as a two-dimensional space over \smash{$\bbF_4$} that is equipped with the canonical symplectic form~\eqref{eq.canonical symplectic form}.
Since \smash{$\bbF_4^2$} is two-dimensional,
each of its one-dimensional subspaces is Lagrangian.
These spaces are:
\begin{equation}
\label{eq.binder of 6x16 1}
\set{00,10,\alpha 0,\beta 0},\quad
\set{00,11,\alpha\alpha,\beta\beta},\quad
\set{00,1\alpha,\alpha\beta,\beta1},\quad
\set{00,1\beta,\alpha 1,\beta\alpha},\quad
\set{00,01,0\alpha,0\beta}.
\end{equation}
Now consider the mapping \smash{$(x_1,x_2,x_3,x_4)\in\bbF_2^4\mapsto(x_1+x_3\alpha,x_2+x_4\alpha)\in\bbF_4^2$}.
Under it, the canonical symplectic form on $\bbF_2^4$ arises as the composition of that on \smash{$\bbF_4^2$} and the nontrivial linear functional \smash{$\bfL:\bbF_4\rightarrow\bbF_2$} defined by $\bfL(0)=\bfL(\alpha)=0$ and $\bfL(1)=\bfL(\alpha^2)=1$:
\begin{equation*}
(x_1+x_3\alpha)(y_2+y_4\alpha)+(x_2+x_4\alpha)(y_1+y_3\alpha)
%=(x_1y_2+x_2y_1)+(x_1y_4+x_3y_2+x_2y_3+x_4y_1)\alpha+(x_3y_4+x_4y_3)\alpha^2
\overset{\bfL}{\longmapsto}(x_1y_2+x_2y_1)+(x_3y_4+x_4y_3).
\end{equation*}
In particular,
it maps Lagrangian subspaces of \smash{$\bbF_4^2$} over \smash{$\bbF_4$} to Lagrangian subspaces of \smash{$\bbF_2^4$} over \smash{$\bbF_2$}.
Applying it to the five sets in~\eqref{eq.binder of 6x16 1} yields those given in the first row of~\eqref{eq.Lagrangian subspaces in F_2^4}.
Since all cosets of the former form a BIBD with $\Lambda=1$,
the same is true of the latter.

Since the cosets of the $15$ Lagrangian subspaces in \smash{$\bbF_2^4$} form a BIBD with $(V,K,\Lambda,R,B)$ parameters $(16,4,3,15,60)$,
and the cosets of these particular $5$ subspaces form one with parameters $(16,4,1,5,20)$,
the cosets of the remaining $10$ Lagrangian subspaces---those given in the second and third rows of~\eqref{eq.Lagrangian subspaces in F_2^4}---form a BIBD with parameters $(16,4,2,10,40)$.
We claim that the latter does not resolve into two affine planes.
Indeed, if it did, the nonzero members of these $10$ subspaces must resolve into two partitions of \smash{$\bbF_2^4\backslash\set{0000}$}.
One of these must contain $\set{0001,1000,1001}$,
and so the other must contain $\set{0011,1000,1011}$ and $\set{0110,1001,1111}$,
which in turn implies that the intersecting sets $\set{0101,1011,1110}$ and $\set{0101,1010,1111}$ belong to the same partition, a contradiction.
\end{example}

\begin{remark}
Theorem~\ref{theorem.binder of dual symplectic} resolves some open problems of~\cite{BodmannK20}.
Notably, it disproves Conjecture~5.12 of~\cite{BodmannK20} as stated,
and moreover proves a modified version of it.
To explain,
for any odd prime $P$ and positive integer $J$,
\cite{BodmannK20} constructs a $P^{2J}$-vector Gabor--Steiner ETF \smash{$(\bfpsi_{(\bfx,\bfy)})_{(\bfx,\bfy)\in\bbF_P^J\times\bbF_P^J}$} for a complex space of dimension $\frac12 P^J(P^J-1)$.
Here since $P$ is an odd prime,
$\exp(-\frac{4\pi\rmi}{P})$ is a primitive $P$th root of unity,
and so Lemma~5.1 of~\cite{BodmannK20} gives, without loss of generality,
that the inner product of any two distinct members of this Gabor--Steiner ETF is
\begin{align*}
\ip{\bfpsi_{(\bfx_1,\bfy_1)}}{\bfpsi_{(\bfx_2,\bfy_2)}}
&=-\prod_{j=1}^{J}\exp(\tfrac{2\pi\rmi}{P}[\bfy_2(j)-\bfy_1(j)][\bfx_2(j)+\bfx_1(j)-1])\\
%&=-\exp(\tfrac{2\pi\rmi}{P}\sum_{j=1}^J
%\bigset{[\bfx_1(j)\bfy_2(j)-\bfx_2(j)\bfy_1(j)]-[\bfx_1(j)-1]\bfy_1(j)+[\bfx_2(j)-1]\bfy_2(j)}\\
&=-\overline{z_{(\bfx_1,\bfy_1)}}z_{(\bfx_2,\bfy_2)}\exp(\tfrac{2\pi\rmi}{P}\rmB((\bfx_1,\bfy_1),(\bfx_2,\bfy_2)))
\end{align*}
where $z_{(\bfx,\bfy)}:=\exp(\tfrac{2\pi\rmi}{P}\sum_{j=1}^J[\bfx(j)-1]\bfy(j))$ is a unimodular scalar for any $(\bfx,\bfy)\in\bbF_P^J\times\bbF_P^J$,
and
\begin{equation*}
\rmB((\bfx_1,\bfy_1),(\bfx_2,\bfy_2))
:=\sum_{j=1}^J[\bfx_1(j)\bfy_2(j)-\bfx_2(j)\bfy_1(j)]
\end{equation*}
defines a symplectic form on $\bbF_P^J\times\bbF_P^J$.
In fact, this $\rmB$ is the canonical symplectic form~\eqref{eq.canonical symplectic form} on $\bbF_P^{2J}$,
provided we identify any $(\bfx,\bfy)\in\bbF_P^J\times\bbF_P^J$ with $(\bfx(1),\bfy(1),\bfx(2),\bfy(2),\dotsc,\bfx(J),\bfy(J))\in\bbF_P^{2J}$.
As such, as briefly mentioned in Example~5.10 of~\cite{IversonM22},
this type of Gabor--Steiner ETF is thus equivalent to a dual of an ETF that is symplectic in the sense of Definition~\ref{def.symplectic ETF}.
Since triple products---and so also binders---are preserved by equivalence,
Theorem~\ref{theorem.binder of dual symplectic} guarantees that $\bin(\bfPsi)$ forms a BIBD with parameters $(V,K,\Lambda)=(P^{2J},P^J,\prod_{j=1}^{J-1}(P^j+1))$.
In particular, when $J=1$ and $J=2$,
$\bin(\bfPsi)$ forms a BIBD with $(V,K,\Lambda)$ being $(P^2,P,1)$ and $(P^4,P^2,P+1)$, respectively, as previously noted in Theorems~5.6 and~5.8 of~\cite{BodmannK20}, respectively.
Based on this data, Conjecture~5.12 of~\cite{BodmannK20} predicts that, for any positive integer $J$, $\bin(\bfPsi)$ forms a BIBD with $(V,K,\Lambda)=(P^{2J},P^J,\frac{P^J-1}{P-1})$.
By Theorem~\ref{theorem.binder of dual symplectic} this conjecture is false whenever \smash{$\frac{P^J-1}{P-1}<\prod_{j=1}^{J-1}(P^j+1)$}, namely for all $J\geq 3$.
In fact, we see that $\bin(\bfPsi)$ actually forms a BIBD that is (much) larger than \cite{BodmannK20} predicts whenever $J\geq 3$.

Notably, Theorem~5.9 of~\cite{BodmannK20} further shows that when $P$ is odd and $J=2$,
$\bin(\bfPsi)$ can not be resolved into $P+1$ affine planes.
Moreover, the paragraph of~\cite{BodmannK20} that precedes Corollary~5.11 seemingly doubts whether $\bin(\bfPhi)$ contains even a single affine plane, again when $P$ is odd and $J=2$.
Theorem~\ref{theorem.binder of dual symplectic} shows that this is unfounded: regardless of $P$ and $J$,
the binder of the dual of any symplectic ETF always contains an affine plane.
That said, Example~\ref{example.binder of 6x16} suggests that $\bin(\bfPsi)$ does not resolve completely into affine planes even when $P=2$ and $J=2$.
This begs the question,
For any finite vector space that is equipped with a symplectic form,
when can the set of all affine Lagrangian subspaces of it, less one affine plane, be resolved into proper subsets, each of which forms a BIBD?
Being a natural question of finite symplectic geometry,
the answer to it might be found in the classical literature.
We leave it for future work.
\end{remark}

Having fully characterized the binder of the dual $\bfPsi$ of any symplectic ETF $\bfPhi$, we devote the remainder of this section to the binder of $\bfPhi$ itself.
By Lemma~\ref{lemma.binder of symplectic}, we already know that $\bin(\bfPhi)$ is empty unless $P=2$.
As we now explain, $\bin(\bfPhi)$ is also empty when $P=2$ except in two cases, namely when either $J=1$ or $J=2$.
When $P=2$ and $J=1$, this is not surprising:
here, $\bfPhi$ is a $4$-vector ETF for a space of dimension $3$,
and so as explained in Example~\ref{example.binder of simplex}, its binder consists of a single copy of $\calV$.
When instead $P=2$ and $J=2$, $\bin(\bfPhi)$ has a remarkable structure.

\begin{theorem}
\label{theorem.binder of symplectic}
Let $\rmB$ be a symplectic form on a vector space $\calV$ over $\bbF_2$ of dimension $2J$,
and let $\calS\subseteq\calV$ satisfy $\rmB(\bfv_1,\bfv_2)=1$ for all $\bfv_1,\bfv_2\in\calS$ with $\bfv_1\neq\bfv_2$.
Then $\#(\calS)\leq 2J+1$.
When $P=2$, the binder of the corresponding symplectic ETF $\bfPhi$ (Definition~\ref{def.symplectic ETF}) is thus empty unless $J\in\set{1,2}$.

When $P=2$ and $J=2$, $\bin(\bfPhi)$ is nontrivial and so forms a BIBD.
It consists of all shifts of any subset $\calV$ of the following form,
where $(\bfv_1,\bfv_2,\bfv_3,\bfv_4)$ is any symplectic basis~\eqref{eq.symplectic basis} for $\calV$:
\begin{equation}
\label{eq.proto binder of 10x16}
\set{\
\bfzero,\quad
\bfv_1,\quad
\bfv_2,\quad
\bfv_1+\bfv_2+\bfv_3,\quad
\bfv_1+\bfv_2+\bfv_4,\quad
\bfv_1+\bfv_2+\bfv_3+\bfv_4\ }.
\end{equation}
\end{theorem}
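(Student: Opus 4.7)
The plan is to first establish the bound $\#(\calS) \leq 2J+1$ via a rank argument, then use Lemma~\ref{lemma.binder of symplectic}(a) to pinpoint the admissible $J$, and finally give an explicit structure result for $J=2$. For the bound, I would form the $M \times M$ matrix $\bfG$ defined by $\bfG(i,j) := \rmB(\bfu_i, \bfu_j)$ where $\calS = \{\bfu_1, \ldots, \bfu_M\}$. Since $\rmB$ is alternating and $\bbF = \bbF_2$, the hypothesis forces $\bfG = \bfJ_M + \bfI_M$, where $\bfJ_M$ denotes the all-ones matrix. A direct $\bbF_2$ computation shows that $\ker(\bfJ_M + \bfI_M) = \{\bfzero\}$ when $M$ is even and $\{\bfzero, \bfone\}$ when $M$ is odd, so $\rank(\bfG) \geq M-1$. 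Conversely, letting $\bfU$ be the synthesis operator of $\calS$ and $\bfJ$ be the matrix of $\rmB$ in any fixed basis of $\calV$, we have $\bfG = \bfU^\rmT \bfJ \bfU$, whence $\rank(\bfG) \leq \rank(\bfU) \leq 2J$. Combining yields $M \leq 2J+1$.

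Combined with Lemma~\ref{lemma.binder of symplectic}(a), a nonempty binder requires such an $\calS$ of size $2^J+1$, which forces $2^J \leq 2J$; this fails for $J \geq 3$ and holds at equality for $J \in \{1,2\}$, so $\bin(\bfPhi)$ is empty whenever $J \geq 3$. By the translation invariance already recorded in Lemma~\ref{lemma.binder of symplectic}(a), it suffices to describe those binder elements containing $\bfzero$, namely sets of the form $\{\bfzero\} \cup \calS$ with $\calS$ of size $2^J+1$ and all off-diagonal $\rmB$-values equal to $1$.

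For $J=2$, I would then separately verify existence and uniqueness up to the stated parametrization. For existence, I would compute the six off-diagonal entries of the Gram-like matrix for $\calS := \{\bfv_1,\bfv_2,\bfv_1+\bfv_2+\bfv_3,\bfv_1+\bfv_2+\bfv_4,\bfv_1+\bfv_2+\bfv_3+\bfv_4\}$ by expanding via bilinearity and applying~\eqref{eq.symplectic basis}; each entry reduces to $1$ in $\bbF_2$. For the converse, given any such $\calS = \{\bfu_1,\ldots,\bfu_5\}$, the preceding rank argument at equality forces $\rank(\bfU) = 4$ and $\ker(\bfU) = \ker(\bfG) = \{\bfzero,\bfone\}$, whence $(\bfu_1,\bfu_2,\bfu_3,\bfu_4)$ is a basis for $\calV$ and $\bfu_5 = \bfu_1+\bfu_2+\bfu_3+\bfu_4$. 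Setting $\bfv_1 := \bfu_1$, $\bfv_2 := \bfu_2$, $\bfv_3 := \bfu_1+\bfu_2+\bfu_3$, and $\bfv_4 := \bfu_1+\bfu_2+\bfu_4$, I would check the six symplectic-basis relations~\eqref{eq.symplectic basis} by short $\bbF_2$ expansions and then invert this change of basis to recover $\{\bfzero\} \cup \calS$ as~\eqref{eq.proto binder of 10x16}. The BIBD claim then follows from Theorem~\ref{theorem.binders of DTETFs}, and the main technical hurdle---the verification that $(\bfv_1,\ldots,\bfv_4)$ really is a symplectic basis---reduces to routine bookkeeping using bilinearity and the known value $\rmB(\bfu_i,\bfu_j) = 1$ for $i \neq j$.
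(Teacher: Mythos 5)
Your proposal is correct, but the route to the key bound $\#(\calS)\leq 2J+1$ is genuinely different from the paper's. The paper argues by induction on $J$: it peels off a symplectic pair $(\bfv_1,\bfv_2)$ from $\calS$, translates the remaining vectors by $\bfv_1+\bfv_2$ into $\calV_0:=\set{\bfv_1,\bfv_2}^\perp$, checks that the translated set again has all pairwise form-values equal to $1$, and so reduces both the dimension and $\#(\calS)$ by $2$; at $J=2$ this same induction step directly identifies $\bfv_1+\bfv_2+\calS\backslash\set{\bfv_1,\bfv_2}$ as the three nonzero vectors of $\calV_0$, yielding~\eqref{eq.proto binder of 10x16}. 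You instead observe that the matrix $\bfG=\bfJ_M+\bfI_M$ over $\bbF_2$ has kernel $\set{\bfzero}$ or $\set{\bfzero,\bfone}$ according to the parity of $M$, so $M-1\leq\rank(\bfG)\leq\rank(\bfU)\leq 2J$; this is shorter and, at equality when $J=2$ and $M=5$, it hands you for free both that any four of the five vectors form a basis and that the fifth is their sum, after which the change of basis to $(\bfv_1,\bfv_2,\bfv_3,\bfv_4)$ is routine $\bbF_2$ bookkeeping (I verified the six relations of~\eqref{eq.symplectic basis} and they do check out). The remaining steps---invoking Lemma~\ref{lemma.binder of symplectic}(a), the inequality $2^J\leq 2J$, translation invariance, and Theorem~\ref{theorem.binders of DTETFs} for the BIBD claim---match the paper. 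The trade-off is that the paper's induction is more geometric and reuses its machinery of symplectic pairs and orthogonal complements (which it needs again in Section~5), whereas your Gram-matrix argument is more self-contained and makes the extremal case transparent.
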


\begin{proof}
We prove the first claim by induction on $J$.
When $J=1$,
$\calV$ contains $2^{2J}=4$ vectors,
and so $\calS$ necessarily satisfies $\#(\calS)\leq 2J+1=3$ or else $\calS=\calV$ contains both $\bfzero$ and $\bfv\neq\bfzero$ where $\rmB(\bfzero,\bfv)=0$.
Now suppose this claim holds for a given positive integer $J-1$,
that $\rmB$ is a symplectic form on a vector space $\calV$ over $\bbF_2$ of dimension $2J$ and that $\calS\subseteq\calV$ satisfies $\rmB(\bfv_1,\bfv_2)=1$ for all distinct $\bfv_1,\bfv_2\in\calS$.
We show that $\#(\calS)\leq 2J+1$.
This obviously holds if $\#(\calS)\leq 2$.
When instead $\#(\calS)>2$,
take any $\bfv_1,\bfv_2\in\calS$, $\bfv_1\neq\bfv_2$.
Since $\rmB(\bfv_1,\bfv_2)=1$, $(\bfv_1,\bfv_2)$ is a symplectic pair,
and so $\calV$ is an internal direct sum of $\calV_0:=\set{\bfv_1,\bfv_2}^\perp$ and
$\calV_0^\perp=\Span\set{\bfv_1,\bfv_2}$.
The set $\calS_0:=\bfv_1+\bfv_2+\calS\backslash\set{\bfv_1,\bfv_2}$ has $\#(\calS_0)=\#(\calS)-2$.
Moreover, $\calS_0\subseteq\calV_0$ since
\begin{equation*}
\rmB(\bfv_1,\bfv_1+\bfv_2+\bfv)
=0+1+1
=0
=1+0+1
=\rmB(\bfv_2,\bfv_1+\bfv_2+\bfv),
\quad
\forall\,\bfv\in\calS\backslash\set{\bfv_1,\bfv_2}.
\end{equation*}
In particular,
$\rmB(\bfv_3,\bfv_1+\bfv_2)=\rmB(\bfv_4,\bfv_1+\bfv_2)=0$ for any $\bfv_3,\bfv_4\in\calS_0$.
At the same time, when such $\bfv_3,\bfv_4$ are distinct then $\bfv_1+\bfv_2+\bfv_3$ and $\bfv_1+\bfv_2+\bfv_4$ are distinct members of $\calS$,
implying
\begin{equation*}
1
=\rmB(\bfv_1+\bfv_2+\bfv_3,\bfv_1+\bfv_2+\bfv_4)
=0+0+0+\rmB(\bfv_3,\bfv_4)
=\rmB(\bfv_3,\bfv_4).
\end{equation*}
Since $\dim(\calV_0)=\dim(\calV)-2=2J-2=2(J-1)$,
applying the inductive hypothesis to $\calS_0$ gives $\#(\calS_0)\leq 2(J-1)+1=2J-1$,
and so $\#(\calS)=\#(\calS_0)+2\leq 2J+1$.

Now recall from Lemma~\ref{lemma.binder of symplectic} that $\bin(\bfPhi)$ consists of sets of the form $\bfv+(\set{\bfzero}\cup\calS)$ where $\#(\calS)=2^J+1$ and $\rmB(\bfv_1,\bfv_2)=1$ for all distinct $\bfv_1,\bfv_2\in\calS$.
From the first claim, such a set $\calS$ can only exist if $2^J+1\leq 2J+1$,
namely $J\in\set{1,2}$.
As such, $\bin(\bfPhi)$ is empty when $J>2$.

When $J=2$,
$\bin(\bfPhi)$ consists of all sets of the form $\bfv+(\set{\bfzero}\cup\calS)$ where $\#(\calS)=5$ and $\rmB(\bfv_1,\bfv_2)=1$ for all distinct $\bfv_1,\bfv_2\in\calS$.
For any distinct $\bfv_1,\bfv_2\in\calS$,
the inductive argument above gives that $\calS_0:=\bfv_1+\bfv_2+\calS\backslash\set{\bfv_1,\bfv_2}$ consists of the three nonzero members of $\calV_0:=\set{\bfv_1,\bfv_2}^\perp$.
That is, $\calS_0=\set{\bfv_3,\bfv_4,\bfv_3+\bfv_4}$ where $(\bfv_1,\bfv_2,\bfv_3,\bfv_4)$ is a symplectic basis for $\calV$,
implying that indeed $\set{\bfzero}\cup\calS$ is of form~\eqref{eq.proto binder of 10x16}.
In particular, $\bfPhi$ is a DTETF whose binder is nonempty,
implying by Theorem~\ref{theorem.binders of DTETFs} that it forms a BIBD.
\end{proof}

To be clear,
when $P=2$ and $J=2$,
one may of course use a rigorous computer algebra system to explicitly compute the binder of the symplectic ETF $\bfPhi$.
For example, recalling the matrix $\bfGamma_2$ of~\eqref{eq.Gamma 16x16},
one may simply compute which of the $\binom{16}{6}=8008$ principal submatrices of $4\brI+\bfGamma_2$ of size $6$ are singular.
Doing so reveals just $16$ sets, all translates of a single $6$-element subset of $\calV$.
Together, these sets form a BIBD with parameters $(V,K,\Lambda,R,B)=(16,6,2,6,16)$ with a $\calV$-circulant incidence matrix, and so arise from a \textit{difference set} for $\calV$.
Notably, this requires that any set of the form~\eqref{eq.proto binder of 10x16} for one choice of symplectic basis is simply a shift of the analogous set for any other choice of such a basis.
In the next section, we give an elementary proof of this fact that is an offshoot of our investigation into the binders of certain sub-ETFs of a symplectic ETF and its dual in the special case where $P=2$.

%%%%%%%%%%%%%%%%%%%%%%%%%%%%%%%%%%%%%%%%%%%%%%%%%%%%%%%%%%%%%%%%
\section{The binders of equiangular tight frames arising from quadratic forms}
%%%%%%%%%%%%%%%%%%%%%%%%%%%%%%%%%%%%%%%%%%%%%%%%%%%%%%%%%%%%%%%%

In this section, we calculate the binder of any member of four particular infinite families of real DTETFs that arise from quadratic forms over the binary field $\bbF_2:=\set{0,1}$.
ETFs in one of these four families consist of $2^{J-1}(2^J-1)$ vectors that span a space of dimension $\tfrac13(2^{J-1}-1)(2^J-1)$, where $J$ is some positive integer,
e.g., $28$ vectors in $\bbR^7$.
ETFs in another consist of $2^{J-1}(2^J+1)$ vectors that span a space of dimension $\tfrac13(2^{J-1}+1)(2^J+1)$, e.g., $36$ vectors in $\bbR^{15}$.
The remaining two families consist of their duals, which notably both span spaces of dimension $\frac13(2^{2J}-1)$, e.g., $28$ and $36$ vectors in $\bbR^{21}$.
The existence of real ETFs with such parameters has been known for some time,
including for example \textit{Steiner}~\cite{FickusMT12} and \textit{Tremain}~\cite{FickusJMP18} ETFs arising from a \textit{Steiner triple system} on $2^J-1$ vertices, respectively.
Recently, it was shown that ETFs with these parameters arise as sub-ETFs (or duals of sub-ETFs) of symplectic ETFs and their duals over $\bbF_2$~\cite{FickusIJK21}.
This leads to an infinite family~\cite{DeregowskaFFL22} of real \textit{mutually unbiased} ETFs~\cite{FickusM21}.
Moreover, real DTETFs with these parameters exist~\cite{IversonM24,DempwolffK23},
arising from actions of certain symplectic groups on certain sets of quadratic forms.
By exploiting some known but obscure connections~\cite{Kantor75,Wertheimer86} between such actions and the ETFs that were constructed in~\cite{FickusIJK21},
we verify that the latter are indeed DTETFs.
We then exploit their connections to concepts from Section~4 in conjunction with Theorem~\ref{theorem.binders of DTETFs} to calculate their binders.
Specifically, we exploit Theorem~\ref{theorem.binder of dual symplectic} to show that ETFs in two of these families have nonempty binders whose corresponding BIBDs relate to each other---and to the corresponding BIBD of affine Lagrangian subspaces---in interesting ways,
and use Theorem~\ref{theorem.binder of symplectic} to show that the binders of ETFs in the remaining two families are empty except in a finite number of cases.
We now review the parts of the theory of quadratic forms over $\bbF_2$ that we shall need.

\subsection{Quadratic forms over $\bbF_2$}
\label{subsection.quadratic forms}

Let $\calV$ be a vector space over a field $\bbF$.
The definition of a \textit{quadratic form} on $\calV$ depends on whether $\operatorname{char}(\bbF)=2$, i.e., if $1+1=0\in\bbF$.
When $\operatorname{char}(\bbF)\neq2$, quadratic forms equate to symmetric bilinear forms.
There, a function $\rmQ:\calV\rightarrow\bbF$ is a quadratic form if there exists a bilinear form $\rmB$ on $\calV$ for which
$\rmQ(x_1\bfv_1+x_2\bfv_2)=x_1^2\rmQ(\bfv_1)+2x_1x_2\rmB(\bfv_1,\bfv_2)+x_2^2\rmQ(\bfv_2)$ for all $x_1,x_2\in\bbF$ and $\bfv_1,\bfv_2\in\calV$.
This $\rmB$ is unique and symmetric,
having \smash{$\rmB(\bfv_1,\bfv_2)=\tfrac12[\rmQ(\bfv_1+\bfv_2)-\rmQ(\bfv_1)-\rmQ(\bfv_2)]$} for any $\bfv_1,\bfv_2\in\calV$.
Conversely, any symmetric bilinear form $\rmB$ on $\calV$ arises from the quadratic form $\rmQ(\bfv):=\rmB(\bfv,\bfv)$.
This $\rmQ$ is unique since necessarily
\smash{$\rmQ(\bfv)
=\rmQ(\tfrac12\bfv+\tfrac12\bfv)
=\tfrac14\rmQ(\bfv)+\tfrac12\rmB(\bfv,\bfv)+\tfrac14\rmQ(\bfv)$}
for all $\bfv\in\calV$.
When instead $\operatorname{char}(\bbF)=2$,
it is common practice to modify this definition in a seemingly minor way:
a function $\rmQ:\calV\rightarrow\bbF$ is a quadratic form if there exists a bilinear form $\rmB$ on $\calV$ for which
\begin{equation}
\label{eq.quadratic form over characteristic two}
\rmQ(x_1\bfv_1+x_2\bfv_2)
=x_1^2\rmQ(\bfv_1)+x_1x_2\rmB(\bfv_1,\bfv_2)+x_2^2\rmQ(\bfv_2)
\end{equation}
for all $x_1,x_2\in\bbF$, $\bfv_1,\bfv_2\in\calV$.
As before, $\rmQ$ uniquely determines $\rmB$:
letting $x_1=x_2=1$ here gives
\begin{equation}
\label{eq.quadratic symplectic relation}
\rmB(\bfv_1,\bfv_2)=\rmQ(\bfv_1+\bfv_2)+\rmQ(\bfv_1)+\rmQ(\bfv_2),
\quad\forall\,\bfv_1,\bfv_2\in\calV.
\end{equation}
%When this occurs, we say $\rmQ$ \textit{yields} $\rmB$.
Any such $\rmB$ is necessarily alternating and $\rmQ(\bfzero)=0$:
note
$\rmB(\bfv,\bfv)
=\rmQ(\bfv+\bfv)+\rmQ(\bfv)+\rmQ(\bfv)
=\rmQ(\bfzero)$ for all $\bfv\in\calV$,
and so
$\rmB(\bfv,\bfv)=\rmQ(\bfzero)=\rmB(\bfzero,\bfzero)=0$ for all $\bfv\in\calV$.
Since $-1=1$, such an alternating form $\rmB$ is also symmetric.
Remarkably, in this setting, the quadratic form $\rmQ$ that gives rise to a bilinear form $\rmB$ is not necessarily unique.
For example,
since $(x+y)^2=x^2+y^2$ for any $x,y\in\bbF$,
the canonical symplectic form
$((x_1,x_2),(y_1,y_2))\mapsto x_1y_2+x_2y_1$ on $\bbF^2$
arises from two distinct quadratic forms, namely $(x_1,x_2)\mapsto x_1x_2$ and $(x_1,x_2)\mapsto x_1^2+x_1x_2+x_2^2$:
\begin{multline}
\label{eq.two forms on two-dimensional space}
[(x_1+y_1)^2+(x_1+y_1)(x_2+y_2)+(x_2+y_2)^2]+(x_1^2+x_1x_2+x_2^2)+(y_1^2+y_1y_2+y_2^2)\\
=(x_1+y_1)(x_2+y_2)+x_1x_2+y_1y_2
=x_1y_2+x_2y_1.
\end{multline}

This theory simplifies when $\bbF$ is $\bbF_2$.
A function $\rmQ:\calV\rightarrow\bbF_2$ is a quadratic form if and only if~\eqref{eq.quadratic symplectic relation} defines a bilinear form $\rmB$,
as this $\rmB$ satisfies~\eqref{eq.quadratic form over characteristic two} for any $\bfv_1,\bfv_2\in\calV$ when $x_1=x_2=1$,
and any bilinear form satisfies~\eqref{eq.quadratic form over characteristic two} for any $\bfv_1,\bfv_2\in\calV$ when either $x_1=0$ or $x_2=0$.
When this occurs, we say that $\rmQ$ \textit{yields} $\rmB$.
If $\rmQ_1,\rmQ_2:\calV\rightarrow\bbF$ give rise to the same bilinear form $\rmB$ then $\rmQ_1+\rmQ_2$ is linear:
$\rmQ_1+\rmQ_2$ distributes over multiplication by scalars in $\bbF_2=\set{0,1}$ since $(\rmQ_1+\rmQ_2)(\bfzero)=0+0=0$, and distributes over addition:
\begin{align*}
(\rmQ_1+\rmQ_2)(\bfv_1+\bfv_2)
&=\rmB(\bfv_1,\bfv_2)+\rmQ_1(\bfv_1)+\rmQ_1(\bfv_2)
+\rmB(\bfv_1,\bfv_2)+\rmQ_2(\bfv_1)+\rmQ_2(\bfv_2)\\
&=(\rmQ_1+\rmQ_2)(\bfv_1)+(\rmQ_1+\rmQ_2)(\bfv_2),
\quad\forall\,\bfv_1,\bfv_2\in\calV.
\end{align*}
Conversely, if $\rmQ:\calV\rightarrow\bbF_2$ is a quadratic form that yields $\rmB$ and $\bfL:\calV\rightarrow\bbF_2$ is linear, then $\rmQ_\bfL:=\rmQ+\bfL$ is a quadratic form that also yields $\rmB$ since
\begin{equation*}
\rmQ_\bfL(\bfv_1+\bfv_2)+\rmQ_\bfL(\bfv_1)+\rmQ_\bfL(\bfv_2)
=\rmQ(\bfv_1+\bfv_2)+\rmQ(\bfv_1)+\rmQ(\bfv_2)+\bfL(2\bfv_1+2\bfv_2)
=\rmB(\bfv_1,\bfv_2)
\end{equation*}
for any $\bfv_1,\bfv_2\in\calV$.
Thus, when $\rmQ$ yields $\rmB$,
the set of all quadratic forms that yield $\rmB$ is the affine linear subspace $\rmQ+\set{\bfL:\calV\rightarrow\bbF_2\ |\ \bfL\ \text{is linear}}$ of $\bbF_2^\calV$,
a set whose cardinality equals that of $\calV$.

For the remainder of this section,
let $\calV$ be a vector space over $\bbF_2$ of dimension $2J$,
and let $\rmQ$ be a quadratic form on $\calV$ whose bilinear form~\eqref{eq.quadratic symplectic relation} is nondegenerate and so symplectic.
Forms of this type arise in the constructions of the maximum number of real mutually unbiased bases in spaces whose dimension is a power of $4$~\cite{CameronS73,CalderbankCKS97}.
Here, any linear functional $\bfL:\calV\rightarrow\bbF_2$ has a Riesz representation---
there exists a unique $\bfv_\bfL\in\calV$ such that $\bfL(\bfv)=\rmB(\bfv_\bfL,\bfv)$ for all $\bfv\in\calV$---implying
\begin{equation}
\label{eq.other quadratic forms}
\rmQ_\bfL(\bfv)
=(\rmQ+\bfL)(\bfv)
=\rmQ(\bfv)+\rmB(\bfv_\bfL,\bfv)
=\rmQ(\bfv+\bfv_\bfL)+\rmQ(\bfv_\bfL),\quad\forall\,\bfv\in\calV.
\end{equation}
Saying that a given $\bfv\in\calV$ is \textit{singular} (with respect to $\rmQ$) when $\rmQ(\bfv)=0$,
we thus see that the quadratic forms that yield $\rmB$ partition into two types:
shifts of $\rmQ$ by singular $\bfv$,
and shifts of $\rmQ+\bfone$ by nonsingular $\bfv$.
The number of the former is thus the size of the \textit{quadric} $\calQ$ of $\rmQ$,
defined as
\begin{equation}
\label{eq.quadric}
\calQ:=\set{\bfv\in\calV: \rmQ(\bfv)=0}.
\end{equation}
Since $\rmQ$ is $\bbF_2$-valued, it can be recovered from $\calQ$.
As noted above, $\calQ$ necessarily contains $\bfzero$.
We moreover claim that $\calQ\neq\set{\bfzero}$ whenever $J\geq 2$.
Indeed, for any $\bfv_1\in\calV\backslash\set{\bfzero}$,
$\set{\bfv_1}^\perp$ has dimension $2J-1\geq 3$,
and so there exists $\bfv_2\in\set{\bfv_1}^\perp\backslash\set{\bfzero,\bfv_1}$.
Since~\eqref{eq.quadratic symplectic relation} gives
$0
=\rmB(\bfv_1,\bfv_2)
=\rmQ(\bfv_1)+\rmQ(\bfv_2)+\rmQ(\bfv_1+\bfv_2)$,
either exactly one or exactly three of the three distinct vectors $\bfv_1$, $\bfv_2$ and $\bfv_1+\bfv_2$ are singular, giving the claim.

Now recall that if $(\bfv_1,\bfv_2)$ is any symplectic pair in $\calV$ then $\calV$ is the internal direct sum of $\calV_0:=\set{\bfv_1,\bfv_2}^\perp$ and $\calV_0^\perp=\Span\set{\bfv_1,\bfv_2}$,
and also that the restriction $\rmB_0$ of $\rmB$ to $\calV_0$ is a symplectic form.
The restriction $\rmQ_0:\calV_0\rightarrow\bbF_2$ of $\rmQ:\calV\rightarrow\bbF_2$ to $\calV_0$ is thus a quadratic form that yields $\rmB_0$.
Here, writing any $\bfv\in\calV$ as $\bfv=x_1\bfv_1+x_2\bfv_2+\bfv_0$ where $x_1,x_2\in\bbF_2$ and $\bfv_0\in\calV_0$,
\eqref{eq.quadratic form over characteristic two} gives
\begin{align}
\nonumber
\rmQ(x_1\bfv_1+x_2\bfv_2+\bfv_0)
&=\rmQ(x_1\bfv_1+x_2\bfv_2)+\rmB(x_1\bfv_1+x_2\bfv_2,\bfv_0)+\rmQ(\bfv_0)\\
\nonumber
&=x_1^2\rmQ(\bfv_1)+x_1x_2\rmB(\bfv_1,\bfv_2)+x_2^2\rmQ(\bfv_2)+0+\rmQ_0(\bfv_0)\\
\label{eq.quadratic form recursion in general}
&=x_1^2\rmQ(\bfv_1)+x_1x_2+x_2^2\rmQ(\bfv_2)+\rmQ_0(\bfv_0).
\end{align}

A pair $(\bfv_1,\bfv_2)$ of vectors in $\calV$ is \textit{hyperbolic} if $\rmB(\bfv_1,\bfv_2)=1$ and $\rmQ(\bfv_1)=0=\rmQ(\bfv_2)$,
that is, if it is a singular symplectic pair.
We claim that any nonzero $\bfv_1\in\calQ$ belongs to such a pair.
Indeed, for any $\bfv\in\calV\backslash\set{\bfv_1}^\perp$,
we have $\rmB(\bfv_1,\bfv_1+\bfv)=\rmB(\bfv_1,\bfv)=1$ where~\eqref{eq.quadratic symplectic relation} gives
$1=\rmB(\bfv_1,\bfv)=0+\rmQ(\bfv)+\rmQ(\bfv_1+\bfv)$,
implying one member of $\set{\bfv,\bfv_1+\bfv}$ is singular while the other is not.
Letting $\bfv_2$ be the former, the latter is $\bfv_1+\bfv_2$.
Having the claim, we next recall that if $J\geq 2$ then $\calQ\neq\set{\bfzero}$ and so it contains such a pair.
When instead $J=1$, $\calV$ either contains such a pair or it does not,
and so either $\calQ=\set{\bfzero,\bfv_1,\bfv_2}$ or $\calQ=\set{\bfzero}$, respectively.
In the former case, \eqref{eq.quadratic form over characteristic two} gives $\rmQ(x_1\bfv_1+x_2\bfv_2)=x_1x_2$ for any $x_1,x_2\in\bbF_2$.
In the latter case, \eqref{eq.quadratic form over characteristic two} instead gives
$\rmQ(x_1\bfv_1+x_2\bfv_2)=x_1^2+x_1^{}x_2^{}+x_2^2$ for any $x_1,x_2\in\bbF_2$,
where $\bfv_1$ and $\bfv_2$ are any distinct nonzero vectors in $\calV$.
In either case, when $J=1$,
\eqref{eq.two forms on two-dimensional space} implies that the resulting bilinear form is the sole symplectic form on $\calV$, having $\rmB(x_1\bfv_1+x_2\bfv_2,y_1\bfv_1+y_2\bfv_2)
=x_1y_2+x_2y_1$ for any \smash{$x_1,x_2,y_1,y_2\in\bbF_2$},
meaning any distinct nonzero vectors in $\calV$ are nonorthogonal.

If $(\bfv_1,\bfv_2)$ is a hyperbolic pair in $\calV$ then~\eqref{eq.quadratic form recursion in general} simplifies to
\begin{equation}
\label{eq.quadratic form recursion}
\rmQ(x_1\bfv_1+x_2\bfv_2+\bfv_0)
=x_1x_2+\rmQ_0(\bfv_0)
\end{equation}
for any $x_1,x_2\in\bbF_2$ and $\bfv_0\in\calV_0$.
In particular, $\bfv\in\calQ$ if and only if either $x_1x_2=0=\rmQ_0(\bfv_0)$ or $x_1x_2=1=\rmQ(\bfv_0)$, that is,
if and only if either $(x_1,x_2)\in\set{(0,0),(1,0),(0,1)}$ and
$\bfv_0\in\calQ_0:=\set{\bfv_0'\in\calV_0: \rmQ_0(\bfv_0')=0}$
or $(x_1,x_2)=(1,1)$ and $\bfv_0\in\calV_0\backslash\calQ_0$.
Thus, $\calQ$ partitions as
\begin{equation}
\label{eq.quadric relationship}
\calQ=\calQ_0\sqcup(\bfv_1+\calQ_0)\sqcup(\bfv_2+\calQ_0)\sqcup(\bfv_1+\bfv_2+\calV_0\backslash\calQ_0).
\end{equation}
The sizes of $\calQ$ and $\calQ_0$ are thus related by
$\#(\calQ)=3\#(\calQ_0)+[\#(\calV_0)-\#(\calQ_0)]=2^{2J-2}+2\#(\calQ_0)$.
As such, $\#(\calQ)=2^{J-1}(2^J\pm1)$
if and only if $\#(\calQ_0)$ is the analogous value when ``$J$'' is $J-1$, i.e.,
\begin{equation*}
\#(\calQ_0)
=\tfrac12\#(\calQ)-2^{2J-3}
=2^{J-2}(2^J\pm1)-2^{2J-3}
=2^{(J-1)-1}(2^{J-1}\pm1).
\end{equation*}
Moreover, such an equation holds when $J=1$:
here, recall $\calQ$ is either $\set{\bfzero}$ or of the form $\set{\bfzero,\bfv_1,\bfv_2}$ for some hyperbolic pair $(\bfv_1,\bfv_2)$,
and so $\#(\calQ)$ is either $1=2^{1-1}(2^1-1)$ or $3=2^{1-1}(2^1+1)$, respectively.
As such, any quadratic form $\rmQ:\calV\rightarrow\bbF_2$ has a \textit{sign},
denoted $\sign(\rmQ)$, such that
\begin{equation}
\label{eq.sign of quadratic}
\sign(\rmQ)
:=2^{-(J-1)}[\#(\calQ)-2^{2J-1}]\in\set{1,-1},
\qquad
\#(\calQ)=2^{J-1}[2^J+\sign(\rmQ)],
\end{equation}
and it is either $1$ or $-1$ when,
after iteratively choosing $J-1$ hyperbolic pairs from $\calV$,
each lying in the orthogonal complement of those previously chosen,
the restriction of $\rmQ$ to the remaining two-dimensional space has either exactly three singular points or exactly one such point, respectively.
Iteratively choosing hyperbolic pairs in this way for as long as possible yields a symplectic basis~\eqref{eq.symplectic basis} $(\bfv_m)_{m=1}^{2J}$ for $\calV$ such that either $\rmQ(\bfv_m)=0$ for all $m\in[2J]$---a \textit{hyperbolic basis}---or $\rmQ(\bfv_m)=0$ for all $m\in[2J-2]$ but $\rmQ(\bfv_{2J-1})=\rmQ(\bfv_{2J})=1$,
depending on whether $\rmQ$ is \textit{positive} or \textit{negative}, respectively,
that is, whether $\sign(\rmQ)$ is $1$ or $-1$, respectively.
Here, iteratively applying~\eqref{eq.quadratic form recursion} gives $\rmQ(\bfV\bfx)$ is either $\rmQ_{J,+}(\bfx)$ or $\rmQ_{J,-}(\bfx)$ for any $\bfx\in\bbF_2^{2J}$, respectively,
where $\bfV$ is the synthesis map of this basis and $\rmQ_{J,+}$ and $\rmQ_{J,-}$ are the \textit{canonical positive} and \textit{negative} quadratic forms on $\bbF_2^{2J}$, defined as
\begin{equation}
\label{eq.canonical quadratic forms}
\rmQ_{J,+}(\bfx)
:=\sum_{j=1}^{J}\bfx(2j-1)\bfx(2j),
\qquad
\rmQ_{J,-}(\bfx)
:=\rmQ_{J,+}(\bfx)+[\bfx(2J-1)]^2+[\bfx(2J)]^2,
\end{equation}
respectively.
Both of these forms yield the canonical symplectic form~\eqref{eq.canonical symplectic form} on $\bbF_2^{2J}$.
Thus, up to isomorphism, there is a unique symplectic form on $\calV$,
and a unique quadratic form of positive sign,
as well as a unique one of negative sign, that give rise to it.
Now recall that there are exactly $\#(\calV)=2^{2J}$ distinct quadratic forms on $\calV$ that yield $\rmB$,
each corresponding to a particular choice of $\bfv_\bfL$ in~\eqref{eq.other quadratic forms}.
The quadric of such a form $\rmQ_\bfL$ is related to that of $\rmQ$ as follows:
\begin{equation}
\label{eq.quadrics of other quadratic forms}
\calQ_\bfL
:=\set{\bfv\in\calV: \rmQ_\bfL(\bfv)=\rmQ(\bfv+\bfv_\bfL)+\rmQ(\bfv_\bfL)=0}
%=\set{\bfv+\bfv_\bfL\in\calV: \rmQ(\bfv)=\rmQ(\bfv_\bfL)}
=\left\{\begin{array}{ll}
\bfv_\bfL+\calQ,&\ \bfv_\bfL\in\calQ,\\
\bfv_\bfL+\calQ^\rmc,&\ \bfv_\bfL\in\calQ^\rmc.
\end{array}\right.
\end{equation}
In particular, shifting $\calQ$ by a member of itself yields the quadric of a quadratic form with the same sign as that of $\rmQ$,
while shifting it by a member of $\calQ^\rmc$ yields the complement of a quadric of a quadratic form whose sign is the opposite of that of $\rmQ$.
There are thus exactly $2^{J-1}(2^J+1)$ positive quadratic forms on $\calV$ that yield $\rmB$, and exactly $2^{J-1}(2^J-1)$ negative such forms.

\subsection{Doubly transitive equiangular tight frames that arise from quadratic forms over $\bbF_2$}

We now explain the construction of~\cite{FickusIJK21},
and then use ideas from~\cite{Kantor75,Wertheimer86} to show that the resulting ETFs are DTETFs.
By Theorem~1.3 of~\cite{IversonM24}, DTETFs with these parameters are known to exist and are unique up to equivalence,
so the contribution of this subsection is the fact that they can be constructed in the manner of~\cite{FickusIJK21}.
In the next subsection, we use this fact to calculate their binders and moreover relate these binders to those of the corresponding symplectic ETF and its dual.
It turns out that a key concept of this work is how the quadrics of various quadratic forms are related to each other, and to the complements of each other:

\begin{definition}
\label{def.affine quadric}
Let $\rmB$ be a symplectic form on a vector space $\calV$ over $\bbF_2$ of dimension $2J$.
We say that a subset $\calD$ of $\calV$ is an \textit{affine quadric} if either it or its complement is the quadric~\eqref{eq.quadric} of some quadratic form $\rmQ$ that yields $\rmB$.
\end{definition}

Later on, we justify this terminology by verifying that any affine quadric is a shift of a quadric.
By~\eqref{eq.sign of quadratic}, the size of any affine quadric $\calD$ is either $2^{J-1}(2^J+1)$ or $2^{J-1}(2^J-1)$.
In either case, $\calD^\rmc$ is an affine quadric of the other size.
We use this concept to streamline Theorems~4.4 and~4.5 of~\cite{FickusIJK21}:
if $\calD$ is any affine quadric,
then the $\calD\times\calV$, $\calD^\rmc\times\calV$, $\calD\times\calD^\rmc$ and $\calD^\rmc\times\calD$ submatrices of the character table $\bfGamma$ are synthesis maps of ETFs,
and moreover the duals of the latter two are yet other ETFs.
A proof of this reimagining of ideas from~\cite{FickusIJK21} is given in the appendix%
%\ of~\cite{FickusL23}% COMMENT
.

\begin{lemma}[\cite{FickusIJK21}]
\label{lemma.symplectic sub-ETFs}
Let $\rmB$ be a symplectic form on a vector space $\calV$ over $\bbF_2$ of dimension $2J\geq 4$,
and let $\bfGamma\in\bbR^{\calV\times\calV}$, $\bfGamma(\bfv_1,\bfv_2):=(-1)^{\rmB(\bfv_1,\bfv_2)}$ be its corresponding character table.
Three distinct pairs of dual ETFs arise from any affine quadric $\calD$ in $\calV$ (Definition~\ref{def.affine quadric}) with $\#(\calD)=2^{J-1}(2^J+1)$:
\begin{enumerate}
\renewcommand{\labelenumi}{(\alph{enumi})}
\item
$2^{2J}$-vector ETFs for spaces of dimension $2^{J-1}(2^J+1)$ and $2^{J-1}(2^J-1)$:
the finite sequences
$\bfPhi=(\bfphi_\bfv)_{\bfv\in\calV}$ in $\bbR^{\calD}$ and $\bfPsi=(\bfpsi_\bfv)_{\bfv\in\calV}$ in $\bbR^{\calD^\rmc}$ that are defined by
\begin{equation*}
\bfphi_{\bfv_2}(\bfv_1):=\frac1{\sqrt{2^{J-1}}}(-1)^{\rmB(\bfv_1,\bfv_2)},
\quad
\bfpsi_{\bfv_2}(\bfv_1):=\frac1{\sqrt{2^{J-1}}}(-1)^{\rmB(\bfv_1,\bfv_2)},
\end{equation*}
are dual ETFs for $\bbR^{\calD}$ and $\bbR^{\calD^\rmc}$, respectively, that are equivalent to the corresponding instances of the symplectic ETF and its dual of Definition~\ref{def.symplectic ETF}, respectively.\smallskip

\item
$2^{J-1}(2^J-1)$-vector ETFs for spaces of dimension $\tfrac13(2^{2J}-1)$ and $\tfrac13(2^{J-1}-1)(2^J-1)$, namely the subsequence \smash{$\bfPhi_{\calD^\rmc}:=(\bfphi_\bfv)_{\bfv\in\calD^\rmc}$} of $\bfPhi$ and its dual \smash{$\widehat{\bfPhi}_{\calD^\rmc}:=(\widehat{\bfphi}_\bfv)_{\bfv\in\calD^\rmc}$}, respectively, for which
\begin{equation*}
\ip{\bfphi_{\bfv_1}}{\bfphi_{\bfv_2}}
=\left\{\begin{array}{cl}
2^{J}+1,&\ \bfv_1=\bfv_2,\\
(-1)^{\rmB(\bfv_1,\bfv_2)},&\ \bfv_1\neq \bfv_2,
\end{array}\right.
\quad
\ip{\widehat{\bfphi}_{\bfv_1}}{\widehat{\bfphi}_{\bfv_2}}
=\left\{\begin{array}{cl}
2^{J-1}-1,&\ \bfv_1=\bfv_2,\\
-(-1)^{\rmB(\bfv_1,\bfv_2)},&\ \bfv_1\neq \bfv_2.
\end{array}\right.
\end{equation*}

\item
$2^{J-1}(2^J+1)$-vector ETFs for spaces of dimension $\tfrac13(2^{2J}-1)$ and $\tfrac13(2^{J-1}+1)(2^J+1)$,
namely the subsequence \smash{$\bfPsi_\calD:=(\bfpsi_\bfv)_{\bfv\in\calD}$} of $\bfPsi$ and its dual \smash{$\widehat{\bfPsi}_\calD:=(\widehat{\bfpsi}_\bfv)_{\bfv\in\calD}$}, respectively, for which
\begin{equation*}
\ip{\bfpsi_{\bfv_1}}{\bfpsi_{\bfv_2}}
=\left\{\begin{array}{cl}
2^{J}-1,&\ \bfv_1=\bfv_2,\\
-(-1)^{\rmB(\bfv_1,\bfv_2)},&\ \bfv_1\neq \bfv_2,
\end{array}\right.
\quad
\ip{\widehat{\bfpsi}_{\bfv_1}}{\widehat{\bfpsi}_{\bfv_2}}
=\left\{\begin{array}{cl}
2^{J-1}+1,&\ \bfv_1=\bfv_2,\\
(-1)^{\rmB(\bfv_1,\bfv_2)},&\ \bfv_1\neq \bfv_2.
\end{array}\right.
\end{equation*}
\end{enumerate}
\end{lemma}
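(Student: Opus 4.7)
The plan hinges on describing $\calD$ concretely via a quadratic form. By Definition~\ref{def.affine quadric} and the sign formula~\eqref{eq.sign of quadratic}, any affine quadric of size $2^{J-1}(2^J+1)$ is a translate of the quadric $\calQ$ of some positive quadratic form $\rmQ$ yielding $\rmB$: if $\calD$ itself is a quadric we take it directly, and otherwise $\calD^\rmc$ is the quadric of a negative form, in which case~\eqref{eq.quadrics of other quadratic forms} applied with any $\bfv_0\in\calD$ exhibits $\calD$ as $\bfv_0+\calQ'$ for $\calQ'$ the quadric of a positive form. Translating the index set $\calD$ by $-\bfv_0$ introduces only a unimodular $\bfv_2$-dependent rescaling of each $\bfphi_{\bfv_2}$---an equivalence that preserves all ETF properties---so without loss of generality I assume $\calD=\calQ$ with $\rmQ$ positive.

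The workhorse is a character-sum identity that follows directly from~\eqref{eq.quadratic symplectic relation}: rewriting $\rmQ(\bfv)+\rmB(\bfv,\bfw)$ as $\rmQ(\bfw)+\rmQ(\bfv+\bfw)$ and changing variables yields $\sum_{\bfv\in\calV}(-1)^{\rmQ(\bfv)+\rmB(\bfv,\bfw)}=(-1)^{\rmQ(\bfw)}\cdot 2^J$, where the constant $2^J$ is precisely the positive sign of $\rmQ$ from~\eqref{eq.sign of quadratic}. Combining this with standard character orthogonality $\sum_{\bfv\in\calV}(-1)^{\rmB(\bfv,\bfw)}=2^{2J}$ when $\bfw=\bfzero$ and $0$ otherwise, together with $\bfone_\calD(\bfv)=\tfrac12(1+(-1)^{\rmQ(\bfv)})$, yields the key identity $\sum_{\bfv\in\calD}(-1)^{\rmB(\bfv,\bfw)}=2^{J-1}(-1)^{\rmQ(\bfw)}$ for $\bfw\neq\bfzero$ and $\#(\calD)=2^{J-1}(2^J+1)$ for $\bfw=\bfzero$.

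For part~(a), this identity computes $\ip{\bfphi_{\bfv_1}}{\bfphi_{\bfv_2}}=2^{-(J-1)}\sum_{\bfv\in\calD}(-1)^{\rmB(\bfv,\bfv_1+\bfv_2)}$ as $(-1)^{\rmQ(\bfv_1+\bfv_2)}$ off-diagonal and $2^J+1$ on-diagonal. The factorization $(-1)^{\rmQ(\bfv_1+\bfv_2)}=z_{\bfv_1}z_{\bfv_2}(-1)^{\rmB(\bfv_1,\bfv_2)}$ with $z_\bfv:=(-1)^{\rmQ(\bfv)}$, a consequence of~\eqref{eq.quadratic symplectic relation}, exhibits $\bfPhi$ as equivalent to the symplectic ETF of Definition~\ref{def.symplectic ETF}. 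Tightness $\bfPhi\bfPhi^*=2^{J+1}\bfI_{\calD\times\calD}$ is an immediate character-orthogonality computation over the columns of $\bfGamma$; identifying $\bfPsi$ as the Naimark complement is then an application of~\eqref{eq.dual of ENTF}. For parts~(b) and~(c), equiangularity with the claimed inner products is automatic since $\rmQ\equiv 1$ on $\calD^\rmc$ and $\rmQ\equiv 0$ on $\calD$ force $\rmQ(\bfv_1+\bfv_2)=\rmB(\bfv_1,\bfv_2)$ in $\bbF_2$ within each half. Tightness is verified via the triple-product criterion~\eqref{eq.triple product tightness} with $S=2^J+1$ for~(b) and $S=2^J-1$ for~(c): for distinct $\bfv_1,\bfv_3$ in the relevant index set, setting $\bfw:=\bfv_1+\bfv_3\neq\bfzero$ and collapsing $\rmB(\bfv_1,\bfv_2)+\rmB(\bfv_2,\bfv_3)=\rmB(\bfw,\bfv_2)$ reduces each triple product to $\pm(-1)^{\rmB(\bfv_3,\bfv_1)}(-1)^{\rmB(\bfw,\bfv_2)}$; summing over the appropriate half and invoking the identity of paragraph~2---using $\rmQ(\bfw)=\rmB(\bfv_1,\bfv_3)$ so that the resulting sign cancels the prefactor---evaluates the sum to exactly $(N-1)/S-S$. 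The dimension $\tfrac13(2^{2J}-1)$ then follows from $D=NS/A$ with $A=S+(N-1)/S=3\cdot 2^{J-1}$ in both cases, and the duals $\widehat{\bfPhi}_{\calD^\rmc}$ and $\widehat{\bfPsi}_\calD$ are obtained from~\eqref{eq.dual of ENTF}.

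The main obstacle is the initial character-sum identity of paragraph~2; once in hand, the rest is systematic bookkeeping in characteristic two. The salient structural observation is that $\rmQ$ is constant on $\calD$ and on $\calD^\rmc$ separately, which is exactly what makes the quadratic exponent $(-1)^{\rmQ(\bfv_1+\bfv_2)}$ collapse to the symplectic exponent $(-1)^{\rmB(\bfv_1,\bfv_2)}$ within each half, thereby enabling the triple products in~(b) and~(c) to be reduced to the same character sum already used in~(a).
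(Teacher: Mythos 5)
Your argument is correct, and for the substantive half of the lemma it takes a genuinely different route from the paper. Part (a) is handled the same way in both: your character-sum identity $\sum_{\bfv\in\calV}(-1)^{\rmQ(\bfv)+\rmB(\bfv,\bfw)}=\sign(\rmQ)2^J(-1)^{\rmQ(\bfw)}$ is exactly the paper's observation that the chirp $\bfc(\bfv)=(-1)^{\rmQ(\bfv)}$ is a $\sign(\rmQ)2^J$-eigenvector of $\bfGamma$, and both proofs then read off the Gram matrix from $\bfGamma\bfchi_\calD$. For parts (b) and (c), however, the paper proves tightness spectrally: it shows the chirp--Fourier transform $\bfDelta\bfGamma$ cubes to $\sign(\rmQ)2^{3J}\bfI$, deduces that the principal submatrix $\bfGamma_{\calD^\rmc}$ satisfies a quadratic minimal polynomial, and extracts eigenvalues and multiplicities by a trace count. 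You instead verify the triple-product tightness criterion~\eqref{eq.triple product tightness} row by row, collapsing each sum of triple products to the same character sum used in (a) via $\rmQ(\bfv_1+\bfv_3)=\rmB(\bfv_1,\bfv_3)$, and then get the dimension from $D=NS/A$ with $A=3\cdot2^{J-1}$; I checked the arithmetic ($-2^{J-1}-2$ for (b), $2-2^{J-1}$ for (c)) and it matches $\tfrac{N-1}{S}-S$ in both cases. Your route is more elementary---it avoids the cube identity entirely and reuses the tool the paper deploys for binders---while the paper's route delivers all four sign combinations and the eigenvalue data in one uniform computation.

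One imprecision worth repairing: your reduction to $\calD=\calQ$ with $\rmQ$ positive is achieved by a translation, which you correctly identify as an equivalence; but equivalence only preserves ETF-hood, dimensions, and triple products, not the \emph{exact} off-diagonal inner products $(-1)^{\rmB(\bfv_1,\bfv_2)}$ and $-(-1)^{\rmB(\bfv_1,\bfv_2)}$ that the lemma asserts in (b) and (c). When $\calD$ is the complement of the quadric of a negative form (so $\bfzero\notin\calD$ and $\calD$ is not itself a quadric), the translation multiplies $\ip{\bfphi_{\bfv_1}}{\bfphi_{\bfv_2}}$ by $(-1)^{\rmB(\bfv_0,\bfv_1+\bfv_2)}$. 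The formulas still come out right because this factor recombines with $(-1)^{\rmQ'(\bfv_1+\bfv_2)}$ into $(-1)^{(\rmQ'+\rmB(\bfv_0,\cdot))(\bfv_1+\bfv_2)}$, and the shifted form is again constant on $\calD$ and on $\calD^\rmc$---but your writeup does not say this. The cleaner fix is to skip the translation and run your computation directly with whichever form $\rmQ$ has $\calD$ or $\calD^\rmc$ as its quadric (as the paper does with its $\varepsilon$ bookkeeping): all you actually use is that $\rmQ$ is constant on each of $\calD$ and $\calD^\rmc$, which holds in both cases.
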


In general, an ETF is \textit{harmonic} when its synthesis map arises by extracting rows from the character table of a finite abelian group $\calG$.
This occurs if and only if the indices of these rows form a difference set $\calD$ for $\calG$~\cite{Turyn65,Konig99,StrohmerH03,XiaZG05,DingF07},
namely when there exists $\Lambda\geq0$ such that $\#\set{(d_1,d_2)\in\calD\times\calD: g=d_1-d_2}=\Lambda$ for all $g\in\calG\backslash\set{0}$.
This is the case for the ETFs of Lemma~\ref{lemma.symplectic sub-ETFs}(a), for example~\cite{FickusIJK21}.
Many harmonic ETFs have irrational Welch bounds and thus empty binders~\cite{FickusJKM18}.
That said, every harmonic ETF has a group-circulant Gram matrix.
For example, as seen in the proof of Lemma~\ref{lemma.symplectic sub-ETFs},
the finite sequence $(\bfphi_\bfv)_{\bfv\in\calV}$ of (a) has $\ip{\bfphi_{\bfv_1}}{\bfphi_{\bfv_2}}=(-1)^{\rmQ(\bfv_1+\bfv_2)}$ whenever $\bfv_1\neq\bfv_2$, where $\rmQ$ is some quadratic form,
and so $\ip{\bfphi_{\bfv+\bfv_1}}{\bfphi_{\bfv+\bfv_2}}=\ip{\bfphi_{\bfv_1}}{\bfphi_{\bfv_2}}$ for all $\bfv,\bfv_1,\bfv_2\in\calV$.
The symmetry group of a harmonic ETF thus contains all translations.
In particular, the binder of any harmonic ETF is translation-invariant:
when a harmonic ETF contains a regular simplex,
any shift of its indices yields another.
The binders of some harmonic ETFs seemingly consist of all cosets of a single subgroup of the underlying group~\cite{FickusJKM18,FickusS20}.
Such incidence structures are not BIBDs.
It is therefore remarkable that the binder of the harmonic ETF $(\bfpsi_\bfv)_{\bfv\in\calV}$ of Lemma~\ref{lemma.symplectic sub-ETFs}(a) does form a BIBD:
it is equivalent to the dual of the corresponding symplectic ETF,
and so the two share the same binder which,
by Theorem~\ref{theorem.binder of dual symplectic},
forms a BIBD and consists of all affine Lagrangian subspaces of $\calV$.
(As expected, this binder is translation-invariant.)
Apart from these ETFs, the $J=2$ instance of $(\bfphi_\bfv)_{\bfv\in\calV}$ of Lemma~\ref{lemma.symplectic sub-ETFs}(a)---see Theorem~\ref{theorem.binder of symplectic}---and trivial harmonic ETFs that arise from singleton difference sets of their complements, we know of no other harmonic ETFs whose binders happen to form BIBDs.
This raises the question as to whether such objects exist.
We leave this problem for future work.

Since any symplectic ETF and its dual are DTETFs~\cite{IversonM22},
the equivalent ETFs of Lemma~\ref{lemma.symplectic sub-ETFs}(a) are too.
We show that the ETFs of Lemma~\ref{lemma.symplectic sub-ETFs}(b) and (c) are DTETFs as well.
The key idea is a known but obscure~\cite{Kantor75,Wertheimer86} way in which the symplectic group~\eqref{eq.symplectic group} here acts, in a doubly-transitive but not straightforward way, on the members of any affine quadric.
To understand that action, it helps to better understand affine quadrics themselves:

\begin{lemma}
\label{lemma.affine quadric}
Let $\rmB$ be a symplectic form on a vector space $\calV$ over $\bbF_2$ of dimension $2J$.
Then the size of any affine quadric is either $2^{J-1}(2^J+1)$ or $2^{J-1}(2^J-1)$, and there are exactly $2^{2J}$ affine quadrics of each such size,
being the necessarily-distinct shifts of the quadric~\eqref{eq.quadric} of any particular quadratic form $\rmQ$ that satisfies~\eqref{eq.quadratic symplectic relation} and whose sign~\eqref{eq.sign of quadratic} is positive or negative, respectively.
\end{lemma}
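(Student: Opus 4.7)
The plan is to fix one quadratic form of the desired sign and show that its $2^{2J}$ translates exhaust all affine quadrics of the appropriate cardinality, using an injectivity argument based on the uniqueness of the Riesz representation associated with $\rmB$.

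First I would recall the bookkeeping already laid out in Subsection~\ref{subsection.quadratic forms}: the quadratic forms yielding $\rmB$ form an affine space of cardinality $2^{2J}$, parametrized bijectively by $\calV$ via $\rmQ_\bfL(\bfu) = \rmQ(\bfu + \bfv_\bfL) + \rmQ(\bfv_\bfL)$ where $\bfv_\bfL$ is the unique Riesz representative of $\bfL$, and by~\eqref{eq.sign of quadratic} exactly $2^{J-1}(2^J+1)$ of these are positive while $2^{J-1}(2^J-1)$ are negative. Since an affine quadric of size $2^{J-1}(2^J+1)$ is, by Definition~\ref{def.affine quadric}, either a positive quadric or the complement of a negative quadric, and since $\bfzero$ lies in every quadric (so these two subfamilies are disjoint), the total number of affine quadrics of size $2^{J-1}(2^J+1)$ is $2^{J-1}(2^J+1) + 2^{J-1}(2^J-1) = 2^{2J}$; the argument for size $2^{J-1}(2^J-1)$ is symmetric.

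Next, fix a positive $\rmQ$ with quadric $\calQ$. For each $\bfv \in \calV$ I would use~\eqref{eq.quadrics of other quadratic forms} to identify $\bfv + \calQ$: when $\bfv \in \calQ$, the formula gives $\bfv + \calQ = \calQ_\bfv$, a positive quadric; when $\bfv \in \calQ^\rmc$, the formula instead gives $\bfv + \calQ^\rmc = \calQ_\bfv$, a negative quadric, so $\bfv + \calQ = (\calQ_\bfv)^\rmc$ is the complement of a negative quadric. In either case, $\bfv + \calQ$ is an affine quadric of size $2^{J-1}(2^J+1)$.

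The final step, and the main subtlety, is to verify that the map $\bfv \mapsto \bfv + \calQ$ is injective; once this is done, the $2^{2J}$ translates must exhaust all $2^{2J}$ affine quadrics of size $2^{J-1}(2^J+1)$. If $\bfv_1$ and $\bfv_2$ lie on the same side of $\calQ$, then $\bfv_1 + \calQ = \bfv_2 + \calQ$ forces $\calQ_{\bfv_1} = \calQ_{\bfv_2}$; since an $\bbF_2$-valued quadratic form is determined by its zero set, this yields $\rmQ_{\bfv_1} = \rmQ_{\bfv_2}$, and uniqueness of the Riesz representation for $\rmB$ then gives $\bfv_1 = \bfv_2$. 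If instead $\bfv_1 \in \calQ$ and $\bfv_2 \in \calQ^\rmc$, then $\bfv_1 + \calQ$ contains $\bfzero$ (because $\bfv_1 = -\bfv_1 \in \calQ$) while $\bfv_2 + \calQ$ does not. The negative case then follows by running the same argument starting from a fixed negative quadric, or alternatively by taking complements throughout. Apart from the case split in the injectivity step, every part of this plan is a direct unpacking of~\eqref{eq.quadrics of other quadratic forms} together with counting.
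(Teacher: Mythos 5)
Your proposal is correct. Steps two and three---showing that every translate $\bfv+\calQ$ is an affine quadric of the right size via~\eqref{eq.quadrics of other quadratic forms}, and that the translates are pairwise distinct---coincide with the paper's argument almost verbatim (the paper phrases injectivity as ``$\bfv_\bfL+\calQ=\calQ$ forces $\bfv_\bfL=\bfzero$,'' which subsumes your three-case analysis, but the substance is the same: a quadric determines its form, and nondegeneracy of $\rmB$ pins down the Riesz vector). Where you genuinely diverge is in proving that the translates exhaust all affine quadrics of that size. You count: the affine quadrics of size $2^{J-1}(2^J+1)$ split into quadrics of positive forms and complements of quadrics of negative forms, these subfamilies are disjoint because $\bfzero$ lies in every quadric but in no complement of one, and the totals $2^{J-1}(2^J+1)+2^{J-1}(2^J-1)=2^{2J}$ match the number of distinct translates, so pigeonhole finishes. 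The paper instead argues directly: any affine quadric $\calD$ of the same size as $\calQ$ has $\calD$ or $\calD^\rmc$ equal to the quadric of some form yielding $\rmB$, every such form is $\rmQ_\bfL$ for some $\bfv_\bfL$, and~\eqref{eq.quadrics of other quadratic forms} then exhibits $\calD$ as $\bfv_\bfL+\calQ$ or $\bfv_\bfL+\calQ^\rmc$, with the size constraint selecting the former. Your route leans on the exact counts of positive and negative forms (established at the end of Subsection~5.1) plus the disjointness observation; the paper's route avoids both and is marginally more self-contained, but each argument is complete and rests on the same underlying parametrization $\bfv_\bfL\mapsto\rmQ_\bfL$.
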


\begin{proof}
By Definition~\ref{def.affine quadric}, if $\calD$ is an affine quadric then either $\calD=\calQ$ or $\calD=\calQ^\rmc$ where $\calQ$ is the quadric~\eqref{eq.quadric} of some quadratic form $\rmQ$ that satisfies~\eqref{eq.quadratic symplectic relation}.
By~\eqref{eq.sign of quadratic},
we thus have either $\#(\calD)=\#(\calQ)=2^{J-1}[2^J+\sign(\rmQ)]$ or
$\#(\calD)=\#(\calQ^\rmc)=2^{2J}-\#(\calQ)=2^{J-1}[2^J-\sign(\rmQ)]$.

We claim that every shift of the quadric $\calQ$ of any particular such $\rmQ$ is an affine quadric.
Indeed, for any $\bfv_\bfL\in\calV$,
either $\bfv_\bfL\in\calQ$ or $\bfv_\bfL\in\calQ^\rmc$.
In the former case, \eqref{eq.quadrics of other quadratic forms} gives that
$\bfv_\bfL+\calQ
=\calQ_\bfL$ is the quadric of the quadratic form $\rmQ_\bfL$ given in~\eqref{eq.other quadratic forms}.
In the latter case, \eqref{eq.quadrics of other quadratic forms} instead gives
$\bfv_\bfL+\calQ
=(\bfv_\bfL+\calQ^\rmc)^\rmc=\calQ_\bfL^\rmc$,
giving the claim.
We moreover claim that any two such shifts are necessarily distinct.
Indeed, if $\bfv_\bfL+\calQ=\calQ$ then
$\bfv_\bfL=\bfv_\bfL+\bfzero\in\bfv_\bfL+\calQ=\calQ$,
implying by~\eqref{eq.quadrics of other quadratic forms} that $\calQ_\bfL=\bfv_\bfL+\calQ=\calQ$.
Since any $\bbF_2$-valued quadratic form is uniquely determined by its quadric---mapping members of its quadric to $0$ and all other vectors to $1$---this implies $\rmQ_\bfL=\rmQ$, implying by~\eqref{eq.other quadratic forms} that $\rmB(\bfv_\bfL,\bfv)=0$ for all $\bfv\in\calV$ and thus $\bfv_\bfL=\bfzero$,
giving the claim.

Since $\rmB$ arises from quadratic forms of both possible signs,
there are thus at least $2^{2J}$ distinct affine quadrics of each of the two possible sizes.
To see that these are the only affine quadrics in $\calV$,
now let $\calD$ be any affine quadric whose size equals that of $\calQ$.
Then either $\calD$ or $\calD^\rmc$ is the quadric $\calQ'$ of a quadratic form $\rmQ'$ that satisfies~\eqref{eq.quadratic symplectic relation}.
Since every such quadratic form that satisfies~\eqref{eq.quadratic symplectic relation} is of the form given in~\eqref{eq.other quadratic forms},
we thus have $\rmQ'=\rmQ_\bfL$ for some $\bfv_\bfL\in\calV$.
As such, $\calD$ is either $\calQ_\bfL^{}$ or $\calQ_\bfL^\rmc$ where~\eqref{eq.quadrics of other quadratic forms} gives that $\calQ_\bfL$ is either $\bfv_\bfL+\calQ$ or $\bfv_\bfL+\calQ^\rmc$.
Thus, $\calD$ itself is either $\bfv_\bfL+\calQ$ or $\bfv_\bfL+\calQ^\rmc$.
Since $\calD$ and $\calQ$ have the same size,
this implies $\calD=\bfv_\bfL+\calQ$ is a shift of $\calQ$.
\end{proof}

As we now explain, in this context---where $\rmB$ is a symplectic form on a finite-dimensional vector space $\calV$ over the field $\bbF_2$---the symplectic group $\Sp(\rmB)$ of~\eqref{eq.symplectic group} acts in a doubly transitive way on the set of all quadratic forms of a particular sign that yield $\rmB$, and moreover this action equates to one by this same group on any affine quadric.
To be precise, for any $\bfA\in\Sp(\rmB)$ and any quadratic from $\rmQ$ that yields $\rmB$,
the mapping $\bfA\cdot\rmQ:\calV\rightarrow\bbF_2$,
$(\bfA\cdot\rmQ)(\bfv):=\rmQ(\bfA^{-1}\bfv)$ is itself a quadratic form that yields $\rmB$:
since any $\bfA\in\Sp(\rmB)$ is an invertible linear operator on $\calV$ that preserves $\rmB$,
\begin{align*}
(\bfA\cdot\rmQ)(\bfv_1+\bfv_2)+(\bfA\cdot\rmQ)(\bfv_1)+(\bfA\cdot\rmQ)(\bfv_2)
&=\rmQ(\bfA^{-1}\bfv_1+\bfA^{-1}\bfv_2)+\rmQ(\bfA^{-1}\bfv_1)+\rmQ(\bfA^{-1}\bfv_2)\\
&=\rmQ(\bfA^{-1}(\bfv_1+\bfv_2))+\rmQ(\bfA^{-1}\bfv_1)+\rmQ(\bfA^{-1}\bfv_2)\\
&=\rmB(\bfA^{-1}\bfv_1,\bfA^{-1}\bfv_2)\\
&=\rmB(\bfv_1,\bfv_2),
\end{align*}
for any $\bfv_1,\bfv_2\in\calV$.
It is now straightforward to show that
\begin{equation}
\label{eq.Sp action on quadratic forms}
(\bfA,\rmQ)\mapsto\bfA\cdot\rmQ,
\quad(\bfA\cdot\rmQ)(\bfv):=\rmQ(\bfA^{-1}\bfv),
\end{equation}
defines an action of $\Sp(\rmB)$ on the set of all quadratic forms that yield $\rmB$.
Here, the quadric of $\bfA\cdot\rmQ$ is obtained by applying $\bfA$ to the elements of the quadric $\calQ$ of $\rmQ$:
\begin{equation*}
\bfA\calQ
:=\set{\bfA\bfv: \bfv\in\calQ}
%=\set{\bfA\bfv: \rmQ(\bfv)=0}.
=\set{\bfv'\in\calV: \rmQ(\bfA^{-1}\bfv')=0}
=\set{\bfv'\in\calV: (\bfA\cdot\rmQ)(\bfv')=0}.
\end{equation*}
Since $\bfA$ is invertible and so $\#(\bfA\calQ)=\#(\calQ)$,
this implies that the sign~\eqref{eq.sign of quadratic} of $\bfA\cdot\rmQ$ equals that of $\rmQ$.
As such, the aforementioned action of $\Sp(\rmB)$ on the set of all quadratic forms that yield $\rmB$ is not transitive,
and in fact restricts to an action on the set of all such forms of either given sign.
Remarkably, when restricted in this way, this action becomes doubly transitive.
This fact is briefly mentioned in~\cite{Cameron81}.
See~\cite{Wertheimer86} for an overview of it, its relation to earlier similar results~\cite{Kantor75},
and additional context about quadratic forms over $\bbF_2$ in general.
For the sake of completeness, an elementary, self-contained proof of this fact is given in the appendix%
%\ of~\cite{FickusL23}% COMMENT
:

\begin{lemma}[\cite{Kantor75,Cameron81}]
\label{lemma.DT of Sp}
Let $\rmB$ be a symplectic form on a vector space $\calV$ over $\bbF_2$ of dimension $2J$,
and let $\Sp(\rmB)$ be the corresponding symplectic group~\eqref{eq.symplectic group}.
Then~\eqref{eq.Sp action on quadratic forms} is a well-defined doubly transitive action of $\Sp(\rmB)$ on the set of all quadratic forms $\rmQ$ that yield $\rmB$ of either given sign~\eqref{eq.sign of quadratic}.
\end{lemma}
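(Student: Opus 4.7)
The plan is to reduce double transitivity of this action on quadratic forms of a fixed sign to the following key claim: for any quadratic form $\rmQ$ yielding $\rmB$ and any nonzero singular vector $\bfv_1 \in \calQ \setminus \set{\bfzero}$, $\bfv_1$ extends to a symplectic basis of $\calV$ adapted to $\rmQ$---hyperbolic if $\sign(\rmQ) = +1$, or with all but the final two basis vectors singular and those last two each satisfying $\rmQ = 1$ if $\sign(\rmQ) = -1$. Well-definedness and sign preservation are already acknowledged in the excerpt, so I would only briefly recite these before turning to the reduction.

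For the reduction, let $(\rmQ_1, \rmQ_2)$ and $(\rmQ_1', \rmQ_2')$ be pairs of distinct same-sign quadratic forms yielding $\rmB$. In characteristic two, $\rmQ_2 + \rmQ_1$ is a linear functional on $\calV$, so by the Riesz representation property of $\rmB$ from Section~4.1 there exist unique $\bfv, \bfv' \in \calV$ with $\rmQ_2(\bfw) = \rmQ_1(\bfw) + \rmB(\bfv, \bfw)$ and analogously $\rmQ_2'(\bfw) = \rmQ_1'(\bfw) + \rmB(\bfv', \bfw)$; these are nonzero since the pairs are distinct, and \eqref{eq.quadrics of other quadratic forms} forces $\bfv \in \calQ_1$ and $\bfv' \in \calQ_1'$ under the sign-preservation assumption. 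A direct computation using $\bfA \in \Sp(\rmB)$ gives $\bfA \cdot \rmQ_2 = (\bfA \cdot \rmQ_1) + \rmB(\bfA\bfv, \cdot)$, so by nondegeneracy of $\rmB$ the problem of simultaneously sending $\rmQ_1 \mapsto \rmQ_1'$ and $\rmQ_2 \mapsto \rmQ_2'$ collapses to finding a single $\bfA \in \Sp(\rmB)$ with $\bfA \cdot \rmQ_1 = \rmQ_1'$ and $\bfA \bfv = \bfv'$.

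Granting the key claim, the reduction closes cleanly: build a symplectic basis adapted to $\rmQ_1$ whose first vector is $\bfv$, and one adapted to $\rmQ_1'$ whose first vector is $\bfv'$. The unique linear map $\bfA$ sending the former basis to the latter lies in $\Sp(\rmB)$ by \eqref{eq.symplectic basis}, and the polarization identity for quadratic forms yielding $\rmB$---together with the fact that the two bases record identical $\rmQ$-values on corresponding vectors---forces $\bfA \cdot \rmQ_1 = \rmQ_1'$, while $\bfA \bfv = \bfv'$ by construction. I would prove the claim itself inductively on $J$: since $\bfv_1 \neq \bfzero$ is singular, the hyperbolic pair construction from Section~\ref{subsection.quadratic forms} produces $\bfw_1$ with $\rmQ(\bfw_1) = 0$ and $\rmB(\bfv_1, \bfw_1) = 1$; then $\calV$ splits internally as $\Span\set{\bfv_1, \bfw_1} \oplus \set{\bfv_1, \bfw_1}^\perp$, and \eqref{eq.quadratic form recursion} shows the restriction of $\rmQ$ to the complement is again a nondegenerate quadratic form of the same sign, so the inductive hypothesis supplies the remaining basis vectors. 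The main obstacle is essentially bookkeeping: one must verify that the recursion preserves the sign at each step (so that ``adapted'' remains the right target structure), and handle the base case $J = 1$, where for $\sign(\rmQ) = +1$ the restriction is a hyperbolic plane and for $\sign(\rmQ) = -1$ it is the unique anisotropic two-dimensional form identified in Section~\ref{subsection.quadratic forms}, supplying the final pair $(\bfv_J, \bfw_J)$ directly.
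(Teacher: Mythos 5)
Your proposal is correct, and it takes a genuinely different route from the paper's proof. The paper argues by counting: it computes the number of ordered pairs of distinct same-sign forms, the order of $\Sp(\rmB)$, and the order of the two-point stabilizer $\rmO(\rmQ)\cap\rmO(\rmQ_\bfL)$ (via a careful enumeration of adapted symplectic bases and of singular symplectic pairs), and then invokes the orbit--stabilizer theorem to show a single orbit exhausts all pairs. You instead give a direct transport argument in the spirit of Witt's extension theorem: you encode the second form of each pair by the Riesz representer $\bfv$ of $\rmQ_2+\rmQ_1$ (which lands in $\calQ_1\backslash\set{\bfzero}$ precisely because the signs agree), note that the action satisfies $\bfA\cdot\rmQ_2=(\bfA\cdot\rmQ_1)+\rmB(\bfA\bfv,\cdot)$, and thereby reduce double transitivity to moving a marked nonzero singular vector of one form to a marked nonzero singular vector of another form of the same sign---which the adapted-basis construction already present in Section~5.1 delivers, since the map carrying one adapted basis to the other lies in $\Sp(\rmB)$ and matches $\rmQ$-values on basis vectors, hence everywhere by the expansion of $\rmQ$ over a symplectic basis. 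Both proofs lean on the same Section~5.1 machinery (hyperbolic pairs, the sign-preserving recursion~\eqref{eq.quadratic form recursion}, and the ``determined by values on a symplectic basis'' principle), but yours is shorter and avoids the combinatorial bookkeeping entirely, at the cost of not producing the explicit stabilizer orders that fall out of the paper's computation. The only points worth making explicit when writing it up are the edge cases the paper also flags ($J=1$ with negative sign is vacuous, since there is a unique such form and hence no distinct pair) and the observation that in the negative-sign case the prescribed singular vector $\bfv$ necessarily sits among the first $2J-2$ (singular) vectors of the adapted basis, so the two value patterns being matched really are identical.
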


We exploit this fact in the manner of~\cite{Wertheimer86}.
To explain, let $\calQ$ be the quadric of any quadratic form $\rmQ$ that yields $\rmB$.
For any $\bfA\in\Sp(\rmB)$,
we have that $\bfA\calQ$ is the quadric of the quadratic form $\bfA\cdot\rmQ$ which also yields $\rmB$ and has $\sign(\bfA\cdot\rmQ)=\sign(\rmQ)$.
By~\eqref{eq.other quadratic forms} and~\eqref{eq.quadrics of other quadratic forms},
there thus exists a unique $\bfv_\bfA\in\calQ$ such that $\bfA\calQ=\calQ+\bfv_\bfA$,
that is, such that $\bfA\calQ+\bfv_\bfA=\calQ$.
Since $\rmQ$ is $\bbF_2$-valued and $\bfv\in\bfA\calQ+\bfv_\bfA$ if and only if
$\rmQ(\bfA^{-1}(\bfv+\bfv_\bfA))=0$, this equates to having $\rmQ(\bfA^{-1}(\bfv+\bfv_\bfA))=\rmQ(\bfv)$ for all $\bfv\in\calV$,
that is, to having $\rmQ(\bfv)=\rmQ(\bfA\bfv+\bfv_\bfA)$ for all $\bfv\in\calV$.
As such, the mapping
\begin{equation}
\label{eq.Sp action on vectors}
(\bfA,\bfv)\mapsto \bfA\cdot\bfv:=\bfA\bfv+\bfv_\bfA
\end{equation}
is a bijection from $\calV$ onto itself that restricts to bijections from $\calQ$ and $\calQ^\rmc$ onto themselves.
We claim that the mapping~\eqref{eq.Sp action on vectors} is in fact the action~\eqref{eq.Sp action on quadratic forms} under the bijection $\bfv_\bfL\mapsto\rmQ_\bfL$ from $\calV$ onto the set of all quadratic forms that yield $\rmB$ that is described in~\eqref{eq.other quadratic forms}.
Indeed, under this bijection, any $\bfv_\bfL\in\calV$ maps to the quadratic form $\bfv\mapsto\rmQ_\bfL(\bfv)
:=\rmQ(\bfv)+\rmB(\bfv_\bfL,\bfv)$ for all $\bfv\in\calV$.
Acting on $\rmQ_\bfL$ by $\bfA\in\Sp(\rmB)$ gives
\begin{equation*}
\bfv\mapsto(\bfA\cdot\rmQ_\bfL)(\bfv)
=\rmQ_\bfL(\bfA^{-1}\bfv)
=\rmQ(\bfA^{-1}\bfv)+\rmB(\bfv_\bfL,\bfA^{-1}\bfv).
\end{equation*}
To simplify further,
recall that $\rmQ(\bfA^{-1}\bfv)=\rmQ(\bfv+\bfv_\bfA)$ for all $\bfv\in\calV$, that $\rmB(\bfA\bfv_1,\bfA\bfv_2)=\rmB(\bfv_1,\bfv_2)$ for all $\bfv_1,\bfv_2\in\calV$,
that $\bfv_\bfA\in\calQ$, and moreover recall~\eqref{eq.quadratic symplectic relation}:
\begin{align*}
\rmQ(\bfA^{-1}\bfv)+\rmB(\bfv_\bfL,\bfA^{-1}\bfv)
&=\rmQ(\bfv+\bfv_\bfA)+\rmB(\bfA\bfv_\bfL,\bfv)\\
&=\rmB(\bfv,\bfv_\bfA)+\rmQ(\bfv)+\rmQ(\bfv_\bfA)+\rmB(\bfA\bfv_\bfL,\bfv)\\
&=\rmQ(\bfv)+\rmB(\bfA\bfv_\bfL+\bfv_\bfA,\bfv).
\end{align*}
Pulling $\bfv\mapsto(\bfA\cdot\rmQ_\bfL)(\bfv)=\rmQ(\bfv)+\rmB(\bfA\bfv_\bfL+\bfv_\bfA,\bfv)$
back through the bijection $\bfv_\bfL\mapsto\rmQ_\bfL$ gives $\bfA\cdot\bfv_\bfL=\bfA\bfv_\bfL+\bfv_\bfA$,
namely the result of acting on $\bfv_\bfL$ by $\bfA$ via~\eqref{eq.Sp action on vectors}, as claimed.
Since~\eqref{eq.Sp action on quadratic forms} is a group action,
the equivalent mapping~\eqref{eq.Sp action on vectors} is as well.
Moreover, further recalling that~\eqref{eq.sign of quadratic} and~\eqref{eq.quadrics of other quadratic forms} give $\sign(\rmQ_\bfL)=\sign(\rmQ)$ if and only if $\bfv_\bfL\in\calQ$,
the restrictions of the action~\eqref{eq.Sp action on vectors} to $\calQ$ and $\calQ^\rmc$ equate to the restrictions of~\eqref{eq.Sp action on quadratic forms} to the sets of quadratic forms on $\calV$ that yield $\rmB$ whose signs equal $\sign(\rmQ)$ and $-\sign(\rmQ)$, respectively.
Since Lemma~\ref{lemma.DT of Sp} gives that the latter are doubly transitive,
the former are as well.
We summarize these facts from~\cite{Wertheimer86} as follows:

\begin{lemma}[\cite{Wertheimer86}]
\label{lemma.DT of Sp on quadric}
Let $\rmB$ be a symplectic form on a vector space $\calV$ over $\bbF_2$ of dimension $2J$,
and let $\Sp(\rmB)$ be the corresponding symplectic group~\eqref{eq.symplectic group}.
Then letting $\calQ$ be the quadric of any quadratic form $\rmQ$ on $\calV$ that yields $\rmB$, for any $\bfA\in\Sp(\rmB)$ there exists a unique $\bfv_\bfA\in\calQ$ such that
\begin{equation}
\label{eq.symplectic of quadric is shift}
\bfA\calQ+\bfv_\bfA=\calQ,
\qquad\text{i.e.,}\qquad
\rmQ(\bfv)=\rmQ(\bfA\bfv+\bfv_\bfA),\quad\forall\,\bfv\in\calV.
\end{equation}
Moreover, \eqref{eq.Sp action on vectors} defines an action of $\Sp(\rmB)$ on $\calV$ whose restrictions to $\calQ$ and $\calQ^\rmc$ are doubly transitive.
\end{lemma}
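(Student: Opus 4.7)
My plan is to deduce all three parts of the lemma by transporting the action of $\Sp(\rmB)$ on quadratic forms from Lemma~\ref{lemma.DT of Sp} across the bijection $\bfv_\bfL\mapsto\rmQ_\bfL$ from $\calV$ onto the set of quadratic forms that yield $\rmB$, as given by~\eqref{eq.other quadratic forms}.

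First, I would establish the existence and uniqueness of $\bfv_\bfA\in\calQ$ such that $\bfA\calQ+\bfv_\bfA=\calQ$. The argument already sketched in the paragraph preceding the lemma makes this clean: for any $\bfA\in\Sp(\rmB)$, the function $\bfA\cdot\rmQ$ given by $(\bfA\cdot\rmQ)(\bfv):=\rmQ(\bfA^{-1}\bfv)$ is itself a quadratic form yielding $\rmB$ (bilinearity of $\rmB$ and its $\bfA$-invariance give the needed identity), and its quadric is $\bfA\calQ$. Since $\bfA$ is invertible, $\#(\bfA\calQ)=\#(\calQ)$, so $\bfA\cdot\rmQ$ has the same sign as $\rmQ$. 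By Lemma~\ref{lemma.affine quadric}, $\bfA\calQ$ is therefore a shift of $\calQ$, i.e., $\bfA\calQ=\calQ+\bfv_\bfA$ for a unique $\bfv_\bfA\in\calV$. To see $\bfv_\bfA\in\calQ$, note that $\bfzero\in\bfA\calQ=\calQ+\bfv_\bfA$ forces $\bfv_\bfA\in\calQ$. Translating the equation $\bfA\calQ+\bfv_\bfA=\calQ$ back in terms of $\rmQ$ via its defining property as the zero-set yields the equivalent identity $\rmQ(\bfv)=\rmQ(\bfA\bfv+\bfv_\bfA)$ for all $\bfv\in\calV$, using that $\rmQ$ is $\bbF_2$-valued.

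Next, I would verify that~\eqref{eq.Sp action on vectors} defines an action of $\Sp(\rmB)$ on $\calV$ by showing it is equivariant with the action~\eqref{eq.Sp action on quadratic forms} under the bijection $\bfv_\bfL\mapsto\rmQ_\bfL$. This is precisely the calculation already carried out in the paragraph before the lemma: starting from $(\bfA\cdot\rmQ_\bfL)(\bfv)=\rmQ(\bfA^{-1}\bfv)+\rmB(\bfv_\bfL,\bfA^{-1}\bfv)$, I substitute $\rmQ(\bfA^{-1}\bfv)=\rmQ(\bfv+\bfv_\bfA)$ from the first part, apply the $\bfA$-invariance of $\rmB$ to rewrite $\rmB(\bfv_\bfL,\bfA^{-1}\bfv)=\rmB(\bfA\bfv_\bfL,\bfv)$, expand via~\eqref{eq.quadratic symplectic relation}, and use $\rmQ(\bfv_\bfA)=0$ to conclude that $\bfA\cdot\rmQ_\bfL=\rmQ_{\bfA\bfv_\bfL+\bfv_\bfA}$. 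Pulling this back through the bijection gives exactly $\bfA\cdot\bfv_\bfL=\bfA\bfv_\bfL+\bfv_\bfA$, so~\eqref{eq.Sp action on vectors} is the image of~\eqref{eq.Sp action on quadratic forms} under a bijection. Being the conjugate of an action by a bijection, it is itself an action.

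Finally, to obtain double transitivity on $\calQ$ and on $\calQ^\rmc$, I would identify the preimages under the bijection: by~\eqref{eq.quadrics of other quadratic forms} and~\eqref{eq.sign of quadratic}, one has $\sign(\rmQ_\bfL)=\sign(\rmQ)$ exactly when $\bfv_\bfL\in\calQ$, and $\sign(\rmQ_\bfL)=-\sign(\rmQ)$ exactly when $\bfv_\bfL\in\calQ^\rmc$. Thus the bijection $\bfv_\bfL\mapsto\rmQ_\bfL$ restricts to bijections from $\calQ$ and $\calQ^\rmc$ onto the two sign classes of quadratic forms yielding $\rmB$. Since Lemma~\ref{lemma.DT of Sp} supplies double transitivity on each sign class, the equivariance established above transfers double transitivity to the restrictions of~\eqref{eq.Sp action on vectors} to $\calQ$ and $\calQ^\rmc$. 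The only step requiring real care is bookkeeping the equivariance computation in paragraph two, but since it has already been spelled out in the surrounding discussion, no substantial obstacle remains.
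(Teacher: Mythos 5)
Your proposal is correct and follows essentially the same route as the paper, which proves this lemma in the discussion immediately preceding its statement: identify $\bfA\calQ$ as the quadric of the same-sign form $\bfA\cdot\rmQ$ to extract the unique $\bfv_\bfA\in\calQ$, transport the action~\eqref{eq.Sp action on quadratic forms} through the bijection $\bfv_\bfL\mapsto\rmQ_\bfL$ to obtain~\eqref{eq.Sp action on vectors}, and pull back double transitivity from Lemma~\ref{lemma.DT of Sp} via the sign dichotomy of~\eqref{eq.quadrics of other quadratic forms}. Your alternative justification that $\bfv_\bfA\in\calQ$ (from $\bfzero\in\bfA\calQ=\calQ+\bfv_\bfA$) is a minor, valid variation on the paper's appeal to~\eqref{eq.quadrics of other quadratic forms}.
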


These facts provide a quick proof of the following result:

\begin{theorem}
\label{theorem.sub-ETFs are DT}
The ETFs of Lemma~\ref{lemma.symplectic sub-ETFs}(b) and (c) are DTETFs.
\end{theorem}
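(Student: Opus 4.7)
The plan is to exhibit the symplectic group $\Sp(\rmB)$, acting on $\calV$ via the twisted action $\sigma_\bfA(\bfv) := \bfA\bfv + \bfv_\bfA$ from Lemma~\ref{lemma.DT of Sp on quadric}, as a subgroup of the symmetry group of each of the four ETFs in question. Because Lemma~\ref{lemma.symplectic sub-ETFs} presents the two ETFs in~(b) as a dual pair, and likewise those in~(c), and because duals share their symmetry groups, I only need to verify this for $\bfPhi_{\calD^\rmc}$ and for $\bfPsi_\calD$. As a preliminary reduction I would invoke Lemma~\ref{lemma.affine quadric}: any affine quadric $\calD$ of size $2^{J-1}(2^J+1)$ is of the form $\bfv_0 + \calQ$ for the quadric $\calQ$ of some positive quadratic form $\rmQ$ yielding $\rmB$. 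Shifting the indexing by $\bfv_0$ multiplies each off-diagonal Gram entry $(-1)^{\rmB(\bfv_1, \bfv_2)}$ by the unimodular factor $\overline{z_{\bfv_1}} z_{\bfv_2}$ with $z_\bfv := (-1)^{\rmB(\bfv_0, \bfv)}$, producing an equivalent ETF with an identical symmetry group; so I may assume $\calD = \calQ$ throughout.

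The main computational step is to verify that for each $\bfA \in \Sp(\rmB)$, the map $\sigma_\bfA$ (which by Lemma~\ref{lemma.DT of Sp on quadric} restricts to a permutation of both $\calQ$ and $\calQ^\rmc$) is a symmetry of $\bfPhi_{\calQ^\rmc}$ and of $\bfPsi_\calQ$. Expanding $\rmB(\sigma_\bfA\bfv_1, \sigma_\bfA\bfv_2)$ by bilinearity, using that $\bfA$ preserves $\rmB$, that $\rmB$ is alternating (so $\rmB(\bfv_\bfA, \bfv_\bfA) = 0$), and that $\rmB$ is symmetric in characteristic~$2$, I expect to obtain
\begin{equation*}
\rmB(\sigma_\bfA\bfv_1, \sigma_\bfA\bfv_2) = \rmB(\bfv_1, \bfv_2) + \rmB(\bfA\bfv_1, \bfv_\bfA) + \rmB(\bfA\bfv_2, \bfv_\bfA).
\end{equation*}
Setting $\zeta_\bfv := (-1)^{\rmB(\bfA\bfv, \bfv_\bfA)} \in \set{-1,+1}$ and exponentiating then yields $(-1)^{\rmB(\sigma_\bfA\bfv_1, \sigma_\bfA\bfv_2)} = \overline{\zeta_{\bfv_1}}\, \zeta_{\bfv_2}\, (-1)^{\rmB(\bfv_1, \bfv_2)}$. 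In view of the Gram-matrix formulas of Lemma~\ref{lemma.symplectic sub-ETFs}, this exhibits $\sigma_\bfA$ as a symmetry of $\bfPhi_{\calQ^\rmc}$; for $\bfPsi_\calQ$ the extra $-1$ appearing on both sides of the resulting equation cancels, and the same conclusion holds.

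Having embedded $\Sp(\rmB)$ into the symmetry groups of both $\bfPhi_{\calQ^\rmc}$ and $\bfPsi_\calQ$, the conclusion is immediate: Lemma~\ref{lemma.DT of Sp on quadric} asserts that the action $\bfA \mapsto \sigma_\bfA$ of $\Sp(\rmB)$ on $\calQ^\rmc$ (respectively $\calQ$) is already doubly transitive, and double transitivity is inherited by any overgroup. Combined with the ETF property and the strict inequality between span dimension and index-set cardinality (both supplied by Lemma~\ref{lemma.symplectic sub-ETFs}), this confirms that all four sequences are DTETFs. The only real obstacle I foresee is the bilinear bookkeeping in the displayed identity above; everything else is a direct consequence of Lemmas~\ref{lemma.symplectic sub-ETFs}, \ref{lemma.affine quadric}, and~\ref{lemma.DT of Sp on quadric}.
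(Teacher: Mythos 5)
Your proposal is correct and follows essentially the same route as the paper: embed $\Sp(\rmB)$ into the symmetry groups via the twisted action $\bfv\mapsto\bfA\bfv+\bfv_\bfA$ of Lemma~\ref{lemma.DT of Sp on quadric}, verify the unimodular-factor identity for $(-1)^{\rmB(\cdot,\cdot)}$ by bilinearity, and inherit double transitivity. The only difference is cosmetic: you normalize $\calD=\calQ$ by a shift at the outset, whereas the paper skips this because Lemma~\ref{lemma.DT of Sp on quadric} already gives the doubly transitive action on both $\calQ$ and $\calQ^\rmc$, hence on $\calD$ and $\calD^\rmc$ regardless of which one is the actual quadric.
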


\begin{proof}
By Definition~\ref{def.affine quadric},
there exists a quadratic form $\rmQ$ that yields $\rmB$ whose quadric $\calQ$ is either $\calD$ or $\calD^\rmc$.
By Lemma~\ref{lemma.DT of Sp on quadric},
for every $\bfA\in\Sp(\rmB)$ there exists $\bfv_\bfA\in\calQ$ such that~\eqref{eq.symplectic of quadric is shift} holds,
and moreover~\eqref{eq.Sp action on vectors} defines a doubly transitive action on both $\calQ$ and $\calQ^\rmc$,
that is, on both $\calD$ and $\calD^\rmc$.
Identifying any $\bfA\in\Sp(\rmB)$ with the restriction of the mapping $\bfv\mapsto\bfA\cdot\bfv=\bfA\bfv+\bfv_\bfA$ to $\calD$ and $\calD^\rmc$ thus yields subgroups of $\Sym(\calD)$ and $\Sym(\calD^\rmc)$ that are isomorphic to $\Sp(\rmB)$ and whose natural actions on $\calD$ and $\calD^\rmc$, respectively, are doubly transitive.
They are the \textit{affine symplectic groups} of~\cite{Wertheimer86}.
Since the ETFs of Lemma~\ref{lemma.symplectic sub-ETFs}(c) and (b) are indexed by $\calD$ and $\calD^\rmc$ respectively,
it thus suffices to show that the members of these subgroups of $\Sym(\calD)$ and $\Sym(\calD^\rmc)$ belong to the symmetry groups of these ETFs, respectively.
To show this,
note that for any $\bfA\in\Sp(\rmB)$,
and any $\bfv_1,\bfv_2\in\calV$ that both lie in either $\calD$ or $\calD^\rmc$,
\begin{align*}
\rmB(\bfA\cdot\bfv_1,\bfA\cdot\bfv_2)
=\rmB(\bfA\bfv_1+\bfv_\bfA,\bfA\bfv_2+\bfv_\bfA)
=\rmB(\bfv_1,\bfv_2)+\rmB(\bfA\bfv_1,\bfv_\bfA)+\rmB(\bfv_\bfA,\bfA\bfv_2)
\end{align*}
and so defining unimodular scalars $(z_\bfv)_{\bfv\in\calV}$ by
$z_\bfv:=(-1)^{\rmB(\bfv_\bfA,\bfA\bfv)}$ for all $\bfv\in\calV$, we have
\begin{equation*}
(-1)^{\rmB(\bfA\cdot\bfv_1,\bfA\cdot\bfv_2)}
=(-1)^{\rmB(\bfA\bfv_1,\bfv_\bfA)+\rmB(\bfv_\bfA,\bfA\bfv_2)+\rmB(\bfv_1,\bfv_2)}
=\overline{z_{\bfv_1}}z_{\bfv_2}(-1)^{\rmB(\bfv_1,\bfv_2)}.
\end{equation*}
When combined with Lemma~\ref{lemma.symplectic sub-ETFs}(c) and (b),
this gives
$\ip{\bfpsi_{\bfA\cdot\bfv_1}}{\bfpsi_{\bfA\cdot\bfv_2}}
=\overline{z_{\bfv_1}}z_{\bfv_2}\ip{\bfpsi_{\bfv_1}}{\bfpsi_{\bfv_2}}$
for all $\bfv_1,\bfv_2\in\calD$, and
$\ip{\bfphi_{\bfA\cdot\bfv_1}}{\bfphi_{\bfA\cdot\bfv_2}}
=\overline{z_{\bfv_1}}z_{\bfv_2}\ip{\bfphi_{\bfv_1}}{\bfphi_{\bfv_2}}$
for all $\bfv_1,\bfv_2\in\calD^\rmc$, respectively.
\end{proof}

Since the ETFs of Lemma~\ref{lemma.symplectic sub-ETFs}(b) and (c) are DTETFs,
and since DTETFs with their parameters are unique up to equivalence~\cite{IversonM24},
their symmetry groups are known to be isomorphic to $\Sp(\rmB)$~\cite{IversonM24}.
Their symmetry groups are thus the affine symplectic groups of~\cite{Wertheimer86} that were discussed in the proof of Theorem~\ref{theorem.sub-ETFs are DT} above.
That is, no other symmetries of them remain to be discovered.

\subsection{The binders of the doubly transitive equiangular tight frames of Lemma~\ref{lemma.symplectic sub-ETFs}}

By Theorem~\ref{theorem.sub-ETFs are DT},
the ETFs of Lemma~\ref{lemma.symplectic sub-ETFs}(b) and (c) are DTETFs.
By Theorem~\ref{theorem.binders of DTETFs}, the binders of these ETFs are thus either empty or form BIBDs.
For any ETF that belongs to one of these four families,
we now determine which of these two phenomena occur,
and moreover the parameters of any BIBD that does arise in this fashion.
As we shall see, the binders of $\bfPhi_{\calD^\rmc}$ and
\smash{$\widehat{\bfPsi}_\calD$} are empty except in a finite number of cases.
In contrast, the binders of their respective duals
\smash{$\widehat{\bfPhi}_{\calD^\rmc}$} and $\bfPsi_\calD$ always form BIBDs, and these BIBDs are related to the binder of $\bfPsi$---the set of all affine Lagrangian subspaces of $\calV$---in nontrivial ways.

Recall that in general, we say that a finite sequence $(\bfphi_n)_{n\in\calK}$ of vectors in a Hilbert space is a regular simplex if it is an ETF for a space of dimension $\#(\calK)-1$.
Notably, whether this occurs is independent of what if any set $\calN$ we regard to be a superset of $\calK$.
As such, if $\bfPhi_0=(\bfphi_n)_{n\in\calN_0}$ is an ETF that is a proper subsequence of another ETF $\bfPhi=(\bfphi_n)_{n\in\calN}$---as is the case for the ETFs $(\bfphi_\bfv)_{\bfv\in\calD^\rmc}$ and $(\bfpsi_\bfv)_{\bfv\in\calD}$ of Lemma~\ref{lemma.symplectic sub-ETFs}---then a subset $\calK$ of $\calN$ belongs to
$\bin(\bfPhi_0)$ if and only if it both belongs to $\bin(\bfPhi)$ and is contained in $\calN_0$.
In particular, $\bin(\bfPhi_0)\subseteq\bin(\bfPhi)$, and if $\bfPhi$ has an empty binder then $\bfPhi_0$ does as well.
Combining this observation with Theorem~\ref{theorem.binder of symplectic} makes short work of the problem of calculating the binders of some of the ETFs of Lemma~\ref{lemma.symplectic sub-ETFs}:

\begin{theorem}
\label{theorem.binder of sub-ETF of symplectic}
Under the assumptions of Lemma~\ref{lemma.symplectic sub-ETFs},
$\bin(\bfPhi_{\calD^\rmc})$ is empty unless $J=2$.
When $J=2$, $\bfPhi_{\calD^\rmc}$ is a $6$-vector regular simplex and so
$\bin(\bfPhi_{\calD^\rmc})=\set{\calD^\rmc}$,
while $\bin(\bfPhi)$ consists of all shifts of $\calD^\rmc$, which forms a BIBD with parameters $(V,K,\Lambda,R,B)=(16,6,2,6,16)$.
\end{theorem}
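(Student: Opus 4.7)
The plan splits cleanly into the cases $J\geq 3$ and $J=2$.

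First, for $J\geq 3$, I will exploit the observation recorded just before the theorem statement: $\bin(\bfPhi_{\calD^\rmc})\subseteq\bin(\bfPhi)$ because $\bfPhi_{\calD^\rmc}$ is literally a subsequence of $\bfPhi$, so any $\calK\subseteq\calD^\rmc$ on which $(\bfphi_\bfv)_{\bfv\in\calK}$ is a regular simplex also belongs to $\bin(\bfPhi)$. Since Lemma~\ref{lemma.symplectic sub-ETFs}(a) states that $\bfPhi$ is equivalent to the symplectic ETF of Definition~\ref{def.symplectic ETF} with $P=2$, and equivalence of ETFs preserves binders, Theorem~\ref{theorem.binder of symplectic} immediately yields $\bin(\bfPhi)=\emptyset$ for all such $J$, whence $\bin(\bfPhi_{\calD^\rmc})=\emptyset$.

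For $J=2$, Lemma~\ref{lemma.symplectic sub-ETFs}(b) shows $\bfPhi_{\calD^\rmc}$ is a $2^{J-1}(2^J-1)=6$-vector ETF for a space of dimension $\tfrac13(2^{2J}-1)=5$, so it is a regular simplex, and Example~\ref{example.binder of simplex} then gives $\bin(\bfPhi_{\calD^\rmc})=\set{\calD^\rmc}$ directly. In particular $\calD^\rmc\in\bin(\bfPhi)$. Because $\bfPhi$ is harmonic---its synthesis operator consisting of rows of the character table $\bfGamma$ of $\calV$---its symmetry group contains every translation of $\calV$, so $\bin(\bfPhi)$ is closed under translation, and every shift $\bfv+\calD^\rmc$ lies in $\bin(\bfPhi)$. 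On the other hand, Theorem~\ref{theorem.binder of symplectic} states that for $J=2$, $\bin(\bfPhi)$ is a single translation orbit. Combining these two facts, this orbit must equal the orbit of $\calD^\rmc$, which is the claimed description.

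With the set-theoretic description in hand, $V=16$ and $K=6$ are immediate, and by Theorem~\ref{theorem.binders of DTETFs} we already know $\bin(\bfPhi)$ forms a BIBD. To extract $B$, I will show the translation-stabilizer $\set{\bfv\in\calV:\bfv+\calD^\rmc=\calD^\rmc}$ is trivial: any stabilizing $\bfv\in\calD^\rmc$ forces $\bfzero=\bfv+\bfv\in\bfv+\calD^\rmc=\calD^\rmc$, contradicting $\rmQ(\bfzero)=0$; any stabilizing $\bfv\in\calD$ reduces, via~\eqref{eq.quadratic symplectic relation} applied to $\rmQ(\bfv+\bfu)=1$ for every $\bfu\in\calD^\rmc$, to the condition $\rmB(\bfv,\bfu)=0$ for all $\bfu\in\calD^\rmc$, which forces $\bfv=\bfzero$ once $\calD^\rmc$ is known to span $\calV$. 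Once $B=16$ is established, $R=BK/V=6$ and $\Lambda=R(K-1)/(V-1)=2$ follow from~\eqref{eq.BIBD parameter relations}.

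The main obstacle is purely verificatory: confirming $\calD^\rmc$ spans $\calV$. This reduces to a small direct calculation in one canonical symplectic basis (e.g.\ using the positive form $\rmQ_{2,+}$ on $\bbF_2^4$), and transfers to every affine quadric $\calD$ by the uniqueness of the quadratic structure up to the action of $\Sp(\rmB)$ (Lemma~\ref{lemma.DT of Sp}).
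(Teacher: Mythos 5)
Your handling of the $J\geq 3$ case and the identification of $\bfPhi_{\calD^\rmc}$ as a $6$-vector regular simplex when $J=2$ match the paper exactly, and your observation that the harmonic structure of $\bfPhi$ makes $\bin(\bfPhi)$ translation-invariant correctly yields the inclusion $\set{\bfv+\calD^\rmc:\bfv\in\calV}\subseteq\bin(\bfPhi)$.

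The gap is in the reverse inclusion. You justify it by reading Theorem~\ref{theorem.binder of symplectic} as asserting that $\bin(\bfPhi)$ is a single translation orbit. But the paper's proof of that theorem only shows that $\bin(\bfPhi)$ is the union, over all symplectic bases, of the translation orbits of the corresponding sets~\eqref{eq.proto binder of 10x16}; the fact that these orbits all coincide is precisely what the remark following Theorem~\ref{theorem.binder of symplectic} defers to the present proof. So your argument is circular relative to the paper's logical structure: the single-orbit claim is content you are expected to supply here, not a premise you may import. The paper closes this by observing that the set~\eqref{eq.proto binder of 10x16} attached to a symplectic basis with synthesis operator $\bfV$ is the quadric $\calQ$ of the negative quadratic form $\rmQ_{2,-}\circ\bfV^{-1}$, that any other symplectic basis is the image of this one under some $\bfA\in\Sp(\rmB)$ so its associated set is $\bfA\calQ$, and that Lemma~\ref{lemma.DT of Sp on quadric} gives $\bfA\calQ=\calQ+\bfv_\bfA$; hence all sets of the form~\eqref{eq.proto binder of 10x16} are shifts of one another, $\bin(\bfPhi)=\set{\bfv+\calQ:\bfv\in\calV}$, and this equals $\set{\bfv+\calD^\rmc:\bfv\in\calV}$ because $\calD^\rmc$ is one of these shifts. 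You would need to add this (or an equivalent) argument. Two smaller points: the distinctness of the $16$ shifts is already recorded in Lemma~\ref{lemma.affine quadric} (``necessarily-distinct shifts''), so your stabilizer computation is superfluous; and as written it tacitly assumes $\bfzero\in\calD$, which holds only when the affine quadric $\calD$ happens to be a quadric rather than a proper shift of one.
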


\begin{proof}
By Lemma~\ref{lemma.symplectic sub-ETFs},
$J\geq 2$ and $\bfPhi$ is equivalent to the symplectic ETF that arises as the $P=2$ case of Definition~\ref{def.symplectic ETF},
and so these two ETFs have the same binder.
When $J>2$, Theorem~\ref{theorem.binder of symplectic} gives that $\bfPhi$ has an empty binder and so the same is true for its sub-ETF $\bfPhi_{\calD^\rmc}$.
So let $J=2$ for the remainder of this proof.
In that context,
Lemma~\ref{lemma.symplectic sub-ETFs}(b) gives that
$\bfPhi_{\calD^\rmc}$ is a $6$-vector ETF for a space of dimension $5$,
and so is a regular simplex.
As detailed in Example~\ref{example.binder of simplex},
its binder thus consists of the single set $\calD^\rmc$.
Here, Theorem~\ref{theorem.binder of symplectic} gives that $\bin(\bfPhi)$ forms a BIBD and consists of all shifts of every set of the form~\eqref{eq.proto binder of 10x16} where $(\bfv_1,\bfv_2,\bfv_3,\bfv_4)$ is any symplectic basis for $\calV$.
Fix any such basis, let $\bfV$ be its synthesis map,
and consider the quadratic form $\rmQ=\rmQ_{2,-}\circ\bfV^{-1}$
obtained by composing the canonical quadratic form on $\bbF_2^4$ of negative type~\eqref{eq.canonical quadratic forms} with $\bfV^{-1}$, and so
\begin{equation*}
\rmQ(x_1\bfv_1+x_2\bfv_2+x_3\bfv_3+x_4\bfv_4)
=x_1^{}x_2^{}+x_3^{}x_4^{}+x_3^2+x_4^2,
\quad\forall\,x_1,x_2,x_3,x_4\in\bbF_2.
\end{equation*}
The quadric $\calQ$ of $\rmQ$ is the set given in~\eqref{eq.proto binder of 10x16}.
Since any symplectic basis for $\calV$ is the image of $(\bfv_1,\bfv_2,\bfv_3,\bfv_4)$ under some $\bfA\in\Sp(\rmB)$, every set of the form~\eqref{eq.proto binder of 10x16} can be expressed as
\begin{equation*}
\set{\bfA\bfzero,
\bfA\bfv_1,
\bfA\bfv_2,
\bfA\bfv_1+\bfA\bfv_2+\bfA\bfv_3,
\bfA\bfv_1+\bfA\bfv_2+\bfA\bfv_4,
\bfA\bfv_1+\bfA\bfv_2+\bfA\bfv_3+\bfA\bfv_4}\\
=\bfA\calQ.
\end{equation*}
Since Lemma~\ref{lemma.DT of Sp on quadric} moreover gives
$\bfA\calQ=\calQ+\bfv_\bfA$, every such set is a shift of $\calQ$.
Thus, $\bin(\bfPhi)$ consists of all shifts of sets which are themselves shifts of $\calQ$, and is thus $\set{\bfv+\calQ: \bfv\in\calV}$.
As noted in Lemma~\ref{lemma.affine quadric},
these shifts are distinct,
and so $\bin(\bfPsi)$ forms a BIBD with $V=16$, $K=6$ and $B=16$,
which in turn implies via~\eqref{eq.BIBD parameter relations} that
$R=6$ and $\Lambda=2$.
Since $\bfPhi_{\calD^\rmc}$ is a regular simplex,
$\calD^\rmc$ is one of these shifts of $\calQ$,
and so $\bin(\bfPhi)$ in fact consists of all shifts of $\calD^\rmc$.
\end{proof}

To calculate the binders of \smash{$\widehat{\bfPhi}_{\calD^\rmc}$} and $\bfPsi_\calD$ we need some more facts from finite symplectic geometry.
A subset $\calS$ of $\calV$ is said to be \textit{totally singular} with respect to some quadratic form $\rmQ$ that yields $\rmB$ when $\rmQ(\bfv)=0$ for all $\bfv\in\calS$,
that is, when $\calS$ is a subset of the corresponding quadric $\calQ$.
Any totally singular subspace $\calS$ is totally orthogonal:
for any $\bfv_1,\bfv_2\in\calS$, combining~\eqref{eq.quadratic symplectic relation} with the fact that $\bfv_1+\bfv_2\in\calS$ gives
$\rmB(\bfv_1,\bfv_2)=\rmQ(\bfv_1+\bfv_2)+\rmQ(\bfv_1)+\rmQ(\bfv_2)=0+0+0=0$.
We caution however that, in contrast with Lemma~\ref{lemma.totally orthogonal}(a),
a totally singular set might contain a symplectic pair, and when it does, its span is not totally singular.
Some partial converses to this fact---some from~\cite{Wertheimer86} and others folklore---are given in the following result, which we reprove in the appendix%
%\ of~\cite{FickusL23}% COMMENT
.

\begin{lemma}[\cite{Wertheimer86}]
\label{lemma.totally singular}
Let $\rmB$ be a symplectic form on a vector space $\calV$ over $\bbF_2$ of dimension $2J$, and let $\calQ$ be the quadric~\eqref{eq.quadric} of a quadratic form $\rmQ$ that yields $\rmB$.
\begin{enumerate}
\renewcommand{\labelenumi}{(\alph{enumi})}
\item
If $\calS$ is a totally orthogonal subspace of $\calV$ then $\calS\cap\calQ$ is a totally singular subspace.
Here if $\calS\nsubseteq\calQ$ then $\calS\cap\calQ$ is a subgroup of $\calS$ of index $2$, and $\calS\backslash\calQ$ is its nonidentity coset.\smallskip

\item
If $\sign(\rmQ)<0$ then no Lagrangian subspace of $\calV$ is totally singular,
and any totally singular subspace of $\calV$ of dimension $J-1$ is contained in exactly three Lagrangian subspaces of $\calV$.\smallskip

\item
If $\sign(\rmQ)>0$ then every Lagrangian subspace of $\calV$ has a unique coset that is totally singular,
and any totally singular subspace of $\calV$ of dimension $J-1$ is contained in exactly three Lagrangian subspaces of $\calV$.
Two of these three subspaces are totally singular while the third is not.
\end{enumerate}
\end{lemma}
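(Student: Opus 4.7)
The plan rests on first establishing (a) and then applying it repeatedly. For (a), since $\rmB$ vanishes on $\calS \times \calS$, the defining identity~\eqref{eq.quadratic symplectic relation} collapses to $\rmQ(\bfv_1 + \bfv_2) = \rmQ(\bfv_1) + \rmQ(\bfv_2)$ for $\bfv_1, \bfv_2 \in \calS$, so $\rmQ|_\calS$ is an $\bbF_2$-linear functional on $\calS$. Its kernel $\calS \cap \calQ$ is thus a subspace, and if $\calS \nsubseteq \calQ$, this functional is nonzero and hence surjective, so $\calS \cap \calQ$ is a hyperplane of index $2$ whose nontrivial coset is $\calS \setminus \calQ$.

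For the first parts of (b) and (c), I would analyze cosets of a fixed Lagrangian $\calL$. For any $\bfv_0 \in \calV$, \eqref{eq.quadratic symplectic relation} expands $\bfw \mapsto \rmQ(\bfv_0 + \bfw)$ as $\bfL_{\bfv_0}(\bfw) + \rmQ(\bfv_0) + \rmQ|_\calL(\bfw)$, where $\bfL_{\bfv_0}(\bfw) := \rmB(\bfv_0, \bfw)$; part (a) makes this an affine functional on $\calL$, constant precisely when $\bfL_{\bfv_0} = \rmQ|_\calL$ as functionals on $\calL$. Since $\bfv_0 \mapsto \bfL_{\bfv_0}$ surjects onto the dual of $\calL$ with kernel $\calL^\perp = \calL$, exactly one ``special'' coset of $\calL$ in $\calV$ satisfies this constancy, while on each of the remaining $2^J - 1$ cosets the affine functional takes both values equally. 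Counting singular vectors yields $\#\calQ = 2^{J-1}(2^J + 1)$ when the special coset is totally singular and $\#\calQ = 2^{J-1}(2^J - 1)$ when it is totally non-singular, which immediately proves (c) first part and, since $\calL$ is the special coset iff $\rmQ|_\calL = 0$ iff $\calL$ is totally singular, also (b) first part. As an auxiliary, I would count pairs $(\bfv, \calL)$ with $\bfv$ nonzero singular and $\calL$ a Lagrangian through $\bfv$: the vector side uses Lemma~\ref{lemma.totally orthogonal}(d) applied to the $(2J-2)$-dim symplectic quotient $\bfv^\perp/\langle\bfv\rangle$ to give $P := \prod_{j=1}^{J-1}(2^j + 1)$ Lagrangians through each $\bfv$, while the Lagrangian side uses (a) to tally $2^J - 1$ or $2^{J-1} - 1$ nonzero singular vectors per Lagrangian depending on whether it is totally singular. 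Solving yields that the total number $a$ of totally singular Lagrangians equals $(1 + \sign(\rmQ)) P$.

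For the second parts, I would reduce to the quotient $\calT^\perp/\calT$, a 2-dim symplectic space over $\bbF_2$ (the radical of $\rmB|_{\calT^\perp}$ being $(\calT^\perp)^\perp = \calT$). Lagrangians of $\calV$ containing $\calT$ correspond bijectively with 1-dim subspaces of $\calT^\perp/\calT$---each automatically Lagrangian via alternation of the induced form---so there are exactly $2^2 - 1 = 3$ of them, giving (b) second part. For (c) second part, since $\rmQ(\bfv + \bft) = \rmQ(\bfv)$ for $\bfv \in \calT^\perp$ and $\bft \in \calT$, $\rmQ$ descends to a quadratic form $\bar\rmQ$ on $\calT^\perp/\calT$; a Lagrangian $\calL \supseteq \calT$ is totally singular iff its image is spanned by a $\bar\rmQ$-singular element, so the count $s(\calT)$ of totally singular Lagrangians through $\calT$ is either $2$ or $0$. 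I would then double-count pairs $(\calT, \calL)$ with $\calT$ a $(J-1)$-dim totally singular subspace and $\calL$ any Lagrangian containing $\calT$. The $\calT$-side gives $3T$, where $T$ is the total number of such $\calT$. The $\calL$-side, by (a), gives $2P(2^J - 1) + (R - 2P) = 3P(2^J - 1)$---where $R = (2^J + 1) P$ is the total count of Lagrangians, each totally singular Lagrangian contains $2^J - 1$ such subspaces, and each non-totally-singular Lagrangian contains just the unique $(J-1)$-dim subspace $\calL \cap \calQ$. Hence $T = P(2^J - 1)$, and restricting to totally singular $\calL$ gives $\sum_\calT s(\calT) = 2P(2^J - 1) = 2T$, which combined with $s(\calT) \leq 2$ forces $s(\calT) = 2$ for every $\calT$. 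The principal obstacle is the auxiliary count $a = (1 + \sign(\rmQ))P$ of totally singular Lagrangians; once this is in hand, everything else is bookkeeping via part (a).
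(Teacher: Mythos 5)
Your proposal is correct, and it takes a genuinely different route from the paper's proof. The paper argues locally and constructively: given a totally singular $\calS$ of dimension $J-1$, it builds an adapted symplectic basis $(\bfv_k)_{k=1}^{2J}$ extending a basis of $\calS$ and normalizing $\rmQ$, explicitly identifies the three cosets $\calS_1,\calS_2,\calS_{1,2}$ of $\calS$ in $\calS^\perp$, reads off their singularity from the normal form~\eqref{eq.canonical quadratic forms}, and then derives the first parts of (b) and (c) as corollaries of this three-coset picture. You instead prove the first parts directly by a coset-of-$\calL$ argument (the map $\bfv_0\mapsto\rmB(\bfv_0,\cdot)|_\calL$ surjecting onto $\calL^*$ with kernel $\calL$ singles out one ``constant'' coset, and the singular-vector count pins down its type from $\sign(\rmQ)$), get the ``exactly three'' claim from the $2$-dimensional symplectic quotient $\calT^\perp/\calT$, and settle the $2$-versus-$0$ split by two global double counts, the key auxiliary being $a=(1+\sign(\rmQ))\prod_{j=1}^{J-1}(2^j+1)$ totally singular Lagrangians. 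I checked the arithmetic in both double counts and it is right (including the forcing step $\sum_\calT s(\calT)=2T$ with $s(\calT)\le 2$). What each approach buys: the paper's explicit basis tells you \emph{which} two of the three Lagrangians through $\calT$ are totally singular, which is occasionally useful downstream; your counting argument is less constructive but avoids the basis-normalization bookkeeping entirely and only needs Lemma~\ref{lemma.totally orthogonal}(d) plus part (a). Note also that your quotient argument implicitly sidesteps computing $\sign(\bar\rmQ)$ on $\calT^\perp/\calT$ (which is where the paper's recursion~\eqref{eq.quadratic form recursion} does the work) by replacing it with the global average; that substitution is valid and is the one genuinely clever step in your write-up.
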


As we now explain,
these facts imply that in the context of Lemma~\ref{lemma.symplectic sub-ETFs},
the BIBD formed by the binder of the dual $\bfPsi$ of the symplectic-equivalent ETF $\bfPhi$, namely the set of all affine Lagrangian subspaces of $\calV$, decomposes into three BIBDs,
including one whose blocks form the binder of its sub-ETF $\bfPsi_\calD$,
as well as another---with twice-repeated blocks---whose distinct blocks form the binder of the dual \smash{$\widehat{\bfPhi}_{\calD^\rmc}$} of the sub-ETF $\bfPhi_{\calD^\rmc}$ of $\bfPhi$.
Though these BIBDs are not new~\cite{Wertheimer86},
the relationships identified here between them and these ETFs seem to be novel.

\begin{theorem}
\label{theorem.binders of sub-ETFs}
Under the assumptions of Lemma~\ref{lemma.symplectic sub-ETFs},
$\bin(\bfPsi)
=\set{\bfv+\calL: \bfv\in\calV,\,\calL\subseteq\calV,\,\calL^\perp=\calL}$ is the set of all affine Lagrangian subspaces of $\calV$, which forms a BIBD with $(V,K,\Lambda,R,B)$ parameters
\begin{equation}
\label{eq.BIBD parameters of binder of dual symplectic}
2^{2J},\qquad
2^J,\qquad
\prod_{j=1}^{J-1}(2^j+1),\qquad
\prod_{j=1}^{J}(2^j+1),\qquad
2^J\prod_{j=1}^{J}(2^j+1),
\end{equation}
respectively.
Moreover,
any Lagrangian subspace here has a unique coset that lies in $\calD$, and so $\bin(\bfPsi_\calD)=\set{\bfv+\calL\in\bin(\bfPsi):\bfv+\calL\subseteq\calD}$ forms a BIBD with $(\calV_\calD,K_\calD,\Lambda_\calD,R_\calD,B_\calD)$ parameters
\begin{equation}
\label{eq.BIBD parameters of binder of sub-ETF of dual symplectic}
2^{J-1}(2^J+1),\qquad
2^J,\qquad
\prod_{j=0}^{J-2}(2^j+1),\qquad
\prod_{j=0}^{J-1}(2^j+1),\qquad
\prod_{j=1}^{J}(2^j+1),
\end{equation}
respectively.
The $\calD^\rmc$-components of the affine Lagrangian subspaces of $\calV$ that do not lie in $\calD$, i.e.,
\begin{equation}
\label{eq.D complement part}
(\bin(\bfPsi)\backslash\bin(\bfPsi_\calD))\cap\calD^\rmc
=\set{(\bfv+\calL)\cap\calD^\rmc:
\bfv+\calL\in\bin(\bfPsi),\,\bfv+\calL\nsubseteq\calD},
\end{equation}
form a BIBD with $(V_{\calD^\rmc},K_{\calD^\rmc},\Lambda_{\calD^\rmc},R_{\calD^\rmc},B_{\calD^\rmc})$ parameters
\begin{equation}
\label{eq.BIBD parameters of binder of dual of sub-ETF of symplectic}
2^{J-1}(2^J-1),\qquad
2^{J-1},\qquad
\prod_{j=2}^{J-1}(2^j+1),\qquad
\prod_{j=2}^{J}(2^j+1),\qquad
(2^J-1)\prod_{j=2}^{J}(2^j+1),
\end{equation}
respectively.
This set~\eqref{eq.D complement part} is $\bin(\widehat{\bfPhi}_{\calD^\rmc})$, and each member of it extends to exactly three members of $\bin(\bfPsi)\backslash\bin(\bfPsi_\calD)$.
Meanwhile, the $\calD$-components of these affine Lagrangian subspaces, i.e.,
\begin{equation}
\label{eq.D part}
(\bin(\bfPsi)\backslash\bin(\bfPsi_\calD))\cap\calD
=\set{(\bfv+\calL)\cap\calD:
\bfv+\calL\in\bin(\bfPsi),\,\bfv+\calL\nsubseteq\calD},
\end{equation}
form a BIBD over the vertex set $\calD$ whose parameters $(V_0,K_0,\Lambda_0,R_0,B_0)$ are
\begin{equation}
\label{eq.BIBD parameters of last BIBD}
2^{J-1}(2^J+1),\quad
2^{J-1},\quad
(2^{J-1}-1)\prod_{j=1}^{J-2}(2^j+1),\quad
(2^J-1)\prod_{j=1}^{J-1}(2^j+1),\quad
(2^J-1)\prod_{j=1}^{J}(2^j+1),
\end{equation}
respectively.
The incidence matrices $\bfX$, $\bfX_\calD$, $\bfX_{\calD^\rmc}$ and $\bfX_0$ of these four respective BIBDs satisfy
\begin{equation}
\label{eq.incidence relation}
\bfX=\left[\begin{array}{ccc}
\bfX_{\calD^\rmc}\otimes\left[\begin{smallmatrix}1\\1\\1\end{smallmatrix}\right]
&\ &\bfX_0\smallskip\\
\bfzero&\ &\bfX_\calD
\end{array}\right],
\end{equation}
provided its columns are indexed in ``$\calD^\rmc$ then $\calD$'' order,
while its rows are arranged according to the intersection of their blocks with $\calD^\rmc$,
beginning with those with nontrivial such intersections.
\end{theorem}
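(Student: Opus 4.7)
The plan is to assemble four pieces, starting from the characterization of $\bin(\bfPsi)$ already provided by Theorem~\ref{theorem.binder of dual symplectic}. By Lemma~\ref{lemma.affine quadric}, we may assume without loss of generality that $\calD=\calQ$ is the quadric of some positive quadratic form $\rmQ$ yielding $\rmB$; this normalization makes Lemma~\ref{lemma.totally singular} directly applicable. The common framework for each of the three remaining BIBDs is to characterize the blocks, establish a clean correspondence with affine Lagrangians, and then invoke either Theorem~\ref{theorem.binders of DTETFs} (for a DTETF binder) or the double transitivity of the affine symplectic action of Lemma~\ref{lemma.DT of Sp on quadric} (for the mixed structure~\eqref{eq.BIBD parameters of last BIBD}); the parameters then follow from~\eqref{eq.BIBD parameter relations}.

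For $\bin(\bfPsi_\calD)$, since $\bfPsi_\calD$ is a subsequence of $\bfPsi$, its binder is precisely the set of affine Lagrangian subspaces of $\calV$ contained in $\calD=\calQ$. Lemma~\ref{lemma.totally singular}(c) says each Lagrangian has a unique totally singular coset, so these blocks are in bijection with Lagrangians and $B_\calD=\prod_{j=1}^J(2^j+1)$. Theorems~\ref{theorem.sub-ETFs are DT} and~\ref{theorem.binders of DTETFs} then give the BIBD structure and~\eqref{eq.BIBD parameters of binder of sub-ETF of dual symplectic} follows. For $\bin(\widehat{\bfPhi}_{\calD^\rmc})$, applying~\eqref{eq.triple product simplex} to the inner products in Lemma~\ref{lemma.symplectic sub-ETFs}(b), as in the proof of Lemma~\ref{lemma.binder of symplectic}(b), shows every block is an affine totally orthogonal (TO) subspace of dimension $J-1$ contained in $\calD^\rmc$. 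Given such a block $\calK=\bfv_0+\calL_0$, counting one-dimensional subspaces of the two-dimensional symplectic quotient $\calL_0^\perp/\calL_0$ yields exactly three Lagrangians $\calL_1,\calL_2,\calL_3$ containing $\calL_0$. The core calculation is that each $\bfv_0+\calL_i$ intersects $\calD^\rmc$ in exactly $\calK$: since $\calL_i$ is Lagrangian, $\rmQ|_{\calL_i}$ is $\bbF_2$-linear (the cross term in~\eqref{eq.quadratic symplectic relation} vanishes), and together with $\rmQ(\bfv_0)=1$ this makes the preimage $\set{\bfl\in\calL_i:\rmQ(\bfv_0+\bfl)=1}$ a subspace of $\calL_i$ of codimension at most one (explicitly, the kernel of $\bfl\mapsto\rmB(\bfv_0,\bfl)+\rmQ(\bfl)$). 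This subspace contains $\calL_0$, hence equals either $\calL_0$ or $\calL_i$; the latter would give $\bfv_0+\calL_i\subseteq\calQ^\rmc$, impossible by Lemma~\ref{lemma.totally singular}(b) applied to the shifted form $\rmQ_{\bfv_0}$ from~\eqref{eq.other quadratic forms} (which has sign $-1$ since $\bfv_0\in\calQ^\rmc$, so its quadric $\bfv_0+\calQ^\rmc$ admits no Lagrangian). Thus each $\calK$ extends to exactly three members of $\bin(\bfPsi)\setminus\bin(\bfPsi_\calD)$, and conversely an identical linearization shows every $\bfv+\calL\in\bin(\bfPsi)\setminus\bin(\bfPsi_\calD)$ has $(\bfv+\calL)\cap\calD^\rmc$ of size $2^{J-1}$ with TO direction. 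Since $\widehat{\bfPhi}_{\calD^\rmc}$ is a DTETF with nonempty binder, Theorem~\ref{theorem.binders of DTETFs} delivers the BIBD, and the $3$-to-$1$ relation $3B_{\calD^\rmc}=B-B_\calD$ together with~\eqref{eq.BIBD parameter relations} yields~\eqref{eq.BIBD parameters of binder of dual of sub-ETF of symplectic}.

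For the BIBD on $\calD$ from~\eqref{eq.D part}, the same linearization on the $\calD$-side shows $(\bfv+\calL)\cap\calD$ is a size-$2^{J-1}$ coset of a codimension-$1$ subspace $\calM$ of $\calL$. The map $\bfv+\calL\mapsto(\bfv+\calL)\cap\calD$ is injective: fixing any $\bfv'\in(\bfv+\calL)\cap\calD$, the shifted form $\rmQ_{\bfv'}$ has positive sign (as $\bfv'\in\calQ$) and $\calM$ is totally singular with respect to $\rmQ_{\bfv'}$; Lemma~\ref{lemma.totally singular}(c) applied to $\rmQ_{\bfv'}$ then says exactly two of the three Lagrangians containing $\calM$ are totally singular for $\rmQ_{\bfv'}$, i.e., satisfy $\bfv'+\calL_i\subseteq\calD$ (putting them in $\bin(\bfPsi_\calD)$), leaving one valid extension in $\bin(\bfPsi)\setminus\bin(\bfPsi_\calD)$. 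Hence $B_0=B-B_\calD$ and the blocks are distinct. The BIBD property follows from the doubly transitive affine symplectic action on $\calD$ (Lemma~\ref{lemma.DT of Sp on quadric}), which preserves both affine Lagrangians and their $\calD$-intersections; parameters~\eqref{eq.BIBD parameters of last BIBD} follow from~\eqref{eq.BIBD parameter relations}. Finally,~\eqref{eq.incidence relation} is a bookkeeping rearrangement: rows for $\bin(\bfPsi_\calD)$ blocks are zero on $\calD^\rmc$-columns and give $\bfX_\calD$ on the $\calD$-columns, while rows for $\bin(\bfPsi)\setminus\bin(\bfPsi_\calD)$ blocks, grouped into triples sharing a common $\calD^\rmc$-component, give $\bfX_{\calD^\rmc}\otimes[1,1,1]^\rmT$ on the $\calD^\rmc$-columns and $\bfX_0$ on the $\calD$-columns. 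The hardest step is the linearization of $\rmQ$ on each candidate Lagrangian combined with the sign-tracking of Lemma~\ref{lemma.totally singular}(b,c) through the shifted forms $\rmQ_{\bfv_0}$ and $\rmQ_{\bfv'}$.
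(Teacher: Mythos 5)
Your proposal is correct and follows the same overall decomposition as the paper's proof: characterize $\bin(\bfPsi)$ via Theorem~\ref{theorem.binder of dual symplectic}, use Lemma~\ref{lemma.totally singular}(c) to get the unique coset of each Lagrangian inside $\calD$, establish the three-to-one and one-to-one restriction maps onto the $\calD^\rmc$- and $\calD$-components via the sign dichotomy of Lemma~\ref{lemma.totally singular}(b),(c) applied to shifted quadratic forms, and read off the parameters from~\eqref{eq.BIBD parameter relations}. Two of your local arguments differ from the paper's and are worth comparing. For the design~\eqref{eq.D complement part}, the paper certifies the BIBD property by direct counting (any two points of $\calD^\rmc$ lie in exactly $\Lambda$ affine Lagrangians, none of which belong to $\bin(\bfPsi_\calD)$, hence in exactly $\tfrac13\Lambda$ blocks), whereas you identify the set with $\bin(\widehat{\bfPhi}_{\calD^\rmc})$ first and invoke Theorems~\ref{theorem.sub-ETFs are DT} and~\ref{theorem.binders of DTETFs}; both routes work. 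More substantively, for the design~\eqref{eq.D part}---which is not the binder of any ETF, so Theorem~\ref{theorem.binders of DTETFs} is unavailable---the paper again counts directly ($\Lambda_0=\Lambda-\Lambda_\calD$), while you appeal to the doubly transitive affine symplectic action of Lemma~\ref{lemma.DT of Sp on quadric} together with Lemma~\ref{lemma.DT BIBD}(a); this is valid, but note that it requires the blocks to be distinct so that block-permuting maps are automorphisms, which is exactly what your injectivity argument supplies. Your inline ``linearization'' of $\rmQ$ on a Lagrangian is essentially a re-derivation of the proof of Lemma~\ref{lemma.totally singular}(a), which could be cited directly. Finally, one step is compressed: the claim that every block of $\bin(\widehat{\bfPhi}_{\calD^\rmc})$ is an affine totally orthogonal \emph{subspace}. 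The triple-product computation from~\eqref{eq.triple product simplex} only yields a totally orthogonal \emph{set} of size $2^{J-1}$ lying in a negative quadric; upgrading ``set'' to ``subspace'' requires embedding its span in a Lagrangian $\calL$ and comparing cardinalities with $\calL\cap\calQ_-$ via Lemma~\ref{lemma.totally singular}(a),(b). This is entirely within the toolkit you cite, but it should be spelled out, since total orthogonality alone does not force closure under addition.
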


\begin{proof}
By Lemma~\ref{lemma.symplectic sub-ETFs}(a),
$\bfPsi$ is equivalent to the dual of the symplectic ETF that arises as the $P=2$ case of Definition~\ref{def.symplectic ETF},
and so these two ETFs have the same binder.
By Theorem~\ref{theorem.binder of dual symplectic}, this binder is the set of all affine Lagrangian subspaces of $\calV$,
which forms a BIBD whose parameters are given by~\eqref{eq.BIBD parameters of binder of dual symplectic}.
The binder of its sub-ETF $\bfPsi_\calD$ thus indeed consists of the members of $\bin(\bfPhi)$ that lie (completely) in $\calD$.
Since $\calD$ is an affine quadric of cardinality $2^{J-1}(2^J+1)$,
Lemma~\ref{lemma.affine quadric} gives that it is of the form $\bfv_\calD+\calQ_+$ where $\calQ_+$ is the quadric of some quadratic form $\rmQ_+$ that yields $\rmB$ with $\sign(\rmQ_+)>0$.
(In fact, such a $\calQ_+$ and $\rmQ_+$ exist for any of the $2^{J-1}(2^J+1)$ choices of $\bfv_\calD$ in $\calD$.)
As such, for any Lagrangian subspace $\calL$ of $\calV$,
the cosets of it that lie in $\calD$ are in one-to-one correspondence with the cosets of it that lie in $\calQ_+$.
Since Lemma~\ref{lemma.totally singular}(c) gives that exactly one coset of the latter type exists, exactly one of the former type exists as well.
In particular, the number $B_\calD$ of members of $\bin(\bfPsi_\calD)$ is the number of Lagrangian subspaces of $\calV$.
This is the number of affine Lagrangian subspaces of $\calV$ that contain $\bfzero$,
namely the ``$R$'' parameter of~\eqref{eq.BIBD parameters of binder of dual symplectic}.
Since $\bfPsi_\calD$ is a DTETF by Theorem~\ref{theorem.sub-ETFs are DT},
the fact that its binder is nonempty implies by Theorem~\ref{theorem.binders of DTETFs} that it forms a BIBD.
Here the vertex set is the index set $\calD$ of the ETF $\bfPsi_\calD$ which has size $V_\calD=2^{J-1}(2^J+1)$.
Since any member of $\bin(\bfPsi_\calD)$ is also a member of $\bin(\bfPsi)$,
the ``$K$'' parameters of these two BIBDs are identical,
and so $\calK_\calD=2^J$.
The ``$R$'' and ``$\Lambda$'' parameters of this BIBD then follow from~\eqref{eq.BIBD parameter relations}.
Indeed,
\begin{equation*}
R_\calD
=\tfrac{K_\calD}{V_\calD}B_\calD
=\tfrac{2^J}{2^{J-1}(2^J+1)}\prod_{j=1}^{J}(2^j+1)
=\prod_{j=0}^{J-1}(2^j+1).
\end{equation*}
Since $V_\calD-1=2^{J-1}(2^J+1)-1=(2^{J-1}+1)(2^J-1)$ and $J\geq 2$,
\begin{equation*}
\Lambda_\calD
=\tfrac{K_\calD-1}{V_\calD-1}R_\calD
=\tfrac{2^J-1}{(2^{J-1}+1)(2^J-1)}\prod_{j=0}^{J-1}(2^j+1)
=\prod_{j=0}^{J-2}(2^j+1).
\end{equation*}

Now consider the set of affine Lagrangian subspaces of $\calL$ that are not contained in $\calD$, namely
\begin{equation}
\label{eq.pf of binders of sub-ETFs 1}
\bin(\bfPsi)\backslash\bin(\bfPsi_\calD)
=\set{\ \bfv+\calL\ :\ \bfv\in\calV,\ \calL\subseteq\calV,\ \calL^\perp=\calL,\ \bfv+\calL\nsubseteq\calD\ }.
\end{equation}
The number of such subsets is
\begin{equation}
\label{eq.pf of binders of sub-ETFs 2}
\#(\bin(\bfPsi)\backslash\bin(\bfPsi_\calD))
=B-B_\calD
=(2^J-1)\prod_{j=1}^{J}(2^j+1).
\end{equation}
We now claim that any member $\bfv+\calL$ of $\bin(\bfPsi)\backslash\bin(\bfPsi_\calD)$ is split perfectly in half by $\calD$, having $\#((\bfv+\calL)\cap\calD^\rmc)=2^{J-1}$.
To see this, for any such set $\bfv+\calL$,
take $\bfv_{\calD^\rmc}\in(\bfv+\calL)\cap\calD^\rmc$, and note
\begin{equation}
\label{eq.pf of binders of sub-ETFs 3}
\bfzero
=\bfv_{\calD^\rmc}+\bfv_{\calD^\rmc}
\in\bfv_{\calD^\rmc}+[(\bfv+\calL)\cap\calD^\rmc]
=(\bfv_{\calD^\rmc}+\bfv+\calL)\cap(\bfv_{\calD^\rmc}+\calD^\rmc).
\end{equation}
Here since $\bfzero\in\bfv_{\calD^\rmc}+\bfv+\calL$ where $\calL$ is a subspace of $\calV$,
$\bfv_{\calD^\rmc}+\bfv+\calL=\calL$.
Moreover, since $\calD$ is an affine quadric of size $2^{J-1}(2^J+1)$,
Definition~\ref{def.affine quadric} gives that $\calD^\rmc$ is an affine quadric of size $2^{J-1}(2^J-1)$.
By Lemma~\ref{lemma.affine quadric}, $\bfv_{\calD^\rmc}+\calD^\rmc$ is thus also an affine quadric of size $2^{J-1}(2^J-1)$.
Since~\eqref{eq.pf of binders of sub-ETFs 3} gives that $\bfv_{\calD^\rmc}+\calD^\rmc$ contains $\bfzero$, Definition~\ref{def.affine quadric} implies that there exists a quadratic form $\rmQ_-$ that yields $\rmB$ with $\sign(\rmQ_-)<0$ whose quadric is $\calQ_-=\bfv_{\calD^\rmc}+\calD^\rmc$.
Lemma~\ref{lemma.totally singular}(a) then gives the claim:
\begin{equation*}
\#((\bfv+\calL)\cap\calD^\rmc)
=\#((\bfv_{\calD^\rmc}+\bfv+\calL)\cap(\bfv_{\calD^\rmc}+\calD^\rmc))
=\#(\calL\cap\calQ_-)
=\tfrac12\#(\calL)
=2^{J-1}.
\end{equation*}
Now consider the set~\eqref{eq.D complement part} of $2^{J-1}$-element subsets of $\calV$ that arise by taking intersections of the members of~\eqref{eq.pf of binders of sub-ETFs 1} with $\calD^\rmc$.
We continue the above argument to further show that the natural surjection from $\bin(\bfPsi)\backslash\bin(\bfPsi_\calD)$ onto this set---that which intersects a given set with $\calD^\rmc$---is three-to-one.
To see this, note that we have already written an arbitrary member of~\eqref{eq.D complement part} as
\begin{equation*}
(\bfv+\calL)\cap\calD^\rmc
=(\bfv_{\calD^\rmc}+\calL)\cap\calD^\rmc
=\bfv_{\calD^\rmc}+\calL\cap(\bfv_{\calD^\rmc}+\calD^\rmc)
=\bfv_{\calD^\rmc}+\calL\cap\calQ_-
=\bfv_{\calD^\rmc}+\calS,
\end{equation*}
where $\calS:=\calL\cap\calQ_-$ is some $(J-1)$-dimensional subspace of $\calV$ that is totally singular with respect to $\rmQ_-$.
By Lemma~\ref{lemma.totally singular}(b),
there are exactly three Lagrangian subspaces of $\calV$ that contain $\calS$.
Call these subspaces $\calL_1$, $\calL_2$ and $\calL_3$.
If $\bfv'+\calL'$ is any affine Lagrangian subspace of $\calV$ such that $(\bfv'+\calL')\cap\calD^\rmc=\bfv_{\calD^\rmc}+\calS$ then
\smash{$\bfv_{\calD^\rmc}
=\bfv_{\calD^\rmc}+\bfzero
\in\bfv_{\calD^\rmc}+\calS
=(\bfv'+\calL')\cap\calD^\rmc
\subseteq(\bfv'+\calL')$}
implying \smash{$\bfv'+\calL'=\bfv_{\calD^\rmc}+\calL'$} and so
\begin{equation*}
\calS
=\bfv_{\calD^\rmc}+\bfv_{\calD^\rmc}+\calS
=\bfv_{\calD^\rmc}+(\bfv'+\calL')\cap\calD^\rmc
=\calL'\cap(\bfv_{\calD^\rmc}+\calD^\rmc)
=\calL'\cap\calQ_-
\subseteq\calL'.
\end{equation*}
Thus, any such $\bfv'+\calL'$ is necessarily of the form $\bfv'+\calL'=\bfv_{\calD^\rmc}+\calL'$ where $\calL'$ is $\calL_1$, $\calL_2$ or $\calL_3$.
Conversely, if $\bfv+\calL=\bfv_{\calD^\rmc}+\calL_i$ where $i\in\set{1,2,3}$ then $\calS\subseteq\calL_i\cap\calQ_-$ where, by Lemma~\ref{lemma.totally singular}(a) and (b), $\dim(\calL_i\cap\calQ_-)=J-1=\dim(\calS)$,
implying $\calS=\calL_i\cap\calQ_-$ and so
\begin{equation*}
(\bfv+\calL)\cap\calD^\rmc
=(\bfv_{\calD^\rmc}+\calL_i)\cap(\bfv_{\calD^\rmc}+\calQ_-)
=\bfv_{\calD^\rmc}+(\calL_i\cap\calQ_-)
=\bfv_{\calD^\rmc}+\calS.
\end{equation*}

Now note that since $\bin(\bfPsi)$ forms a BIBD,
any distinct $\bfv_1,\bfv_2\in\calD^\rmc$ are contained in exactly $\Lambda$ members of $\bin(\bfPsi)$.
In fact, since every member of $\bin(\bfPsi_\calD)$ is a subset of $\calD$,
any such $\bfv_1,\bfv_2$ are contained in exactly $\Lambda$ members of $\bin(\bfPsi)\backslash\bin(\bfPsi_\calD)$.
Since the natural surjection from $\bin(\bfPsi)\backslash\bin(\bfPsi_\calD)$ onto \eqref{eq.D complement part} is three-to-one,
any distinct $\bfv_1,\bfv_2\in\calD^\rmc$ are contained in exactly $\frac13\Lambda$ members of~\eqref{eq.D complement part}.
Since every member of \eqref{eq.D complement part} is a $2^{J-1}$-element subset of the $2^{J-1}(2^J-1)$-element set $\calD^\rmc$,
this implies that~\eqref{eq.D complement part} forms a BIBD with
``$V$,'' ``$K$'' and ``$\Lambda$'' parameters of
\begin{equation*}
V_{\calD^\rmc}=2^{J-1}(2^J-1),\quad
K_{\calD^\rmc}=2^{J-1},\quad
\Lambda_{\calD^\rmc}
=\tfrac13\Lambda
=\tfrac1{2^1+1}\prod_{j=1}^{J-1}(2^j+1)
=\prod_{j=2}^{J-1}(2^j+1),
\end{equation*}
respectively.
When $J=2$, the empty product in this expression for $\Lambda_{\calD^\rmc}$ should be interpreted as $1$, as is typical.
Recalling~\eqref{eq.BIBD parameter relations} and writing
$V_{\calD^\rmc}-1=2^{J-1}(2^J-1)-1=(2^{J-1}-1)(2^J+1)$ gives what remains of~\eqref{eq.BIBD parameters of binder of dual of sub-ETF of symplectic}:
\begin{align*}
R_{\calD^\rmc}
&=\tfrac{V_{\calD^\rmc}-1}{K_{\calD^\rmc}-1}\Lambda_{\calD^\rmc}
=\tfrac{(2^{J-1}-1)(2^J+1)}{2^{J-1}-1}\prod_{j=2}^{J-1}(2^j+1)
=\prod_{j=2}^{J}(2^j+1),\\
B_{\calD^\rmc}
&=\tfrac{V_{\calD^\rmc}}{K_{\calD^\rmc}}R_{\calD^\rmc}
=\tfrac{2^{J-1}(2^J-1)}{2^{J-1}}\prod_{j=2}^{J}(2^j+1)
=(2^J-1)\prod_{j=2}^{J}(2^j+1).
\end{align*}
(Alternatively, one can argue that $R_{\calD^\rmc}$ and $B_{\calD^\rmc}$ are one-third of the number of affine Lagrangian subspaces that contain any given $\bfv\in\calD^\rmc$ and the number~\eqref{eq.pf of binders of sub-ETFs 2} of affine Lagrangian subspaces of $\calV$ that do not lie in $\calD$, respectively.)

We now use a similar approach on the set~\eqref{eq.D part} of $2^{J-1}$-element subsets of $\calV$ that arise by taking intersections of the members of~\eqref{eq.pf of binders of sub-ETFs 1} with $\calD$.
Here, we claim the natural surjection from $\bin(\bfPsi)\backslash\bin(\bfPsi_\calD)$ onto this set is one-to-one.
To see this, let $\bfv_1+\calL_1$ and $\bfv_2+\calL_2$ be affine Lagrangian subspaces of $\calV$ that do not lie in $\calD$ and assume
$(\bfv_1+\calL_1)\cap\calD=(\bfv_2+\calL_2)\cap\calD$.
Let $\bfv_\calD$ be any one of the $2^{J-1}$ members of $(\bfv_1+\calL_1)\cap\calD$.
Adding $\bfv_\calD$ to $\bfv_1+\calL_1$, $\bfv_2+\calL_2$ and $\calD$ yields $\calL_1$, $\calL_2$ and $\calQ_+:=\bfv_\calD+\calD$.
Under this shift, $(\bfv_1+\calL_1)\cap\calD=(\bfv_2+\calL_2)\cap\calD$ becomes $\calL_1\cap\calQ_+=\calL_2\cap\calQ_+$,
while the fact that $\bfv_1+\calL_1,\bfv_2+\calL_2\nsubseteq\calD$ implies that $\calL_1$ and $\calL_2$ are not totally singular with respect to $\calQ_+$.
Here, Lemma~\ref{lemma.totally singular}(a) gives that $\calS:=\calL_1\cap\calQ_+=\calL_2\cap\calQ_+$ is a totally singular subspace of dimension $J-1$, at which point Lemma~\ref{lemma.totally singular}(c) gives that $\calS$ is contained in exactly one Lagrangian subspace of $\calV$ that is not totally singular with respect to $\calQ_+$.
Thus, $\calL_1=\calL_2$.
Adding $\bfv_\calD$ to this equation gives $\bfv_1+\calL_1=\bfv_2+\calL_2$ as claimed.
As such, for any distinct $\bfv_1,\bfv_2\in\calD$,
the number of members of~\eqref{eq.D part} that contain both $\bfv_1$ and $\bfv_2$ is independent of one's choice of such $\bfv_1,\bfv_2$, being the number $\Lambda-\Lambda_\calD$ of members of $\bin(\bfPsi)$ that contain $\bfv_1$ and $\bfv_2$
less the number of members of $\bin(\bfPsi_\calD)$ that do so.
The set~\eqref{eq.D part} of $K_0:=2^{J-1}$-element subsets of $\calD$ thus forms a BIBD over the $\calV_0:=2^{J-1}(2^J+1)$-element vertex set $\calD$ whose ``$\Lambda$'', ``$R$'' and ``$B$'' parameters are obtained by subtracting those of~\eqref{eq.BIBD parameters of binder of sub-ETF of dual symplectic} from those of~\eqref{eq.BIBD parameters of binder of dual symplectic}, as summarized in~\eqref{eq.BIBD parameters of last BIBD}.

The relationship~\eqref{eq.incidence relation} between the incidence matrices of these BIBDs is merely a way to visualize many of these facts.
As such, all that remains to be shown is that the binder of \smash{$\widehat{\bfPhi}_{\calD^\rmc}$} is indeed the set
$(\bin(\bfPsi)\backslash\bin(\bfPsi_\calD))\cap\calD^\rmc$ of~\eqref{eq.D complement part}.
Here since $(-1)^{\rmB(\bfv_1,\bfv_2)+\rmB(\bfv_2,\bfv_3)+\rmB(\bfv_3,\bfv_1)}
=(-1)^{\rmB(\bfv_1+\bfv_3,\bfv_2+\bfv_3)}$ for any $\bfv_1,\bfv_2,\bfv_3\in\calV$,
applying~\eqref{eq.triple product simplex} to the expression for
\smash{$\ip{\widehat{\bfphi}_{\bfv_1}}{\widehat{\bfphi}_{\bfv_2}}$} given in Lemma~\ref{lemma.symplectic sub-ETFs}(b) gives that a subset $\calK$ of $\calD^\rmc$ belongs to $\bin(\widehat{\bfPhi}_{\calD^\rmc})$ if and only if
\begin{equation}
\label{eq.pf of binders of sub-ETFs 4}
\#(\calK)=2^{J-1},
\quad
\rmB(\bfv_1+\bfv_3,\bfv_2+\bfv_3)=0,\
\forall\,\bfv_1,\bfv_2,\bfv_3\in\calK,
\ \bfv_1\neq\bfv_2\neq\bfv_3\neq\bfv_1.
\end{equation}
If $\calK\in(\bin(\bfPsi)\backslash\bin(\bfPsi_\calD))\cap\calD^\rmc$ then
$\calK=(\bfv+\calL)\cap\calD^\rmc$ for some affine Lagrangian subspace $\bfv+\calL$ of $\calV$ that does not lie in $\calD$,
and so $\#(\calK)=2^{J-1}$.
Here, arguing similarly to before, for any $\bfv_{\calD^\rmc}\in\calK$,
we have that
$\bfv_{\calD^\rmc}+\calK=\calL\cap\calQ_-$
where $\calQ_-:=\bfv_{\calD^\rmc}+\calD^\rmc$ is the quadric of some quadratic form $\rmQ_-$ that yields $\rmB$ with $\sign(\rmQ_-)<0$.
For any $\bfv_1,\bfv_2,\bfv_3\in\calK$, we thus have $\bfv_{\calD^\rmc}+\bfv_1,\bfv_{\calD^\rmc}+\bfv_2,\bfv_{\calD^\rmc}+\bfv_3\in\calL$,
implying $\bfv_1+\bfv_3,\bfv_2+\bfv_3\in\calL$ and so
$\rmB(\bfv_1+\bfv_3,\bfv_2+\bfv_3)=0$.
Thus, if $\calK\in(\bin(\bfPsi)\backslash\bin(\bfPsi_\calD))\cap\calD^\rmc$ then $\calK$ satisfies~\eqref{eq.pf of binders of sub-ETFs 4} and so $\calK\in\bin(\widehat{\bfPhi}_{\calD^\rmc})$.
Conversely, if $\calK\in\bin(\widehat{\bfPhi}_{\calD^\rmc})$ then take any $\bfv_{\calD^\rmc}\in\calK$ and let $\calS:=\bfv_{\calD^\rmc}+\calK$.
By~\eqref{eq.pf of binders of sub-ETFs 4}, $\calS$ has cardinality $2^{J-1}$.
For any distinct $\bfv_1,\bfv_2\in\calS\backslash\set{\bfzero}$,
$\bfv_1+\bfv_{\calD^\rmc}$ and $\bfv_2+\bfv_{\calD^\rmc}$ are distinct members of $\calK\backslash\set{\bfv_{\calD^\rmc}}$,
and so~\eqref{eq.pf of binders of sub-ETFs 4} gives
\begin{equation*}
\rmB(\bfv_1,\bfv_2)
=\rmB((\bfv_1+\bfv_{\calD^\rmc})+\bfv_{\calD^\rmc},(\bfv_2+\bfv_{\calD^\rmc})+\bfv_{\calD^\rmc})
=0.
\end{equation*}
Since this clearly also holds when $\bfv_1=\bfv_2$ or when either $\bfv_1$ or $\bfv_2$ is $\bfzero$,
$\calS$ is totally orthogonal.
Here, since $\bin(\widehat{\bfPhi}_{\calD^\rmc})$ is by definition a set of subsets of $\calD^\rmc$, we also have
$\calS=\bfv_{\calD^\rmc}+\calK\subseteq\calQ_-:=\bfv_{\calD^\rmc}+\calD^\rmc$,
that is, that $\calS$ is totally singular with respect to the quadratic form that has $\rmQ_-$ as its quadric.
Let $\calL$ be any Lagrangian subspace of $\calV$ that contains $\Span(\calS)$.
Since $\sign(\rmQ_-)<0$,
Lemma~\ref{lemma.totally singular}(a) and (b) give that $\calL\cap\calQ_-$ is a subspace of $\calV$ of dimension $J-1$ that is totally singular with respect to $\rmQ_-$.
Since $\calS\subseteq\calL\cap\calQ_-$ where both $\calS$ and $\calL\cap\calQ_-$ have cardinality $2^{J-1}$, this implies $\bfv_{\calD^\rmc}+\calK=\calS=\calL\cap\calQ_-$,
and so $\calK
=\bfv_{\calD^\rmc}+(\calL\cap\calQ_-)
=(\bfv_{\calD^\rmc}+\calL)\cap\calD^\rmc
\in(\bin(\bfPsi)\backslash\bin(\bfPsi_\calD))\cap\calD^\rmc$.
Thus,
$\bin(\widehat{\bfPhi}_{\calD^\rmc})
=(\bin(\bfPsi)\backslash\bin(\bfPsi_\calD))\cap\calD^\rmc$, as claimed.
\end{proof}

\begin{example}
We now consider instances of Lemma~\ref{lemma.symplectic sub-ETFs} and Theorems~\ref{theorem.binder of sub-ETF of symplectic} and~\ref{theorem.binders of sub-ETFs} where $J=2$.
Up to isomorphism, $\bbF_2^4$ is the only vector space over $\bbF_2$ of dimension $2J=4$,
\begin{equation*}
\rmB((x_1,x_2,x_3,x_4),(y_1,y_2,y_3,y_4))
=(x_1y_2+x_2y_1)+(x_3y_4+x_4y_3)
\end{equation*}
is the symplectic form on it,
and
\begin{equation}
\label{eq.quadratic example 1}
\rmQ_+(x_1,x_2,x_3,x_4):=x_1x_2+x_3x_4,
\quad
\rmQ_-(x_1,x_2,x_3,x_4):=x_1^{}x_2^{}+x_3^{}x_4^{}+x_3^2+x_4^2,
\end{equation}
are the quadratic forms that yield this symplectic form via~\eqref{eq.quadratic symplectic relation} of positive and negative sign~\eqref{eq.sign of quadratic}, respectively.
The quadrics~\eqref{eq.quadric} of these quadratic forms are
\begin{align}
\nonumber
\calQ_+&:=\set{0000,0001,0010,0100,0101,0110,1000,1001,1010,1111},\\
\label{eq.quadratic example 2}
\calQ_-&:=\set{0000,0100,1000,1101,1110,1111},
\end{align}
respectively.
By Lemma~\ref{lemma.affine quadric},
the $16$ shifts of $\calQ_+$ and $\calQ_-$ are the $32$ affine quadrics of $\calV$.
Each is either the quadric of some quadratic form that yields $\rmB$ (that equates to one of the two forms above via a change of basis) or the complement of such a quadric.
In particular, $\calQ_-^\rmc$ is a shift of $\calQ_+$,
and so is one of the $16$ choices of $\calD$ that yield six ETFs via Lemma~\ref{lemma.symplectic sub-ETFs}.
Letting $\calD=\calQ_-^\rmc$ in particular yields the four ETFs of Figure~\ref{figure.example}, and two others arise as the dual of two of these.
\begin{figure}
\begin{equation*}
\setlength{\arraycolsep}{0pt}
\begin{array}{ccccc}
\bfPsi&=&\frac1{\sqrt{2}}
\begin{tiny}\left[\begin{array}{cccccc|cccccccccc}
+&+&+&+&+&+&+&+&+&+&+&+&+&+&+&+\\
+&+&-&-&-&-&+&+&+&+&+&+&-&-&-&-\\
+&-&+&-&-&-&+&+&+&-&-&-&+&+&+&-\\
+&-&-&+&-&-&+&-&-&-&+&+&-&+&+&+\\
+&-&-&-&+&-&-&+&-&+&-&+&+&-&+&+\\
+&-&-&-&-&+&-&-&+&+&+&-&+&+&-&+\\
\end{array}\right]\end{tiny}
&\cong&\frac1{\sqrt{2}}
\begin{tiny}\left[\begin{array}{cccccc|cccccccccc}
+&-&-&-&-&-& & & & & & & & & & \\
+&-& & & & & & & & & & &+&+&+&+\\
+& &-& & & & & & &+&+&+& & & &+\\
+& & &-& & & &+&+&+& & &+& & & \\
+& & & &-& &+& &+& &+& & &+& & \\
+& & & & &-&+&+& & & &+& & &+& \\
 &+&+& & & &+&+&+& & & & & & &-\\
 &+& &+& & &+& & & &+&+&-& & & \\
 &+& & &+& & &+& &+& &+& &-& & \\
 &+& & & &+& & &+&+&+& & & &-& \\
 & &+&+& & &+& & &-& & & &+&+& \\
 & &+& &+& & &+& & &-& &+& &+& \\
 & &+& & &+& & &+& & &-&+&+& & \\
 & & &+&+& & & &-& & &+& & &+&+\\
 & & &+& &+& &-& & &+& & &+& &+\\
 & & & &+&+&-& & &+& & &+& & &+
\end{array}\right]\end{tiny}\medskip
\\
\bfPhi&=&\frac1{\sqrt{2}}
\begin{tiny}\left[\begin{array}{cccccc|cccccccccc}
+&+&+&+&-&-&+&-&-&+&-&-&+&-&-&+\\
+&+&+&-&+&-&-&+&-&-&+&-&-&+&-&+\\
+&+&+&-&-&+&-&-&+&-&-&+&-&-&+&+\\
+&+&-&-&+&+&+&-&-&+&-&-&-&+&+&-\\
+&+&-&+&-&+&-&+&-&-&+&-&+&-&+&-\\
+&+&-&+&+&-&-&-&+&-&-&+&+&+&-&-\\
+&-&+&-&+&+&+&-&-&-&+&+&+&-&-&-\\
+&-&+&+&-&+&-&+&-&+&-&+&-&+&-&-\\
+&-&+&+&+&-&-&-&+&+&+&-&-&-&+&-\\
+&-&-&+&+&+&+&+&+&-&-&-&-&-&-&+
\end{array}\right]\end{tiny}
&\cong&\frac1{\sqrt{3}}
\begin{tiny}\left[\begin{array}{cccccc|cccccccccc}
+&+& & & & &-& & &-& & & & & & \\
+&+& & & & & &-& & &-& & & & & \\
+&+& & & & & & &-& & &-& & & & \\
+& &+& & & &-& & & & & &-& & & \\
+& &+& & & & &-& & & & & &-& & \\
+& &+& & & & & &-& & & & & &-& \\
+& & &+& & &-& & & & & & & & &-\\
+& & &+& & & & & & &-& & & &-& \\
+& & &+& & & & & & & &-& &-& & \\
+& & & &+& & &-& & & & & & & &-\\
+& & & &+& & & & &-& & & & &-& \\
+& & & &+& & & & & & &-&-& & & \\
+& & & & &+& & &-& & & & & & &-\\
+& & & & &+& & & &-& & & &-& & \\
+& & & & &+& & & & &-& &-& & & \\
 &+&-& & & & & & &-& & &+& & & \\
 &+&-& & & & & & & &-& & &+& & \\
 &+&-& & & & & & & & &-& & &+& \\
 &+& &-& & & &-& & & & & & &+& \\
 &+& &-& & & & &-& & & & &+& & \\
 &+& &-& & & & & &-& & & & & &+\\
 &+& & &-& &-& & & & & & & &+& \\
 &+& & &-& & & &-& & & &+& & & \\
 &+& & &-& & & & & &-& & & & &+\\
 &+& & & &-&-& & & & & & &+& & \\
 &+& & & &-& &-& & & & &+& & & \\
 &+& & & &-& & & & & &-& & & &+\\
 & &+&-& & & &-& & & &+& & & & \\
 & &+&-& & & & &-& &+& & & & & \\
 & &+&-& & & & & & & & &-& & &+\\
 & &+& &-& &-& & & & &+& & & & \\
 & &+& &-& & & &-&+& & & & & & \\
 & &+& &-& & & & & & & & &-& &+\\
 & &+& & &-&-& & & &+& & & & & \\
 & &+& & &-& &-& &+& & & & & & \\
 & &+& & &-& & & & & & & & &-&+\\
 & & &+&-& &-&+& & & & & & & & \\
 & & &+&-& & & & &+&-& & & & & \\
 & & &+&-& & & & & & & &+&-& & \\
 & & &+& &-&-& &+& & & & & & & \\
 & & &+& &-& & & &+& &-& & & & \\
 & & &+& &-& & & & & & &+& &-& \\
 & & & &+&-& &-&+& & & & & & & \\
 & & & &+&-& & & & &+&-& & & & \\
 & & & &+&-& & & & & & & &+&-& \\
\hline
 & & & & & &+&-& &+&-& & & & & \\
 & & & & & &+&-& & & & &+&-& & \\
 & & & & & &+& &-&+& &-& & & & \\
 & & & & & &+& &-& & & &+& &-& \\
 & & & & & &+& & & &-& & & &-&+\\
 & & & & & &+& & & & &-& &-& &+\\
 & & & & & & &+&-& &+&-& & & & \\
 & & & & & & &+&-& & & & &+&-& \\
 & & & & & & &+& &-& & & & &-&+\\
 & & & & & & &+& & & &-&-& & &+\\
 & & & & & & & &+&-& & & &-& &+\\
 & & & & & & & &+& &-& &-& & &+\\
 & & & & & & & & &+&-& &-&+& & \\
 & & & & & & & & &+& &-&-& &+& \\
 & & & & & & & & & &+&-& &-&+&
\end{array}\right]\end{tiny}
\end{array}
\end{equation*}
\caption{
\label{figure.example}
Four of the six ETFs that arise via Lemma~\ref{lemma.symplectic sub-ETFs} from the $10$-element affine quadric $\calD$ that is the complement of the $6$-element quadric~\eqref{eq.quadratic example 2} of the canonical quadratic form~\eqref{eq.quadratic example 1} on $\bbF_2^4$ of negative sign.
The $10$ rightmost columns of $\bfPsi$ and $6$ leftmost columns of $\bfPhi$ form the ETFs $\bfPsi_\calD$ and $\bfPhi_{\calD^\rmc}$, respectively.
These ETFs are DTETFs (Theorem~\ref{theorem.sub-ETFs are DT}) and have nontrivial binders (Theorems~\ref{theorem.binder of sub-ETF of symplectic} and~\ref{theorem.binders of sub-ETFs}) which necessarily form BIBDs (Theorem~\ref{theorem.binders of DTETFs}).
Phasing these BIBDs \`{a} la~\eqref{eq.phased incidence} yields ETFs equivalent to their duals.
By Theorem~\ref{theorem.binders of sub-ETFs}, $\bin(\bfPsi)$ (bottom right matrix) decomposes into three copies of $\bin(\widehat{\bfPhi}_{\calD^\rmc})$ (upper left block), $\bin(\bfPsi_\calD)$ (lower right block) and a third BIBD (upper right block).}
\end{figure}
Specifically, the dual ETFs $\bfPhi$ and $\bfPsi$ of Lemma~\ref{lemma.symplectic sub-ETFs}(a) arise by extracting the ten $\calD$-indexed and six $\calD^\rmc$-indexed rows, respectively, of the character table $\bfGamma$ of $\bbF_2^4$ with respect to $\rmB$, namely the $16\times 16$ matrix~\eqref{eq.Gamma 16x16} with its rows and columns depicted in Figure~\ref{figure.example} in ``$\calD^\rmc$ then $\calD$'' order.
These ETFs are equivalent to---and so have the same binders as---the symplectic ETF and its dual, respectively, (Definition~\ref{def.symplectic ETF}) that arise from this $\rmB$.
In particular, by Theorems~\ref{theorem.binder of dual symplectic} and~\ref{theorem.binders of sub-ETFs},
$\bfPsi$ (upper left matrix of Figure~\ref{figure.example}) is a $16$-vector ETF for $\bbR^{\calD^\rmc}\cong\bbR^6$ whose binder consists of all affine Lagrangian subspaces of $\bbF_2^4$, as detailed in Example~\ref{example.binder of 6x16}, which forms a BIBD with $(V,K,\Lambda,R,B)$ parameters $(16,4,3,15,60)$.
Phasing the incidence matrix of this BIBD \`{a} la~\eqref{eq.phased incidence} (bottom right matrix of Figure~\ref{figure.example}) yields an ETF that is equivalent to its dual $\bfPhi$.
Meanwhile $\bfPhi$ (lower left matrix of Figure~\ref{figure.example}) is a $16$-vector ETF for $\bbR^\calD\cong\bbR^{10}$.
Though a symplectic-equivalent ETF such as this usually has an empty binder (Theorem~\ref{theorem.binder of symplectic}) this $P=2$, $J=2$ case is unique:
by Theorem~\ref{theorem.binder of sub-ETF of symplectic}, its binder consists of all $\bbF_2^4$ shifts of $\calD^\rmc$, and it forms a BIBD with parameters $(16,6,2,6,16)$.
Its phased incidence matrix (upper right matrix of Figure~\ref{figure.example}) is equivalent to $\bfPsi$.
By Theorem~\ref{theorem.binders of DTETFs},
any block of either of these two BIBDs is an oval of the other,
that is, the support of any row of one incidence matrix has either $0$ or $2$ indices in common with the support of any row of the other.

By Lemma~\ref{lemma.symplectic sub-ETFs}(b),
the $\calD^\rmc$-indexed columns of $\bfPhi$ (the six leftmost columns of the lower left matrix of Figure~\ref{figure.example}) form a $6$-vector ETF $\bfPhi_{\calD^\rmc}$ for its $5$-dimensional span.
By Theorem~\ref{theorem.binder of sub-ETF of symplectic},
this is an exceptional case of an ETF that typically has an empty binder:
here since $\bfPhi_{\calD^\rmc}$ is a simplex, its binder consists of the single set $\calD^\rmc$ (the first row of the upper right matrix of Figure~\ref{figure.example}).
By Lemma~\ref{lemma.symplectic sub-ETFs}(c),
the $\calD$-index columns of $\bfPsi$ (the ten rightmost columns of the upper left matrix of Figure~\ref{figure.example}) form a $10$-vector ETF $\bfPsi_\calD$ for its $5$-dimensional span.
By Theorem~\ref{theorem.binders of sub-ETFs},
each of the $15$ Lagrangian subspaces of $\bbF_2^4$ has a unique coset that lies in $\calD$,
and are the members of the binder of $\bfPsi_\calD$,
which forms a BIBD with parameters $(10,4,2,6,15)$.
Theorem~\ref{theorem.binders of sub-ETFs} moreover gives that each of the remaining $60-15=45$ affine Lagrangian subspaces of $\bbF_2^4$ are split perfectly in half by $\calD$,
and that taking the $\calD^\rmc$-components of these subspaces yields exactly three copies of the binder of the dual \smash{$\widehat{\bfPhi}_{\calD^\rmc}$} of $\bfPhi_{\calD^\rmc}$, which in this case forms a BIBD with parameters $(6,2,1,5,15)$.
In particular, the binders of both $\bfPsi_\calD$ and $\widehat{\bfPhi}_{\calD^\rmc}$ appear in that of $\bfPsi$ (as the upper left block and lower right block, respectively, of the lower right matrix of Figure~\ref{figure.example}).
\end{example}

Having calculated the binder of any ETF that lies in five of the six families of Lemma~\ref{lemma.symplectic sub-ETFs}, we now do the same for the sixth.
Remarkably, these binders are empty except in three cases.
Our argument relies on the following technical lemma concerning quadratic forms over $\bbF_2$.
Since such forms are classical, this lemma is unlikely to be original,
and so we relegate its proof to the appendix%
%\ of~\cite{FickusL23}% COMMENT
.

\begin{lemma}
\label{lemma.counting pairs}
Let $\calQ$ be the quadric of a quadratic form $\rmQ$ that yields a symplectic form $\rmB$ on a vector space $\calV$ over $\bbF_2$ of dimension $2J$.
If $J\geq1$, the number of hyperbolic pairs $(\bfv_1,\bfv_2)$ in $\calV$ is
\begin{equation}
\label{eq.lemma.counting pairs.1}
2^{2J-2}[2^{J-1}+\sign(\rmQ)][2^{J}-\sign(\rmQ)].
\end{equation}
Moreover, for any such $(\bfv_1,\bfv_2)$,
a vector in $\calQ$ is nonorthogonal to both $\bfv_1$ and $\bfv_2$ if and only if it is of the form $\bfv_1+\bfv_2+\bfv_0$ where $\bfv_0\in\calV_0\backslash\calQ_0$,
where $\calQ_0$ is the quadric of the restriction of $\rmQ$ to $\calV_0:=\set{\bfv_1,\bfv_2}^\perp$.

If $J\geq2$, the number of ordered pairwise nonorthogonal $4$-tuples $(\bfv_1,\bfv_2,\bfv_3,\bfv_4)$ of members of $\calQ$ is
\begin{equation}
\label{eq.lemma.counting pairs.2}
2^{4J-6}[2^{2J-2}-1][2^{J-2}-\sign(\rmQ)][2^{J}-\sign(\rmQ)].
\end{equation}
Moreover, for any such $(\bfv_1,\bfv_2,\bfv_3,\bfv_4)$,
a vector in $\calQ$ is nonorthogonal to $\bfv_1$, $\bfv_2$, $\bfv_3$ and $\bfv_4$ if and only if it is of the form $\bfv_1+\bfv_2+\bfv_3+\bfv_4+\bfv_{00}$ where $\bfv_{00}$ lies in the quadric $\calQ_{00}$ of the restriction $\rmQ_{00}$ of $\rmQ$ to the space $\calV_{00}:=\set{\bfv_1,\bfv_2,\bfv_3,\bfv_4}^\perp$ of dimension $2J-4$,
which has $\sign(\rmQ_{00})=-\sign(\rmQ)$.
\end{lemma}

\begin{theorem}
\label{theorem.binder Tremain}
Under the assumptions of Lemma~\ref{lemma.symplectic sub-ETFs},
\smash{$\bin(\widehat{\bfPsi}_\calD)$} is empty when $J>4$.
When instead $J\in\set{2,3,4}$,
it forms a BIBD with the following $(V,K,\Lambda,R,B)$ parameters:
\begin{equation*}
J=2:\ (10,4,2,6,15),\qquad
J=3:\ (36,6,8,56,336),\qquad
J=4:\ (136,10,64,960,13056).
\end{equation*}
In these three cases, every member of either $\bin(\bfPsi_\calD)$ or \smash{$\bin(\widehat{\bfPsi}_\calD)$} is an oval of the other.
\end{theorem}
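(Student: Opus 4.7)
My plan is to reduce the condition $\calK\in\bin(\widehat{\bfPsi}_\calD)$ to an explicit $\bbF_2$-arithmetic statement, apply the ``$\rmB$-clique'' bound hidden in Theorem~\ref{theorem.binder of symplectic} to rule out large $J$, and then verify the remaining three cases by exhibiting a block and invoking Theorem~\ref{theorem.binders of DTETFs}.

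Substituting the inner products from Lemma~\ref{lemma.symplectic sub-ETFs}(c) into the triple-product characterization~\eqref{eq.triple product simplex}---with $S=2^{J-1}+1$ and off-diagonal unimodularity---gives that $\calK\in\bin(\widehat{\bfPsi}_\calD)$ exactly when $\calK\subseteq\calD$, $\#(\calK)=2^{J-1}+2$, and
\begin{equation*}
\rmB(\bfv_1+\bfv_3,\bfv_2+\bfv_3)=1\quad\text{for every pairwise distinct}\ \bfv_1,\bfv_2,\bfv_3\in\calK.
\end{equation*}
Fixing any $\bfv_0\in\calK$ and setting $\calS:=\bfv_0+\calK\setminus\set{\bfv_0}$, this condition is equivalent to saying that $\calS$ is a $(2^{J-1}+1)$-element subset of $\calV\setminus\set{\bfzero}$ on which $\rmB$ is identically $1$ off the diagonal---a ``$\rmB$-clique'' in exactly the sense treated in the first claim of Theorem~\ref{theorem.binder of symplectic}. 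That claim bounds $\#(\calS)\leq 2J+1$, so $2^{J-1}+1\leq 2J+1$ is forced, and this fails for every $J\geq 5$; hence $\bin(\widehat{\bfPsi}_\calD)$ is empty when $J>4$.

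For each $J\in\set{2,3,4}$ I would then exhibit a single $\calK\in\bin(\widehat{\bfPsi}_\calD)$. Working in canonical coordinates $\calV=\bbF_2^{2J}$ with $\rmQ=\rmQ_{J,+}$ from~\eqref{eq.canonical quadratic forms} and $\calD$ a suitable affine quadric, this amounts to producing a $\rmB$-clique $\calS$ of size $2^{J-1}+1$ together with a shift vector $\bfv_0$ such that $\bfv_0+(\set{\bfzero}\cup\calS)\subseteq\calD$. The inductive construction underlying Theorem~\ref{theorem.binder of symplectic}---repeatedly extract a symplectic pair and replace it by its sum, shrinking the ambient space---yields a canonical way to build $\calS$; the additional quadratic-form bookkeeping needed to land the shift inside $\calD$ is mild for $J\in\set{2,3}$, and becomes essentially forced at $J=4$, where the bound $2^{J-1}+1\leq 2J+1$ is tight and the clique is pinned down up to affine symplectic equivalence. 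Once this is done, Theorem~\ref{theorem.sub-ETFs are DT} certifies $\widehat{\bfPsi}_\calD$ as a DTETF, so Theorem~\ref{theorem.binders of DTETFs} upgrades $\bin(\widehat{\bfPsi}_\calD)$ to a BIBD on the vertex set $\calD$ with the asserted $V=2^{J-1}(2^J+1)$ and $K=2^{J-1}+2$, and simultaneously yields the mutual oval property with $\bin(\bfPsi_\calD)$; a quick check using~\eqref{eq.BIBD parameters of binder of sub-ETF of dual symplectic} confirms that the oval size $(V_\calD-1)/(K_\calD-1)+1$ of $\bin(\bfPsi_\calD)$ equals $2^{J-1}+2$, matching $K$ as required by Lemma~\ref{lemma.DT BIBD}(b).

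It remains to pin down $B$, equivalently $\Lambda$ via~\eqref{eq.BIBD parameter relations}. The natural route is the orbit--stabilizer formula under the affine symplectic action on $\calD$: since the DT action is transitive on blocks, $B=\abs{\Sp_{2J}(\bbF_2)}/\abs{\Stab(\calK_0)}$ for any fixed block $\calK_0$. The hard part will be the stabilizer count, which I do not see how to handle uniformly; for $J=3$ and $J=4$ it would almost certainly have to be carried out either by identifying $\widehat{\bfPsi}_\calD$ with an already-classified doubly transitive configuration---for instance with a Tremain-type $36$-vector ETF for $\bbR^{15}$ at $J=3$, and a $136$-vector DTETF for $\bbR^{51}$ at $J=4$---whose binder block count is tabulated, or by a case-by-case examination of the isotropy subgroup of the explicit clique inside the classical symplectic group, yielding the stated values $B=15,\,336,\,13056$ and from them $\Lambda$ and $R$.
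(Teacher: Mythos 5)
Your reduction of $\calK\in\bin(\widehat{\bfPsi}_\calD)$ to the condition $\rmB(\bfv_1+\bfv_3,\bfv_2+\bfv_3)=1$, the translation to a $(2^{J-1}+1)$-element ``$\rmB$-clique'' $\calS$, and the appeal to the bound $\#(\calS)\leq 2J+1$ from Theorem~\ref{theorem.binder of symplectic} to kill all $J>4$ is exactly the paper's argument, and it is correct. The BIBD/oval conclusions via Theorems~\ref{theorem.sub-ETFs are DT} and~\ref{theorem.binders of DTETFs}, and your consistency check that the oval size $(V_\calD-1)/(K_\calD-1)+1$ of $\bin(\bfPsi_\calD)$ equals $2^{J-1}+2$, are also fine.

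The gap is in your route to $B$. You write ``since the DT action is transitive on blocks, $B=\abs{\Sp_{2J}(\bbF_2)}/\abs{\Stab(\calK_0)}$,'' but nothing you have established gives transitivity on blocks. Theorem~\ref{theorem.binders of DTETFs} rests only on double transitivity on the \emph{vertices}; the binder is a union of orbits of blocks under the symmetry group, and a priori there could be several such orbits (each of which would separately form a BIBD). Without proving block-transitivity, orbit--stabilizer computes only the size of one orbit, i.e.\ a lower bound for $B$. On top of that, you leave the stabilizer order uncomputed and defer to ``already-classified configurations'' whose block counts you would look up, which is not a proof. The paper sidesteps both difficulties: once Theorem~\ref{theorem.binders of DTETFs} guarantees a BIBD, the parameter $\Lambda$ is well defined and can be obtained by counting, for one fixed pair of distinct vertices, the blocks containing them; after shifting, this is the number of totally-nonorthogonal $2^{J-1}$-element subsets of $\calQ\backslash\set{\bfv}^\perp$ for a fixed nonzero $\bfv\in\calQ$, a small finite search ($2$ by hand for $J=2$, $8$ and $64$ by exhaustive computer search for $J=3,4$), from which $R$ and $B$ follow by~\eqref{eq.BIBD parameter relations}. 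A secondary concern: your existence step must produce a clique of the right size \emph{inside the quadric} $\calQ$ (equivalently, landing inside $\calD$ after the shift), and the inductive clique construction from Theorem~\ref{theorem.binder of symplectic} does not control singularity; at $J=4$, where the clique size $9=2J+1$ is extremal, this containment is genuinely nontrivial and in the paper is certified by the same exhaustive count that yields $\Lambda$.
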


\begin{proof}
By Theorem~\ref{theorem.sub-ETFs are DT},
the dual ETFs $\bfPsi_\calD$ and $\widehat{\bfPsi}_\calD$ of Lemma~\ref{lemma.symplectic sub-ETFs}(c) are DTETFs.
By Theorem~\ref{theorem.binders of sub-ETFs},
$\bin(\bfPsi_\calD)$ forms a BIBD.
By Theorem~\ref{theorem.binders of DTETFs},
$\bin(\widehat{\bfPsi}_\calD)$ is either empty or forms a BIBD and, in the latter case, every member of either $\bin(\bfPsi_\calD)$ or \smash{$\bin(\widehat{\bfPsi}_\calD)$} is an oval of the other.
Here, Lemma~\ref{lemma.symplectic sub-ETFs}(c) gives
$S=\norm{\widehat{\bfpsi}_{\bfv}}^2=2^{J-1}+1$ for all $\bfv\in\calD$ and
\smash{$\ip{\widehat{\bfpsi}_{\bfv_1}}{\widehat{\bfpsi}_{\bfv_2}}=(-1)^{\rmB(\bfv_1,\bfv_2)}$}
for all distinct $\bfv_1,\bfv_2\in\calD$.
By~\eqref{eq.triple product simplex},
a subset $\calK$ of $\calD$ thus belongs to \smash{$\bin(\widehat{\bfPsi}_\calD)$} if and only if
\begin{equation}
\label{eq.pf of binder Tremain 1}
\#(\calK)=2^{J-1}+2,
\quad
\rmB(\bfv_1,\bfv_2)+\rmB(\bfv_2,\bfv_3)+\rmB(\bfv_3,\bfv_1)=1,
\ \forall\,\bfv_1,\bfv_2,\bfv_3\in\calK,
\ \bfv_1\neq\bfv_2\neq\bfv_3\neq\bfv_1.
\end{equation}
Moreover, for any $\bfv_0\in\calV$,
Lemma~\ref{lemma.affine quadric} gives that $\bfv_0+\calD$ is an affine quadric of size $2^{J-1}(2^J+1)$,
and
\begin{equation*}
\rmB(\bfv_1+\bfv_0,\bfv_2+\bfv_0)+\rmB(\bfv_2+\bfv_0,\bfv_3+\bfv_0)+\rmB(\bfv_3+\bfv_0,\bfv_1+\bfv_0)
=\rmB(\bfv_1,\bfv_2)+\rmB(\bfv_2,\bfv_3)+\rmB(\bfv_3,\bfv_1)
\end{equation*}
for any $\bfv_1,\bfv_2,\bfv_3\in\calV$.
It follows that for any $\bfv_0\in\calV$,
the mapping $\calK\mapsto\bfv_0+\calK$ is a bijection from the binder of $\widehat{\bfPsi}_\calD$ to that of the analogous ETF $\widehat{\bfPsi}_{\bfv_0+\calD}$ that arises from applying Lemma~\ref{lemma.symplectic sub-ETFs}(c) with ``$\calD$'' being $\bfv_0+\calD$.
As such, to determine when \smash{$\bin(\widehat{\bfPsi}_\calD)$} is nonempty---and if so, the parameters of the resulting BIBD---we can take $\bfv_0\in\calD$ to assume without loss of generality that $\bfzero\in\calD$.
Since $\calD$ is an affine quadric of size $2^{J-1}(2^J+1)$ that contains $\bfzero$,
it is the quadric $\calQ$ of some positive quadratic form that yields $\rmB$.
Moreover, since \smash{$\bin(\widehat{\bfPsi}_\calD)$} is a BIBD if it is nonempty, it suffices to determine the number $R$ of subsets $\calK$ of $\calD=\calQ$ that both satisfy~\eqref{eq.pf of binder Tremain 1} and contain $\bfzero$: if $R=0$, then \smash{$\bin(\widehat{\bfPsi}_\calD)$} is empty,
while if $R>0$,
then~\eqref{eq.BIBD parameter relations} gives \smash{$\Lambda=\tfrac{R(K-1)}{V-1}$} and \smash{$B=\tfrac{VR}{K}$} where $V=\#(\calD)=2^{J-1}(2^J+1)$ and $K=\#(\calK)=2^{J-1}+2$.
Note that if $\calK\subseteq\calD$ satisfies \eqref{eq.pf of binder Tremain 1},
then taking $\bfv_3=\bfzero$ in~\eqref{eq.pf of binder Tremain 1} implies that $\calS=\calK\backslash\set{\bfzero}$ satisfies
\begin{equation}
\label{eq.pf of binder Tremain 2}
\calS\subseteq\calQ,
\quad
\#(\calS)=2^{J-1}+1,
\quad
\rmB(\bfv_1,\bfv_2)=1,\ \forall\,\bfv_1,\bfv_2\in\calS,\ \bfv_1\neq\bfv_2.
\end{equation}
Conversely, if $\calS$ is any set that satisfies \eqref{eq.pf of binder Tremain 2},
then $\calK=\calS\sqcup\set{\bfzero}$ satisfies~\eqref{eq.pf of binder Tremain 1}:
clearly $\bfzero\notin\calS$,
and moreover $\rmB(\bfv_1,\bfv_2)+\rmB(\bfv_2,\bfv_3)+\rmB(\bfv_3,\bfv_1)=1$ for any pairwise distinct $\bfv_1,\bfv_2,\bfv_3\in\calK$,
being either $1+1+1$ when all three of these vectors are nonzero,
and otherwise being a sum of one $1$ and two values of $0$.
As such, $R$ is moreover the number of sets $\calS$ that satisfy~\eqref{eq.pf of binder Tremain 2}.
If $R>0$,
then such a set $\calS$ exists,
and Theorem~\ref{theorem.binder of symplectic} implies that $2^{J-1}+1=\#(\calS)\leq 2J+1$, that is, $2^{J-2}\leq J$,
namely $J\leq 4$.
By the contrapositive statement, \smash{$\bin(\widehat{\bfPsi}_\calD)$} is thus empty when $J>4$.

In summary, it thus now suffices to assume that $J$ is $2$, $3$ or $4$,
and show that the number $R$ of sets $\calS$ that satisfy \eqref{eq.pf of binder Tremain 2} is $6$, $56$ or $960$, respectively.
Theorem~4.16 of~\cite{Wertheimer86} gives the parameters of these \textit{ovoid} designs without proof.
For the sake of completeness, we count these $\calS$ explicitly.
To do so,
note that there are exactly $(2^{J-1}+1)!$ ways to enumerate the members of any such set $\calS$ to obtain an ordered finite sequence \smash{$(\bfv_i)_{i=1}^{2^{J-1}+1}$} in $\calQ$ that satisfies $\rmB(\bfv_{i_1},\bfv_{i_2})=1$ for all distinct $i_1,i_2$.
As such, we obtain $R$ by dividing the number of such sequences by $(2^{J-1}+1)!$.

When $J=2$, taking $J=2$ and $\sign(\rmQ)>0$ in~\eqref{eq.lemma.counting pairs.1} of  Lemma~\ref{lemma.counting pairs} gives $36$ hyperbolic pairs $(\bfv_1,\bfv_2)$ in $\calV$, and moreover that for any such $(\bfv_1,\bfv_2)$,
a vector $\bfv_3\in\calQ$ is nonorthogonal to both $\bfv_1$ and $\bfv_2$ if and only if $\bfv_3=\bfv_1+\bfv_2+\bfv_0$ where $\bfv_0$ belongs to the singleton set $\calV_0\backslash\calQ_0$ of vectors that are nonsingular with respect to a positive quadratic form on a symplectic space over $\bbF_2$ of dimension $2$.
There are thus $36$ ways to choose three ordered pairwise nonorthogonal members $(\bfv_1,\bfv_2,\bfv_3)$ of $\calQ$.
Dividing this number by $3!=6$ gives that there are exactly $R=6$ sets $\calS$ that satisfy~\eqref{eq.pf of binder Tremain 2} when $J=2$, as claimed.

Similarly, when $J=3$, taking $J=3$ and $\sign(\rmQ)>0$ in~\eqref{eq.lemma.counting pairs.2} of Lemma~\ref{lemma.counting pairs} gives $6720$ ways to choose four ordered pairwise nonorthogonal members $(\bfv_1,\bfv_2,\bfv_3,\bfv_4)$ of $\calQ$,
and moreover that the number of ways to append a fifth such vector $\bfv_5$ to this tuple is the number of vectors that are singular with respect to a negative quadratic form over a symplectic space over $\bbF_2$ of dimension $2$, namely $1$.
As such, dividing $(6720)(1)=6720$ by $5!=120$ gives that there are exactly $R=56$ sets $\calS$ that satisfy~\eqref{eq.pf of binder Tremain 2} when $J=3$, as claimed.

When $J=4$, taking $J=4$ and $\sign(\rmQ)>0$ in~\eqref{eq.lemma.counting pairs.2} of Lemma~\ref{lemma.counting pairs} gives $2903040$ ways to choose four ordered pairwise nonorthogonal members $(\bfv_1,\bfv_2,\bfv_3,\bfv_4)$ of $\calQ$.
Moreover, for any $(\bfv_1,\bfv_2,\bfv_3,\bfv_4)$,
taking $J=2$ and $\sign(\rmQ)<0$ in~\eqref{eq.lemma.counting pairs.2} of Lemma~\ref{lemma.counting pairs} gives $120$ ways to choose $(\bfv_5,\bfv_6,\bfv_7,\bfv_8)$ so that the members of $(\bfv_1,\bfv_2,\bfv_3,\bfv_4,\bfv_5,\bfv_6,\bfv_7,\bfv_8)$ lie in $\calQ$ and are pairwise nonorthogonal.
For any such $(\bfv_1,\bfv_2,\bfv_3,\bfv_4,\bfv_5,\bfv_6,\bfv_7,\bfv_8)$,
Lemma~\ref{lemma.counting pairs} further gives that the number of ways to append a ninth such vector $\bfv_9$ to this tuple is the number of vectors that are singular with respect to a positive quadratic form over a trivial space, namely $1$.
As such, dividing $(2903040)(120)(1)=348364800$ by $9!=362880$ gives that there are exactly $R=960$ sets $\calS$ that satisfy~\eqref{eq.pf of binder Tremain 2} when $J=4$, as claimed.
\end{proof}

We now summarize the parameters of the BIBDs formed by the binders of the six DTETFs of Lemma~\ref{lemma.symplectic sub-ETFs} when $J\in\set{2,3,4}$.
Here ``$D$'' is the dimension of the ETF's span,
``$V$'' is the number of vectors in the ETF,
``$K$'' is the necessary number of vectors in any regular simplex that it contains,
``$\Lambda$'' is the number of such regular simplices that contain any given pair of distinct ETF vectors,
``$R$'' is the number of such regular simplices that contain any given ETF vector,
and ``$B$'' is the total number of such regular simplices.
In particular, we denote that a binder is empty here by giving $\Lambda$, $R$ and $B$ to be $0$.
Letting $J$ be $2$, $3$ and $4$ in
Lemma~\ref{lemma.symplectic sub-ETFs} and
Theorems~\ref{theorem.binder of sub-ETF of symplectic}, \ref{theorem.binders of sub-ETFs} and~\ref{theorem.binder Tremain} yields the following three tables, respectively:
\begin{equation*}
\begin{array}{lcccccc}
\text{ETF}&
D&V&K&\Lambda&R&B\\
\bfPhi&
10&16&6&2&6&16\\
\bfPsi&
6&16&4&3&15&60\\
\bfPhi_{\calD^\rmc}&
5&6&6&1&1&1\\
\widehat{\bfPhi}_{\calD^\rmc}&
1&6&2&1&5&15\\
\bfPsi_\calD&
5&10&4&2&6&15\\
\widehat{\bfPsi}_\calD&
5&10&4&2&6&15
\end{array}\qquad
\begin{array}{lcccccc}
\text{ETF}&
D&V&K&\Lambda&R&B\\
\bfPhi&
36&64&10&0&0&0\\
\bfPsi&
28&64&8&15&135&1080\\
\bfPhi_{\calD^\rmc}&
21&28&10&0&0&0\\
\widehat{\bfPhi}_{\calD^\rmc}&
7&28&4&5&45&315\\
\bfPsi_\calD&
21&36&8&6&30&135\\
\widehat{\bfPsi}_\calD&
15&36&6&8&56&336
\end{array}\qquad
\begin{array}{lcccccc}
\text{ETF}&
D&V&K&\Lambda&R&B\\
\bfPhi&
136&256&18&0&0&0\\
\bfPsi&
120&256&16&135&2295&36720\\
\bfPhi_{\calD^\rmc}&
85&120&18&0&0&0\\
\widehat{\bfPhi}_{\calD^\rmc}&
35&120&8&45&765&11475\\
\bfPsi_\calD&
85&136&16&30&270&2295\\
\widehat{\bfPsi}_\calD&
51&136&10&64&960&13056
\end{array}
\end{equation*}
For all larger values of $J$,
the binders of $\bfPhi$, $\bfPhi_{\calD^\rmc}$ and $\widehat{\bfPsi}_\calD$ are all empty,
while the binders of $\bfPsi$, $\widehat{\bfPhi}_{\calD^\rmc}$ and $\bfPsi_\calD$ form BIBDs whose parameters are given in Theorem~\ref{theorem.binders of sub-ETFs}.
A number of these entries, such as those for $\bfPsi$ and $\widehat{\bfPsi}_\calD$ when $J=2$, and those for $\widehat{\bfPhi}_{\calD^\rmc}$ and $\widehat{\bfPsi}_\calD$ when $J=3$,
are consistent with those previously published in the literature~\cite{FickusJKM18} based upon numerical evidence.
That said, the BinderFinder algorithm of~\cite{FickusJKM18} is prohibitively expensive for the instances of these ETFs where $J\geq 4$,
and so for example, it is a novel observation that this $120$-vector DTETF for $\bbR^{35}$ has the property that any two of its vectors are contained in exactly $45$ regular simplices.

For $J\in\set{2,3,4}$, any member of the binder of either $\bfPsi_\calD$ or $\widehat{\bfPsi}_\calD$ is an oval of the BIBD formed by the other.
This is true even when $J=2$ and the two resulting BIBDs have the same parameters $(10,4,2,6,15)$.
This can be explicitly verified, for example, by noting that the support of any row of the right block of the upper right matrix of Figure~\ref{figure.example} has either exactly two or zero vertices in common with those of the lower right block of the lower right matrix there.
(We caution here that $\bfPhi_\calD:=(\bfphi_\bfv)_{\bfv\in\calD}$,
the $J=2$ instance of which appears in Figure~\ref{figure.example} as the $10$ rightmost columns of $\bfPhi$, is not an ETF.
In general, the dual of a sub-ETF does not arise as a submatrix of a dual ETF.
That said, the Gram matrices of two such finite sequences of vectors agree in their off-diagonal entries, and so every member of $\bin(\bfPhi)\cap\calD$ of the appropriate cardinality is also a member of $\bin(\widehat{\bfPsi}_\calD)$.)
When $J=2$, $\bin(\bfPhi)$ and $\bin(\bfPsi)$ also consist of ovals of the other,
as do $\bin(\bfPhi_{\calD^\rmc})$ and $\bin(\widehat{\bfPhi}_{\calD^\rmc})$.

To date, we have not discovered an infinite family of DTETFs that have the property that the binders of both them and their duals are nonempty.
By Theorem~\ref{theorem.binders of DTETFs}, any member of either BIBD formed by these binders is necessarily an oval of the other.
In light of the $J=2$ instance of $\bfPsi_\calD$ or $\widehat{\bfPsi}_\calD$,
one possible candidate for such a family consists of $(Q^2+1)$-vector DTETFs for real spaces of dimension $\frac12(Q^2+1)$, where $Q$ is any odd prime power~\cite{IversonM24}.
We leave a deeper investigation of the binders of such DTETFs,
as well as those of others~\cite{IversonM24,DempwolffK23}, for future work.

From the point of view of deterministic compressed sensing,
perhaps the most exciting contribution of this work is the revelation of Theorem~\ref{theorem.binder Tremain} that the binder of the DTETF $\widehat{\Psi}_\calD$ of Lemma~\ref{lemma.symplectic sub-ETFs}(c) is empty whenever $J>4$.
Here recall that the spark of an ETF outperforms Donoho and Elad's spark bound~\eqref{eq.Welch and spark bounds} precisely when its binder is empty.
Though, as noted above, this applies to $\bfPhi$, $\bfPhi_{\calD^\rmc}$ and $\widehat{\bfPsi}_\calD$ whenever $J$ is not small,
the ETFs $\bfPhi$ and $\bfPhi_{\calD^\rmc}$ are of limited applicability to compressed sensing since the dimension of their spans is more than half of the number of their vectors: why bother with the complexities of compressed sensing when one can simply perform traditional sensing at no more than twice the cost?
In contrast, the parameters of $\widehat{\bfPsi}_\calD$ match those of certain Tremain ETFs~\cite{FickusJMP18},
and as $J$ grows large, the dimension of its span $\frac13(2^{J-1}+1)(2^J+1)$ is about one-third of the number $2^{J-1}(2^J+1)$ of its vectors.
This begs the question, What is the spark of \smash{$\widehat{\bfPsi}_\calD$}?
More importantly, to what degree does \smash{$\widehat{\bfPsi}_\calD$} satisfy the restricted isometry property?
Though such problems seem to be computationally intractable in general~\cite{AlexeevCM12,BandeiraDMS13},
we have already seen, in Theorem~\ref{theorem.binders of sub-ETFs} for example,
that it is possible to exactly compute the spark of any ETF that belongs to certain infinite families of them by showing that their binders are nonempty.
Here though
\smash{$\widehat{\bfPsi}_\calD$} has an empty binder, it is highly symmetric---being a DTETF---and its Gram matrix (Lemma~\ref{lemma.symplectic sub-ETFs}(c)) is elegantly expressed in terms of a quadratic form over $\bbF_2$.
By using sufficiently sophisticated algebra and finite geometry, can one for example also exactly compute the spark of \smash{$\widehat{\bfPsi}_\calD$} for all $J>4$?

\section*{Acknowledgments}
We thank Dr.~Joseph~W.~Iverson of the Department of Mathematics of Iowa State University for the helpful comments he provided while visiting us as part of the Air Force Office of Scientific Research's Summer Faculty Fellowship Program.
The views expressed in this article are those of the authors and do not reflect the official policy or position of the United States Air Force, Department of Defense, or the U.S.~Government.

\setlength{\bibsep}{0pt}

%\end{document}

\appendix

\section{Proof of Lemma~\ref{lemma.DT BIBD}}

(a) For any $v_1,v_2,v_3,v_4\in\calV$ such that $v_1\neq v_2$ and $v_3\neq v_4$,
taking $\sigma\in\Aut(\calV,(\calK_b)_{b\in\calB})$ such that $\sigma(v_1)=v_3$ and $\sigma(v_2)=v_4$, the corresponding permutation $\tau$ of $\calB$ maps $\set{b\in\calB: v_1,v_2\in\calK_b}$ onto $\set{b'\in\calB: v_3,v_4\in\calK_{b'}}$,
implying that the number of blocks which contain $v_1$ and $v_2$ is the same as that which contain $v_3$ and $v_4$.

(b) Let $\calL$ be an $L$-element arc of a BIBD $(\calV,(\calK_b)_{b\in\calB})$.
Recall that the incidence matrix $\bfX\in\bbR^{\calB\times\calV}$ of this BIBD satisfies~\eqref{eq.BIBD incidence matrix properties}.
Letting $\bfchi_{\calL}$ be the characteristic function of $\calL$,
we also have $(\bfX\chi_\calL)(b)=\#(\calK_b\cap\calL)\in\set{0,1,2}$ for all $b\in\calB$.
Moreover, letting $N_1$ and $N_2$ be the number of blocks $\calK_b$ that intersect $\calL$ in exactly one or two vertices, respectively,
\eqref{eq.BIBD incidence matrix properties} implies
\begin{align*}
N_1+2N_2
&=\sum_{b\in\calB}\#(\calK_b\cap\calL)
=\ip{\bfone_\calB}{\bfX\bfchi_\calL}
=\ip{\bfX^\rmT\bfone_\calB}{\bfchi_\calL}
=\ip{R\bfone_\calV}{\bfchi_\calL}
=RL,\\
N_1+4N_2
&=\sum_{b\in\calB}[\#(\calK_b\cap\calL)]^2
=\norm{\bfX\bfchi_\calL}^2
=\ip{\bfchi_\calL}{[(R-\Lambda)\brI+\Lambda\bfone_{\calV\times\calV}]\bfchi_\calL}
=(R-\Lambda)L+\Lambda L^2.
\end{align*}
Subtracting the second equation here from twice the first and then recalling~\eqref{eq.BIBD parameter relations} gives
\begin{equation}
\label{eq.pf of DT BIBD 1}
0\leq N_1=2RL-[(R-\Lambda)L+\Lambda L^2]
=\Lambda L[(\tfrac{R}{\Lambda}+1)-L]
=\Lambda L[(\tfrac{V-1}{K-1}+1)-L].
\end{equation}
In particular, $L\leq\frac{V-1}{K-1}+1$.
Moreover, if $L=\frac{V-1}{K-1}+1$ then $L>1$ and~\eqref{eq.pf of DT BIBD 1} gives $N_1=0$, that is, $\#(\calK_b\cap\calL)\in\set{0,2}$ for all $b\in\calB$.
Conversely, if $\calL$ is any nonempty subset of $\calV$ such that $\#(\calK_b\cap\calL)\in\set{0,2}$ for all $b\in\calB$ then $\calL$ is an arc of this BIBD with $L>0$ and $N_1=0$, implying by~\eqref{eq.pf of DT BIBD 1} that $L=\frac{V-1}{K-1}+1$.
When this occurs, $K-1$ necessarily divides $V-1$.
See~\cite{Andriamanalimanana79} for an alternative upper bound on the size of an arc that holds when $K-1\nmid V-1$.

(c) Assume that $(\calV,(\calK_b)_{b\in\calB})$ is doubly transitive---implying by (a) that it is a BIBD---and moreover that this BIBD has an oval,
namely that there exists a nonempty subset $\calL$ of $\calV$ such that $\#(\calK_b\cap\calL)\in\set{0,2}$ for all $b\in\calB$.
Any automorphism $\sigma$ of $(\calV,(\calK_b)_{b\in\calB})$ maps any such oval $\calL$ to another: if $\tau$ is the associated permutation of $\calB$,
\begin{equation*}
\#(\calK_b\cap\sigma(\calL))
=\#(\sigma(\sigma^{-1}(\calK_b)\cap\calL))
=\#(\calK_{\tau^{-1}(b)}\cap\calL)
\in\set{0,2},
\quad\forall\,b\in\calB.
\end{equation*}
Letting $(\calL_a)_{a\in\calA}$ be any enumeration of the set of all ovals of $(\calV,(\calK_b)_{b\in\calB})$,
this implies that $\Aut(\calV,(\calK_b)_{b\in\calB})$ is a subgroup of $\Aut(\calV,(\calL_a)_{a\in\calA})$.
Since $(\calV,(\calK_b)_{b\in\calB})$ is doubly transitive this implies that $(\calV,(\calL_a)_{a\in\calA})$ is as well, and so by (a) is itself a BIBD.
Since $\#(\calK_b\cap\calL_a)\in\set{0,2}$ for all $a\in\calA$ and $b\in\calB$,
any member of either BIBD is an oval of the other.

\section{Proof of Lemma~\ref{lemma.DT implies ETF}}

Consider the matrix $\bfA\in\bbF^{\calN\times\calN}$ defined by
\begin{equation}
\label{eq.pf of DT implies ETF 1}
\bfA(n_1,n_3)
:=\sum_{n_2\in\calN}\ip{\bfphi_{n_1}}{\bfphi_{n_2}}\ip{\bfphi_{n_2}}{\bfphi_{n_3}}\ip{\bfphi_{n_3}}{\bfphi_{n_1}}
=(\bfPhi^*\bfPhi)^2(n_1,n_3)\overline{(\bfPhi^*\bfPhi)(n_1,n_3)}.
\end{equation}
By assumption,
for any $n_1,n_3,n_4,n_6\in\calN$ with $n_1\neq n_3$ and $n_4\neq n_6$, there exists $\sigma\in\Sym(\bfPhi)$ such that $\sigma(n_1)=n_4$ and $\sigma(n_3)=n_6$.
Since $\sigma\in\Sym(\bfPhi)$, there exists a finite sequence of unimodular scalars $(z_n)_{n\in\calN}$ such that
$\ip{\bfphi_{\sigma(n_7)}}{\bfphi_{\sigma(n_8)}}
=\overline{z_{n_7}}z_{n_8}\ip{\bfphi_{n_7}}{\bfphi_{n_8}}$ for all $n_7,n_8\in\calN$.
In particular, $\abs{\ip{\bfphi_{n_4}}{\bfphi_{n_6}}}
=\abs{\ip{\bfphi_{n_1}}{\bfphi_{n_3}}}$
for all such $n_1,n_3,n_4,n_6$,
implying there exists $C\geq0$ such that $\abs{\ip{\bfphi_{n_1}}{\bfphi_{n_3}}}=C$ when $n_1\neq n_3$.
In fact, $C>0$ since $N>D$, and we without loss of generality scale $\bfPhi$ so that $C=1$.
Similarly, \eqref{eq.triple product invariance} implies that $\bfA(n_4,n_6)=\bfA(n_1,n_3)$ for all such $n_1,n_3,n_4,n_6$,
and so there exists $A\in\bbF$ such that $\bfA(n_1,n_3)=A$ when $n_1\neq n_3$.
Multiplying~\eqref{eq.pf of DT implies ETF 1} by $(\bfPhi^*\bfPhi)(n_1,n_3)$ thus gives
$(\bfPhi^*\bfPhi)^2(n_1,n_3)=A(\bfPhi^*\bfPhi)(n_1,n_3)$ when $n_1\neq n_3$.
When instead $n_1=n_3$, $(\bfPhi^*\bfPhi)^2(n_1,n_3)=S^2+(N-1)$ while $(\bfPhi^*\bfPhi)(n_1,n_3)=S$.
Altogether, $(\bfPhi^*\bfPhi)^2=A(\bfPhi^*\bfPhi)+[S(S-A)+(N-1)]\brI$.
That said,
$\ker(\bfPhi^*\bfPhi)$ contains a nonzero vector since $N>D$,
and applying this above equation to it gives $S(S-A)+(N-1)=0$.
Thus, $(\bfPhi^*\bfPhi)^2=A(\bfPhi^*\bfPhi)$, implying $\bfPhi$ is a tight frame and so also an ETF.

\section{Proof of Lemma~\ref{lemma.totally orthogonal}}

(a)
For any finite linear combinations of members of $\calS$,
\begin{equation*}
\rmB\biggparen{\,
\sum_{m_1\in\calM_1}\bfx_1(m_1)\bfv_{m_1},\sum_{m_2\in\calM_2}\bfx_2(m_2)\bfv_{m_2}}
=\sum_{m_1\in\calM_1}\sum_{m_2\in\calM_2}\bfx_1(m_1)\bfx_2(m_2)\,0
=0.
\end{equation*}

(b) If $\calL\subseteq\calL^\perp$ then
$2\dim(\calL)
\leq\dim(\calL)+\dim(\calL^\perp)
=\dim(\calV)
=2J$ and so $\dim(\calL)\leq J$.
Here, $\dim(\calL)=J$ if and only if $\dim(\calL)=\dim(\calL^\perp)$.
Since $\calL\subseteq\calL^\perp$, this equates to having $\calL=\calL^\perp$.

(c) If $(\bfv_m)_{m=1}^M$ is a basis for a totally orthogonal space
then $\rmB(\bfv_{m_1},\bfv_{m_2})=0$ for any $m_1,m_2$,
and so in particular $\bfv_m\in\calV_m^\perp$ for any $m$,
where $\calV_m:=\Span(\bfv_{m'})_{m'=1}^{m-1}$.
Moreover, $\bfv_m\notin\calV_m^{}$ for any $m$ since $(\bfv_m)_{m=1}^M$ is linearly independent.
Conversely, if $(\bfv_m)_{m=1}^M$ is any finite sequence of vectors in $\calV$ with the property that $\bfv_m\in\calV_m^{\perp}\backslash\calV_m^{}$ for all $m$,
then it is linearly independent since $\bfv_m\notin\calV_m$ for all $m$.
Moreover, $\rmB(\bfv_{m_1},\bfv_{m_2})=0$ for any $m_1,m_2$:
this holds when $m_1=m_2$ since $\rmB$ is alternating,
holds when $m_1<m_2$ since $\bfv_{m_2}\in\calV_{m_2-1}^\perp\supseteq\calV_{m_1}^\perp$,
and similarly holds when $m_1>m_2$.
Thus, $(\bfv_m)_{m=1}^M$ is totally orthogonal,
implying $\Span(\bfv_m)_{m=1}^M$ is as well.

(d)
By (b) and (c),
any Lagrangian subspace is the image of the synthesis map $\bfV:\bbF_Q^J\rightarrow\calV$ of a finite sequence $(\bfv_j)_{j=1}^{J}$ of vectors in $\calV$ with $\bfv_j\in\calV_j^{\perp}\backslash\calV_j^{}$ for all $j\in[J]$ where
\smash{$\calV_j:=\Span(\bfv_{j'})_{j'=1}^{j-1}$}.
When chosen in this way, (c) gives that each $\calV_j$ is a totally orthogonal subspace of dimension $j-1$,
and so lies in $\calV_j^{\perp}$, which has dimension $2J-(j-1)$.
As such, for each $j\in[J]$, there are exactly
\smash{$\#(\calV_j^{\perp}\backslash\calV_j^{})
=\#(\calV_j^{\perp})-\#(\calV_j^{})
=Q^{2J-(j-1)}-Q^{j-1}$} choices for $\bfv_j$.
The total number of such $\bfV$ is thus $\prod_{j=1}^{J}(Q^{2J-(j-1)}-Q^{j-1})$.
For any such $\bfV$ and invertible matrix $\bfA\in\bbF_Q^{J\times J}$,
$\bfV\bfA$ is of this same type, having $\im(\bfV\bfA)=\im(\bfV)$.
Conversely, for any \smash{$\bfV_1,\bfV_2:\bbF_Q^J\rightarrow\calV$} of this type with $\im(\bfV_1)=\im(\bfV_2)$, we have that $\bfV_1,\bfV_2:\bbF_Q^J\rightarrow\im(\bfV_1)=\im(\bfV_2)$ are invertible,
and so $\bfV_2=\bfV_1\bfA$ where $\bfA:=\bfV_1^{-1}\bfV_2$ is an invertible matrix.
As such, the total number of Lagrangian subspaces of $\calV$ is obtained by dividing the number $\prod_{j=1}^{J}(Q^{2J-(j-1)}-Q^{j-1})$ of such $\bfV$ by the order of $\GL(\bbF_Q^J)$.
The latter is \smash{$\prod_{j=1}^{J}(Q^J-Q^{j-1})$}:
a matrix \smash{$\bfA\in\bbF_Q^{J\times J}$} is invertible if and only if,
for each $j\in[J]$, its $j$th column does not belong to the $(j-1)$-dimensional span of its prior columns.
Altogether, the total number of Lagrangian subspaces of $\calV$ is thus
\begin{equation*}
\frac{\prod_{j=1}^{J}(Q^{2J-(j-1)}-Q^{j-1})}{\prod_{j=1}^{J}(Q^J-Q^{j-1})}
=\frac{\prod_{j=1}^{J}(Q^{2(J-j+1)}-1)}{\prod_{j=1}^{J}(Q^{J-j+1}-1)}
=\prod_{j=1}^{J}(Q^{J-j+1}+1)
=\prod_{j=1}^{J}(Q^j+1).
\end{equation*}

\section{Proof of Lemma~\ref{lemma.symplectic sub-ETFs}}

For the moment, let $\calD$ be any affine quadric in $\calV$,
and so $\#(\calD)$ may be either $2^{J-1}(2^J+1)$ or $2^{J-1}(2^J-1)$.
Then either $\calD$ or $\calD^\rmc$ is the quadric $\calQ$ of a quadratic form $\rmQ:\calV\rightarrow\bbF_2$ that satisfies~\eqref{eq.quadratic symplectic relation}.
Letting $\bfc\in\bbR^\calV$, $\bfc(\bfv):=(-1)^{\rmQ(\bfv)}$ be the associated \textit{chirp} function,
the characteristic function of $\calD$ is thus
$\bfchi_\calD=\bfchi_\calQ=\tfrac12(\bfone+\bfc)$ when $\calD=\calQ$ and $\bfchi_\calD=\bfchi_{\calQ^\rmc}=\tfrac12(\bfone-\bfc)$ when $\calD=\calQ^\rmc$.
In either case, $\bfchi_\calD=\tfrac12(\bfone+\varepsilon\bfc)$ where $\varepsilon:=1$ when $\calD=\calQ$ and $\varepsilon:=-1$ when $\calD=\calQ^\rmc$.
Thus $\bfchi_{\calD^\rmc}=\tfrac12(\bfone-\varepsilon\bfc)$.
We caution that this $\varepsilon$ is distinct from the sign~\eqref{eq.sign of quadratic} of $\rmQ$, which arises here too:
\begin{equation}
\label{eq.pf of symplectic sub-ETFs 0}
\sum_{\bfv\in\calV}(-1)^{\rmQ(\bfv)}
=\sum_{\bfv\in\calV}[2\bfchi_\calQ(\bfv)-1]
=2\#(\calQ)-2^{2J}
=\sign(\rmQ)2^{J}.
\end{equation}
Remarkably, $\rmc$ is an eigenvector of $\bfGamma$ with this number as its eigenvalue: for any $\bfv_1\in\calV$, \eqref{eq.quadratic symplectic relation} gives
\begin{equation*}
(\bfGamma\bfc)(\bfv_1)
=\sum_{\bfv_2\in\calV}(-1)^{\rmB(\bfv_1,\bfv_2)}(-1)^{\rmQ(\bfv_2)}
=\sum_{\bfv_2\in\calV}(-1)^{\rmQ(\bfv_1+\bfv_2)+\rmQ(\bfv_1)}
=\sign(\rmQ)2^{J}\bfc(\bfv_1).
\end{equation*}
Since $\bfGamma$ is a Hadamard matrix of size $2^{2J}$ with an all-ones $\bfzero$th row, $\bfGamma\bfone=2^{2J}\bfdelta_\bfzero$, and so
\begin{equation}
\label{eq.pf of symplectic sub-ETFs 1}
\bfGamma\bfchi_\calD
=\bfGamma\tfrac12(\bfone+\varepsilon\bfc)
=\tfrac12(\bfGamma\bfone+\varepsilon\bfGamma\bfc)
=\tfrac12[2^{2J}\bfdelta_{\bfzero}+\varepsilon\sign(\rmQ)2^{J}\bfc]
=2^{J-1}[2^{J}\bfdelta_{\bfzero}+\varepsilon\sign(\rmQ)\bfc].
\end{equation}
Evaluating this at $\bfv=\bfzero$ gives
\begin{equation}
\label{eq.pf of symplectic sub-ETFs 2}
\#(\calD)
=2^{J-1}[2^J+\varepsilon\sign(\rmQ)].
\end{equation}
Now define $(\bfzeta_{\bfv})_{\bfv\in\calV}$ in $\bbR^\calD$ by
$\bfzeta_{\bfv_2}(\bfv_1)
:=2^{-\frac12(J-1)}(-1)^{\rmB(\bfv_1,\bfv_2)}$.
For any $\bfv_1,\bfv_2\in\calV$,
\begin{equation*}
2^{J-1}\ip{\bfzeta_{\bfv_1}}{\bfzeta_{\bfv_2}}
=\sum_{\bfv\in\calD}(-1)^{\rmB(\bfv,\bfv_1)}(-1)^{\rmB(\bfv,\bfv_2)}
=\sum_{\bfv\in\calV}(-1)^{\rmB(\bfv,\bfv_1+\bfv_2)}\bfchi_\calD(\bfv)
=(\bfGamma\bfchi_\calD)(\bfv_1+\bfv_2).
\end{equation*}
For any $\bfv_1,\bfv_2\in\calV$,
combining this with~\eqref{eq.pf of symplectic sub-ETFs 1} gives
\begin{equation}
\label{eq.pf of symplectic sub-ETFs 3}
\ip{\bfzeta_{\bfv_1}}{\bfzeta_{\bfv_2}}
=[2^{J}\bfdelta_{\bfzero}+\varepsilon\sign(\rmQ)\bfc](\bfv_1+\bfv_2)
=\left\{\begin{array}{cl}
2^J+\varepsilon\sign(\rmQ),&\ \bfv_1=\bfv_2,\\
\varepsilon\sign(\rmQ)(-1)^{\rmQ(\bfv_1+\bfv_2)},&\ \bfv_1\neq\bfv_2.
\end{array}\right.
\end{equation}
As such, $(\bfzeta_{\bfv})_{\bfv\in\calV}$ is equiangular.
It is moreover a tight frame for $\bbR^\calD$ since for any $\bfv_1,\bfv_2\in\calD$,
\begin{equation*}
\biggparen{\,\sum_{\bfv\in\calV}\bfzeta_{\bfv}^{}\bfzeta_{\bfv}^*}(\bfv_1,\bfv_2)
=\tfrac1{2^{J-1}}\sum_{\bfv\in\calV}(-1)^{\rmB(\bfv_1,\bfv)}(-1)^{\rmB(\bfv_2,\bfv)}
=\tfrac1{2^{J-1}}(\bfGamma\bfGamma^*)(\bfv_1,\bfv_2)
=2^{J+1}\brI(\bfv_1,\bfv_2).
\end{equation*}

Now consider the subsequence \smash{$(\bfzeta_{\bfv})_{\bfv\in\calD^\rmc}$} of $(\bfzeta_{\bfv})_{\bfv\in\calV}$ that has the $\calQ\times\calQ^\rmc$ or $\calQ^\rmc\times\calQ$ submatrix of \smash{$2^{-\frac12(J-1)}\bfGamma$} as its synthesis map when $\varepsilon$ is $1$ or $-1$, respectively.
Regardless, $\rmQ(\bfv_1)=\rmQ(\bfv_2)$ for all $\bfv_1,\bfv_2\in\calD^\rmc$ and so also $\rmQ(\bfv_1+\bfv_2)=\rmB(\bfv_1,\bfv_2)$ for all such $\bfv_1,\bfv_2$.
By~\eqref{eq.pf of symplectic sub-ETFs 3}, the Gram matrix of $(\bfzeta_{\bfv})_{\bfv\in\calD^\rmc}$ is thus $2^J\brI+\varepsilon\sign(\rmQ)\bfGamma_{\calD^\rmc}$ where $\bfGamma_{\calD^\rmc}$ is the $\calD^\rmc\times\calD^\rmc$ submatrix of $\bfGamma$:
\begin{equation}
\label{eq.pf of symplectic sub-ETFs 4}
\ip{\bfzeta_{\bfv_1}}{\bfzeta_{\bfv_2}}
=\left\{\begin{array}{cl}
2^J+\varepsilon\sign(\rmQ),&\ \bfv_1=\bfv_2\\
\varepsilon\sign(\rmQ)(-1)^{\rmB(\bfv_1,\bfv_2)},&\ \bfv_1\neq\bfv_2
\end{array}\right\}
%=[2^J\brI+\varepsilon\sign(\rmQ)\bfGamma](\bfv_1,\bfv_2)
=[2^J\brI+\varepsilon\sign(\rmQ)\bfGamma_{\calD^\rmc}](\bfv_1,\bfv_2),
\end{equation}
for any $\bfv_1,\bfv_2\in\calD^\rmc$.
To compute the spectrum of $\bfGamma_{\calD^\rmc}$ and thus also of this Gram matrix,
note that since $\bfchi_{\calD^\rmc}=\frac12(\bfone-\varepsilon\bfc)$,
the $\calV\times\calV$ diagonal projection matrix whose diagonal entries indicate $\calD^\rmc$ can be expressed as $\bfP:=\tfrac12(\brI-\varepsilon\bfDelta)$ where $\bfDelta\in\bbR^{\calV\times\calV}$ is diagonal with $\bfDelta(\bfv,\bfv):=\rmc(\bfv)=(-1)^{\rmQ(\bfv)}$ for all $\bfv\in\calV$.
This matrix $\bfDelta$ has an unusual relationship with $\bfGamma$:
for any $\bfv_1,\bfv_3\in\calV$,
\begin{equation*}
(\bfGamma\bfDelta\bfGamma)(\bfv_1,\bfv_3)
=\sum_{\bfv_2\in\calV}\bfGamma(\bfv_1,\bfv_2)\bfDelta(\bfv_2,\bfv_2)\bfGamma(\bfv_2,\bfv_3)
=\sum_{\bfv_2\in\calV}(-1)^{\rmB(\bfv_1,\bfv_2)+\rmQ(\bfv_2)+\rmB(\bfv_2,\bfv_3)},
\end{equation*}
where~\eqref{eq.quadratic symplectic relation} gives,
for any $\bfv_1,\bfv_2,\bfv_3\in\calV$, that
\begin{align*}
\rmB(\bfv_1,\bfv_2)+\rmQ(\bfv_2)+\rmB(\bfv_2,\bfv_3)
&=\rmQ(\bfv_1+\bfv_2+\bfv_3)+\rmQ(\bfv_1+\bfv_3)\\
&=\rmQ(\bfv_1+\bfv_2+\bfv_3)+\rmQ(\bfv_1)+\rmB(\bfv_1,\bfv_3)+\rmQ(\bfv_3),
\end{align*}
and moreover~\eqref{eq.pf of symplectic sub-ETFs 0} gives
$\sum_{\bfv_2\in\calV}(-1)^{\rmQ(\bfv_1+\bfv_2+\bfv_3)}=\sign(\rmQ)2^{J}$ for any $\bfv_1,\bfv_3\in\calV$,
implying
\begin{equation*}
(\bfGamma\bfDelta\bfGamma)(\bfv_1,\bfv_3)
=\sum_{\bfv_2\in\calV}(-1)^{\rmQ(\bfv_1+\bfv_2+\bfv_3)+\rmQ(\bfv_1)+\rmB(\bfv_1,\bfv_3)+\rmQ(\bfv_3)}
=\sign(\rmQ)2^{J}(\bfDelta\bfGamma\bfDelta)(\bfv_1,\bfv_3).
\end{equation*}
Thus,
$\bfGamma\bfDelta\bfGamma
=\sign(\rmQ)2^{J}\bfDelta\bfGamma\bfDelta$,
that is, the \textit{chirp--Fourier} transform $\bfDelta\bfGamma$
cubes to $\sign(\rmQ)2^{3J}\brI$.
Combining this with the facts that $\bfGamma^2=2^{2J}\brI$, $\bfDelta^2=\brI$ and $(\brI-\varepsilon\bfDelta)^2=2(\brI-\varepsilon\bfDelta)$ gives
\begin{align*}
&\tfrac12[(\brI-\varepsilon\bfDelta)\bfGamma(\brI-\varepsilon\bfDelta)]^2\\
&\qquad=(\brI-\varepsilon\bfDelta)\bfGamma(\brI-\varepsilon\bfDelta)\bfGamma(\brI-\varepsilon\bfDelta)\\
&\qquad=
\brI\bfGamma\brI\bfGamma\brI
-\varepsilon\brI\bfGamma\brI\bfGamma\bfDelta
-\varepsilon\brI\bfGamma\bfDelta\bfGamma\brI
-\varepsilon\bfDelta\bfGamma\brI\bfGamma\brI
+\brI\bfGamma\bfDelta\bfGamma\bfDelta
+\bfDelta\bfGamma\brI\bfGamma\bfDelta
+\bfDelta\bfGamma\bfDelta\bfGamma\brI
-\varepsilon\bfDelta\bfGamma\bfDelta\bfGamma\bfDelta\\
&\qquad=2^{2J+1}(\brI-\varepsilon\bfDelta)
+\sign(\rmQ)2^{J}
[-\varepsilon\bfDelta\bfGamma\bfDelta+(\bfDelta\bfGamma+\bfGamma\bfDelta)-\varepsilon\bfGamma]\\
&\qquad=2^{2J+1}(\brI-\varepsilon\bfDelta)
-\varepsilon\sign(\rmQ)2^{J}
(\brI-\varepsilon\bfDelta)\bfGamma(\brI-\varepsilon\bfDelta).
\end{align*}
Dividing this equation by $2^3$ and recalling that $\bfP=\tfrac12(\brI-\varepsilon\bfDelta)$ gives
\begin{equation*}
(\bfP\bfGamma\bfP)^2
=2^{2J-1}\bfP-\varepsilon\sign(\rmQ)2^{J-1}\bfP\bfGamma\bfP.
\end{equation*}
Since $\bfP$ is the diagonal projection onto
$\Span(\bfdelta_\bfv)_{\bfv\in\calD^\rmc}$ and $\bfGamma_{\calD^\rmc}$ is the $\calD^\rmc\times\calD^\rmc$ submatrix of $\bfGamma$, this implies
$\bfGamma_{\calD^\rmc}^2
=2^{2J-1}\brI-\varepsilon\sign(\rmQ)2^{J-1}\bfGamma_{\calD^\rmc}$.
Thus,
\begin{equation*}
[\varepsilon\sign(\rmQ)\bfGamma_{\calD^\rmc}-2^{J-1}\brI][\varepsilon\sign(\rmQ)\bfGamma_{\calD^\rmc}+2^{J}\brI]
=\bfGamma_{\calD^\rmc}^2+\varepsilon\sign(\rmQ)2^{J-1}\bfGamma_{\calD^\rmc}-2^{2J-1}\brI
=\bfzero,
\end{equation*}
implying that every eigenvalue of $\varepsilon\sign(\rmQ)\bfGamma_{\calD^\rmc}$ is either $2^{J-1}$ or $-2^J$.
Letting $M$ be the multiplicity of the former,
the multiplicity of the latter is $N-M$ where $N:=\#(\calD^\rmc)=2^{J-1}[2^J-\varepsilon\sign(\rmQ)]$,
cf.\ \eqref{eq.pf of symplectic sub-ETFs 2}.
Since every diagonal entry of $\bfGamma_{\calD^\rmc}$ is $1$,
taking the trace of $\varepsilon\sign(\rmQ)\bfGamma_{\calD^\rmc}$ gives
\begin{equation*}
\varepsilon\sign(\rmQ)N
=\Tr(\varepsilon\sign(\rmQ)\bfGamma_{\calD^\rmc})
=2^{J-1}M-2^J(N-M)
=3(2^{J-1})M-2^{J}N.
\end{equation*}
Solving for $M$ reveals that it is notably invariant of both $\sign(\rmQ)$ and $\varepsilon$:
\begin{equation*}
M
=\tfrac13[2^J+\varepsilon\sign(\rmQ)]2^{-(J-1)}N
=\tfrac13[2^J+\varepsilon\sign(\rmQ)][2^J-\varepsilon\sign(\rmQ)]
=\tfrac13(2^{2J}-1).
\end{equation*}
Recalling~\eqref{eq.pf of symplectic sub-ETFs 4},
the Gram matrix $2^{J}\brI+\varepsilon\sign(\rmQ)\bfGamma_{\calD^\rmc}$ of \smash{$(\bfzeta_{\bfv})_{\bfv\in\calD^\rmc}$}
thus has eigenvalues $3(2^{J-1})$ and $0$, the former with multiplicity $\tfrac13(2^{2J}-1)$.
Thus, \smash{$(\bfzeta_{\bfv})_{\bfv\in\calD^\rmc}$} is an ETF for a space of dimension $\tfrac13(2^{2J}-1)$.
The Gram matrix of its dual \smash{$(\widehat{\bfzeta}_{\bfv})_{\bfv\in\calD^\rmc}$} is
\begin{equation}
\label{eq.pf of symplectic sub-ETFs 5}
3(2^{J-1})\brI-[2^{J}\brI+\varepsilon\sign(\rmQ)\bfGamma_{\calD^\rmc}]
=2^{J-1}\brI-\varepsilon\sign(\rmQ)\bfGamma_{\calD^\rmc},
\end{equation}
which has eigenvalues $3(2^{J-1})$ and $0$, the former having multiplicity
\begin{align}
\nonumber
N-M
&=2^{J-1}[2^J-\varepsilon\sign(\rmQ)]-\tfrac13[2^{J}+\varepsilon\sign(\rmQ)][2^J-\varepsilon\sign(\rmQ)]\\
\label{eq.pf of symplectic sub-ETFs 6}
&=\tfrac13[2^{J-1}-\varepsilon\sign(\rmQ)][2^J-\varepsilon\sign(\rmQ)].
\end{align}
Thus, \smash{$(\widehat{\bfzeta}_{\bfv})_{\bfv\in\calD^\rmc}$} is an ETF for a space of this dimension.

We now require that $\#(\calD)=2^{J-1}(2^J+1)$ and let $(\bfphi_\bfv)_{\bfv\in\calV}$ and $(\bfpsi_\bfv)_{\bfv\in\calV}$ be the instances of $(\bfzeta_{\bfv})_{\bfv\in\calV}$
where ``$\calD$'' is $\calD$ and $\calD^\rmc$, respectively.
By Definition~\ref{def.affine quadric},
the latter is itself an affine quadric of cardinality $\#(\calD^\rmc)=2^{J-1}(2^J-1)$.
In these two instances,
\eqref{eq.pf of symplectic sub-ETFs 2} implies that ``$\varepsilon\sign(\rmQ)$'' is $1$ and $-1$, respectively, and so~\eqref{eq.pf of symplectic sub-ETFs 3} gives
that these ETFs for $\bbR^\calD$ and $\bbR^{\calD^\rmc}$ satisfy
\begin{equation*}
\ip{\bfphi_{\bfv_1}}{\bfphi_{\bfv_2}}
=\left\{\begin{array}{cl}
2^J+1,&\ \bfv_1=\bfv_2,\\
(-1)^{\rmQ(\bfv_1+\bfv_2)},&\ \bfv_1\neq \bfv_2,
\end{array}\right.
\quad
\ip{\bfpsi_{\bfv_1}}{\bfpsi_{\bfv_2}}
=\left\{\begin{array}{cl}
2^J-1,&\ \bfv_1=\bfv_2,\\
-(-1)^{\rmQ(\bfv_1+\bfv_2)},&\ \bfv_1\neq \bfv_2.
\end{array}\right.
\end{equation*}
Since $\ip{\bfpsi_{\bfv_1}}{\bfpsi_{\bfv_2}}
=-\ip{\bfphi_{\bfv_1}}{\bfphi_{\bfv_2}}$ whenever $\bfv_1\neq\bfv_2$,
these ETFs are dual.
Moreover, \eqref{eq.quadratic symplectic relation} gives
\begin{equation*}
(-1)^{\rmQ(\bfv_1+\bfv_2)}
=(-1)^{\rmB(\bfv_1,\bfv_2)+\rmQ(\bfv_1)+\rmQ(\bfv_2)}
=\overline{z_{\bfv_1}}z_{\bfv_2}(-1)^{\rmB(\bfv_1,\bfv_2)}
\end{equation*}
for all $\bfv_1,\bfv_2\in\calV$ where $(z_\bfv)_{\bfv\in\calV}$, $z_{\bfv}:=(-1)^{\rmQ(\bfv)}$ is a finite sequence of unimodular scalars.
Thus $(\bfphi_\bfv)_{\bfv\in\calV}$ and $(\bfpsi_\bfv)_{\bfv\in\calV}$ are equivalent---via these scalars---to the symplectic ETF and its dual that arise as the corresponding instance of Definition~\ref{def.symplectic ETF}, respectively.
Moreover, since $(\bfphi_\bfv)_{\bfv\in\calD^\rmc}$ and $(\bfpsi_\bfv)_{\bfv\in\calD}$ are the instances of $(\bfzeta_{\bfv})_{\bfv\in\calD^\rmc}$ when ``$\calD$'' is $\calD$ and $\calD^\rmc$, respectively, both are ETFs for spaces of dimension $\frac13(2^{2J}-1)$.
Applying~\eqref{eq.pf of symplectic sub-ETFs 4}, \eqref{eq.pf of symplectic sub-ETFs 5}, \eqref{eq.pf of symplectic sub-ETFs 6} in these two instances---where again ``$\varepsilon\sign(\rmQ)$'' is $1$ and $-1$, respectively---gives the remaining claims made in (b) and (c), respectively.

\section{Proof of Lemma~\ref{lemma.DT of Sp}}

When $J=1$, $\calV$ has exactly three nonzero vectors, any two of which form a singular pair, implying $\Sp(\rmB)=\GL(\calV)$.
Here, there is just one quadratic form $\rmQ$ that yields $\rmB$ of negative sign,
having quadric $\calQ=\set{\bfzero}$,
and the action of $\Sp(\rmB)$ on the set of all such forms is doubly transitive in a vacuous sense.
There are three quadratic forms that yield $\rmB$ of positive sign,
each uniquely defined by its nonsingular vector, which may be any nonzero vector in $\calV$.
Here, \eqref{eq.Sp action on quadratic forms} is doubly transitive since any pair of distinct nonzero vectors in $\calV$ can be mapped to any other such pair via a member of
$\Sp(\rmB)=\GL(\calV)$.
As such, we assume $J\geq 2$ for the remainder of the proof.
When $J=2$, all products below of the form ``$\prod_{j=1}^{J-2}$'' should be regarded as $1$.

Let $\rmQ$ be a quadratic form that yields $\rmB$ with desired sign~\eqref{eq.sign of quadratic}.
By~\eqref{eq.other quadratic forms}, the quadratic forms that yield $\rmB$ are parameterized by $\bfv_\bfL\in\calV$,
being of the form $\bfv\mapsto\rmQ_\bfL(\bfv):=\rmQ(\bfv)+\rmB(\bfv_{\bfL},\bfv)$.
By \eqref{eq.sign of quadratic}, $\sign(\rmQ_\bfL)=\sign(\rmQ)$ if and only if the quadric $\calQ_\bfL$ of $\rmQ_\bfL$ has the same size as the quadric $\calQ$ of $\rmQ$.
By~\eqref{eq.quadrics of other quadratic forms}, this occurs if and only if $\bfv_\bfL\in\calQ$.
The number of quadratic forms that yield $\rmB$ and have the same sign as $\rmQ$ is thus $\#(\calQ)=2^{J-1}[2^J+\sign(\rmQ)]$, and so
\begin{equation}
\label{eq.pf of DT of Sp 0}
\#(\calQ)-1
=2^{J-1}[2^J+\sign(\rmQ)]-1
=[2^{J-1}+\sign(\rmQ)][2^{J}-\sign(\rmQ)].
\end{equation}
The number of ordered pairs $(\rmQ_1,\rmQ_2)$ of distinct quadratic forms that yield $\rmB$ and have the same sign as $\rmQ$ is thus
\begin{align}
\nonumber
\#(\calQ)[\#(\calQ)-1]
&=2^{J-1}[2^J+\sign(\rmQ)][2^{J-1}+\sign(\rmQ)][2^{J}-\sign(\rmQ)]\\
\label{eq.pf of DT of Sp 1}
&=2^{J-1}(2^{2J}-1)[2^{J-1}+\sign(\rmQ)].
\end{align}
To show that the aforementioned action $(\bfA,\rmQ')\mapsto\bfA\cdot\rmQ'$ is doubly transitive,
it thus suffices to show that the size of the orbit of any particular such pair $(\rmQ_1,\rmQ_2)$ under the induced action
$\bfA\cdot(\rmQ_1,\rmQ_2):=(\bfA\cdot\rmQ_1,\bfA\cdot\rmQ_2)$ is given by~\eqref{eq.pf of DT of Sp 1}.
Here, by redefining ``$\rmQ$'' to be $\rmQ_1$,
we may assume without loss of generality that $(\rmQ_1,\rmQ_2)=(\rmQ,\rmQ_\bfL)$ where $\bfv_\bfL\in\calQ$ and $\bfv_\bfL\neq\bfzero$ since $\sign(\rmQ_1)=\sign(\rmQ_2)$ and $\rmQ_1\neq\rmQ_2$, respectively.

We compute the size of this orbit via its stabilizer.
Here, recalling that $\bfA\in\Sp(\rmB)$ if and only if $\bfA:\calV\rightarrow\calV$ is linear and maps any particular symplectic basis for $\calV$ to another,
the order of $\Sp(\rmB)$ is simply the number of (ordered) symplectic bases for $\calV$.
Further recall that any such basis is constructed by iteratively choosing (ordered) symplectic pairs from the orthogonal complement of those already chosen.
When $\calV$ has dimension $2J$, any $\bfv_1\in\calV\backslash\set{\bfzero}$ can serve as the first member of such a pair, and any $\bfv_2\in\calV\backslash\set{\bfv_1}^\perp$ can serve as the second, implying the number of such pairs is $(2^{2J}-1)(2^{2J}-2^{2J-1})=2^{2J-1}(2^{2J}-1)$.
Thus, $\Sp(\rmB)$ has order
\begin{equation}
\label{eq.pf of DT of Sp 2}
\prod_{j=1}^{J}[2^{2j-1}(2^{2j}-1)]\\
=2^{4J-4}(2^{2J}-1)[2^{J-1}+\sign(\rmQ)][2^{J-1}-\sign(\rmQ)]
\prod_{j=1}^{J-2}[2^{2j-1}(2^{2j}-1)].
\end{equation}
By the orbit--stabilizer theorem, the size of
$\Orb(\rmQ,\rmQ_\bfL)=\set{(\bfA\cdot\rmQ,\bfA\cdot\rmQ_\bfL): \bfA\in\Sp(\rmB)}$ is thus given by~\eqref{eq.pf of DT of Sp 1} if and only if the order of
$\Stab(\rmQ,\rmQ_\bfL)
=\set{\bfA\in\Sp(\rmB): (\bfA\cdot\rmQ,\bfA\cdot\rmQ_\bfL)=(\rmQ,\rmQ_\bfL)}$
equals the number obtained by dividing~\eqref{eq.pf of DT of Sp 2} by~\eqref{eq.pf of DT of Sp 1}. We next simplify this stabilizer.

In general, for any quadratic form $\rmQ$ that yields $\rmB$,
the corresponding \textit{orthogonal group} $\Orth(\rmQ)$ is the set of all linear operators $\bfA$ on $\calV$ such that $\rmQ(\bfA\bfv)=\rmQ(\bfv)$ for all $\bfv\in\calV$.
Here~\eqref{eq.quadratic symplectic relation} gives
\begin{equation*}
\rmB(\bfA\bfv_1,\bfA\bfv_2)
=\rmQ(\bfA(\bfv_1+\bfv_2))+\rmQ(\bfA\bfv_1)+\rmQ(\bfA\bfv_2)
=\rmQ(\bfv_1+\bfv_2)+\rmQ(\bfv_1)+\rmQ(\bfv_2)
=\rmB(\bfv_1,\bfv_2)
\end{equation*}
for all $\bfv_1,\bfv_2\in\calV$ and so $\Orth(\rmQ)$ is a subset of $\Sp(\rmB)$, which itself is a subgroup of $\GL(\calV)$.
This makes it straightforward to show that $\Orth(\rmQ)$ is itself a subgroup of $\Sp(\rmB)$.
Notably, $\bfA\cdot\rmQ=\rmQ$ if and only if $\rmQ(\bfA^{-1}\bfv)=\rmQ(\bfv)$ for all $\bfv\in\calV$, that is, $\bfA^{-1}\in\Orth(\rmQ)$, namely $\bfA\in\Orth(\rmQ)$.
As such, $\Stab(\rmQ,\rmQ_\bfL)=\Orth(\rmQ)\cap\Orth(\rmQ_\bfL)$ is an intersection of two of these orthogonal subgroups of $\Sp(\rmB)$.
Altogether, simplifying the quotient of~\eqref{eq.pf of DT of Sp 2} and~\eqref{eq.pf of DT of Sp 1},
in order to show that $(\bfA,\rmQ')\mapsto\bfA\cdot\rmQ'$ is a doubly transitive action of $\Sp(\rmB)$ on the set of all quadratic forms of a given sign that yield $\rmB$,
it suffices to take any such $\rmQ$, any $\bfv_\bfL\in\calQ\backslash\set{\bfzero}$,
define $\rmQ_\bfL$ by~\eqref{eq.other quadratic forms}, and show that
\begin{equation}
\label{eq.pf of DT of Sp 3}
\#(\Orth(\rmQ)\cap\Orth(\rmQ_\bfL)) =2^{3J-3}[2^{J-1}-\sign(\rmQ)]\prod_{j=1}^{J-2}[2^{2j-1}(2^{2j}-1)].
\end{equation}

To count the members of this intersection,
we claim that for any symplectic basis $(\bfv_k)_{k=1}^{2J}$ and any linear operator $\bfA$ on $\calV$, we have $\bfA\in\Orth(\rmQ)$ if and only if $(\bfA\bfv_k)_{k=1}^{2J}$ is a symplectic basis $\calV$ and $\rmQ(\bfA\bfv_k)=\rmQ(\bfv_k)$ for all $k$.
Indeed,
the former clearly implies the latter since $\Orth(\rmQ)$ is a subgroup of $\Sp(\rmB)$.
Conversely,
decomposing any $\bfv\in\calV$ as $\bfv=\sum_{k=1}^{2J}x_k\bfv_k$ for some scalars $(x_k)_{k=1}^{2J}$ in $\bbF_2$,
note that by~\eqref{eq.quadratic form over characteristic two} and the orthogonality of distinct symplectic pairs,
\begin{align*}
\rmQ(\bfA\bfv)
&=\sum_{j=1}^J\rmQ(x_{2j-1}\bfA\bfv_{2j-1}+x_{2j}\bfA\bfv_{2j})\\
&=\sum_{j=1}^J [x_{2j-1}^2\rmQ(\bfA\bfv_{2j-1})
+x_{2j-1}x_{2j}\rmB(\bfA\bfv_{2j-1},\bfA\bfv_{2j})
+x_{2j}^2\rmQ(\bfA\bfv_{2j})].
\end{align*}
Disregarding $\bfA$ and recombining yields $\rmQ(\bfv)$, yielding the claim.
As such, to find the order of $\Orth(\rmQ)\cap\Orth(\rmQ_\bfL)$,
we need only choose any symplectic basis $(\bfv_k)_{k=1}^{2J}$ for $\calV$ and then count the symplectic bases $(\bfv_k')_{k=1}^{2J}$ for $\calV$ that have
$\rmQ(\bfv_k')=\rmQ(\bfv_k)$ and $\rmQ_\bfL(\bfv_k')=\rmQ_\bfL(\bfv_k)$ for all $k$.
To do so, it is convenient to design $(\bfv_k)_{k=1}^{2J}$ so that it has a simple relationship with $\bfv_\bfL$ and is also as singular as reasonably possible with respect to both $\rmQ$ and $\rmQ_\bfL$.

We now count the symplectic pairs $(\bfv_1,\bfv_2)$ that are $\calQ$-singular.
Here~\eqref{eq.pf of DT of Sp 0} gives
$[2^{J-1}+\sign(\rmQ)][2^{J}-\sign(\rmQ)]$ choices of $\bfv_1\in\calQ\backslash\set{\bfzero}$.
For any such $\bfv_1$, take any $\bfv_2\in\calQ$ such that $\rmB(\bfv_1,\bfv_2)=1$.
(As discussed in Subsection~\ref{subsection.quadratic forms}, such a $\bfv_2$ necessarily exists.)
Then uniquely decomposing any $\bfv\in\calV$ as $\bfv=x_1\bfv_1+x_2\bfv_2+\bfv_0$ where $x_1,x_2\in\bbF_2$, $\bfv_0\in\calV_0:=\set{\bfv_1,\bfv_2}^\perp$,
we have $\rmB(\bfv_1,\bfv)=1$ if and only if $x_2=1$.
Moreover,
\eqref{eq.quadratic form recursion} gives $\rmQ(x_1\bfv_1+\bfv_2+\bfv_0)=x_1+\rmQ_0(\bfv_0)$.
Letting $\calQ_0$ be the quadric of $\rmQ_0$,
we thus have that
$\set{\bfv\in\calQ: \rmB(\bfv_1,\bfv)=1}
=(\bfv_2+\calQ_0)\sqcup(\bfv_1+\bfv_2+\calV_0\backslash\calQ_0)$
is a set of cardinality $\#(\calV_0)=2^{2J-2}$.
That is, for any $\bfv_1\in\calQ\backslash\set{\bfzero}$,
\begin{equation}
\label{eq.pf of DT of Sp 4}
\#\set{\bfv\in\calQ: \rmB(\bfv_1,\bfv)=1}
=2^{2J-2}.
\end{equation}
Altogether, the number of $\calQ$-singular symplectic pairs in the $2J$-dimensional space $\calV$ is
\begin{align}
\nonumber
\#\set{(\bfv_1,\bfv_2)\in\calQ\times\calQ: \rmB(\bfv_1,\bfv_2)=1}
&=2^{2J-2}[2^{J-1}+\sign(\rmQ)][2^{J}-\sign(\rmQ)]\\
\label{eq.pf of DT of Sp 5}
&=2^{2J-2}(2^{2J-2}-1)\frac{2^{J}-\sign(\rmQ)}{2^{J-1}-\sign(\rmQ)}.
\end{align}

Now let $(\bfv_1,\bfv_2)$ be any symplectic pair with $\bfv_1=\bfv_\bfL$ that is $\rmQ$-singular,
and complete it to a symplectic basis $(\bfv_k)_{k=1}^{2J}$ for $\calV$ that is as $\rmQ$-singular as possible,
having $\rmQ(\bfv_k)=0$ for all $k\in[2J-2]$,
and either $\rmQ(\bfv_{2J-1})=\rmQ(\bfv_{2J})=0$ or $\rmQ(\bfv_{2J-1})=\rmQ(\bfv_{2J})=1$ when $\sign(\rmQ)$ is $1$ or $-1$, respectively.
Here $\rmQ_\bfL(\bfv_k)=\rmQ(\bfv_k)$ if and only if $\rmB(\bfv_1,\bfv_k)=0$,
namely if and only if $k\neq 2$.

As noted above,
the order of $\Orth(\rmQ)\cap\Orth(\rmQ_\bfL)$ is thus the number of symplectic bases $(\bfv_k')_{k=1}^{2J}$ for $\calV$ with $\rmQ(\bfv_k')=\rmQ(\bfv_k)$ and $\rmQ_\bfL(\bfv_k')=\rmQ_\bfL(\bfv_k)$ for all $k\in[2J]$.
For any such basis and any $k\in[2J]$,
we have $\rmB(\bfv_\bfL,\bfv_k')=0$ if and only if
$\rmQ(\bfv_k')+\rmB(\bfv_\bfL,\bfv_k')=\rmQ_\bfL(\bfv_k')=\rmQ_\bfL(\bfv_k)$
equals $\rmQ(\bfv_k')=\rmQ(\bfv_k)$,
namely if and only if $k\neq 2$.
Decomposing $\bfv_L$ in terms of any such basis $(\bfv_k')_{k=1}^{2J}$,
and distributing $\rmB(\bfv_\bfL,\bfv_2')=1$ and $\rmB(\bfv_\bfL,\bfv_k')=0$ over this decomposition for all $k\neq 2$ gives that necessarily $\bfv_1'=\bfv_\bfL=\bfv_1$.
For any symplectic basis $(\bfv_k')_{k=1}^{2J}$ such that $\bfv_1'=\bfv_\bfL$ and $\rmQ(\bfv_k')=\rmQ(\bfv_k)$ whenever $k\geq 2$, we claim that $\rmQ_\bfL(\bfv_k')=\rmQ_\bfL(\bfv_k)$ for all $k\in[2J]$.
Indeed, since $\bfv_1'=\bfv_\bfL=\bfv_1$, this holds for $k=1$.
For $k=2$,
summing the equations $\rmQ(\bfv_2')=\rmQ(\bfv_2)$ and
$\rmB(\bfv_\bfL,\bfv_2')
=\rmB(\bfv_1',\bfv_2')
=1
=\rmB(\bfv_1,\bfv_2)
=\rmB(\bfv_\bfL,\bfv_2)$
gives $\rmQ_\bfL(\bfv_2')=\rmQ_\bfL(\bfv_2)$.
Similarly, for any $k\geq 3$, summing $\rmQ(\bfv_k')=\rmQ(\bfv_k)$
and
$\rmB(\bfv_\bfL,\bfv_k')
=\rmB(\bfv_1',\bfv_k')
=0
=\rmB(\bfv_1,\bfv_k)
=\rmB(\bfv_\bfL,\bfv_k)$
gives $\rmQ_\bfL(\bfv_k')=\rmQ_\bfL(\bfv_k)$, yielding the claim.
In summary, the order of $\Orth(\rmQ)\cap\Orth(\rmQ_\bfL)$ is the number of symplectic bases
$(\bfv_k')_{k=1}^{2J}$ for $\calV$ such that $\bfv_1'=\bfv_\bfL$,
$\rmQ(\bfv_k')=0$ whenever $3\leq k\leq 2J-2$,
and such that either $\rmQ(\bfv_{2J-1})=\rmQ(\bfv_{2J})=0$ or $\rmQ(\bfv_{2J-1})=\rmQ(\bfv_{2J})=1$ when $\sign(\rmQ)$ is $1$ or $-1$, respectively.
By~\eqref{eq.pf of DT of Sp 4}, there are $2^{2J-2}$ choices of $\bfv_2'$.
For any $j\in\set{2,\dotsc,J-1}$,
the number of choices of suitable $(\bfv_{2j-1},\bfv_{2j})$ is given by~\eqref{eq.pf of DT of Sp 5} where ``$J$'' is $J-(j-1)=J-j+1$.
Since the final pair $(\bfv_{2J-1},\bfv_{2J})$ is chosen from a space that contains either two nonzero singular vectors or three nonsingular vectors, depending on whether $\sign(\rmQ)$ is $1$ or $-1$, respectively, there are either $(2)(1)=2$ or $(3)(2)=6$ choices for this pair in these respective cases.
In either case, there are thus $2[2-\sign(\rmQ)]$ choices for $(\bfv_{2J-1},\bfv_{2J})$.
Simplifying the resulting telescoping product reveals that the order of $\Orth(\rmQ)\cap\Orth(\rmQ_\bfL)$ satisfies~\eqref{eq.pf of DT of Sp 3},
implying that the action $(\bfA,\rmQ')\mapsto\bfA\cdot\rmQ'$ is indeed doubly transitive:
\begin{multline*}
2^{2J-2}\{2[2-\sign(\rmQ)]\}
\prod_{j=2}^{J-1}\biggbracket{2^{2j-2}(2^{2j-2}-1)\frac{2^{j}-\sign(\rmQ)}{2^{j-1}-\sign(\rmQ)}}\\
=2^{2J-1}[2^{J-1}-\sign(\rmQ)]\prod_{j=2}^{J-1}[2^{2j-2}(2^{2j-2}-1)]
%=2^{2J-1}[2^{J-1}-\sign(\rmQ)]\prod_{j=1}^{J-2}[2^{2j}(2^{2j}-1)]
=2^{3J-3}[2^{J-1}-\sign(\rmQ)]\prod_{j=1}^{J-2}[2^{2j-1}(2^{2j}-1)].
\end{multline*}

\section{Proof of Lemma~\ref{lemma.totally singular}}

(a) We claim that if $\calS$ is a totally orthogonal subspace of $\calV$ then $\calS\cap\calQ$ is a totally singular subspace of $\calV$.
Clearly, $\calS\cap\calQ$ is totally singular.
It contains $\bfzero$ since $\bfzero\in\calS$ and $\bfzero\in\calQ$.
Since the underlying field is $\bbF_2$, this immediately implies that it is closed under scalar multiplication.
It is moreover closed under addition since $\rmQ(\bfv_1+\bfv_2)=\rmB(\bfv_1,\bfv_2)+\rmQ(\bfv_1)+\rmQ(\bfv_2)=0+0+0=0$ for all $\bfv_1,\bfv_2\in\calS\cap\calQ$,
yielding the claim.
We next claim that any totally orthogonal subspace $\calS$ of $\calV$ that is not contained in $\calQ$ is split perfectly in half by it.
Indeed, if there exists $\bfv_0\in\calS\backslash\calQ$, then
$\rmQ(\bfv+\bfv_0)=\rmB(\bfv,\bfv_0)+\rmQ(\bfv)+\rmQ(\bfv_0)=0+\rmQ(\bfv)+1$ for all $\bfv\in\calS$, implying $\bfv\mapsto\bfv+\bfv_0$ is an involution from $\calS\cap\calQ$ onto $\calS\backslash\calQ$.
Since $\calS\cap\calQ$ is a subgroup of $\calS$ with
$\calS
=(\calS\cap\calQ)\sqcup(\calS\backslash\calQ)
=(\calS\cap\calQ)\sqcup(\bfv_0+\calS\cap\calQ)$,
it has index $2$ and its nonidentity coset is $\calS\backslash\calQ$, yielding the claim.

For (b) and (c),
continue to let $\sign(\rmQ)$ be arbitrary,
and let $\calS$ be any totally singular subspace of $\calV$ of dimension $J-1$.
By (a), such subspaces exist, arising for example as any (not necessarily proper) subspace of the intersection of $\calQ$ with any Lagrangian subspace $\calL$ of $\calV$.
Clearly, there exists $\bfv_1\in\calS\backslash\set{\bfzero}$ if and only if $J\geq 2$.
When this occurs, any such $\bfv_1$ belongs to $\calQ\backslash\set{\bfzero}$ and so there exists $\bfv_2\in\calV$ such that $(\bfv_1,\bfv_2)$ is a hyperbolic pair.
Here necessarily $\bfv_2\in\calQ\backslash\calS$ since $\rmQ(\bfv_2)=0$ and $\rmB(\bfv_1,\bfv_2)=1$.
Further recall that any $\bfv\in\calV$ can be uniquely decomposed as $\bfv=x_1\bfv_1+x_2\bfv_2+\bfv_0$ where $x_1,x_2\in\bbF_2$ and $\bfv_0\in\calV_0:=\set{\bfv_1,\bfv_2}^\perp$.
When $\bfv\in\calS$, we necessarily have
$0=\rmB(\bfv_1,\bfv)=\rmB(x_1\bfv_1+x_2\bfv_2+\bfv_0)=x_2$,
implying $\bfv_0=\bfv+x_1\bfv_1$ is a linear combination of two elements of $\calS$,
and so $\bfv_0\in\calS_0:=\calS\cap\calV_0$.
Thus, $\calS=\set{x_1\bfv_1+\bfv_0: x_1\in\bbF, \bfv_0\in\calS_0}$.
Here since $\bfv_1\notin\calV_0$ and so $\bfv_1\notin\calS_0$,
we have $\dim(\calS_0)=\dim(\calS)-1=J-2$.
Since $\calS$ is totally singular with respect to $\rmQ$,
$\calS_0$ is totally singular with respect to its restriction $\rmQ_0$ to $\calV_0$:
by~\eqref{eq.quadratic form recursion},
$0=\rmQ(x_1\bfv_1+\bfv_0)=x_1(0)+\rmQ_0(\bfv_0)=\rmQ_0(\bfv_0)$ for all $\bfv_0\in\calS_0$.

Iteratively applying this idea yields a symplectic basis $(\bfv_k)_{k=1}^{2J}$ for $\calV$ where $(\bfv_{2j-1})_{j=1}^{J-1}$ is a basis for $\calS$,
$\rmQ(\bfv_k)=0$ for all $k\in[2J-2]$,
and \smash{$(-1)^{\rmQ(\bfv_{k})}=\sign(\rmQ)$} for $k\in\set{2J-1,2J}$.
Writing any \smash{$\bfv\in\calV$} as
\smash{$\bfv=\sum_{k=1}^{2J}x_k\bfv_k$} for some scalars \smash{$(x_k)_{k=1}^{2J}$} in $\bbF_2$,
we thus have \smash{$\bfv\in\calS^\perp$} if and only if $0=\rmB(\bfv_{2j-1},\bfv)=x_{2j}$ for all $j\in[J-1]$.
Moreover, when this occurs,
\smash{$\rmQ(\sum_{k=1}^{2J}x_k\bfv_k)$} is either $x_{2J-1}x_{2J}$ or $x_{2J-1}^{}x_{2J}^{}+x_{2J-1}^2+x_{2J}^2$ when $\sign(\rmQ)$ is $1$ or $-1$, respectively.
Now note that since $\dim(\calS^\perp)-\dim(\calS)=2$,
$\calS$ has three nonidentity cosets in $\calS^\perp$.
In light of the observations above, these cosets are $\calS_1:=\bfv_{2J-1}+\calS$,
$\calS_2:=\bfv_{2J}+\calS$ and $\calS_{1,2}:=\bfv_{2J-1}+\bfv_{2J}+\calS$.
Moreover,
$\calS_{1,2}$ is \textit{totally nonsingular}---none of its members are singular---while $\calS_1$ and $\calS_2$ are either both totally singular or both totally nonsingular when $\sign(\rmQ)$ is $1$ or $-1$, respectively.
Thus,
\begin{equation*}
\calS^\perp\backslash\calS
=\calS_1\sqcup\calS_2\sqcup\calS_{1,2},
\qquad
(\calS^\perp\backslash\calS)\cap\calQ
=\left\{\begin{array}{cl}
\calS_1\sqcup\calS_2,&\ \sign(\rmQ)>0,\\
\emptyset,&\ \sign(\rmQ)<0.
\end{array}\right.
\end{equation*}
Now note that by Lemma~\ref{lemma.totally orthogonal},
every Lagrangian subspace $\calL$ of $\calV$ is of the form $\calL=\Span\set{\calS,\bfv}$ where $\calS$ is a totally orthogonal subspace of dimension $J-1$ and $\bfv\in\calS^\perp\backslash\calS$.
We thus have that any totally singular subspace $\calS$ of dimension $J-1$ is contained in exactly three Lagrangian subspaces, namely $\calS\sqcup\calS_1$, $\calS\sqcup\calS_2$ and $\calS\sqcup\calS_{1,2}$,
exactly two of which---$\calS\sqcup\calS_1$ and $\calS\sqcup\calS_2$---are totally singular when $\sign(\rmQ)>0$,
and none of which are totally singular when $\sign(\rmQ)<0$.

In particular, when $\sign(\rmQ)<0$, no Lagrangian subspace $\calL$ of $\calV$ is totally singular since if it were,
then any $(J-1)$-dimensional subspace $\calS$ of it would be as well, implying that $\calL$ is one of the three Lagrangian subspaces that contains $\calS$, none of which are totally singular, a contradiction.
This completes the proof of (b).

For (c), now assume that $\sign(\rmQ)>0$ and that $\calL$ is a Lagrangian subspace of $\calV$.
If $\calL$ is totally singular, then $\calL=\bfzero+\calL$ is the only totally singular coset of $\calL$:
if $\calL^\perp=\calL\subseteq\calQ$ and $\bfv_1+\calL\subseteq\calQ$ for some $\bfv_1\in\calV$ then $\bfv_1=\bfv_1+\bfzero\in\bfv_1+\calL\subseteq\calQ$,
and so applying~\eqref{eq.quadratic symplectic relation} with any $\bfv_2\in\calL\subseteq\calQ$ gives
$0=\rmQ(\bfv_1+\bfv_2)
=\rmB(\bfv_1,\bfv_2)+\rmQ(\bfv_1)+\rmQ(\bfv_2)
=\rmB(\bfv_1,\bfv_2)$,
implying $\bfv_1\in\calL^\perp=\calL$ and so $\bfv+\calL=\calL$.
When $\calL$ is not totally singular, (a) gives that $\calS:=\calL\cap\calQ$ is a totally singular subspace of $\calV$ of dimension $\dim(\calL)-1=J-1$.
Constructing a hyperbolic basis $(\bfv_k)_{k=1}^{2J}$ for $\calV$ such that $(\bfv_{2j-1})_{j=1}^{J-1}$ is a basis for $\calS=\calL\cap\calQ$ and defining $\calS_1$, $\calS_2$ and $\calS_{1,2}$ from $\calS$ as above,
the fact that $\calL$ is not totally singular gives $\calL=\calS\sqcup\calS_{1,2}$.
There is thus indeed a coset of $\calL$ that is totally singular:
\begin{equation*}
\bfv_{2J-1}+\calL
=\bfv_{2J-1}+(\calS\sqcup\calS_{1,2})
=(\bfv_{2J-1}+\calS)\sqcup(\bfv_{2J-1}+\calS_{1,2})
=\calS_1\sqcup\calS_2
\subseteq\calQ.
\end{equation*}
To see that this coset is unique, note that if $\bfv_1+\calL\subseteq\calQ$ then $\bfv_1\in\calQ$ and so
$0
=\rmQ(\bfv_1+\bfv_2)
=\rmB(\bfv_1,\bfv_2)+\rmQ(\bfv_1)+\rmQ(\bfv_2)
=\rmB(\bfv_1,\bfv_2)$ for any $\bfv_2\in\calS$,
implying $\bfv_1\in\calS^\perp$.
At the same time, $\bfv_1\notin\calL$ since otherwise $\bfv_1+\calL=\calL\nsubseteq\calQ$.
As such, if $\bfv_1+\calL\subseteq\calQ$ then $\bfv_1\in\calS^\perp\backslash\calL=\calS_1\sqcup\calS_2$.
Now note that since $\set{\bfv_1\in\calV: \bfv_1+\calL\subseteq\calQ}$ is clearly closed under shifts by members of $\bfL$ it is itself a union of cosets of $\bfL$.
Since this set is nonempty, containing $\bfv_{2J-1}$, and is contained in $\calS_1\sqcup\calS_2$, we thus have
$\set{\bfv_1\in\calV: \bfv_1+\calL\subseteq\calQ}
=\calS_1\sqcup\calS_2=\bfv_{2J-1}+\calL$.
Thus, $\bfv_1+\calL\subseteq\calQ$ if and only if $\bfv_1+\calL=\bfv_{2J-1}+\calL$.

\section{Proof of Lemma~\ref{lemma.counting pairs}}

Recall that if $(\bfv_1,\bfv_2)$ is a symplectic pair in $\calV$ then any $\bfv\in\calV$ uniquely decomposes as $\bfv=x_1\bfv_1+x_2\bfv_2+\bfv_0$ for some $x_1,x_2\in\bbF_2$ and $\bfv_0\in\calV_0:=\set{\bfv_1,\bfv_2}^\perp$.
Further recall that when this occurs,
if $\rmQ$ is a quadratic form on $\calV$ that yields $\rmB$,
then its restriction $\rmQ_0$ to $\calV_0$ is a quadratic form that yields the restriction $\rmB_0$ of $\rmB$ to $\calV_0$ and moreover satisfies~\eqref{eq.quadratic form recursion in general} for any
$x_1,x_2\in\bbF_2$ and $\bfv_0\in\calV_0$.

If $J=1$ and $\sign(\rmQ)<0$,
the quantity in~\eqref{eq.lemma.counting pairs.1} is $0$,
which is indeed the number of symplectic pairs $(\bfv_1,\bfv_2)$ of members of $\calQ=\set{\bfzero}$.
So now assume that either $J\geq2$ or $J=1$ and $\sign(\rmQ)>0$.
Here, recall that there exists $\bfv_1\in\calQ\backslash\set{\bfzero}$,
and moreover that for any such $\bfv_1$,
there exists $\bfv_2\in\calQ$ such that $\rmB(\bfv_1,\bfv_2)=1$.
For any such hyperbolic pair $(\bfv_1,\bfv_2)$,
\eqref{eq.quadratic form recursion in general} reduces to~\eqref{eq.quadratic form recursion},
implying
\begin{equation*}
\rmB(\bfv_1,x_1\bfv_1+x_2\bfv_2+\bfv_0)
=x_2,
\quad
\rmQ(x_1\bfv_1+x_2\bfv_2+\bfv_0)=x_1x_2+\rmQ_0(\bfv_0),
\end{equation*}
for all $x_1,x_2\in\bbF_2$, $\bfv_0\in\calV_0$.
In particular, for any $\bfv_1\in\calQ\backslash\set{\bfzero}$,
the pair $(\bfv_1,\bfv)$ is hyperbolic if and only if $\bfv=x_1\bfv_1+x_2\bfv_2+\bfv_0$ where $x_2=1$ and $x_1x_2+\rmQ_0(\bfv_0)=0$.
For any $\bfv_0\in\calV_0$, this holds if and only if $x_1=\rmQ_0(\bfv_0)$ and $x_2=1$.
Thus, the number of hyperbolic pairs in $\calV$ is the product of the number $\#(\calQ\backslash\set{\bfzero})$ of choices of $\bfv_1$ with the number $\#(\calV_0)$ of choices for such $\bfv$, namely the value given in~\eqref{eq.lemma.counting pairs.1}:
\begin{equation*}
\#(\calQ\backslash\set{\bfzero})\#(\calV_0)
=\bigset{2^{J-1}[2^J+\sign(\rmQ)]-1}2^{2(J-1)}
=2^{2J-2}[2^{J-1}+\sign(\rmQ)][2^{J}-\sign(\rmQ)].
\end{equation*}
For any hyperbolic pair $(\bfv_1,\bfv_2)$ in $\calV$,
now recall that $\sign(\rmQ_0)=\sign(\rmQ)$ and that~\eqref{eq.quadric relationship} holds,
namely that $\calQ$ partitions into $\calQ_0$,
$\bfv_1+\calQ_0$,
$\bfv_2+\calQ_0$ and $\bfv_1+\bfv_2+\calV_0\backslash\calQ_0$.
Here since $\rmB(\bfv_1,\bfv_2)=1$ and $\calQ_0\subseteq\calV_0=\set{\bfv_1,\bfv_2}^\perp$,
the set $\bfv_1+\bfv_2+\calV_0\backslash\calQ_0$ is the only one of these four components whose members are nonorthogonal to both $\bfv_1$ and $\bfv_2$.
Moreover, $\rmB(\bfv_1+\bfv_2+\bfv_{01},\bfv_1+\bfv_2+\bfv_{02})=\rmB_0(\bfv_{01},\bfv_{02})$ for any $\bfv_{01},\bfv_{02}\in\calV_0\backslash\calQ_0$.
As such, if additional vectors in $\calQ$ are appended to $(\bfv_1,\bfv_2)$,
the members of the resulting tuple are pairwise nonorthogonal if and only if the appended vectors are obtained by adding $\bfv_1+\bfv_2$ to pairwise nonorthogonal vectors in $\calV_0\backslash\calQ_0$.

For the remaining claims, now let $\calQ_0$ be the quadric of an arbitrary quadratic form $\rmQ_0$ that yields a symplectic form $\rmB_0$ on a vector space $\calV_0$ over $\bbF_2$ of dimension $2J_0\geq 2$.
We claim that the number of nonsingular symplectic pairs in $\calV_0$,
namely the number of pairs $(\bfv_{01},\bfv_{02})$ of members of $\calV_0\backslash\calQ_0$ such that $\rmB(\bfv_{01},\bfv_{02})=1$, is
\begin{equation}
\label{eq.pf.lemma.counting pairs.1}
2^{2J_0-2}[2^{J_0-1}-\sign(\rmQ_0)][2^{J_0}-\sign(\rmQ_0)].
\end{equation}
When $J_0=1$ and $\sign(\rmQ_0)>0$,
the above quantity is $0$,
which is indeed the number of symplectic pairs in the singleton set $\calV_0\backslash\calQ_0$.
So now assume that either $J_0\geq 2$ or $J_0=1$ and $\sign(\rmQ_0)<0$.
We claim that for any $\bfv_{01}\in\calV_0\backslash\calQ_0$,
there exists $\bfv_{02}\in\calV_0\backslash\calQ_0$ such that $\rmB_0(\bfv_{01},\bfv_{02})=1$.
When $J_0=1$ and $\sign(\rmQ_0)<0$,
recall that $\calV_0$ consists of four vectors,
and that its three nonzero members are nonsingular and pairwise nonorthogonal.
To see that such a vector $\bfv_{02}$ exists in the remaining case where $J_0\geq2$, take any $\bfv\in\calV_0\backslash\set{\bfv_{01}}^\perp$, and so $\rmB_0(\bfv_{01},\bfv)=1$.
If $\rmQ_0(\bfv)=1$, let $\bfv_{02}=\bfv$.
Otherwise, $\rmQ_0(\bfv)=0$ and
$\rmQ_0(\bfv_{01}+\bfv)
=\rmQ_0(\bfv_{01})+\rmB_0(\bfv_{01},\bfv)+\rmQ_0(\bfv)
=1+1+0
=0$,
and so $\bfv_{03}:=\bfv$ and $\bfv_{04}:=\bfv_{01}+\bfv$ form a hyperbolic pair such that $\bfv_{03}+\bfv_{04}=\bfv_{01}$.
Here since $J_0\geq2$ and so
$\#(\calV_0\backslash\calQ_0)=2^{J_0-1}[2^{J_0}-\sign(\rmQ_0)]\geq 6$,
there exists $\bfv_{00}\in\calV_{00}:=\set{\bfv_{01},\bfv_{02}}^\perp$ such that $\rmQ_0(\bfv_{00})=1$.
Letting $\bfv_{02}=\bfv_{03}+\bfv_{00}$, we thus have
\begin{align*}
\rmB_0(\bfv_{01},\bfv_{02})
&=\rmB_0(\bfv_{03}+\bfv_{04},\bfv_{03}+\bfv_{00})
=\rmB_0(\bfv_{04},\bfv_{03})
=1,\\
\rmQ_0(\bfv_{02})
&=\rmQ_0(\bfv_{03}+\bfv_{00})
=\rmQ_0(\bfv_{03})+\rmB_0(\bfv_{03},\bfv_{00})+\rmQ_0(\bfv_{00})
=0+0+1
=1,
\end{align*}
as claimed.
For any such nonsingular symplectic pair $(\bfv_{01},\bfv_{02})$,
\eqref{eq.quadratic form recursion in general} becomes
\begin{equation}
\label{eq.pf.lemma.counting pairs.2}
\rmQ_0(x_{01}\bfv_{01}+x_{02}\bfv_{02}+\bfv_{00})
=x_{01}^2+x_{01}x_{02}+x_{02}^2+\rmQ_{00}(\bfv_{00})
\end{equation}
for any $x_{01},x_{02}\in\bbF_2$ and $\bfv_{00}\in\calV_{00}:=\set{\bfv_{01},\bfv_{02}}^\perp$.
Thus, the quadrics $\calQ_0$ and $\calQ_{00}$ of $\rmQ_0$ and $\rmQ_{00}$, respectively, satisfy
\begin{equation}
\label{eq.pf.lemma.counting pairs.3}
\calQ_0=\calQ_{00}\sqcup(\bfv_{01}+\calV_{00}\backslash\calQ_{00})\sqcup(\bfv_{02}+\calV_{00}\backslash\calQ_{00})\sqcup(\bfv_{01}+\bfv_{02}+\calV_{00}\backslash\calQ_{00}).
\end{equation}
As such, $\#(\calQ_0)=3\#(\calV_{00})-2\#(\calQ_{00})$ where
$\#(\calV_{00})=2^{2J_0-2}$ and $\#(\calQ_{00})=2^{J_0-2}[2^{J_0-1}+\sign(\rmQ_{00})]$,
implying
\begin{equation*}
2^{J_0-1}[2^J_0+\sign(\rmQ_0)]
=\#(\calQ_0)
=3\#(\calV_{00})-2\#(\calQ_{00})
=2^{J_0-1}[2^J_0-\sign(\rmQ_{00})],
\end{equation*}
namely that $\sign(\rmQ_{00})=-\sign(\rmQ_0)$.
(This is notably different than the aforementioned hyperbolic (singular symplectic) case in which $\rmQ$ and $\rmQ_0$ have the same sign and their respective quadrics satisfy~\eqref{eq.quadric relationship}.)
We now count the nonsingular symplectic pairs $(\bfv_{01},\bfv)$ in $\calV_0$.
As noted above, for any $\bfv_{01}\in\calV_0\backslash\calQ_0$,
there exists $\bfv_{02}\in\calV_0\backslash\calQ_0$ such that $\rmB_0(\bfv_{01},\bfv_{02})=1$.
Uniquely decomposing any $\bfv\in\calV_0$ as $\bfv=x_{01}\bfv_{01}+x_{02}\bfv_{02}+\bfv_{00}$ where $x_{01},x_{02}\in\bbF_2$ and $\bfv_{00}\in\calV_{00}=\set{\bfv_{01},\bfv_{02}}^\perp$,
we have $\rmB_0(\bfv_{01},\bfv)=1$ if and only if $x_{02}=1$,
in which case~\eqref{eq.pf.lemma.counting pairs.2} gives
$1=\rmQ_0(\bfv)=x_{01}^2+x_{01}^{}+1+\rmQ_{00}(\bfv_{00})$ if and only if $\bfv_{00}\in\calQ_{00}$ (regardless of $x_{01}\in\bbF_2$).
As such, the number of such pairs is the product of the number $\#(\calV_0\backslash\calQ_0)=2^{J_0-1}[2^{J_0}-\sign(\rmQ_0)]$ of choices for $\bfv_{01}$ with the number
$\#(\bbF_2)\#(\calQ_{00})
=2(2^{J_0-2})[2^{J_0-1}+\sign(\rmQ_{00})]
=2^{J_0-1}[2^{J_0-1}-\sign(\rmQ_0)]$ of choices for $\bfv$, namely~\eqref{eq.pf.lemma.counting pairs.1}.
Here~\eqref{eq.pf.lemma.counting pairs.3} also immediately implies that $\calV_0\backslash\calQ_0$ partitions into $\calV_{00}\backslash\calQ_{00}$,
$\bfv_{01}+\calQ_{00}$, $\bfv_{02}+\calQ_{00}$ and $\bfv_{01}+\bfv_{02}+\calQ_{00}$.
Since $\calQ_{00}\subseteq\calV_{00}=\set{\bfv_{01},\bfv_{02}}^\perp$ where $\rmB_0(\bfv_{01},\bfv_{02})=1$,
the set $\bfv_{01}+\bfv_{02}+\calQ_{00}$ is the only one of these four components whose members are nonorthogonal to both $\bfv_{01}$ and $\bfv_{02}$.

We now combine the previous results in the special case where $J\geq 2$ and $\rmQ_0$ is the restriction of $\rmQ$ to $\calV_0:=\set{\bfv_1,\bfv_2}^\perp$ where $(\bfv_1,\bfv_2)$ is a hyperbolic pair in $\calV$.
In particular, the members of a $4$-tuple $(\bfv_1,\bfv_2,\bfv_3,\bfv_4)$ lie in $\calQ$ and are pairwise nonorthogonal if and only if $(\bfv_1,\bfv_2)$ is hyperbolic and $\bfv_3=\bfv_1+\bfv_2+\bfv_{01}$ and $\bfv_4=\bfv_1+\bfv_2+\bfv_{02}$ where $(\bfv_{01},\bfv_{02})$ is a nonsingular symplectic pair in $\calV_0$.
The number of such $4$-tuples is thus the product of the number~\eqref{eq.lemma.counting pairs.1} of such $(\bfv_1,\bfv_2)$ with the number~\eqref{eq.pf.lemma.counting pairs.1} of such $(\bfv_{01},\bfv_{02})$ in the case where $J_0=J-1$ and $\sign(\rmQ)=\sign(\rmQ_0)$, namely~\eqref{eq.lemma.counting pairs.2}.
For any such $(\bfv_1,\bfv_2,\bfv_3,\bfv_4)$,
a vector $\bfv\in\calQ$ is nonorthogonal to both $\bfv_1$ and $\bfv_2$ if and only if $\bfv=\bfv_1+\bfv_2+\bfv_0$ for some $\bfv_0\in\calV_0\backslash\calQ_0$.
Moreover, any such $\bfv$ is nonorthogonal to both $\bfv_3=\bfv_1+\bfv_2+\bfv_{01}$ and $\bfv_4=\bfv_1+\bfv_2+\bfv_{02}$ if and only if $\bfv_0$ is nonorthogonal to both $\bfv_{01}$ and $\bfv_{02}$,
namely if and only if $\bfv_0=\bfv_{01}+\bfv_{02}+\bfv_{00}$ for some $\bfv_{00}\in\calQ_{00}$.
Thus, $\bfv\in\calQ$ is nonorthogonal to $\bfv_1$, $\bfv_2$, $\bfv_3$ and $\bfv_4$ if and only if $\bfv=\bfv_1+\bfv_2+\bfv_{01}+\bfv_{02}+\bfv_{00}=\bfv_1+\bfv_2+\bfv_3+\bfv_4+\bfv_{00}$ for some $\bfv_{00}\in\calQ_{00}$.
Here, $\rmQ_0$ is the restriction of $\rmQ$ to $\calV_0=\set{\bfv_1,\bfv_2}^\perp$,
and $\rmQ_{00}$ is the restriction of $\rmQ_0$ to the orthogonal complement of $\set{\bfv_{01},\bfv_{02}}=\set{\bfv_1+\bfv_2+\bfv_3,\bfv_1+\bfv_2+\bfv_4}$ in $\calV_0$, denoted $\calV_{00}$.
We claim that $\calV_{00}=\set{\bfv_1,\bfv_2,\bfv_3,\bfv_4}^\perp$.
Indeed, if $\bfv\in\set{\bfv_1,\bfv_2,\bfv_3,\bfv_4}^\perp$ then $\bfv\in\set{\bfv_1,\bfv_2}^\perp=\calV_0$ and $\bfv\in\set{\bfv_{01},\bfv_{02}}^\perp$,
implying $\bfv\in\calV_{00}$.
Conversely, if $\bfv\in\calV_{00}$ then $\bfv\in\calV=\set{\bfv_1,\bfv_2}^\perp$ and $\bfv\in\set{\bfv_{01},\bfv_{02}}^\perp$, implying $\bfv\in\set{\bfv_3,\bfv_4}^\perp$.
Thus, $\rmQ_{00}$ is the restriction of $\rmQ$ to $\calV_{00}=\set{\bfv_1,\bfv_2,\bfv_3,\bfv_4}^\perp$.
Since $\calV_0$ has codimension $2$ in $\calV$ and $\calV_{00}$ has codimension $2$ in $\calV_0$, this space $\calV_{00}$ has codimension $4$ in the $2J$-dimensional space $\calV$.
Moreover,
$\sign(\rmQ_{00})=-\sign(\rmQ_0)=-\sign(\rmQ)$.


\begin{thebibliography}{WW}
\bibitem{AlexeevCM12}
B.~Alexeev, J.~Cahill, D.~G.~Mixon,
Full spark frames,
J.\ Fourier Anal.\ Appl.\ 18 (2012) 1167--1194.

\bibitem{Andriamanalimanana79}
B.~Andriamanalimanana,
Ovals, unitals, and codes,
Ph.D.\ Dissertation, Lehigh University, 1979.

\bibitem{BandeiraDMS13}
A.~S.~Bandeira, E.~Dobriban, D.~G.~Mixon, W.~F.~Sawin,
Certifying the restricted isometry property is hard,
IEEE Trans.\ Inform.\ Theory 59 (2013) 3448--3450.

\bibitem{BandeiraFMW13}
A.~S.~Bandeira, M.~Fickus, D.~G.~Mixon, P.~Wong,
The road to deterministic matrices with the Restricted Isometry Property,
J.\ Fourier Anal.\ Appl.\ 19 (2013) 1123--1149.

\bibitem{BodmannK20}
B.~Bodmann, E.~J.~King,
Optimal arrangements of classical and quantum states with limited purity,
J.\ Lond.\ Math.\ Soc.\ 101 (2020) 393--431.

\bibitem{BourgainDFKK11}
J.~Bourgain, S.~Dilworth, K.~Ford, S.~Konyagin, D.~Kutzarova,
Explicit constructions of RIP matrices and related problems,
Duke Math.~J.\ 159 (2011) 145--185.

\bibitem{CalderbankCKS97}
A.~R.~Calderbank, P.~J.~Cameron, W.~M.~Kantor, J.~J.~Seidel,
$\mathbb{Z}_4$-Kerdock codes, orthogonal spreads, and extremal Euclidean line-sets,
Proc.\ Lond.\ Math.\ Soc.\ 75 (1997) 436--480.

\bibitem{Cameron81}
P.~J.~Cameron,
Finite permutation groups and finite simple groups,
Bull.~London Math.~Soc.\ 13 (1981) 1--22.

\bibitem{CameronS73}
P.~J.~Cameron, J.~J.~Seidel,
Quadratic forms over $GF(2)$,
Indag.\ Math.\ 76 (1973) 1--8.

\bibitem{Candes08}
E.~J.~Cand\`{e}s,
The restricted isometry property and its implications for compressed sensing,
C.\ R.\ Math.\ Acad.\ Sci.\ Paris 346 (2008) 589--592.

\bibitem{DeregowskaFFL22}
B.~Der\c{e}gowska, M.~Fickus, S.~Foucart, B.~Lewandowska,
On the value of the fifth maximal projection constant,
J.\ Funct.\ Anal.\ 283 (2022) 109634/1--16.

\bibitem{DempwolffK23}
U.~Dempwolff, W.~M.~Kantor,
On 2-transitive sets of equiangular lines,
Bull.\ Aust.\ Math.\ Soc.\ 107 (2023) 134--145.

\bibitem{DingF07}
C.~Ding, T.~Feng,
A generic construction of complex codebooks meeting the Welch bound,
IEEE Trans.\ Inform.\ Theory 53 (2007) 4245--4250.

\bibitem{DonohoE03}
D.\ L.\ Donoho, M.\ Elad,
Optimally sparse representation in general (nonorthogonal) dictionaries via $\ell^1$ minimization,
Proc.\ Natl.\ Acad.\ Sci.\ USA 100 (2003) 2197--2202.

\bibitem{FickusIJK21}
M.~Fickus, J.~W.~Iverson, J.~Jasper, E.~J.~King,
Grassmannian codes from paired difference sets,
Des.\ Codes Cryptogr.\ 89 (2021) 2553--2576.

\bibitem{FickusJKM18}
M.~Fickus, J.~Jasper, E.~J.~King, D.~G.~Mixon,
Equiangular tight frames that contain regular simplices,
Linear Algebra Appl.\ 555 (2018) 98--138.

\bibitem{FickusJMP18}
M.~Fickus, J.~Jasper, D.~G.~Mixon, J.~D.~Peterson,
Tremain equiangular tight frames,
J.\ Combin.\ Theory Ser.~A 153 (2018) 54--66.

\bibitem{FickusJMP21}
M.~Fickus, J.~Jasper, D.~G.~Mixon, J.~D.~Peterson,
Hadamard equiangular tight frames,
Appl.\ Comput.\ Harmon.\ Anal.\ 50 (2021) 281--302.

\bibitem{FickusJMPW18}
M.~Fickus, J.~Jasper, D.~G.~Mixon, J.~D.~Peterson, C.~E.~Watson,
Equiangular tight frames with centroidal symmetry,
Appl.\ Comput.\ Harmon.\ Anal.\ 44 (2018) 476--496.

\bibitem{FickusJMPW19}
M.~Fickus, J.~Jasper, D.~G.~Mixon, J.~D.~Peterson, C.~E.~Watson,
Polyphase equiangular tight frames and abelian generalized quadrangles,
Appl.\ Comput.\ Harmon.\ Anal.\ 47 (2019) 628--661.

\bibitem{FickusL23}
M.~Fickus, E.~C.~Lake,
Doubly transitive equiangular tight frames that contain regular simplices,
arXiv:2302.08879 (2023).

\bibitem{FickusM21}
M.~Fickus, B.~R.~Mayo,
Mutually unbiased equiangular tight frames,
IEEE Trans.\ Inform.\ Theory 67 (2021) 1656--1667.

\bibitem{FickusM16}
M.~Fickus, D.~G.~Mixon,
Tables of the existence of equiangular tight frames,
arXiv:1504.00253 (2016).

\bibitem{FickusMJ16}
M.~Fickus, D.~G.~Mixon, J.~Jasper,
Equiangular tight frames from hyperovals,
IEEE Trans.\ Inform.\ Theory.\ 62 (2016) 5225--5236.

\bibitem{FickusMT12}
M.~Fickus, D.~G.~Mixon, J.~C.~Tremain,
Steiner equiangular tight frames,
Linear Algebra Appl.\ 436 (2012) 1014--1027.

\bibitem{FickusS20}
M.~Fickus, C.~A.~Schmitt,
Harmonic equiangular tight frames comprised of regular simplices,
Linear Algebra Appl.\ 586 (2020) 130--169.

\bibitem{IversonM22}
J.~W.~Iverson, D.~G.~Mixon,
Doubly transitive lines I: Higman pairs and roux,
J.\ Combin.\ Theory Ser.\ A 185 (2022) 105540.

\bibitem{IversonM24}
J.~W.~Iverson, D.~G.~Mixon,
Doubly transitive lines II: Almost simple symmetries,
Algebr.\ Comb.\ 7 (2024) 37--76.

\bibitem{Kantor75}
W.~M.~Kantor,
Symplectic groups, symmetric designs, and line ovals,
J.\ Algebra 33 (1975) 43--58.

\bibitem{King19}
E.~J.~King,
$2$- and $3$-covariant equiangular tight frames,
Proc.\ Int.\ Conf.\ Sampl.\ Theory Appl. (2019) 1--4.

\bibitem{King25}
E.~J.~King,
$k$-Homogeneous equiangular tight frames,
arXiv:2505.00160 (2025).

\bibitem{Konig99}
H.~K\"{o}nig,
Cubature formulas on spheres,
Math.\ Res.\ 107 (1999) 201--212.

\bibitem{Lake23}
E.~C.~Lake,
Regular simplices within doubly transitive equiangular tight frames,
M.S.\ Thesis, Air Force Institute of Technology, 2023.

\bibitem{StrohmerH03}
T.~Strohmer, R.~W.~Heath,
Grassmannian frames with applications to coding and communication,
Appl.\ Comput.\ Harmon.\ Anal.\ 14 (2003) 257--275.

\bibitem{Tropp04}
J.~A.~Tropp,
Greed is good: Algorithmic results for sparse approximation,
IEEE Trans.\ Inform.\ Theory.\ 50 (2004) 2231--2242.

\bibitem{Turyn65}
R.~J.~Turyn,
Character sums and difference sets,
Pacific J.\ Math.\ 15 (1965) 319--346.

\bibitem{Welch74}
L.~R.~Welch,
Lower bounds on the maximum cross correlation of signals,
IEEE Trans.\ Inform.\ Theory 20 (1974) 397--399.

\bibitem{Wertheimer86}
M.~Wertheimer,
Designs in quadrics,
Ph.D.\ Dissertation, University of Pennsylvania, 1986.

\bibitem{XiaZG05}
P.~Xia, S.~Zhou, G.~B.~Giannakis,
Achieving the Welch bound with difference sets,
IEEE Trans.\ Inform.\ Theory 51 (2005) 1900--1907.

\end{thebibliography}
\end{document}